\definecolor{darkgreen}{rgb}{0,0.45,0}
\newcommand{\cd}[2][]{\vcenter{\hbox{\xymatrix#1{#2}}}}
\crefname{equation}{}{}
\crefname{lem}{Lemma}{Lemmas}
\crefname{thm}{Theorem}{Theorems}
\crefname{defi}{Definition}{Definitions}
\crefname{conj}{Conjecture}{Conjectures}
\crefname{ex}{Example}{Examples}
\crefname{sec}{Section}{Sections}
\crefname{prop}{Proposition}{Propositions}
\def\slashedrightarrow{\relbar\joinrel\mapstochar\joinrel\rightarrow}
\newcommand{\horightarrow}{\slashedrightarrow}
\theoremstyle{plain}
\newtheorem{thm}{Theorem}[section]
\newtheorem{cor}[thm]{Corollary}
\newtheorem{lem}[thm]{Lemma}
\newtheorem{prop}[thm]{Proposition}
\theoremstyle{remark}
\newtheorem{rmk}[thm]{Remark}
\newtheorem{ex}[thm]{Example}
\theoremstyle{definition}
\newtheorem{defi}[thm]{Definition}
\definecolor{mypurple}{rgb}{0.5, 0.0, 0.5}
\newcommand{\co}{\colon}
\newcommand{\ca}{\mathcal}
\newcommand{\dc}{\mathbb}
\newcommand{\nc}{\mathsf}
\newcommand{\cat}{\nc}
\newcommand{\thg}{{\mathord{\text{--}}}}
\newcommand{\wc}{\widecheck}
\newcommand{\wh}{\widehat}
\newcommand{\one}{\mathbf{1}}
\newcommand{\mi}{\textrm{-}}
\newcommand{\ot}{\otimes}
\newcommand{\Set}{\nc{Set}}
\newcommand{\Cat}{\nc{Cat}}
\newcommand{\VCat}{\nc{Cat}_\ca{V}}
\newcommand{\VSym}{\dc{S}\nc{ym}_\ca{V}}
\newcommand{\Sym}{\dc{S}\nc{ym}}
\newcommand{\VCatSym}{\dc{C}\nc{atSym}_\ca{V}}
\newcommand{\CatSym}{\dc{C}\nc{atSym}}
\newcommand{\VProf}{\dc{P}\nc{rof}_\ca{V}}
\newcommand{\VMat}{\dc{M}\nc{at}_\ca{V}}
\newcommand{\Prof}{\dc{P}\nc{rof}}
\newcommand{\Mat}{\dc{M}\nc{at}}
\newcommand{\op}{\mathrm{op}}
\newcommand{\id}{\mathrm{id}}
\tikzset{tick/.style={postaction={decorate,decoration={markings,mark=at
position 0.5 with {\draw[-] (0,.4ex) -- (0,-.4ex);}}}}}
\tikzset{bigtick/.style={postaction={decorate,decoration={markings,mark=at
position 0.5 with {\draw[-] (0,.6ex) -- (0,-.6ex);}}}}}
\newcommand{\tickar}{\begin{tikzcd}[baseline=-0.5ex,cramped,sep=small,ampersand
replacement=\&]{}\ar[r,tick]\&{}\end{tikzcd}}
\newcommand{\ticktwoar}{\begin{tikzcd}[baseline=-0.5ex,cramped,sep=small,
ampersand replacement=\&]{}\ar[r,Rightarrow,bigtick]\&{}\end{tikzcd}}
\newcommand{\tickarlong}{\begin{tikzcd}[baseline=-0.5ex,cramped,ampersand
replacement=\&]{}\ar[r,tick]\&{}\end{tikzcd}}
\newcommand{\xtickar}[1]{\stackrel{#1}{\tickarlong}}
\newcommand{\sbul}{\scriptstyle\bullet}
\tikzset{bul/.style={postaction={decoration={markings,mark=at position 0.5 with
{\node{$\sbul$};}},decorate}}}
\tikzset{Rightarrow/.style={double equal sign distance,>={Implies},->},
triple/.style={-,preaction={draw,Rightarrow}}}
\newcommand{\tor}{\ensuremath{\relbar\joinrel\mapstochar\joinrel\rightarrow}}
\newcommand{\comp}[1]{\wh{#1}}
\newcommand{\coj}[1]{\wc{#1}}
\newcommand{\kot}{\boxtimes}
\newcommand{\vid}{1}
\newcommand{\hid}{\mathrm{id}}
\newcommand{\Two}{\scriptstyle\Downarrow}
\newcommand{\Ddownarrow}{\rotatebox[origin=c]{-90}{$\Rrightarrow$}}
\newcommand{\Kl}{{\dc{K}\nc{l}}}
\newcommand{\twocell}[9]{\begin{tikzcd}[ampersand replacement=\&]
#1\ar[r,tick,"{#2}"]\ar[d,"{#8}"']\ar[dr,phantom,"\Two{#9}"] \& #3\ar[d,"{#4}"]
\\
#7\ar[r,tick,"{#6}"'] \& #5
\end{tikzcd}}
\newcommand{\twocong}[2][0.5]{\ar@{}[#2] \save ?(#1)*{\cong}\restore}
\newcommand{\twoeq}[2][0.5]{\ar@{}[#2] \save ?(#1)*{=}\restore}
\newcommand{\rtwocell}[3][0.5]{\ar@{}[#2] \ar@{=>}?(#1)+/l 0.2cm/;?(#1)+/r
0.2cm/^{#3}}
\newcommand{\ltwocell}[3][0.5]{\ar@{}[#2] \ar@{=>}?(#1)+/r 0.2cm/;?(#1)+/l
0.2cm/^{#3}}
\newcommand{\ltwocello}[3][0.5]{\ar@{}[#2] \ar@{=>}?(#1)+/r 0.2cm/;?(#1)+/l
0.2cm/_{#3}}
\newcommand{\dtwocell}[3][0.5]{\ar@{}[#2]|{\Two {#3}}}
\newcommand{\dltwocell}[3][0.5]{\ar@{}[#2] \ar@{=>}?(#1)+/ur  0.2cm/;?(#1)+/dl
0.2cm/^{#3}}
\newcommand{\drtwocell}[3][0.5]{\ar@{}[#2] \ar@{=>}?(#1)+/ul  0.2cm/;?(#1)+/dr
0.2cm/^{#3}}
\newcommand{\dthreecell}[3][0.5]{\ar@{}[#2] \ar@3{->}?(#1)+/u  0.2cm/;?(#1)+/d
0.2cm/^{#3}}
\newcommand{\utwocell}[3][0.5]{\ar@{}[#2] \ar@{=>}?(#1)+/d 0.2cm/;?(#1)+/u
0.2cm/_{#3}}
\newcommand{\dtwocelltarg}[3][0.5]{\ar@{}#2 \ar@{=>}?(#1)+/u  0.2cm/;?(#1)+/d
0.2cm/^{#3}}
\newcommand{\utwocelltarg}[3][0.5]{\ar@{}#2 \ar@{=>}?(#1)+/d  0.2cm/;?(#1)+/u
0.2cm/_{#3}}
\newcommand{\defeq}{\mathrel{\mathop:}=}
\newcommand{\freesmc}{S}
\newcommand{\sym}{\mathfrak{S}}
\begin{document}
\title[Monoidal Kleisli bicategories]{
Monoidal Kleisli bicategories and \\ the  arithmetic product of coloured symmetric
sequences}

\author[N. Gambino]{Nicola Gambino}
\address{Department of Mathematics, University of Manchester}
\email{nicola.gambino@manchester.ac.uk}

\author[R. Garner]{Richard Garner}
\address{School of Mathematical and Physical Sciences, Macquarie University}
\email{richard.garner@mq.edu.au}

\author[C. Vasilakopoulou]{Christina Vasilakopoulou}
\address{School of Applied Mathematical and Physical Sciences, National Technical University of Athens}
\email{cvasilak@math.ntua.gr}

\keywords{Kleisli bicategory, double category, monoidal structure, symmetric sequence, species of structures}.

\subjclass[2020]{18N10, 18N15, 18M80, 18C20, 18M05}

\begin{abstract} We extend the arithmetic product of species of structures and symmetric
sequences studied by Maia and M\'endez and by Dwyer and Hess
 to coloured symmetric sequences and show that it determines
a normal oplax monoidal structure on the bicategory of coloured symmetric sequences.
In order to do this, we establish general results on extending monoidal structures to Kleisli
bicategories. Our approach uses monoidal double categories, which help us to attack the difficult problem of verifying the coherence conditions for a monoidal bicategory in an efficient way.
\end{abstract}

\date{\today}

\maketitle

\setcounter{tocdepth}{1}
\tableofcontents

\section{Introduction}

\subsection*{Context, aim and motivation}
Joyal's theory of species of structures~\cite{JoyalA:thecsf} provides an illuminating and powerful approach to enumerative combinatorics, as amply illustrated in~\cite{BergeronF:comstls}, and finds applications also
in algebra~\cite{Species}.
 By definition, a species of structures $F$ is simply a functor from the category~$\mathfrak{B}$ of finite sets and bijections to the category of sets and functions, mapping a  finite set $U$ of `labels' to a set~$F[U]$ of `$F$-structures' ({\em e.g.}~binary rooted trees) labelled by elements of $U$.
Importantly for applications, species of structures support a calculus of operations (which includes substitution, sum, product and differentiation) that has a combinatorial interpretation and provides a `categorification' of the calculus of exponential power series widely used in combinatorics~\cite{WilfH;generating}. This point of view is supported by the introduction of
the so-called analytic functor associated to a species of structures~\cite{FoncteursAnalytiques}, which is defined by the formula
\[
F(X) = \sum_{n \in \mathbb{N}} \frac{F[n] \times X^n}{\sym_n} \mathrlap{,}
\]
where, for $n \in \mathbb{N}$, $[n] = \{ 1, \ldots, n \}$ and the fraction denotes the quotient of $F[n] \times X^n$ by
the evident action of the $n$-th symmetric group $\sym_n$.
The passage from species of structures to analytic functors goes via
symmetric sequences, which are defined as functors from $\sym$, the skeleton of $\mathfrak{B}$ whose objects are finite cardinals, into the category of sets.
Under the equivalence between species of structures and symmetric sequences, the substitution operation of species of structures corresponds to the substitution monoidal structure on symmetric sequences defined
in~\cite{KellyGM:opejpm}, which is of interest since monoids with respect to it are precisely symmetric operads~\cite{BoardmanJ:homias,Mayoperad}.

In~\cite{MaiaM:aripcs}, Maia and M\'endez introduced a new operation on species of structures, baptized \emph{arithmetic product}, and provided a combinatorial interpretation for it. Given species $F$ and $G$, their arithmetic product $F \kot G$ is defined by letting
\[
(F_1 \kot F_2)[U] = \sum_{(\pi_1, \pi_2) \in \mathcal{R}[U]} F_1[\pi_1] \times F_2[\pi_2] \mathrlap{,}
\]
where $\mathcal{R}[U]$ denotes a certain set of  partitions of $U$, called rectangles.
Independently of the work of Maia and M\'endez,  Dwyer and Hess rediscovered
this operation\footnote{Dwyer and Hess called it matrix multiplication. Here we prefer to say arithmetic product in order to
avoid potential confusion with the composition operation of the bicategory of matrices.} in the context of symmetric sequences~\cite{DwyerW:BoardmanVtpo}
in order to extend the Boardman--Vogt tensor product of symmetric operads~\cite{BoardmanJ:homias} to operadic bimodules. For symmetric sequences, the
arithmetic product is defined from the product of natural numbers on $\sym$ (which is functorial, even if it is not the cartesian product)  by Day
convolution~\cite{DayBJ:clocf}, via the coend formula
\begin{equation}
\label{equ:matrix-mult-dwyer-hess}
(F_1 \kot F_2)[m] = \int^{m_1, m_2} \sym[m,m_1 \cdot m_2] \times F_1[m_1] \times F_2[m_2]   \mathrlap{.}
\end{equation}
The connection between the arithmetic product for species in~\cite{MaiaM:aripcs} and symmetric sequences in~\cite{DwyerW:BoardmanVtpo} seems to
have been first noted by Bremner and Dotsenko in~\cite{BremnerM:boavtp}.

In~\cite{DwyerW:BoardmanVtpo}, the authors also observed that the arithmetic product of symmetric sequences interacts with the substitution monoidal
structure in an interesting way, in that there is a natural transformation with components
\begin{equation}
\label{equ:intro-interchange}
(G_1 \circ F_1) \kot (G_2 \circ F_2) \to (G_1 \kot G_2) \circ (F_1 \kot F_2)
\end{equation}
which are not necessarily invertible.
Dwyer and Hess conjectured that this transformation underlies what is usually called a \emph{duoidal} or \emph{2-monoidal} structure, in which two
monoidal structures interact by means of an interchange law~\cite{Species}. The conjecture was settled positively by the second-named author and
L\'opez\ Franco, who also showed that this duoidal structure is {\em normal}, in the sense that the units of the two monoidal structures essentially
coincide. This was done as part of their general study of commutative operations~\cite{GarnerLopezFranco}, which involves introducing a general notion
of \emph{commuting tensor product} of $\circ$-monoids in a  normal duoidal category $(\mathcal{V}, \mathord{\kot},\circ)$. When this notion is
instantiated at the normal duoidal category of symmetric sequences, it re-finds the  Boardman--Vogt tensor product  $P \otimes_{\mathrm{BV}} Q$ of
symmetric operads $P$ and $Q$. In particular, to express that the operations of $P$ and $Q$ \emph{commute} with each other in $P \otimes_{\mathrm{BV}}
Q$ one uses a diagram of the form
\begin{equation}
\label{equ:bv}
\begin{tikzcd}
P \kot Q \ar[r, shift left=.75ex] \ar[r, shift right=.75ex] & (P \otimes_{\mathrm{BV}} Q) \circ (P \otimes_{\mathrm{BV}} Q) \ar[r] & P \otimes_{\mathrm{BV}} Q
\end{tikzcd}
\end{equation}
which involves both the arithmetic product and the substitution monoidal structures.
Furthermore, \cite{Duoidalmeasuring} uses this duoidal structure to establish an enrichment of symmetric operads in symmetric cooperads.

The aim of this paper is to generalise the definition of the arithmetic product, and the key results concerning it, from symmetric sequences to
coloured symmetric sequences: this corresponds to the passage from symmetric operads to coloured symmetric operads, {\em i.e.}~from the
single-object to the many-object case. We will show that such generalisation not only is possible, but actually
determines a new kind of low-dimensional categorical structure.

The motivation for this work is manifold. First, it is part of a wider research programme aimed  at understanding the structure of the bicategory of coloured symmetric sequences and related bicategories, with applications to logic and theoretical computer science, \emph{cf.}~\cite{cattani2005profunctors,hyland2010some}. In particular, it provides the basis to extend the Garner--L\'opez\ Franco theory of commutativity, to re-find the Boardman--Vogt tensor product of coloured symmetric operads (\emph{cf.} \eqref{equ:bv}), and to develop a corresponding tensor product of bimodules between them, generalising the results of Dwyer and Hess, a project that we leave for future work.  The results presented here are also useful to extend the study of enrichment in \cite{VCocats} to relate coloured (co)operads and their (co)modules. Finally, we hope that
our results may eventually be of interest in combinatorics, since the arithmetic product of coloured symmetric sequences defined here
induces a corresponding operation on coloured species of structures~\cite{FioreM:carcbg}, which extends the arithmetic product of Maia and M\'endez to variants of Joyal's species of structures that are particular instances of coloured species of structures.

\subsection*{Main results}
Whereas the original arithmetic product is extra structure on the substitution monoidal category of symmetric sequences, our generalised arithmetic
product for coloured symmetric
sequences will be extra structure on the bicategory~$\mathsf{Sym}$ of coloured symmetric sequences
of~\cite{FioreM:carcbg}.
Recall that, for sets $X$ and $Y$, an $(X,Y)$-coloured symmetric sequence is a functor $M \colon \freesmc Y^{\op} \times X \to \Set$,
where $\freesmc Y$ is the free symmetric strict monoidal category on $Y$.
 Such a functor $M$ assigns a set $M(\vec y, x)$ to each $\vec y = (y_1, \ldots, y_n) \in \freesmc Y$ and $x \in X$, the elements of which can be
thought of as operations $f \co y_1, \ldots, y_n \to x$, with inputs of sorts $y_1, \ldots, y_n$ and output of sort $x$, typically pictured as
corollas. Taking $X= Y = 1$ recovers the notion of ordinary symmetric sequence since $S 1 = \sym$; and as shown
in~\cite{FioreM:matmccs,FioreM:carcbg}, the calculus of symmetric sequences can be extended to their coloured counterparts. In particular, the
substitution monoidal structure can be generalised to a composition operation, which is the composition of the bicategory $\mathsf{Sym}$ whose objects
are sets, and whose maps from $X$ to $Y$ are the symmetric $(X,Y)$-coloured  sequences~\cite{FioreM:carcbg,FioreM:relpkb}. The monads in this
bicategory are then symmetric  coloured operads~\cite{BaezJ:higdan}.

Given an
$(X_1,Y_1)$-coloured symmetric sequence
$M$ and a $(X_2, Y_2)$-coloured symmetric sequence $M_2$, we will define their arithmetic product as the $(X_1 \times X_2, Y_1 \times Y_2)$-coloured symmetric sequence $M_1 \kot M_2$ given by
\begin{equation}
\label{equ:coloured-matrix-multiplication}
(M_1 \kot M_2)(\vec{y}, (x_1,x_2)) =
\int^{\vec y^1, \vec y^2 \in SY} \freesmc(Y_1 \times Y_2)[\vec y, \vec y_1 \kot \vec y_2] \times M_1(\vec y_1, x_1) \times M_2(\vec y_2, x_2) 
  \mathrlap{,}
\end{equation}
where
\begin{equation}
\label{equ:monoidal-for-freesmc}
\kot \co \freesmc Y_1 \times \freesmc Y_2 \to \freesmc(Y_1 \times Y_2)
\end{equation}
is an operation determined by lexicographic ordering of pairs.
As expected, when~$X_1 = Y_1 = X_2 = Y_2 = 1$, we obtain the arithmetic product in~\eqref{equ:matrix-mult-dwyer-hess} of Dwyer and Hess.

Our main result, \cref{thm:main-app-2}, asserts that the said arithmetic product determines a \emph{normal oplax monoidal structure} on the bicategory
$\mathsf{Sym}$ of coloured symmetric sequences. The notion of a normal oplax monoidal bicategory appears to be novel and is introduced here
in~\cref{def:oplaxbicat} as the natural `many-object' generalisation of the normal duoidal structure in~\eqref{equ:intro-interchange}.
The key challenge to be overcome to obtain our main result is the verification of the axioms for a normal oplax monoidal bicategory, which are of the same daunting complexity as those for a monoidal bicategory~\cite{CoherenceTricats,Gurskitricats}. As such, attempting a direct verification seems hopelessly
complicated and unlikely to result in any insight. Instead, we solve the problem developing ideas of 2-dimensional monad theory~\cite{2-dimmonadtheory}, obtaining some general results that are of independent interest.

Our approach exploits crucially the notion of a (pseudo) double category, which adeptly handles the bookwork around dealing with structures involving two kinds
of morphisms. More specifically, in a double category one has objects, two kinds of 1-cells (called horizontal and vertical), and squares (which help
to relate horizontal and vertical 1-cells).
Of key importance for our development are the double category of profunctors $\Prof$, which has categories as objects,
profunctors~\cite{BenabouJ:dis,BenabouJ:distw} (also known as distributors or bimodules) as horizontal 1-cells and
functors as vertical 1-cells; and the double category of matrices $\Mat$, which is the full double subcategory of $\Prof$ spanned by sets (viewed as
discrete categories). 

Double categories are important for us because they provide an efficient way of constructing three-dimensional structures such as monoidal
bicategories and, as we shall see, their oplax variants. The basic insight, as explained
in~\cite{GarnerR:lowdsf,ConstrSymMonBicats,ConstrSymMonBicatsFun}, is that, in order to obtain a monoidal structure on a bicategory $\mathcal{E}$, it
is sufficient to represent $\mathcal{E}$ as the horizontal bicategory of a double category and then construct a monoidal structure on
this double category. Since the coherence data and axioms for a monoidal double category are of the same character as those of a monoidal category, rather than a monoidal bicategory, this significantly reduces the volume and complexity of the checks required to establish the structure.

The relevance of this to our situation is that the bicategory of coloured symmetric sequences $\mathsf{Sym}$ can be represented as the horizontal bicategory
of a double category $\Sym$ in which vertical 1-cells are functions between sets. In fact, building on~\cite{CruttwellShulman,FioreM:relpkb}, this
$\Sym$ can be seen as a full sub-double category (on the discrete objects) of the double
category $\CatSym$ of \emph{categorical symmetric sequences}; which can, in turn, be constructed as
the Kleisli double category of a double monad on the double category $\Prof$ of profunctors.
The double monad in question maps a category $X$ to its symmetric strict monoidal completion~$\freesmc X$, extending the corresponding $2$-monad on the $2$-category of categories.

Given the above, the desired normal oplax monoidal structure on the bicategory of coloured symmetric sequences can be obtained as follows. Firstly
(\cref{thm:oplaxmonoidalbicat}), we extend the results of~\cite{GarnerR:lowdsf,ConstrSymMonBicats,ConstrSymMonBicatsFun} to establish that such a structure can be obtained from a normal oplax monoidal structure on $\Sym$, or more generally, $\CatSym$.
Secondly, to obtain this, we prove and apply a result (\cref{thm:oplaxmonoidalKlT}) which isolates sufficient conditions on a double monad $T$ on
a double category $\mathbb{C}$ under which a monoidal structure on $\mathbb{C}$ will extend to an oplax monoidal structure on the Kleisli double category $\Kl(T)$. Pleasingly, this condition on $T$ turns out to be a natural one, namely a suitably adapted form of the \emph{pseudo-commutativity} of~\cite{HylandPower}. This is satisfied by the 2-monad $\freesmc$ used in our application, and indeed, the operation~\eqref{equ:monoidal-for-freesmc} featured in the definition of the arithmetic product is part of this pseudo-commutative structure on $\freesmc \co \Cat \to \Cat$.

Thus, using this result, the monoidal structure on $\Prof$ given by the cartesian product of sets extends to give the arithmetic product oplax
monoidal
structure on the double category $\CatSym$ of categorical symmetric sequences, which in turn induces the desired
oplax monoidal structure on the double category $\Sym$ of coloured symmetric sequences. This general method
leads exactly to the
formula in~\eqref{equ:coloured-matrix-multiplication}, which is a natural generalisation of that in~\eqref{equ:matrix-mult-dwyer-hess}.

While our approach offers a clear pathway to prove our main results, and others besides, we still have to overcome significant technical challenges, dealing with coherence conditions at the double categorical level, keeping track of strictness and weakness of the structures involved. Roughly speaking, vertical structure tends to be stricter than horizontal one, but the two are closely related under the assumption that the double categories under consideration are fibrant, in the sense of~\cite{Framedbicats}. This allows us to induce a lot of the horizontal, weaker, structure that we need for applications from vertical, stricter, one, that is already known, thereby keeping some control of the complexity of our calculations.

While it undoubtedly requires more groundwork to set up the abstract approach that we take, the end result is a modular framework which is easily 
applicable to other, related situations. For example, although we shall not do so here, it is entirely straightforward to adapt our results from the 
setting appropriate for studying symmetric coloured operads to the setting appropriate for many-sorted algebraic theories: it is simply a matter of 
replacing the double monad $S$ for symmetric strict monoidal categories with a corresponding double monad $F$ for categories with strictly 
associative finite products, and verifying that everything still carries through. In this setting, we obtain an oplax monoidal structure on the 
appropriate Kleisli bicategory---which is essentially the bicategory of sifted-cocontinuous functors between presheaf categories---which extends the 
duoidal structure on the functor category $[F(1)^\mathrm{op}, \cat{Set}]$ used in~\cite{GarnerLopezFranco} to study the commuting tensor product of 
single-sorted algebraic theories.

For expository convenience, we outlined our results above for $\Set$-valued coloured symmetric sequences, but in fact our development will be carried out in a more general enriched context. For any symmetric monoidal closed cocomplete $\ca{V}$ there is an analogue of the $2$-monad $\freesmc$ leading to a bicategory of $\ca{V}$-enriched symmetric sequences. However, to obtain an oplax monoidal structure on this bicategory, we will restrict to the case where the tensor product of $\ca{V}$ is in fact \emph{cartesian} product. The reason for this restriction, which was also made in~\cite{DwyerW:BoardmanVtpo,GarnerLopezFranco}, is
that the $2$-monad $\freesmc$ is only pseudo-commutative when $\ca{V}$ is cartesian,  since the structure maps in~\eqref{equ:monoidal-for-freesmc} for this pseudo-commutativity involve a `duplication' of objects that is not available in
the general symmetric monoidal closed setting.

\subsection*{Outline of the paper}
\cref{sec:dbl-cat,sec:dbl-fun} recall the notions of double category, double functor, horizontal and vertical transformation, and
modification. We pay particular attention to companions, leading to the notion of a special vertical transformation, and establish a few useful lemmas
about them. \cref{sec:mon-dbl-cat,sec:mon-dbl-fun} focus on monoidal double categories, monoidal double functors, monoidal horizontal and vertical
transformations and monoidal modifications. In particular, we show that, for monoidal double categories $\dc{C}$ and $\dc{D}$,
monoidal double functors between them are the objects of a functor double category (\cref{thm:functor-dblcat-monoidal}).
\cref{sec:monoids} considers monoids in monoidal double categories. We use this notion in \cref{sec:dbl-monad,sec:mon-dbl-monad} to define double monads and monoidal double monads and obtain results on them in a homogeneous manner. To do this, we show that the double category of monoidal double endofunctors
on a monoidal double category admits a monoidal structure, given by composition (\cref{thm:ps-hom-c-c-monoidal}).
In \cref{sec:kleisli}, we consider Kleisli double categories and establish sufficient conditions for a double monad on a monoidal double
category to determine a monoidal structure on the Kleisli double category. We apply these results to coloured symmetric sequences in \cref{sec:application}, leading up to our main results on the existence of oplax monoidal structures on the relevant double category
(\cref{thm:double-catsym-oplax-monoidal}) and bicategory (\cref{thm:main-app-2}).

\subsection*{Acknowledgements} We are grateful to Mike Shulman for helpful conversations
and to Thomas Ehrhard for pointing us to~\cite{Premonoidal}, which led us to formulate the
notion of centrality in~\cref{def:central}. We also thank the referee for the careful reading of the paper and the helpful suggestions.

Gambino acknowledges that this
material is based upon work supported by the US Air Force Office for
Scientific Research under award number FA9550-21-1-0007 and by EPSRC via
grant EP/V002325/2. Garner acknowledges the support of Australian
Research Council grants FT160100393 and DP190102432. Vasilakopoulou acknowledges the support of the General Secretariat of Research and
Innovation (GSRI) and the Hellenic Foundation for Research and Innovation (HFRI).

\section{Double categories}
\label{sec:dbl-cat}

By \emph{double category}, we will mean a (horizontally) weak double category, also known as a pseudo double category; relevant material can be found in \cite{Limitsindoublecats, Adjointfordoublecats, Garner2006Double, ConstrSymMonBicatsFun}. 

\begin{defi}[Double category]
A \emph{double category} $\dc{C}$ consists of the following data:
\begin{itemize}
\item a category $\dc{C}_0$, whose objects are \emph{$0$-cells}, and whose arrows are \emph{vertical 1-cells} $f \co X \to X'$;
\item a category $\dc{C}_1$, whose objects are \emph{horizontal 1-cells} $M\colon X\horightarrow Y$, and whose arrows are \emph{2-morphisms}
\begin{equation}\label{eq:2map}
\begin{tikzcd}X\ar[d,"f"']\ar[r,tick,"M"]\ar[dr,phantom,"\Two\phi"] &
Y\ar[d,"g"] \\  X'\ar[r,tick,"M'"'] & Y'\mathrlap{ ;}\end{tikzcd}
\end{equation}
\item two functors $\mathbf{s},\mathbf{t}\colon\dc{C}_1\rightarrow \dc{C}_0$ called \emph{source} and \emph{target} respectively;
\item \emph{composition} and \emph{identity} functors $\circ\colon\dc{C}_1\times_{\dc{C}_0}\dc{C}_1\to\dc{C}_1$ and
$\hid\colon\dc{C}_0\to\dc{C}_1$;
\item natural families of globular isomorphisms in $\mathbb{C}_1$:
\begin{equation}\label{eq:aellr}
a_{M, N, P} \co (M\circ N)\circ P \Rightarrow M\circ(N\circ  P) \, , \quad
\ell_M \co \hid_{Y}\circ M \Rightarrow M \quad\text{and} \quad
r_M \co M\circ\hid_{X}  \Rightarrow M \mathrlap{.}
\end{equation}
\end{itemize}
These data are required to satisfy
coherence axioms analogous to those for a bicategory; see~\cite[\S 7.1]{Limitsindoublecats}.
\end{defi}

In the last item of the preceding definition, we use the notion of a \emph{globular} 2-morphism in a double category~$\dc{C}$; this is a
2-morphism $\phi$
for which $\mathbf{s}(\phi)$ and $\mathbf{t}(\phi)$ are identities:
\[
\begin{tikzcd}X\ar[d,equal]\ar[r,tick,"M"]\ar[dr,phantom,"\Two\phi"] &
X \ar[d,equal] \\  X'\ar[r,tick,"M'"'] & Y\mathrlap{ .} \end{tikzcd}
\]

\begin{rmk}
  \label{rk:directions}
  In our definition of double category, vertical $1$-cells compose categorically, i.e., strictly associatively, while horizontal $1$-cells compose bicategorically, i.e., with associativity only up to coherent isomorphism. Compared to the original definition in~\cite{Limitsindoublecats, Adjointfordoublecats}, we have chosen to interchange the vertical and horizontal directions so as to match up with later work such as~\cite{Garner2006Double, ConstrSymMonBicatsFun}. The reader should bear this reversal of sense in mind when comparing the definitions and constructions that follow with those of~\cite{Limitsindoublecats, Adjointfordoublecats}.
\end{rmk}
Given a double category $\dc{C}$, we write $\ca{H}(\dc{C})$ for the its \emph{horizontal
bicategory}, comprising the 0-cells, horizontal 1-cells and globular 2-morphisms; and we write $\nc{V}(\dc{C})$  for
its \emph{vertical 2-category}, whose objects and morphisms are the  0-cells and vertical 1-cells of $\dc{C}$, and where a $2$-cell $f \Rightarrow g$ is a 2-morphism in $\dc{C}$ of
the form
\[
\begin{tikzcd}X\ar[d,"g"']\ar[dr,phantom,"\Two\phi"]\ar[r,tick,"\hid_X"] &
X\ar[d,"f"] \\
X'\ar[r,tick,"\hid_{X'}"'] & X'\mathrlap{ .}\end{tikzcd}
\]
Notice that vertical composition of 2-cells in $\nc{V}(\dc{C})$ is given by pasting in the horizontal direction in $\dc{C}$, and vice versa.

\begin{ex}[Bicategories and monoidal categories as double
  categories] \label{ex:mon} Any bicategory can be seen as a double
  category with only identity vertical arrows. In particular, any
  monoidal category $(\ca{V}, \circ, J)$ can be regarded as a double
  category with a single object and only the identity vertical arrow.
\end{ex}

\begin{ex}[The double category of matrices] \label{ex:mat}
Fix a monoidal category~$(\ca{V}, \circ, J)$ with small coproducts, such that the tensor product
preserves coproducts in each variable; in particular, this holds whenever the monoidal structure is closed. Given sets
$X$ and $Y$, an $(X,Y)$-\emph{matrix}  $M\colon X\horightarrow Y$ is a family of objects $\bigl(M(y,x) \in \ca{V} : x \in X, y\in Y\bigr)$.
The double category of \emph{$\ca{V}$-matrices} $\VMat$ has objects and vertical $1$-cells being sets and functions, respectively;
horizontal 1-cells $M \colon X\horightarrow Y$
being $(X,Y)$-matrices; and 2-morphisms $\phi \colon M \Rightarrow N$ as in~\eqref{eq:2map} being families of $\ca{V}$-arrows $\bigl(\phi_{yx}\colon
M(y,x) \to N(gy,fx) : x \in X, y \in Y\bigr)$. Horizontal composition $N \circ M\colon X\horightarrow Y\horightarrow Z$ is given
by
\begin{displaymath}
 (N\circ M)(z,x) =\sum_{y\in Y} N(z,y) \circ M(y,x)\mathrlap{ ,}
\end{displaymath}
while the horizontal identities $\hid_X\colon X\horightarrow X$ are defined by $\hid_X(x,x')=J$ if
$x=x'$ and $\hid_X(x,x') = 0$ if~$x\neq x'$. The horizontal bicategory of this double category is the usual bicategory of enriched matrices; see~\cite{Varthrenr}.
\end{ex}

The choice of a different notation for \cref{ex:prof} below is needed in view of \cref{ex:mat-mon}.

\begin{ex}[The double category of profunctors]  \label{ex:prof} Fix a braided monoidal cocomplete category~$(\ca{V}, \otimes, I)$ in which
the tensor product preserves colimits in each variable. Below, we use freely the standard notions of~$\ca{V}$-category, $\ca{V}$-functor and
$\ca{V}$-natural transformation, for which we invite readers to refer to~\cite{Kelly}.
Recall that, for small $\ca{V}$-categories~$X$ and~$Y$, a $(Y,X)$-\emph{profunctor}  $M\colon X\horightarrow Y$ is a $\ca{V}$-functor $M\colon  Y^\op\otimes X \to \ca{V}$. We may now define the double category $\VProf$ of \emph{$\ca{V}$-profunctors} as follows. The objects and vertical $1$-cells of $\VProf$ are
small $\ca{V}$-categories and $\ca{V}$-functors; the horizontal
$1$-cells are $\ca{V}$-profunctors, and squares as in~\eqref{eq:2map} (writing now~$F$ and~$G$ instead of $f$ and $g$)
are $\ca{V}$-natural transformations~$\phi \co M \Rightarrow M' \, (G \otimes F)$, where
\[
Y^\op\otimes
X \xrightarrow{G\otimes F} {Y'}^\op\otimes X' \xrightarrow{M'}\ca{V} \mathrlap{.}
\]
Horizontal composition of $M\colon X \horightarrow Y$ and $N\colon Y \horightarrow Z$ is
given by the coend
\begin{equation}
\label{equ:comp-of-prof}
 (N\circ M)(z,x)=\int^{y\in Y} N(z,y) \otimes M(y,x) \mathrlap{.}
\end{equation}
The horizontal identity $\hid_X \co X \horightarrow X$ is defined by $\hid_X(x',x) = X[x',x]$\footnote{We use brackets for hom-objects
of categories, and parentheses for applications of (pro)functors and other maps.}. Note that $\VMat$ can be regarded as
a sub-double category of $\VProf$ by considering sets as discrete $\ca{V}$-categories.
The horizontal bicategory of $\VProf$ is the familiar bicategory of profunctors, while the vertical 2-category is the 2-category
of small $\ca{V}$-categories.
\end{ex}

These examples illustrate the fact, pointed out in Remark~\ref{rk:directions}, that in our double categorical structures, the vertical structure is strict, and relatively straightforward, while the horizontal structure is weaker and requires careful consideration of coherence issues. The management of these coherence issues is simplified when the horizontal and vertical structures can be universally related in the following way.

\begin{defi} Let $\dc{C}$ be a double category. A \emph{companion} for a vertical
  $1$-cell $f \co X \to X'$ in $\dc{C}$ is given by a horizontal
  1-cell $\comp{f}\colon X\horightarrow X'$ along with 2-morphisms
\begin{equation}
\label{eq:comp2cells}
\begin{tikzcd}
 X\ar[r,tick,"\comp{f}"]\ar[d,"f"']\ar[dr,phantom,"\Two p_1"] & {X'}\ar[d,equal] \\
 {X'}\ar[r,tick,"\hid_{X'}"'] & {X'}
 \end{tikzcd}
 \qquad \text{and} \qquad
 \begin{tikzcd}
 X\ar[r,tick,"\hid_X"]\ar[d,equal]\ar[dr,phantom,"\Two p_2"] & X\ar[d,"f"] \\
 X\ar[r,tick,"\comp{f}"'] & {X'}
 \end{tikzcd}
\end{equation}
such that
\begin{equation}\label{eq:piaxioms}
\begin{tikzcd}
X\ar[r,tick,"\hid_X"]\ar[d,equal]\ar[dr,phantom,"\Two p_2"] & X\ar[d,"f"] \\
X\ar[d,"f"']\ar[r,tick,"\comp{f}"]\ar[dr,phantom,"\Two p_1"] & X'\ar[d,equal] \\
X'\ar[r,tick,"\hid_{X'}"'] & X'
\end{tikzcd}=\hid_f\qquad\text{and} \qquad
\begin{tikzcd}
X\ar[r,tick,"\hid_X"]\ar[d,equal]\ar[dr,phantom,"\Two p_2"] & X\ar[d,"f"]\ar[r,tick,"\comp{f}"]\ar[dr,phantom,"\Two p_1"] & X'\ar[d,equal] \\
X\ar[r,tick,"\comp{f}"']\ar[d,equal]\ar[drr,phantom,"\Two \ell"] & X'\ar[r,tick,"\hid_{X'}"'] & X' \ar[d,equal]\\
X\ar[rr,tick,"\comp{f}"'] & & X'
\end{tikzcd} =
\begin{tikzcd}
X\ar[r,tick,"\hid_X"]\ar[d,equal]\ar[drr,phantom,"\Two r"] & X\ar[r,tick,"\comp{f}"] & X'\ar[d,equal] \\
X\ar[rr,tick,"\comp{f}"'] & & X'\mathrlap{ .}
\end{tikzcd}
\end{equation}
\end{defi}

Although companions are defined algebraically, they also
have a universal characterisation.
\begin{lem}
  \label{lem:universalcompcoj}
  Let $f \colon X \rightarrow X'$ be vertical $1$-cell of a double category $\dc{C}$.
  Giving a companion $(\comp{f}, p_1, p_2)$ for $f$ is equivalent to giving either of the following:
  \begin{itemize}
  \item A horizontal $1$-cell $\comp{f}$ and $2$-morphism $p_1$ as in~\eqref{eq:comp2cells} such that, for every horizontal $1$-cell ${M \colon X'
\horightarrow Y'}$, the composite $2$-morphism to the left below is cartesian with respect to  $(\mathbf{s}, \mathbf{t}) \colon \mathbb{C}_1
\rightarrow \mathbb{C}_0 \times \mathbb{C}_0$; or
  \item A horizontal $1$-cell $\comp{f}$ and $2$-morphism $p_2$ as in~\eqref{eq:comp2cells} such that, for every horizontal $1$-cell ${M \colon W
\horightarrow X}$, the composite $2$-morphism to the right below is opcartesian with respect to $(\mathbf{s}, \mathbf{t}) \colon \mathbb{C}_1
\rightarrow \mathbb{C}_0 \times \mathbb{C}_0$:
  \end{itemize}
  \begin{equation*}
    \begin{tikzcd}
      X\ar[r,tick,"\comp{f}"]\ar[d,"f"']\ar[dr,phantom,"\Two p_1"] & X'\ar[r,tick,"M"]\ar[d,equal] & Y'\ar[d,equal] \\
      X'\ar[r,tick,"\hid_{X'}"]\ar[d,equal]\ar[drr,phantom,"\Two r"] & X' \ar[r,tick,"M"] & Y' \ar[d,equal]\\
      X'\ar[rr,tick,"M"'] & & Y'
    \end{tikzcd} \qquad\qquad
    \begin{tikzcd}
      W\ar[rr,tick,"M"]\ar[d,equal]\ar[drr,phantom,"\Two \ell^{\mi1}"] & & X\ar[d,equal]\\
      W\ar[r,tick,"M"]\ar[d,equal] & X\ar[r,tick,"\hid_X"]\ar[d,equal]\ar[dr,phantom,"\Two p_2"] & X\ar[d,"f"] \\
      W\ar[r,tick,"M"'] & X \ar[r,tick,"\comp{f}"'] & X'\mathrlap{ .}
    \end{tikzcd}
  \end{equation*}
\end{lem}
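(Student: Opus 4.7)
The plan is to prove the two characterisations separately; since the cartesian and opcartesian arguments are formally dual (swap sources and targets, and replace $p_1$ by $p_2$), I focus on the cartesian one. The strategy splits into: (a) showing that a companion structure induces the stated cartesian universal property; (b) conversely, showing that $(\comp{f}, p_1)$ satisfying the cartesian property admits a unique $p_2$ completing it to a companion.

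For (a), suppose $(\comp{f}, p_1, p_2)$ is a companion and fix $M \co X' \horightarrow Y'$; write $\sigma_M \co M \circ \comp{f} \Rightarrow M$ for the displayed composite. Given $\alpha \co N \Rightarrow M$ whose source $1$-cell factors as $f \circ a'$ and whose target is some $b$, I construct a lift $\alpha' \co N \Rightarrow M \circ \comp{f}$ by pasting $\alpha$ horizontally with $p_2$ (along the vertical $1$-cell $a'$), prefaced by $r^{-1}_N$ to introduce a horizontal identity and tidied up with an associator. The first equation of~\eqref{eq:piaxioms} then gives $\sigma_M \cdot \alpha' = \alpha$, and uniqueness of $\alpha'$ follows from the second (``snake'') equation: any competing lift may be reconstructed from $\alpha$ by the same pasting recipe.

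For (b), suppose $\comp{f}$ and $p_1$ induce cartesian morphisms. Take $M = \hid_{X'}$, so that $\sigma_{\hid_{X'}} \co \hid_{X'} \circ \comp{f} \Rightarrow \hid_{X'}$ is cartesian. The $2$-morphism $\hid_f \co \hid_X \Rightarrow \hid_{X'}$ has $(\mathbf{s}, \mathbf{t}) = (f, f) = (f, \id_{X'}) \circ (\id_X, f)$, hence admits a unique lift $\tilde p_2 \co \hid_X \Rightarrow \hid_{X'} \circ \comp{f}$ with $(\mathbf{s}, \mathbf{t}) = (\id_X, f)$; I then set $p_2 \defeq \ell_{\comp{f}} \cdot \tilde p_2$. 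Verifying the companion axioms~\eqref{eq:piaxioms} proceeds as follows. The first axiom, $p_1 \cdot p_2 = \hid_f$, follows by substituting the definition of $p_2$ and observing that $p_1 \cdot \ell_{\comp{f}} = \sigma_{\hid_{X'}}$ (a coherence identity in the horizontal bicategory). For the second, snake-shaped equation, both sides are globular $2$-morphisms $\comp{f} \circ \hid_X \Rightarrow \comp{f}$; postcomposing each with $\ell_{\comp{f}}^{-1}$ produces two lifts of a single $2$-morphism $\comp{f} \circ \hid_X \Rightarrow \hid_{X'}$ through the cartesian cell $\sigma_{\hid_{X'}}$, and the uniqueness clause of the cartesian property forces them to coincide.

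The opcartesian characterisation then follows by the evident horizontal-reversal of the argument, with $p_2$, $\hid_X$, and left unitors swapped for $p_1$, $\hid_{X'}$, and right unitors. The main obstacle is bookkeeping of the coherence cells $a, \ell, r$ when pasting squares in the double category; however, the substance of the argument is light, since once $p_2$ is produced by the universal property, both companion axioms follow from \emph{uniqueness} rather than from exhibiting an explicit equality of large pastings—this is the characteristic advantage of replacing algebraic data by a universal property.
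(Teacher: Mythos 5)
The paper states this lemma without giving a proof (it is essentially folklore and appears in the framed-bicategory literature). Your argument is the standard one and is correct: given a companion, the lift of $\alpha$ is $(\alpha \circ (p_2 \cdot \hid_a)) \cdot r_N^{-1}$, which by the first companion axiom and naturality of $r$ satisfies $\sigma_M \cdot \alpha' = \alpha$; and uniqueness of the lift follows by showing that this recipe applied to $\sigma_M \cdot \gamma$ returns $\gamma$, an interchange/coherence calculation driven by the second (``snake'') axiom. Conversely, lifting $\hid_f$ through the cartesian cell $\sigma_{\hid_{X'}}$ produces $p_2$, with the first axiom immediate and the second forced by the uniqueness clause.

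Three small remarks. In part~(a) no associator is in fact needed: $\alpha \circ (p_2 \cdot \hid_a)$ already has domain $N \circ \hid_W$ and codomain $M \circ \comp f$, so the only coherence cell required is $r_N^{-1}$. In part~(b), the key identity $\sigma_{\hid_{X'}} = p_1 \cdot \ell_{\comp f}$ is not merely a coherence rewriting: it uses naturality of $\ell$ applied to $p_1$, together with the coherence fact $\ell_{\hid_{X'}} = r_{\hid_{X'}}$; since part~(b) leans entirely on it, it deserves a sentence. Finally, the statement asserts an \emph{equivalence} of data, so one should note that the two constructions are mutually inverse; this is one more application of the uniqueness clause in cartesianness (which forces the $p_2$ recovered from the cartesian structure attached to a genuine companion to be the original $p_2$), but it is worth making explicit.
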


By virtue of this result, companions of a vertical 1-cell are
unique up to a unique globular $2$-isomorphism, so that by the usual abuse of notation we may refer simply to
\emph{the} companion. In what follows we will often
require the existence of certain companions, but in many examples
of interest we have all companions and also all \emph{conjoints}---the
dual notion to companion, which associates to a vertical $1$-cell $f
\colon X \rightarrow X'$ a horizontal $1$-cell $\coj{f} \colon X'
\horightarrow X$ along with unit and counit $2$-morphisms. In this case, we
may speak of a
\emph{framed bicategory} in the sense of~\cite{Framedbicats} or \emph{fibrant double category} in other references. By the
above lemma and an appropriate dual lemma for conjoints, a double category is a framed bicategory if and only if~$(\mathbf{s},\mathbf{t})\colon\dc{C}_1\to\dc{C}_0\times\dc{C}_0$ is a Grothendieck
fibration, or equivalently, a Grothendieck opfibration.

\begin{ex}\label{ex:Proffibrant}
The double category $\VProf$ of $\ca{V}$-profunctors is a fibrant double category. The
companion and conjoint of a $\ca{V}$-functor
$F\colon X \to X'$ are the $\ca{V}$-profunctors $\comp{F}\colon X\horightarrow X'$
and
$\coj{F}\colon X' \horightarrow X$ given by
\begin{equation}
\label{equ:comp-and-coj-for-prof}
 \comp{F}(x',x)=X'[x',Fx]\quad\textrm{and}\quad\coj{F}(x,x')=X'[Fx,x'] \mathrlap{.}
 \end{equation}
 It follows \emph{a fortiori} that the double category $\VMat$ of
 matrices is also fibrant, where for a function~$f \colon X\to X'$ its companion and conjoint $\comp{f}\colon X\horightarrow X'$ and $\coj{f}\colon X'\horightarrow X$ are the $\ca{V}$-matrices
given by:
\begin{displaymath}
	\comp{f}(x',x)=\coj{f}(x,x')=\begin{cases}
		I & \mathrm{if  }\;f(x)=x' \mathrlap{,} \\
		0 & \mathrm{ otherwise.}
	\end{cases}
\end{displaymath}
Note that, in these examples, we have that $\comp F \dashv \coj F$ and $\comp f \dashv \coj f$ in the horizontal bicategory. In fact, it is \emph{always} true that the companion and conjoint of a vertical $1$-cell are adjoint in this way.
\end{ex}

The universality of companions in Lemma~\ref{lem:universalcompcoj} immediately implies the following omnibus proposition.

\begin{prop}
  \label{prop:companionomnibus}
 Let $\dc{C}$ be a double category.
 \begin{enumerate}[(i)]
  \item \label{omni-i} The vertical identity $1$-cell $\vid_X \colon X \rightarrow X$ has the horizontal identity $\hid_X \colon X \horightarrow X$ as a companion.
  \item  \label{omni-ii}  If the vertical $1$-cells $f \colon X \rightarrow X'$ and $g \colon X' \rightarrow X''$ have the companions $\comp f$ and $\comp g$, then $g \circ f$ has the companion $\comp g \circ \comp f$.
  \item  \label{omni-iii}  If the vertical $1$-cells $f \colon X \rightarrow X'$ and $g \colon Y \rightarrow Y'$ admit companions, then pasting with the companion $2$-morphisms for $f$ and $g$ gives a bijection between $2$-morphisms $\phi$ as in~\cref{eq:2map} and globular $2$-morphisms
\begin{equation}\label{eq:transpose}
\begin{tikzcd}
  X\ar[r,tick,"M"]\ar[d,equal]\ar[drr,phantom,"\Two\comp{\phi}"] &
Y\ar[r,tick,"\comp{g}"] & {Y'}\ar[d,equal] \\
  X\ar[r,tick,"\comp{f}"'] & {X'}\ar[r,tick,"{M'}"'] & {Y'}\mathrlap{ .}
 \end{tikzcd}
\end{equation}
  If $f$ and $g$ are invertible in $\dc{C}_0$, then under this correspondence $\phi$ is invertible in $\dc{C}_1$ if and only if $\comp \phi$ is.
\item \label{omni-iv}  If $\mathsf{V}(\dc{C})'$ denotes the locally full sub-$2$-category of $\mathsf{V}(\dc{C})$ with the same objects, and as
morphisms just those the vertical $1$-cells which admit companions, then taking companions underlies an identity-on-objects homomorphism of
bicategories $\mathsf{V}(\dc{C})' \rightarrow \ca{H}(\dc{C})$.
  \item \label{omni-v}  If $f \colon X \rightarrow X'$ is a vertical $1$-isomorphism in $\dc{C}$ and both $f$ and $f^{\mi1}$ admit companions, then $\smash{\comp f}$ is an
equivalence in $\ca{H}(\dc{C})$ with pseudoinverse $\smash{\comp{f^{\mi1}}}$;
\item \label{omni-vi}  If  $\phi \colon f \Rightarrow g$ is an invertible $2$-cell in $\mathsf{V}(\mathbb{C})$, and $f$ and $g$ admit companions, then $\comp \phi$ is an invertible globular $2$-morphism $\comp f \Rightarrow \comp g$ in $\dc{C}$.
  \end{enumerate}
\end{prop}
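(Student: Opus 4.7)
The plan is to deduce all six parts from the universal characterisation of companions given in \cref{lem:universalcompcoj}, treating this as a calculus for transposing $2$-morphisms across companions. Parts~(i) and~(ii) are handled first by exhibiting the relevant companions explicitly: for~(i), one takes $\hid_X$ as a companion for $\vid_X$, with $p_1$ and $p_2$ chosen among suitable unitors, so that the companion axioms~\eqref{eq:piaxioms} reduce to the standard identity $\ell_{\hid_X} = r_{\hid_X}$ in the horizontal bicategory; for~(ii), one takes $\comp g \circ \comp f$ as a companion for $g \circ f$, with $p_1$ and $p_2$ obtained by horizontal pasting of the companion cells of $f$ and $g$, absorbing an intermediate horizontal identity via a unitor, the axioms again following from bicategory coherence together with the companion axioms for $f$ and $g$.

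Part~(iii) is the heart of the proposition. Given $\phi$ with legs $(f, g)$, one applies \cref{lem:universalcompcoj} twice. First, the cartesian lift at $M'$ coming from the companion of $f$ produces a unique factorisation $\phi = (r \cdot (p_1^f \circ M')) \cdot \phi'$ with $\phi' \colon M \Rightarrow M' \circ \comp f$ of legs $(\vid_X, g)$; then the opcartesian lift at $M$ coming from the companion of $g$ produces a unique factorisation $\phi' = \comp \phi \cdot (\ell^{\mi 1} \cdot (M \circ p_2^g))$ with $\comp \phi$ globular. The composite assignment $\phi \leftrightarrow \comp \phi$ is a bijection by the two uniqueness clauses. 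For the invertibility statement, when $f$ and $g$ are invertible in $\dc{C}_0$, the (op)cartesian lifts of \cref{lem:universalcompcoj} are themselves invertible $2$-morphisms in $\dc{C}_1$, being (op)cartesian lifts of isomorphisms under the fibration $(\mathbf{s}, \mathbf{t}) \colon \dc{C}_1 \to \dc{C}_0 \times \dc{C}_0$; the bijection thus becomes composition with invertibles in $\dc{C}_1$, which preserves invertibility in either direction.

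The remaining parts assemble from (i)--(iii). For~(vi), the bijection of~(iii) applied with $M = \hid_X$ and $M' = \hid_{X'}$ produces a globular $\comp \phi \colon \comp f \Rightarrow \comp g$, whose invertibility follows because $\phi \mapsto \comp \phi$ is functorial in the vertical composition of $\mathsf{V}(\dc{C})$---again a consequence of the uniqueness of (op)cartesian factorisations---so $\comp{\phi^{\mi 1}}$ furnishes the required inverse. For~(v), part~(ii) exhibits $\comp{f^{\mi 1}} \circ \comp f$ as a companion for $\vid_X$, which by part~(i) and uniqueness of companions up to unique globular isomorphism is canonically isomorphic to $\hid_X$; the symmetric argument on the other side produces the pseudoinverse equivalence in $\ca{H}(\dc{C})$. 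Finally,~(iv) assembles~(i), (ii) and~(vi) into a homomorphism of bicategories: the coherence axioms for the unit and associativity constraints reduce to equalities between globular $2$-morphisms that are characterised by the same universal factorisation, and so coincide. The main technical obstacle is the bookkeeping involved in identifying the various $p_1, p_2$ cells when they are pasted together with unitors; invoking the universal property of \cref{lem:universalcompcoj} at each step converts these verifications into uniqueness arguments rather than explicit pasting diagram chases.
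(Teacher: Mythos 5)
Your proposal is correct and takes exactly the approach the paper intends: the paper offers no detailed argument, simply stating that the result ``immediately'' follows from the universal characterisation in \cref{lem:universalcompcoj}, and you spell out how each part is derived from that lemma by iterated (op)cartesian factorisation. Two small slips worth noting, neither of which undermines the argument: in part (iv) the action of the homomorphism on 2-cells should be supplied by the bijection of part~(iii) (applied with $M = \hid_X$, $M' = \hid_{X'}$) rather than by part~(vi), since the homomorphism must act on \emph{all} 2-cells of $\mathsf{V}(\dc{C})'$ and not merely the invertible ones; and in part~(iii) you refer to $(\mathbf{s},\mathbf{t})$ as a fibration, whereas $\dc{C}$ is not assumed fibrant---but the standard argument that a (op)cartesian arrow over an isomorphism is itself an isomorphism requires only that the specific 2-morphisms in question be (op)cartesian, which is precisely what \cref{lem:universalcompcoj} provides, so the reasoning goes through unchanged.
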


\section{Maps of double categories}
\label{sec:dbl-fun}

In this section, we recall the various kinds of maps existing between double categories, starting with the notions of double functor and oplax double functor. In most of the paper we will work with double functors, which
preserve composition and identities up to isomorphism. However, we will also need oplax double functors, which preserve composition and identities only up to a non-invertible 2-cell, in one important situation, namely when we define
 oplax monoidal structure in~\cref{def:mondoublecat}. 

\begin{defi}[Oplax double functor, double functor]\label{def:doublefunctor}
Let $\dc{C}$ and $\dc{D}$ be double categories.  An \emph{oplax double functor} $F\colon\dc{C}\to\dc{D}$ 
consists of the following data:
\begin{itemize}
\item two ordinary functors $F_0\colon\dc{C}_0\to\dc{D}_0$,
$F_1\colon\dc{C}_1\to\dc{D}_1$, denoted by the same letter $F$ below, such that $\mathbf{s}F_1 = F_0 \mathbf{s}$ and $\mathbf{t}F_1 = F_0 \mathbf{t}$, as displayed in:
\begin{displaymath}
 \begin{tikzcd}
 X\ar[r,tick,"M"]\ar[d,"f"']\ar[dr,phantom,"\scriptstyle\Downarrow\phi"] &
Y\ar[d,"g"] \\
 X'\ar[r,tick,"M'"']\ & Y'
 \end{tikzcd}\quad\mapsto\quad
 \begin{tikzcd}
 FX\ar[r,tick,"FM"]\ar[d,"Ff"']\ar[dr,phantom,"\scriptstyle\Downarrow F\phi"]
& FY\ar[d,"Fg"] \\
 FX'\ar[r,tick,"FM'"']\ & FY'\mathrlap{ ;}
 \end{tikzcd}
\end{displaymath}
\item two natural transformations with components
\begin{equation}\label{eq:oplaxfunctorcoherence}
 \begin{tikzcd}
FX\ar[rr,tick,"F(N\circ
M)"]\ar[d,equal]\ar[drr,phantom,"\scriptstyle\Downarrow\xi_{M,N}"] &&
FZ\ar[d,equal] \\
FX\ar[r,tick,"FM"'] & FY\ar[r,tick,"FN"'] & FZ
 \end{tikzcd}\qquad\qquad
 \begin{tikzcd}
FX\ar[rr,tick,"F(\hid_X)"]\ar[drr,phantom,"\scriptstyle\Downarrow\xi_X"]\ar[d,
equal] && FX\ar[d,equal] \\
FX\ar[rr,tick,"\hid_{FX}"'] && FX\mathrlap{ .}
 \end{tikzcd}
\end{equation}
\end{itemize}
These data are required to satisfy coherence conditions similar to those for an oplax functor between bicategories,
one regarding associativity and two for unitality; see~\cite[\S 7.2]{Limitsindoublecats}.

A {\em (pseudo) double functor}  $F\colon\dc{C}\to\dc{D}$ is an oplax double
functor for which the 2-cells $\xi_{M,N}$ and $\xi_X$ are invertible.
\end{defi}

\begin{lem} \label{lem:dbl-to-bicat-functor}
Let $F \co \dc{C} \to \dc{D}$ be an oplax (pseudo) double functor. Then $F$ induces an oplax (pseudo) functor of bicategories $\ca{H}(F) \co \ca{H}(\dc{C})
\to \ca{H}(\dc{D})$.
This assignment extends to an ordinary functor from the category of double categories and oplax (pseudo) double functors to the
category of bicategories and oplax (pseudo) functors.
\end{lem}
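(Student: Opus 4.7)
The plan is to extract the horizontal-bicategorical data of $\ca{H}(F)$ directly from the defining data of the oplax double functor $F$, and then to observe that the coherence axioms for an oplax functor of bicategories are simply the specialisation of the oplax double functor axioms to the case in which all vertical $1$-cells are identities.

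First I would define $\ca{H}(F)$ cell-by-cell: take $F_0$ on $0$-cells, $F_1$ on horizontal $1$-cells, and $F_1$ on globular $2$-morphisms. The equations $\mathbf{s} F_1 = F_0 \mathbf{s}$ and $\mathbf{t} F_1 = F_0 \mathbf{t}$ ensure that the $F_1$-image of a globular $2$-morphism is again globular, since its source and target become $F_0$ applied to identities, hence identities in $\dc{D}_0$. Functoriality of $\ca{H}(F)$ on each hom-category then follows from functoriality of $F_1$, together with the elementary check that the unitor $\ell$ and the associator $a$ from~\eqref{eq:aellr} are sent to their analogues in $\ca{H}(\dc{D})$ — this is the strict part of $F_1$ being a functor between the categories $\dc{C}_1$ and $\dc{D}_1$, combined with the constraint data. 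For the oplax coherence $2$-cells of $\ca{H}(F)$ I would use the $2$-morphisms $\xi_{M,N}$ and $\xi_X$ from~\eqref{eq:oplaxfunctorcoherence}; these are visibly globular as displayed, and their naturality (as components of natural transformations $\dc{C}_1 \times_{\dc{C}_0} \dc{C}_1 \to \dc{D}_1$ and $\dc{C}_0 \to \dc{D}_1$) specialises, in particular, to naturality with respect to globular $2$-morphisms in $\dc{C}$, which is exactly what is required for an oplax functor of bicategories.

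The substantive step is the verification of the three oplax-functor coherence axioms — one for associativity, two for unitality. The plan here is to observe that each is obtained by setting all vertical $1$-cells in the relevant oplax double functor axiom from~\cite[\S 7.2]{Limitsindoublecats} equal to $\vid$; under this specialisation every $2$-morphism appearing in the pasting diagrams becomes globular, the horizontal composites reduce to composites in $\ca{H}(\dc{D})$, and the double-categorical associators and unitors coincide with the bicategorical ones of $\ca{H}(\dc{C})$ and $\ca{H}(\dc{D})$. In the pseudo case, invertibility of $\xi_{M,N}$ and $\xi_X$ is patently preserved, yielding a pseudo functor. I expect the main obstacle to be purely clerical: keeping the pasting conventions of~\cite{Limitsindoublecats} aligned with those for bicategories, since the double categorical axioms are framed in a slightly richer language than strictly necessary for the globular specialisation.

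Finally, for the functoriality of the assignment $F \mapsto \ca{H}(F)$, I would verify $\ca{H}(\id_\dc{C}) = \id_{\ca{H}(\dc{C})}$ and $\ca{H}(G \circ F) = \ca{H}(G) \circ \ca{H}(F)$ directly. The identity oplax double functor has identity component functors and identity coherences, which is precisely the data of $\id_{\ca{H}(\dc{C})}$; for a composite $G \circ F$, the standard recipe for its coherences — namely $G(\xi^F_{M,N}) \cdot \xi^G_{FM,FN}$, and similarly for units — matches on the nose the recipe for composing oplax functors of bicategories. Hence the assignment is strictly functorial, as claimed.
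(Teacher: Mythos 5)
Your proof is correct and takes essentially the same approach as the paper, which simply states that the lemma ``follows from the definition'' and cites a reference for the pseudo case (noting the oplax case is analogous) before observing that double functors and bicategory functors both compose strictly. What you have written is exactly the verification the paper delegates: extract the globular data, note the $\xi$-cells are globular and specialise the oplax double functor axioms to identity vertical $1$-cells to obtain the bicategorical coherence axioms, and check that the strict composition recipes agree; the only minor wrinkle is that the remark about $\ell$ and $a$ being preserved belongs with the coherence-axiom discussion rather than with hom-functoriality, but this is a matter of exposition, not substance.
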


\begin{proof}
This follows from the definition, and is an oplax analogue of \cite[Theorem~4.1]{ConstrSymMonBicatsFun}.
Recall that functors of double categories and bicategories compose strictly associatively.
\end{proof}

\begin{lem}\label{lem:Ff}
  Let $F \colon \dc{C} \rightarrow \dc{D}$ be a double functor and $f \colon X \rightarrow X'$ a
  vertical $1$-cell  of $\dc{C}$. Assume that $f$ admits a companion $\comp f$ with structure cells $p_1, p_2$. Then the vertical $1$-cell $Ff \colon
FX \rightarrow FX'$ of $\dc{D}$ admits the companion $F \comp{f}$ via the structure cells
  \begin{equation*}
 \begin{tikzcd}
FX\ar[r,tick,"F\comp{f}"]\ar[d,"Ff"']\ar[dr,phantom,"\Two Fp_1"] &
F{X'}\ar[d,equal]
\\
FX'\ar[r,tick,"F(\hid_{X'})"]\ar[dr,phantom,"\scriptstyle\Downarrow\xi_{X'}"]\ar[d,
equal] & FX'\ar[d,equal] \\
FX'\ar[r,tick,"\hid_{FX'}"'] & FX'\mathrlap{ .}
\end{tikzcd} \qquad \text{and} \qquad
 \begin{tikzcd}
FX\ar[r,tick,"\hid_{FX}"]\ar[dr,phantom,"\scriptstyle\Downarrow\xi_{X}^{\mi 1}"]\ar[d,
equal] & FX\ar[d,equal] \\
FX\ar[d,equal]\ar[r,tick,"F(\hid_{X})"]\ar[dr,phantom,"\Two Fp_2"] & FX\ar[d,"Ff"]\\
FX'\ar[r,tick,"\hid_{FX'}"'] & FX'\mathrlap{ .}
\end{tikzcd}
\end{equation*}
\end{lem}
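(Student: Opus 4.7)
The strategy is to verify the two companion axioms of~\eqref{eq:piaxioms} directly for the candidate companion $F\comp{f}$ of $Ff$, equipped with the displayed 2-morphisms which I abbreviate as $\wt{p}_1 \defeq \xi_{X'} \bullet Fp_1$ and $\wt{p}_2 \defeq Fp_2 \bullet \xi_X^{-1}$, where $\bullet$ denotes composition in $\dc{D}_1$ (i.e.\ vertical pasting of 2-morphisms). The only ingredients needed are: functoriality of $F_1 \co \dc{C}_1 \to \dc{D}_1$; naturality of the coherence isomorphisms $\xi_{(-)}$ and $\xi_{(-),(-)}$; the unit coherence axioms of a pseudo double functor; and the companion axioms~\eqref{eq:piaxioms} applied to $f$.

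The first axiom is a short computation. Expanding $\wt{p}_1 \bullet \wt{p}_2 = \xi_{X'} \bullet (Fp_1 \bullet Fp_2) \bullet \xi_X^{-1}$, functoriality of $F_1$ and the first companion axiom for $f$ identify the inner composite $Fp_1 \bullet Fp_2$ with $F(\hid_f)$. Naturality of the transformation $\xi_{(-)}$ at the arrow $f \co X \to X'$ in $\dc{C}_0$ then gives $\xi_{X'} \bullet F\hid_f = \hid_{Ff} \bullet \xi_X$, and substituting back collapses the expression to $\hid_{Ff}$, as required.

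The second axiom asserts $\ell_{F\comp{f}} \bullet (\wt{p}_1 \circ \wt{p}_2) = r_{F\comp{f}}$, where $\circ$ denotes horizontal composition of $2$-morphisms. The first step is to replace $\wt{p}_1 \circ \wt{p}_2$ by an expression involving $F(p_1 \circ p_2)$: using the interchange law to decompose $\wt{p}_1 \circ \wt{p}_2$, and then using naturality of the constraint $\xi_{(-),(-)}$ at the pair $(p_2,p_1)$, we rewrite this as the composite $\xi_{\comp{f},\hid_{X'}} \bullet F(p_1 \circ p_2) \bullet \xi_{\hid_X,\comp{f}}^{-1}$, suitably pre- and post-composed with $\xi_X^{-1}$ and $\xi_{X'}$. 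We then invoke the unit coherence axioms for the pseudo double functor $F$, which relate $\ell$ and $r$ in $\dc{D}$ to the $\xi$-constraints and to $F(\ell)$ and $F(r)$ in $\dc{D}_1$; these let us rewrite $\ell_{F\comp{f}}$ as a composite of $F(\ell_{\comp{f}})$ and appropriate $\xi$-cells, and similarly for $r_{F\comp{f}}$. The $\xi$-corrections cancel, and the second companion axiom for $f$, $\ell_{\comp{f}} \bullet (p_1 \circ p_2) = r_{\comp{f}}$, finishes the calculation.

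The main obstacle is the bookkeeping in the second axiom: a pasting involving several $\xi$-cells must be reorganised so that the coherence axioms of $F$ cancel them against the unitors, after which the corresponding companion axiom for $f$ can be invoked. Recording the computation as a single pasting diagram in $\dc{D}$ (rather than a long chain of equalities) makes the cancellations visible, and shows that the pseudo double functor coherence is exactly what is needed to transport the companion data for $f$ in $\dc{C}$ to companion data for $Ff$ in $\dc{D}$.
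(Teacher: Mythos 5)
Your proof is correct; the paper states \cref{lem:Ff} without proof, so there is no paper argument to compare against, and your direct verification of the two companion axioms~\eqref{eq:piaxioms} for $(F\comp{f}, \wt{p}_1, \wt{p}_2)$ is the natural and complete way to establish the claim. The first check is exactly as you say: functoriality of $F_1$, the first companion axiom for $f$, and naturality of $\xi_{(\mathord{-})}\colon F_1\circ\hid_{\dc{C}}\Rightarrow\hid_{\dc{D}}\circ F_0$ at $f$ reduce $\wt{p}_1\bullet\wt{p}_2$ to $\hid_{Ff}$. For the second axiom, the interchange step is best made precise by padding $\wt{p}_1\circ\wt{p}_2$ into a $3\times 2$ grid with identity $2$-morphisms on $F\comp{f}$ inserted alongside $\xi_X^{\mi1}$ and $\xi_{X'}$, giving $(\xi_{X'}\circ\vid_{F\comp{f}})\bullet(Fp_1\circ Fp_2)\bullet(\vid_{F\comp{f}}\circ\xi_X^{\mi1})$; naturality of $\xi_{(\mathord{-}),(\mathord{-})}$ at $(p_2,p_1)$ then replaces the middle factor as you indicate, and the two unit coherence axioms for the pseudo double functor $F$ identify the flanking whiskered $\xi$-cells precisely with the discrepancy between $\ell_{F\comp{f}},r_{F\comp{f}}$ and $F(\ell_{\comp{f}}),F(r_{\comp{f}})$, after which the second companion axiom for $f$ closes the computation. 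The one small imprecision in your write-up is the phrase ``pre- and post-composed with $\xi_X^{\mi1}$ and $\xi_{X'}$'': these must appear in their whiskered forms $\vid_{F\comp{f}}\circ\xi_X^{\mi1}$ and $\xi_{X'}\circ\vid_{F\comp{f}}$, which are exactly the cells that the unit coherence axioms for $F$ involve, so that the cancellation is then immediate.
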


In future, we will tend to suppress the unit coherence cells $\xi$ appearing above, and simply write $Fp_1$ and $Fp_2$ for these pasting composites.

Since a double category has two kinds of $1$-cell, vertical and horizontal, there are
two kinds of natural transformations between double functors,
vertical and horizontal, depending on the directions of their components.
For our applications, it is the \emph{horizontal} transformations, recalled in \cref{def:hor-transf}, which will be most important, since these induce structure on the horizontal bicategory. However, the \emph{vertical} transformations, recalled in \cref{def:vert-transf}, are simpler to construct and work with, and so fundamental to our development will be the possibility of turning a vertical natural transformation into a horizontal
one in the presence of well-behaved companions. The precise conditions needed are isolated in the
the notion of a special vertical transformation (\cref{def:special-vertical-transformation}), and are justified in \cref{lem:2}, where we show that the special vertical transformations are exactly the vertical $1$-cells of the functor double category (\cref{thm:functor-dblcat}) which admit companions.

The following definition can be found, for example, in \cite[\S2.4]{Poly}; or in~\cite[\S7.4]{Limitsindoublecats} under the name ‘strong vertical transformation’ (recalling the reversal of sense of Remark~\ref{rk:directions}).

\begin{defi}[Horizontal transformation] \label{def:hor-transf}
\label{def:horizontaltransf}
Let $F,G\colon\dc{C}\to\dc{D}$ be  oplax double functors.
A \emph{horizontal transformation} $\beta \colon F \ticktwoar G$ consists of
\begin{itemize}
\item horizontal $1$-cell components $\beta_X\colon FX\horightarrow GX$ in $\dc{D}$ for each object
$X \in \dc{C}$;
\item $2$-morphism components
\begin{displaymath}
\twocell{FX}{\beta_X}{GX}{Gf}{GX'}{\beta_{X'}}{FX'}{Ff}{\beta_f}
\end{displaymath}
in $\dc{D}$ for each vertical $1$-cell $f\colon X\to X'$ in $\dc{C}$;
\item invertible globular coherence $2$-morphisms
\begin{displaymath}
 \begin{tikzcd}
FX\ar[r,tick,"FM"]\ar[drr,phantom,"\Two\beta_M"]\ar[d,equal] &
FY\ar[r,tick,"\beta_Y"] & GY\ar[d,equal] \\
FX\ar[r,tick,"\beta_X"'] & GX\ar[r,tick,"GM"'] & GY
 \end{tikzcd}
\end{displaymath}
in $\dc{D}$ for each horizontal $1$-cell $M\colon X\horightarrow Y$ in $\dc{C}$.
\end{itemize}
These data are required to satisfy, firstly, the two axioms
\begin{equation}\label{eq:betafunctor}
 \begin{tikzcd}
FX\ar[r,tick,"\beta_X"]\ar[d,"Ff"']\ar[dr,phantom,"\Two\beta_f"] &
GX\ar[d,"Gf"] \\
FX'\ar[r,tick,"\beta_{X'}"]\ar[d,"Fg"']\ar[dr,phantom,"\Two\beta_g"] &
GX'\ar[d,"Gg"] \\
FX''\ar[r,tick,"\beta_{X''}"'] & GX''
 \end{tikzcd}=
 \begin{tikzcd}
FX\ar[r,tick,"\beta_X"]\ar[dd,"{F(gf)}"']\ar[ddr,phantom,"\Two\beta_{gf}"] &
GX\ar[dd,"{G(gf)}"] \\
\hole \\
FX''\ar[r,tick,"\beta_{X''}"'] & GX''
 \end{tikzcd} \ \ \text{and} \ \
 \begin{tikzcd}
FX\ar[r,tick,"\beta_X"]\ar[d,equal,"F(\vid_X)"']\ar[dr,phantom,"\Two\beta_{\vid_X}"] &
GX\ar[d,equal,"G(\vid_X)"]  \\
FX\ar[r,tick,"\beta_{X}"'] & FX
 \end{tikzcd}=
 \begin{tikzcd}
FX\ar[r,tick,"\beta_X"]\ar[dr,phantom,"\Two\vid_{\beta_X}"]\ar[d,equal] &
GX\ar[d,equal] \\
FX\ar[r,tick,"\beta_X"'] & GX
 \end{tikzcd}
\end{equation}
expressing that $\beta_{(\thg)}\colon\dc{C}_0\to\dc{D}_1$ is a
functor; then the axiom
\begin{equation}\label{eq:betanaturality}
\begin{tikzcd}
FX\ar[r,tick,"FM"]\ar[d,equal]\ar[drr,phantom,"\Two\beta_M"] &
FY\ar[r,tick,"\beta_Y"] & GY\ar[d,equal] \\
FX\ar[r,tick,"\beta_X"']\ar[d,"Ff"']\ar[dr,phantom,"\Two\beta_f"] &
GX\ar[r,tick,"GM"']\ar[dr,phantom,"\Two
G\phi"]\ar[d,"Gf"] & GY\ar[d,"Gg"] \\
FX'\ar[r,tick,"\beta_{X'}"'] & GX'\ar[r,tick,"GM'"'] & GY'
\end{tikzcd}=
\begin{tikzcd}
 FX\ar[d,"Ff"']\ar[r,tick,"FM"]\ar[dr,phantom,"\Two F\phi"] &
FY\ar[d,"Fg"]\ar[dr,phantom,"\Two\beta_g"]\ar[r,tick,"\beta_Y"] & GY\ar[d,"Gg"]
\\
FX'\ar[r,tick,"FM'"']\ar[d,equal]\ar[drr,phantom,"\Two\beta_{M'}"] &
FY'\ar[r,tick,"\beta_{Y'}"'] & GY'\ar[d,equal] \\
FX'\ar[r,tick,"\beta_{X'}"'] & GX'\ar[r,tick,"GM'"'] & GY'
\end{tikzcd}
\end{equation}
expressing that the $2$-morphisms $\beta_M$ are components of a
natural transformation; and finally, the axioms
\begin{displaymath}
\begin{tikzcd}
FX\ar[rr,tick,"F(N\circ M)"]\ar[d,equal]\ar[drr,phantom,"\Two\xi_{M,N}"] &&
FZ\ar[d,equal]\ar[r,tick,"\beta_Z"] & GZ\ar[d,equal] \\
FX\ar[r,tick,"FM"']\ar[d,equal] &
FY\ar[r,tick,"FN"']\ar[drr,phantom,"\Two\beta_N"]\ar[d,equal] &
FZ\ar[r,tick,"\beta_Z"'] & GZ\ar[d,equal] \\
FX\ar[r,tick,"FM"']\ar[drr,phantom,"\Two\beta_M"]\ar[d,equal] &
FY\ar[r,tick,"\beta_Y"'] & GY\ar[r,tick,"GM"']\ar[d,equal] & GZ\ar[d,equal] \\
FX\ar[r,tick,"\beta_X"'] & GX\ar[r,tick,"GM"'] & GY\ar[r,tick,"GN"'] & GZ
\end{tikzcd}=
\begin{tikzcd}
FX\ar[rr,tick,"F(N\circ M)"]\ar[d,equal]\ar[drrr,phantom,"\Two\beta_{N\circ
M}"] && FZ\ar[r,tick,"\beta_Z"] & GZ\ar[d,equal] \\
FX\ar[r,tick,"\beta_X"']\ar[d,equal] & GX\ar[rr,tick,"G(N\circ
M)"']\ar[d,equal]\ar[drr,phantom,"\Two\xi_{M,N}"] && GZ\ar[d,equal]\\
FX\ar[r,tick,"\beta_X"'] & GX\ar[r,tick,"GM"'] & GY\ar[r,tick,"GN"'] & GZ
\end{tikzcd}
\end{displaymath}
\begin{displaymath}
 \begin{tikzcd}
FX\ar[r,tick,"F(\hid_X)"]\ar[d,equal]\ar[drr,phantom,"\Two\beta_{\hid}"] &
FX\ar[r,tick,"\beta_X"] & GX\ar[d,equal] \\
FX\ar[r,tick,"\beta_X"']\ar[d,equal] &
GX\ar[r,tick,"G(\hid_X)"']\ar[d,equal]\ar[dr,phantom,"\Two\xi_X"] &
GX\ar[d,equal] \\
FX\ar[r,tick,"\beta_X"']\ar[d,equal] & GX\ar[r,tick,"\hid_{GX}"'] &
GX\ar[d,equal] \\
FX\ar[rr,tick,"\beta_X"'] && GX
 \end{tikzcd}=
 \begin{tikzcd}
FX\ar[r,tick,"F(\hid_X)"]\ar[d,equal]\ar[dr,phantom,"\Two\xi_X"] &
FX\ar[r,tick,"\beta_X"]\ar[d,equal] & GX\ar[d,equal] \\
FX\ar[r,tick,"\hid_{FX}"']\ar[d,equal] & FX\ar[r,tick,"\beta_X"'] &
GX\ar[d,equal] \\
FX\ar[rr,tick,"\beta_X"'] && GX
 \end{tikzcd}
\end{displaymath}
expressing compatibility of $\beta$ with the double structure of $F$ and
$G$.
\end{defi}

The following definition can be found {\em e.g.} in \cite[\S2.3]{Poly} or \cite[Definition~2.8]{ConstrSymMonBicatsFun} under the name `tight
transformation'.

\begin{defi}[Vertical transformation] \label{def:vert-transf}
Let $F,F'\colon\dc{C}\to\dc{D}$ be oplax
double functors. A \emph{vertical transformation} $\sigma\colon F\Rightarrow F'$
consists of the following data:
\begin{itemize}
\item vertical $1$-cell components $\sigma_X\colon FX\to F'X$ in $\dc{D}$ for each object $X \in \dc{C}$;
\item $2$-morphism components
\begin{equation}\label{eq:transfcomponent}
 \begin{tikzcd}
FX\ar[r,tick,"FM"]\ar[d,"\sigma_X"']\ar[dr,phantom,
"\scriptstyle\Downarrow\sigma_M"] & FY\ar[d,"\sigma_Y"] \\
  F'X\ar[r,tick,"F'M"'] & F'Y
 \end{tikzcd}
\end{equation}
in $\dc{D}$ for each horizontal $1$-cell $M \colon X \tickar Y$ in $\dc{C}$.
\end{itemize}
These data are required to satisfy, firstly, the axiom
\begin{equation}\label{eq:verticaltransfax0}
 \begin{tikzcd}
FX\ar[r,tick,"FM"]\ar[d,"\sigma_X"']\ar[dr,phantom,
"\Two\sigma_M"] & FY\ar[d,"\sigma_Y"] \\
F'X\ar[r,tick,"F'M"]\ar[d,"F'f"']\ar[dr,phantom,"\Two F'\phi"] &
F'Y\ar[d,"F'g"] \\
F'X'\ar[r,tick,"F'M'"'] & F'Y'
 \end{tikzcd}=
 \begin{tikzcd}
FX\ar[r,tick,"FM"]\ar[d,"Ff"']\ar[dr,phantom,"\Two F\phi"] & FY\ar[d,"Fg"] \\
FX'\ar[r,tick,"FM'"]\ar[d,"\sigma_{X'}"']\ar[dr,phantom,"\Two\sigma_{M'}"] &
FY'\ar[d,"\sigma_{Y'}"] \\
F'X'\ar[r,tick,"F'M'"'] & F'Y'
 \end{tikzcd}
\end{equation}
expressing that we have two ordinary natural transformations $F_0\Rightarrow F'_0$ and
$F_1\Rightarrow F'_1$; then the axiom
\begin{equation}\label{eq:verticaltransfax}
 \begin{tikzcd}
FX\ar[rr,tick,"F(N\circ
M)"]\ar[d,equal]\ar[drr,phantom,"\scriptstyle\Downarrow\xi_{M,N}"] &&
FZ\ar[d,equal] \\
FX\ar[r,tick,"FM"]\ar[d,"\sigma_X"']\ar[dr,phantom,
"\scriptstyle\Downarrow\sigma_M"] &
FY\ar[r,tick,"FN"]\ar[d,"\sigma_Y"description]\ar[dr,phantom,"\Two\sigma_N"] &
FZ\ar[d,"\sigma_Z"] \\
F'X\ar[r,tick,"F'M"'] & F'Y\ar[r,tick,"F'N"'] & F'Z
 \end{tikzcd}=
 \begin{tikzcd}
FX\ar[rr,tick,"F(N\circ
M)"]\ar[d,"\sigma_X"']\ar[drr,phantom,"\scriptstyle\Downarrow\sigma_{N\circ
M}"]
&& FZ\ar[d,"\sigma_Z"] \\
F'X\ar[rr,tick,"F'(N\circ M)"']\ar[drr,phantom,"\Two\xi_{M,N}"]\ar[d,equal]
&&
F'Z\ar[d,equal] \\
F'X\ar[r,tick,"F'M"'] & F'Y\ar[r,tick,"F'N"'] & F'Z
 \end{tikzcd}
 \end{equation}
expressing compatibility with horizontal composition; and finally, the axiom
\begin{equation}
\label{eq:verticaltransfax1}
 \begin{tikzcd}
 FX\ar[r,tick,"F(\hid_X)"]\ar[d,equal]\ar[dr,phantom,"\Two\xi_X"] &
FX\ar[d,equal] \\
 FX\ar[r,tick,"\hid_{FX}"]\ar[d,"\sigma_X"']\ar[dr,phantom,"\Two\hid_{\sigma_X}"]
& FX\ar[d,"\sigma_X"] \\
 F'X\ar[r,tick,"\hid_{F'X}"'] & F'X
 \end{tikzcd}=
 \begin{tikzcd}
FX\ar[r,tick,"F(\hid_X)"]\ar[d,"\sigma_X"']\ar[dr,phantom,"\Two\sigma_{
\hid_X } " ] & FX\ar[d,"\sigma_X"] \\
 F'X\ar[r,tick,"F'(\hid_X)"']\ar[d,equal]\ar[dr,phantom,"\Two\xi_X"] &
F'X\ar[d,equal] \\
 F'X\ar[r,tick,"\hid_{F'X}"'] & F'X
 \end{tikzcd}
\end{equation}
expressing compatibility with horizontal identities.
\end{defi}

It is easy to see that a horizontal transformation $\beta \co F \ticktwoar F'$ between double functors induces a pseudonatural transformation $\ca{H}(\beta)\colon\ca{H}(F)\Rightarrow\ca{H}(F')$ between the associated homomorphisms of bicategories. On the other hand, from a vertical transformation $\sigma \colon F \Rightarrow F'$, there is no direct way of inducing anything $\ca{H}(F)\Rightarrow\ca{H}(F')$. However, there is an indirect way of doing so, if we can first turn the vertical transformation
$\sigma \colon F \Rightarrow F'$ into a horizontal one $\comp \sigma \colon F \ticktwoar F'$. The following definition isolates the properties required of $\sigma$ for this to be possible.

\begin{defi}[Special vertical transformation] \label{def:special-vertical-transformation}
Let $\sigma\colon F\Rightarrow F'$ be a vertical transformation between oplax double functors $\dc{C} \rightarrow \dc{D}$. We say that $\sigma$ is
\emph{special} if for every $X \in \dc{C}$, the vertical $1$-cell component $\sigma_X \colon FX \rightarrow
F'X$
has a companion $\comp{\sigma}_X \colon FX \horightarrow F'X$ in $\dc{D}$, and the
companion transposes
\begin{equation}\label{eq:6}
\begin{tikzcd}
  FX\ar[r,tick,"FM"]\ar[d,equal]\ar[drr,phantom,"\Two\comp{\sigma}_M"] &
FY\ar[r,tick,"\comp{\sigma}_Y"] & F'Y\ar[d,equal] \\
  FX\ar[r,tick,"\comp{\sigma}_X"'] & F'X\ar[r,tick,"F'M"'] & F'Y
 \end{tikzcd}
\end{equation}
of the $2$-morphism components~\eqref{eq:transfcomponent} of $\sigma$ are invertible.
\end{defi}

A special vertical transformation was called a transformation with \emph{loosely strong companions} in \cite[Definition~4.10]{ConstrSymMonBicatsFun},
characterised precisely by the following proposition; when considered in the setting of fibrant double categories, it was called a \emph{horizontally strong} 
transformation in \cite[Definition~A.4]{CruttwellShulman}.

\begin{prop}
  \label{prop:inducedhorizontalfromspecial}
  Let $\sigma \colon F \Rightarrow F'$ be a special vertical transformation between oplax double functors. The companion $1$-cells $\comp{\sigma}_X \colon FX \tickar F'X$ are the horizontal $1$-cell components of a horizontal transformation $\comp \sigma \colon F \ticktwoar F'$, whose $2$-morphism components $\comp{\sigma}_f$ are the companion transposes of the equalities of vertical $1$-cells $\sigma_{X'} \circ Ff = F'f \circ \sigma_X$ as in:
\begin{equation*}
  \twocell{FX}{\comp{\sigma}_X}{F'X}{F'f}{F'{X'}}{\comp{\sigma}_{X'}}{FX'}{Ff}{\comp{\sigma}_f}
\end{equation*}
and whose globular coherence $2$-morphisms are given by~\cref{eq:6}. In particular, $\sigma$ induces a pseudonatural transformation $\ca{H}(\hat
\sigma) \colon \ca{H}(F) \Rightarrow \ca{H}(F')$ between the induced oplax functors of horizontal bicategories.
\end{prop}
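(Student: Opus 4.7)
The plan is to construct the data of the horizontal transformation $\comp\sigma$ exactly as prescribed in the statement, and then reduce verification of each horizontal transformation axiom in~\cref{def:hor-transf} to the corresponding axiom for the vertical transformation $\sigma$ via the companion transpose bijection of~\cref{prop:companionomnibus}(\ref{omni-iii}). First I would set $\comp\sigma_X \defeq \comp{\sigma_X}$ using the assumed companions; define the $2$-morphism components $\comp\sigma_f \co \comp\sigma_{X'} \circ Ff \Rightarrow F'f \circ \comp\sigma_X$ (in the sense of~\eqref{eq:transfcomponent} turned horizontal) as the companion transpose of the identity $2$-morphism witnessing the equation $\sigma_{X'} \circ Ff = F'f \circ \sigma_X$, which holds because $\sigma_0 \co F_0 \Rightarrow F'_0$ is a strict natural transformation; and take the globular coherence component $\comp\sigma_M$ to be the inverse of the transposed $2$-morphism in~\eqref{eq:6}, which is globularly invertible by the special hypothesis.

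Next I would verify the six axioms. For the functoriality axioms~\eqref{eq:betafunctor} for $\comp\sigma_{(-)} \co \dc{C}_0 \to \dc{D}_1$, applying the transpose bijection and the standard pasting identities for companions (in particular~\cref{prop:companionomnibus}(\ref{omni-ii})) turns the required equalities into the functoriality equalities $\sigma_{gf} = \sigma_g \cdot \sigma_f$ and $\sigma_{\vid_X} = \vid_{\sigma_X}$, which are automatic since $\sigma_0$ is an ordinary natural transformation. For naturality~\eqref{eq:betanaturality}, both pasting composites are globular $2$-morphisms of the form ``$F'M' \circ \comp\sigma_{f}$-shape'' into $\comp\sigma_{Y'} \circ FM$; by~\cref{prop:companionomnibus}(\ref{omni-iii}), equality may be checked after transposition, where it becomes precisely the vertical transformation axiom~\eqref{eq:verticaltransfax0}. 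For the two compatibility axioms with horizontal composition and identities, I would again transpose both sides through the companions at the outer source and target objects; after using the definition of $\comp\sigma_M$ and the coherence between $\ell, r$ and $p_1, p_2$ from~\eqref{eq:piaxioms}, both axioms reduce to~\eqref{eq:verticaltransfax} and~\eqref{eq:verticaltransfax1} respectively.

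The main obstacle will be the bookkeeping for the two horizontal-composition/identity compatibility axioms: there the definition of $\comp\sigma_M$ already embeds two companion structure cells $p_1, p_2$ and an $(\ell$ or $r)$-style isomorphism, so one must be careful to re-pack these using the companion triangle identities~\eqref{eq:piaxioms} before the transpose can be applied cleanly. Once this is done, the last sentence of the proposition is immediate from~\cref{lem:dbl-to-bicat-functor} together with the remark preceding~\cref{def:special-vertical-transformation} that every horizontal transformation between oplax double functors induces a pseudonatural transformation between the horizontal oplax functors.
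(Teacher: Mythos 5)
Your strategy --- transporting each horizontal transformation axiom across the companion-transpose bijection of \cref{prop:companionomnibus}\ref{omni-iii} to the corresponding vertical transformation axiom --- is exactly the ``straightforward diagram chase using the universal property of companions'' that the paper has in mind, so the approach is essentially the same. One correction: the globular coherence cell $\comp{\sigma}_M$ should be \eqref{eq:6} itself, \emph{not} its inverse. The companion transpose of $\sigma_M$ from \eqref{eq:transfcomponent}, taken with respect to the vertical boundaries $\sigma_X$ and $\sigma_Y$, already has the form $\comp{\sigma}_Y \circ FM \Rightarrow F'M \circ \comp{\sigma}_X$, which is precisely the direction required of a globular coherence cell in \cref{def:hor-transf} (and the proposition statement itself asserts that the coherence cells are \emph{given by} \eqref{eq:6}). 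Had you inverted it, the two compatibility axioms would not reduce to \eqref{eq:verticaltransfax} and \eqref{eq:verticaltransfax1} as you claim; with the sign corrected, the rest of your outline is sound.
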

\begin{proof}
  The horizontal transformation axioms are a straightforward diagram chase using the universal property of companions.
\end{proof}

We could now proceed to verify by hand further desirable properties of the construction $\sigma \mapsto \comp \sigma$ (for example, its
functoriality); however, this turns out to be unneccessary, as we can in fact characterise $\comp \sigma$ as a companion for $\sigma$ in a suitable
\emph{functor double category}, and then apply results such as \cref{prop:companionomnibus}. We first define these functor double categories.

\begin{defi}[Modification]\label{def:modification}
Let $\beta \co F \ticktwoar G$ and $\beta' \co F' \ticktwoar G'$ be horizontal transformations  and let
$\sigma \co F \Rightarrow F'$ and $\tau \co G \Rightarrow G'$ be vertical transformations between oplax double functors $\dc{C} \rightarrow \dc{D}$.
 A \emph{modification}
\begin{displaymath}
\begin{tikzcd}
F\ar[r,Rightarrow,bigtick,"\beta"]\ar[d,Rightarrow,"{\sigma}"']\ar[dr,phantom,"\scriptstyle\Ddownarrow{\gamma}"] & G \ar[d,Rightarrow,"\tau"] \\
F'\ar[r,Rightarrow,bigtick,"{\beta'}"'] & G'
\end{tikzcd}
\end{displaymath}
consists of  2-morphisms
\begin{displaymath}
  \twocell{FX}{\beta_X}{GX}{\tau_X}{G'X}{\beta'_X}{F'X}{\sigma_X}{\gamma_X}
\end{displaymath}
in $\dc{D}$ for every object $X \in \dc{C}$, subject to the naturality axiom:
\begin{displaymath}
 \begin{tikzcd}
FX\ar[r,tick,"\beta_X"]\ar[d,"\sigma_X"']\ar[dr,phantom,"\Two\gamma_X"] &
GX\ar[d,"\tau_X"] \\
F'X\ar[r,tick,"\beta'_X"']\ar[d,"F'f"']\ar[dr,phantom,"\Two\beta'_f"] &
G'X\ar[d,"G'f"] \\
F'X'\ar[r,tick,"\beta'_{X'}"'] & G'X'
 \end{tikzcd}=
 \begin{tikzcd}
FX\ar[r,tick,"\beta_X"]\ar[d,"Ff"']\ar[dr,phantom,"\Two\beta_f"] &
GX\ar[d,"Gf"] \\
FX'\ar[r,tick,"\beta_{X'}"']\ar[d,"\sigma_{X'}"']\ar[dr,phantom,"\Two\gamma_{X'}
" ] &
GX'\ar[d,"\tau_{X'}"] \\
F'X'\ar[r,tick,"\beta'_{X'}"'] & G'X'
 \end{tikzcd}\mathrlap{;}
\end{displaymath}
and the following axiom expressing compatibility with $\beta$, $\beta'$, $\sigma$, $\tau$:
\begin{displaymath}
\begin{tikzcd}
FX\ar[r,tick,"FM"]\ar[d,equal]\ar[drr,phantom,"\Two\beta_M"] &
FY\ar[r,tick,"\beta_Y"] & GY\ar[d,equal] \\
FX\ar[r,tick,"\beta_X"']\ar[dr,phantom,"\Two\gamma_X"]\ar[d,"\sigma_X"'] &
GX\ar[d,"\tau_X"']\ar[r,tick,"GM"']\ar[dr,phantom,"\Two\tau_M"] &
GY\ar[d,"\tau_Y"] \\
F'X\ar[r,tick,"\beta'_X"'] & G'X\ar[r,tick,"G'M"'] & G'Y
\end{tikzcd}=
\begin{tikzcd}
FX\ar[d,"\sigma_X"']\ar[r,tick,"FM"]\ar[dr,phantom,"\Two\sigma_M"] &
FY\ar[d,"\sigma_Y"']\ar[dr,phantom,"\Two\gamma_Y"]\ar[r,tick,"\beta_Y"] &
GY\ar[d,"\tau_Y"] \\
F'X\ar[r,tick,"F'M"']\ar[d,equal]\ar[drr,phantom,"\Two\beta'_M"] &
F'Y\ar[r,tick,"\beta'_Y"'] & G'Y\ar[d,equal] \\
F'X\ar[r,tick,"\beta'_X"'] & G'X\ar[r,tick,"G'M"'] & G'Y \mathrlap{.}
\end{tikzcd}
\end{displaymath}
\end{defi}

\begin{prop}[Functor double categories] \label{thm:functor-dblcat}
Let $\dc{C}$ and $\dc{D}$ be double categories.
There is a double category $\nc{DblCat}[\dc{C},\dc{D}]$ (resp., $\nc{DblCat}_{\mathrm{oplax}}[\dc{C},\dc{D}]$) composed of double functors from $\dc{C}$ to $\dc{D}$ (resp., oplax double functors), vertical transformations, horizontal
transformations and modifications.
\end{prop}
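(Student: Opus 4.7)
The plan is to build $\nc{DblCat}[\dc{C},\dc{D}]$ (the oplax variant being identical \emph{mutatis mutandis}) by defining all data pointwise from $\dc{D}$, and then reducing the coherence axioms to those already holding in $\dc{D}$.

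First, I would set up the category $\nc{DblCat}[\dc{C},\dc{D}]_0$ of (oplax) double functors and vertical transformations. Vertical composition of $\sigma \colon F \Rightarrow F'$ and $\sigma' \colon F' \Rightarrow F''$ has $1$-cell components $\sigma'_X \sigma_X$ and $2$-morphism components obtained by vertical pasting of $\sigma_M$ on top of $\sigma'_M$ in $\dc{D}$. Strict associativity and unitality come from the strict category structure of $\dc{D}_0$, and the axioms~\eqref{eq:verticaltransfax0}, \eqref{eq:verticaltransfax}, \eqref{eq:verticaltransfax1} for the composite follow from pasting the corresponding axioms for $\sigma$ and $\sigma'$ together with the middle-four interchange in $\dc{D}$. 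The category $\nc{DblCat}[\dc{C},\dc{D}]_1$ of horizontal transformations and modifications is constructed identically, with modifications composed by componentwise vertical pasting; the two axioms of \cref{def:modification} for the composite reduce to pasting the corresponding axioms for the factors.

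Next, source and target $\mathbf{s}, \mathbf{t}$ are evident. The horizontal identity $\hid_F \colon F \ticktwoar F$ has components $(\hid_F)_X = \hid_{FX}$, $(\hid_F)_f = \hid_{Ff}$, and globular coherence cells constructed from the unitors of $\dc{D}$. Horizontal composition of $\beta \colon F \ticktwoar G$ and $\gamma \colon G \ticktwoar H$ has $1$-cell components $\gamma_X \circ \beta_X$, $2$-morphism components built by horizontal pasting of $\beta_f$ with $\gamma_f$, and globular coherence $2$-morphisms built from $\beta_M$ and $\gamma_M$ mediated by the associator of $\dc{D}$; the same recipe gives horizontal composition of modifications. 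That these operations are functorial on $\nc{DblCat}[\dc{C},\dc{D}]_1$ uses the middle-four interchange in $\dc{D}$. Finally, the associator and unitor globular isomorphisms required by~\eqref{eq:aellr} are constructed componentwise from those of $\dc{D}$; checking that these are in fact modifications, natural in modifications, and satisfy the pentagon and triangle axioms is then pointwise in $\dc{D}$.

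The main obstacle will be verifying that the horizontal composite $\gamma \circ \beta$ genuinely satisfies all the axioms of \cref{def:horizontaltransf}. The functoriality axioms~\eqref{eq:betafunctor} and the naturality axiom~\eqref{eq:betanaturality} for the composite are straightforward diagram chases built from horizontal pasting of the corresponding axioms for $\beta$ and $\gamma$. More delicate are the two axioms expressing compatibility with the (possibly non-invertible) oplax coherence cells $\xi_{M,N}$ and $\xi_X$ of $F$, $G$, and $H$, since these must track the coherence cells of $G$ as they thread between the $\beta$- and $\gamma$-parts of the composite. In each case, however, the required equation decomposes, via the associator and unitors of $\dc{D}$, into an instance of the corresponding axiom for $\beta$ followed by one for $\gamma$; none of these verifications uses any structure beyond that of $\dc{D}$ itself, so the checks are routine (if somewhat lengthy) pasting diagrams.
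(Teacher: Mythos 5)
Your construction is correct and matches the paper's own approach: all compositions are given componentwise in $\dc{D}$, and the one formula the paper bothers to record explicitly---the globular coherence cell of a composite horizontal transformation, built by vertically pasting $\beta_M$ whiskered by $\gamma_Y$ above $\gamma_M$ whiskered by $\beta_X$, with associators threaded in---is exactly what you describe. Your discussion of the $\xi$-compatibility axioms identifies the same place where the bookkeeping is heaviest, and the remaining verifications are routine pasting exactly as you say.
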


\begin{proof}
  Each of the forms of vertical and horizontal composition is given by composing the relevant component data in the same direction; verifying the
axioms is routine. The only point requiring any further note is that, for composable horizontal transformations $\beta \colon F \ticktwoar G$ and
$\delta \colon G \ticktwoar H$, the globular coherence $2$-isomorphisms of the composite $\delta \circ \beta \colon F \ticktwoar H$ are given by
  \begin{equation*}
     \begin{tikzcd}
FX\ar[r,tick,"FM"]\ar[drr,phantom,"\Two\beta_M"]\ar[d,equal] &
FY\ar[r,tick,"\beta_Y"] & GY\ar[d,equal] \ar[r,tick,"\delta_Y"] & HY \ar[d,equal]\\
FX\ar[d,equal]\ar[r,tick,"\beta_X"] & GX\ar[drr,phantom,"\Two\delta_M"]\ar[d,equal]\ar[r,tick,"GM"] & GY \ar[r,tick,"\delta_Y"] & HY
\ar[d,equal]\\
FX\ar[r,tick,"\beta_X"'] & GX\ar[r,tick,"\delta_X"'] & HX \ar[r,tick,"HM"'] & HY\mathrlap{ .}
 \end{tikzcd} \qedhere
\end{equation*}
\end{proof}

\begin{prop} \label{lem:2}
  Let $\sigma \colon F \Rightarrow F'$  be a vertical transformation between double functors (resp., oplax double functors).
  Then $\sigma$ has a
companion  as a vertical 1-cell in the double category $\nc{DblCat}[\dc{C}, \dc{D}]$ (resp., $\nc{DblCat}_{\mathrm{oplax}}[\dc{C}, \dc{D}]$) if and only if it is
special in the sense of \cref{def:special-vertical-transformation}.
\end{prop}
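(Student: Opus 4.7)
The plan is to exploit the pointwise nature of structure and axioms in the functor double category $\nc{DblCat}[\dc{C},\dc{D}]$, combined with the universal characterisation of companions in \cref{lem:universalcompcoj} and the construction already performed in \cref{prop:inducedhorizontalfromspecial}.

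For the ``only if'' direction, suppose $\sigma$ admits a companion $(\comp{\sigma}, P_1, P_2)$ in $\nc{DblCat}[\dc{C},\dc{D}]$. Here $\comp{\sigma} \colon F \ticktwoar F'$ is a horizontal transformation, while $P_1$ and $P_2$ are modifications whose components at each object $X$ are 2-morphisms in $\dc{D}$ of exactly the shape demanded by~\eqref{eq:comp2cells}. Since composition and identities in $\nc{DblCat}[\dc{C},\dc{D}]$ are pointwise, evaluating the companion axioms~\eqref{eq:piaxioms} at $X$ exhibits $(\comp{\sigma}_X, (P_1)_X, (P_2)_X)$ as a companion for $\sigma_X$ in $\dc{D}$. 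For the remaining invertibility clause of specialness, I note that the modification compatibility axiom of $P_1$ (say) against a horizontal 1-cell $M$ of $\dc{C}$ identifies the composite in~\eqref{eq:6} with the globular coherence 2-isomorphism $\comp{\sigma}_M$ of the horizontal transformation $\comp{\sigma}$, which is invertible by the very definition of horizontal transformation.

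For the ``if'' direction, assume $\sigma$ is special. The desired horizontal transformation $\comp{\sigma} \colon F \ticktwoar F'$ is already at hand from \cref{prop:inducedhorizontalfromspecial}, and I define modifications $P_1, P_2$ by taking their components at $X$ to be the structure cells $p_1, p_2$ witnessing $\comp{\sigma}_X$ as a companion of $\sigma_X$ in $\dc{D}$. The companion axioms~\eqref{eq:piaxioms} in $\nc{DblCat}[\dc{C},\dc{D}]$ then reduce pointwise to those in $\dc{D}$, and the modification naturality axiom against a vertical 1-cell $f$ of $\dc{C}$ follows immediately from the defining companion-transpose equation for $\comp{\sigma}_f$ in \cref{prop:inducedhorizontalfromspecial}.

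The principal obstacle is verifying the modification compatibility axiom against a horizontal 1-cell $M$, which couples $\sigma_M$ with the globular $\comp{\sigma}_M$. I plan to dispatch it by applying Proposition~\ref{prop:companionomnibus}(iii) to take companion transposes on both sides; since $\comp{\sigma}_M$ is defined in \cref{prop:inducedhorizontalfromspecial} precisely \emph{as} the companion transpose of $\sigma_M$, both sides then collapse to the same pasting, modulo a routine rearrangement using the companion axioms~\eqref{eq:piaxioms} for $\sigma_X$ and $\sigma_Y$ in $\dc{D}$. The same reasoning handles both the pseudo and oplax cases, as the coherence cells $\xi$ for $F, F'$ enter symmetrically on both sides.
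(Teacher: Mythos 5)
Your proof is correct and takes essentially the same route as the paper's: in both arguments, the modifications $P_1, P_2$ are built pointwise from the componentwise companion data, and the key observation in each direction is that the second modification axiom for $P_1$ against a horizontal $1$-cell $M$ identifies the globular coherence cell $\comp{\sigma}_M$ with the companion transpose of $\sigma_M$.
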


\begin{proof} If $\sigma$ is special, then we have an associated horizontal transformation $\comp{\sigma} \co  F \ticktwoar F'$ via \cref{prop:inducedhorizontalfromspecial}. Moreover, we can define modifications
\begin{equation}
  \label{eq:7}
\begin{tikzcd}
 F\ar[r,Rightarrow,bigtick,"\comp{\sigma}"]\ar[d,Rightarrow,"\sigma"']\ar[dr,phantom,"\Two p_1"] & {F'}\ar[d,equal] \\
 {F'}\ar[r,Rightarrow,bigtick,"\hid_{F'}"'] & {F'}
 \end{tikzcd}
 \qquad \text{and} \qquad
 \begin{tikzcd}
 F\ar[r,Rightarrow,bigtick,"\hid_F"]\ar[d,equal]\ar[dr,phantom,"\Two p_2"] & F\ar[d,Rightarrow,"\sigma"] \\
 F\ar[r,Rightarrow,bigtick,"\comp{\sigma}"'] & {F'}
 \end{tikzcd}
\end{equation}
whose component $2$-morphisms are those witnessing that each $\comp{\sigma}_X$ is a companion of $\sigma_X$;
the modification axioms of \cref{def:modification} are now easily verified, and it is clear that these modifications satisfy the companion axioms since they do so componentwise.

Suppose conversely that $\sigma$ has a companion in the functor double category, namely a horizontal transformation $\comp\sigma$
with the modifications witnessing this given as in~\eqref{eq:7}. The components of these modifications witness that each horizontal $1$-cell ${\comp
\sigma}_X$ is a companion for the vertical $1$-cell $\sigma_X$. Furthermore, the second modification axiom for $p_1$ ensures that the invertible
coherence $2$-morphism $\comp{\sigma}_M$ is the companion transpose of the $2$-morphism component $\sigma_M$; in particular, this says that $\sigma$
is special as required.
\end{proof}

As a sample application of the utility of this result, let us use it to give an efficient proof of:

\begin{prop}\label{lem:pseudonatadjequiv}
  Let $\sigma\colon F\Rightarrow F' \colon \dc{C} \rightarrow \dc{D}$
  be an invertible vertical transformation between oplax double
  functors. If the $1$-cell components of $\sigma$ and
  $\sigma^{\mi 1}$ have companions, then they induce a horizontal
  equivalence $\comp{\sigma} \colon F \ticktwoar F'$ and so a pseudonatural
  equivalence $\comp{\sigma}\colon\ca{H}(F)\Rightarrow\ca{H}(F')$
  between oplax functors of bicategories.
\end{prop}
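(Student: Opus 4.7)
The plan is to lift the question to the functor double category $\nc{DblCat}_{\mathrm{oplax}}[\dc{C},\dc{D}]$ of \cref{thm:functor-dblcat} and there apply \cref{prop:companionomnibus}(v). Since $\sigma$ is invertible as a vertical transformation, it is a vertical $1$-isomorphism in this functor double category, with inverse $\sigma^{\mi1}$. The main task is therefore to show that both $\sigma$ and $\sigma^{\mi1}$ admit companions in the functor double category; by \cref{lem:2}, this is equivalent to showing that both are special in the sense of \cref{def:special-vertical-transformation}.

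To verify this, I will first note that vertical composition of vertical transformations in $\nc{DblCat}_{\mathrm{oplax}}[\dc{C},\dc{D}]$ is strictly componentwise, both on vertical $1$-cells and on $2$-morphisms; consequently, the invertibility of $\sigma$ forces each component $2$-morphism $\sigma_M$ to be invertible in $\dc{D}_1$, with inverse $(\sigma^{\mi1})_M$. By hypothesis, each $\sigma_X$ and $(\sigma^{\mi1})_X$ already admits a companion, which is half of the specialness condition. For the other half, I will invoke \cref{prop:companionomnibus}(iii): since $\sigma_X$ and $\sigma_Y$ are vertical $1$-isomorphisms admitting companions, the companion transpose $\sigma_M \mapsto \comp{\sigma}_M$ preserves and reflects invertibility, so the invertibility of $\sigma_M$ just established yields that of $\comp{\sigma}_M$. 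The same argument with $\sigma^{\mi1}$ in place of $\sigma$ shows that $\sigma^{\mi1}$ is also special.

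With both $\sigma$ and $\sigma^{\mi1}$ admitting companions in $\nc{DblCat}_{\mathrm{oplax}}[\dc{C},\dc{D}]$, \cref{prop:companionomnibus}(v) applied inside this double category will then yield that $\comp{\sigma}$ is an equivalence in its horizontal bicategory, with pseudoinverse $\comp{\sigma^{\mi1}}$; unpacking the horizontal bicategory of a functor double category, this is exactly a horizontal equivalence $\comp{\sigma} \colon F \ticktwoar F'$. The desired pseudonatural equivalence $\comp{\sigma} \colon \ca{H}(F) \Rightarrow \ca{H}(F')$ then drops out by applying the passage from horizontal transformations and globular modifications to pseudonatural transformations and modifications (alluded to in the discussion preceding \cref{def:special-vertical-transformation}), which manifestly sends equivalences to equivalences.

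The only genuinely delicate point that I expect is the componentwise invertibility of $\sigma_M$, which rests on unpacking how vertical transformations and their composites are described in the functor double category; once this is in hand, the rest reduces to a direct appeal to the omnibus results on companions, with no new coherence data or axiom-checking required.
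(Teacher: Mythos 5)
Your proposal is correct and follows essentially the same route as the paper: identify $\sigma$ as a vertical $1$-isomorphism in $\nc{DblCat}_{\mathrm{oplax}}[\dc{C},\dc{D}]$, use \cref{lem:2} to reduce existence of companions there to specialness, and conclude via \cref{prop:companionomnibus}\ref{omni-v}. The only difference is that you spell out the justification for specialness (invertibility of each $\sigma_M$ plus \cref{prop:companionomnibus}\ref{omni-iii}), which the paper compresses into the single clause ``Since $\sigma$ is invertible and its components have companions, it is special.''
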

\begin{proof}
  Since $\sigma$ is invertible and its components have companions, it is special; likewise, $\sigma^{\mi1}$ is special. So by \cref{lem:2}, both $\sigma$ and $\sigma^{\mi1}$ admit companions in $\nc{DblCat}_{\mathrm{oplax}}[\dc{C}, \dc{D}]$. It follows by~\cref{prop:companionomnibus}\ref{omni-v} that $\comp \sigma$ is an equivalence in $\ca{H}(\nc{DblCat}_{\mathrm{oplax}}[\dc{C}, \dc{D}])$ as desired.
\end{proof}

We conclude this section with a miscellaneous technical lemma concerning components of a vertical transformation, which will be used in~\cref{sec:monoids,sec:kleisli,sec:application}.

\begin{lem}\label{lem:companioncomponents}
Let $\sigma\colon F\Rightarrow F'$ be a vertical transformation
and $f\colon X\to X'$ be a vertical 1-cell in $\dc{C}$. If $f$  has a companion $\comp{f}$, then the component
$\sigma_{\comp{f}}$ is the transpose of the naturality
vertical identity as in
\begin{displaymath}
\twocell{FX}{F\comp{f}}{F{X'}}{\sigma_{X'}}{F'{X'}}{F'\comp{f}}{F'X}{\sigma_X}{
\sigma_{\comp {f}}}
\;=\;
\begin{tikzcd}
FX\ar[r,tick,"\hid"]\ar[d,"\sigma_X"'] & FX\ar[r,tick,"\hid"]\ar[d,"\sigma_X"] &
FX\ar[r,tick,"F\comp{f}"]\ar[d,"Ff"']\ar[dr,phantom,"\Two Fp_1"] &
F{X'}\ar[d,equal]
\\
F'X\ar[r,tick,"\hid"]\ar[d,equal]\ar[dr,phantom,"\Two F'p_2"] & F'X\ar[d,"F'f"]
&
F{X'}\ar[d,"\sigma_{X'}"']\ar[r,tick,"\hid"] & F{X'}\ar[d,"\sigma_{X'}"] \\
F'X\ar[r,tick,"F'\comp{f}"'] & F'{X'}\ar[r,tick,"\hid"'] &
F'{X'}\ar[r,tick,"\hid"'] &
F'{X'} \mathrlap{.}
\end{tikzcd}
\end{displaymath}
\end{lem}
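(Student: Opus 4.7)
The plan is to use the naturality axiom \eqref{eq:verticaltransfax0} of $\sigma$ applied to the companion structure $2$-morphism $p_2$, together with the opcartesian universal property of companions from \cref{lem:universalcompcoj} and the companion axioms \eqref{eq:piaxioms}.

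First, I apply axiom \eqref{eq:verticaltransfax0} with $\phi = p_2\colon \hid_X \Rightarrow \comp{f}$, whose vertical boundaries are $\id_X$ and $f$. Employing axiom \eqref{eq:verticaltransfax1} to rewrite $\sigma_{\hid_X}$ in terms of $\hid_{\sigma_X}$ modulo the unit coherence cells $\xi_X$ of $F$ and $F'$---which are absorbed into the suppressed notations $Fp_2$ and $F'p_2$ introduced after \cref{lem:Ff}---this yields the equation
\[
\sigma_{\comp{f}} \cdot Fp_2 \;=\; F'p_2 \cdot \hid_{\sigma_X}
\]
of $2$-morphisms in $\dc{D}$, with common source $\hid_{FX}$ and common target $F'\comp{f}$. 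By the opcartesian half of \cref{lem:universalcompcoj}, the $2$-morphism $Fp_2$ (after pasting with $\ell^{\mi 1}$) is opcartesian for $(\mathbf{s},\mathbf{t})\colon\dc{D}_1\to\dc{D}_0\times\dc{D}_0$; hence $\sigma_{\comp{f}}$ is uniquely characterised among $2$-morphisms $F\comp{f}\Rightarrow F'\comp{f}$ with vertical boundaries $\sigma_X,\sigma_{X'}$ by this equation.

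It therefore suffices to check that the right-hand pasting $\rho$ in the lemma's equation also satisfies $\rho \cdot Fp_2 = F'p_2\cdot\hid_{\sigma_X}$. Pre-composing $Fp_2$ above $\rho$ glues it with the $Fp_1$ factor already present in $\rho$, and the first companion axiom \eqref{eq:piaxioms}---namely $p_1\cdot p_2 = \hid_f$---applied inside $F$ gives $Fp_1\cdot Fp_2 = \hid_{Ff}$ (again absorbing the $\xi$'s). The remaining pasting then simplifies, via the horizontal unitors and the vertical $1$-cell equality $\sigma_{X'}\circ Ff = F'f\circ\sigma_X$ (which is exactly the statement that $\sigma$ is a vertical transformation), to precisely $F'p_2\cdot\hid_{\sigma_X}$.

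The main technical hurdle I anticipate is the careful bookkeeping of the coherence cells $\xi$ and of the horizontal unitors in the pasting diagram as it is simplified; conceptually, nothing beyond the vertical transformation axioms and the companion relations is needed.
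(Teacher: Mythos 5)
Your argument is correct, and it takes a genuinely different (indeed, dual) route from the paper's. Both proofs begin by uniquely pinning down $\sigma_{\comp{f}}$ via the universal property of the companion $F\comp{f}$ for $Ff$ (via \cref{lem:Ff}), but the paper pastes $Fp_2$ above \emph{and} $F'p_1$ below, computes that the result is the trivial identity square (using naturality axiom \eqref{eq:verticaltransfax0} applied to $\phi = p_1$, the companion identity $p_1 \cdot p_2 = \hid_f$, and \eqref{eq:verticaltransfax1}), and then appeals to the fact that this double pasting is a bijection—so $\sigma_{\comp{f}}$ is determined by its ``transpose'' and equals the claimed cell $\rho$ because $\rho$ is by construction the inverse transpose of the trivial square. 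You instead paste only $Fp_2$ on top, invoke the naturality axiom with $\phi = p_2$, and then cite the one-sided opcartesian universality of \cref{lem:universalcompcoj} to reduce the problem to checking that $\rho$ satisfies the resulting equation $\rho \cdot Fp_2 = F'p_2 \cdot \hid_{\sigma_X}$. The trade-off is that the paper gets the ``known'' transpose of $\rho$ for free (since $\rho$ is defined as a transpose), whereas you must verify an additional equation for $\rho$ by pasting; conversely, your approach avoids the explicit two-sided transposition and only uses the $p_2$-side of the companion data. Both deploy exactly the same toolkit—naturality of $\sigma$, the companion axioms \eqref{eq:piaxioms}, and the unit compatibility \eqref{eq:verticaltransfax1}—so this is a valid alternative presentation rather than a shortcut, and the verification of the $\rho$-equation in your steps 5--7, while correct in outline, still requires the same kind of careful unitor and $\xi$ bookkeeping you anticipate.
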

\begin{proof}
It suffices to show these two $2$-morphisms have the same companion transposes, which follows by the calculation (in which we again suppress unit coherence $2$-morphisms for $F$ and $F'$):
\begin{displaymath}
 \begin{tikzcd}
FX\ar[r,tick,"\hid"]\ar[dr,phantom,"\Two Fp_2"]\ar[d,equal] &
FX\ar[d,"Ff"] \\
FX\ar[r,tick,"F\comp{f}"]\ar[d,"\sigma_X"']\ar[dr,phantom,"\Two\sigma_{\comp{f}}
"]
& F{X'}\ar[d,"\sigma_{X'}"] \\
F'X\ar[r,tick,"F'\comp{f}"']\ar[d,"F'f"']\ar[dr,phantom,"\Two F'p_1"] &
F'{X'}\ar[d,equal] \\
F'{X'}\ar[r,tick,"\hid"'] & F'{X'}
 \end{tikzcd}=
 \begin{tikzcd}
FX\ar[r,tick,"\hid"]\ar[dr,phantom,"\Two Fp_2"]\ar[d,equal] &
FX\ar[d,"Ff"] \\
FX\ar[r,tick,"F\comp{f}"]\ar[d,"Ff"']\ar[dr,phantom,"\Two Fp_1"]
& F{X'}\ar[d,equal] \\
F{X'}\ar[r,tick,"F(\hid)"']\ar[d,"\sigma_{X'}"']\ar[dr,phantom,"\Two
\sigma_{\hid}"] &
F{X'}\ar[d,"\sigma_{X'}"] \\
F'{X'}\ar[r,tick,"\hid"'] & F'{X'}
 \end{tikzcd}=
 \begin{tikzcd}
FX\ar[r,tick,"\hid"]\ar[d,"Ff"'] & FX\ar[d,"Ff"] \\
F{X'}\ar[d,"\sigma_{X'}"'] & F{X'}\ar[d,"\sigma_{X'}"]\\
F'{X'}\ar[r,tick,"\hid"'] & F'{X'}
 \end{tikzcd}
\end{displaymath}
using naturality of $\sigma$; the companion axiom \cref{eq:piaxioms}; and axiom \cref{eq:verticaltransfax1} for a vertical transformation.
\end{proof}

\section{Monoidal double categories}
\label{sec:mon-dbl-cat}

The aim of this section is to introduce the notions of \emph{monoidal double category} and \emph{oplax monoidal double category}, and to
prove some useful facts about them. Both notions describe double categories endowed with a monoidal product: the key difference is that in the former
case, this  tensor product is a double functor, while in the latter case, it is merely an oplax double functor as in~\cref{def:doublefunctor}. In
particular, it should be emphasised that `oplax' only modifies the functoriality of
the tensor product, rather than the nature of the associativity and unit constraints for this tensor, which for us will always be invertible.

In what follows, we will be concerned with the the question of extending a \emph{monoidal} structure on a double category to an \emph{oplax monoidal}
structure on an associated Kleisli double category. Since we need both notions, 
we here define them simultaneously.

\begin{defi}[Oplax monoidal double category, monoidal double category] \label{def:mondoublecat}\label{def:oplaxdoublecat}
Let $\dc{C}$ be a double category. An \emph{oplax monoidal structure} on $\dc{C}$ consists of the following data:
\begin{itemize}
\item an oplax double functor $\ot\colon\dc{C}\times\dc{C}\to\dc{C}$;
\item an oplax double functor $I\colon\one\to\dc{C}$;
\item invertible vertical transformations $\alpha \co \mathord{\ot}\circ(1\times\mathord{\ot}) \Rightarrow \mathord{\ot}\circ(\mathord{\ot}\times1)$,
$\lambda \co \mathord{\ot}\circ(I\times 1) \Rightarrow 1$ and $\rho \co \mathord{\ot}\circ(1 \times I)\Rightarrow 1$
\end{itemize}
satisfying the usual Mac~Lane coherence axioms for $\alpha$, $\lambda$ and $\rho$.
Said another way, this structure amounts to the following:
\begin{itemize}
 \item monoidal structures $(\ot_0,I_0)$ and $(\ot_1,I_1)$ on the categories $\dc{C}_0$ and $\dc{C}_1$;
 \item \emph{strict} monoidality of $\mathbf{s}, \mathbf{t} \colon \dc{C}_1 \rightrightarrows \dc{C}_0$. For example, the associativity
constraint for $\dc{C}_1$ has components
 \begin{equation}\label{eq:associativitycomponents}
 \begin{tikzcd}[column sep=.7in]
(X_1\ot X_2)\ot X_2 \ar[r,tick,"(M_1 \ot M_2)\ot
M_3"]\ar[d,"\alpha_{X_1,X_2,X_3}"']\ar[dr,phantom,"\Two\alpha_{M_1,M_2,M_3}"] &
(Y_1 \ot Y_2)\ot Y_3 \ar[d,"\alpha_{Y_1,Y_2,Y_3}"] \\
X_1 \ot(X_2 \ot X_3)\ar[r,tick,"M_1 \ot(M_2 \ot M_3)"'] & Y_1 \ot(Y_2 \ot Y_3) \mathrlap{;}
 \end{tikzcd}
 \end{equation}
 \item globular 2-morphisms
 \begin{equation}\label{eq:structure2cells1}
\begin{tikzcd}[column sep=.5in]
 X_1\ot X_2\ar[rr,tick,"(N_1\circ M_1)\ot(N_2\circ M_2)"]\ar[d,equal]\ar[drr,phantom,"\scriptstyle\Downarrow\tau"] && Z_1\ot Z_2\ar[d,equal] \\
 X_1\ot X_2\ar[r,tick,"M_1\ot M_2"'] & Y_1\ot Y_2\ar[r,tick,"N_1\ot N_2"'] & Z_1\ot Z_2
\end{tikzcd}\quad\text{and} \quad
\begin{tikzcd}
 X_1\ot
X_2\ar[rr,tick,"\hid_{X_1}\ot\hid_{X_2}"]\ar[d,equal]\ar[drr,phantom,
"\scriptstyle\Downarrow\eta"] && X_1\ot X_2\ar[d,equal] \\
 X_1 \ot X_2 \ar[rr,tick,"\hid_{X_1\ot X_2}"'] && X_1\ot X_2\mathrlap{,}
\end{tikzcd}
\end{equation}
subject to axioms that make $\ot$ into an oplax double functor;
\item globular 2-morphisms
\begin{equation}\label{eq:structure2cells2}
\begin{tikzcd}
I_0\ar[d,equal]\ar[rr,tick,"I_1"]\ar[drr,phantom,
"\scriptstyle\Downarrow\delta"] && I_0\ar[d,equal] \\
 I_0\ar[r,tick,"I_1"'] & I_0\ar[r,tick,"I_1"'] & I_0
 \end{tikzcd}\quad\text{and} \quad
 \begin{tikzcd}
I_0\ar[d,equal]\ar[rr,tick,"I_1"]\ar[drr,phantom,
"\scriptstyle\Downarrow\iota"]  && I_0\ar[d,equal] \\
I_0\ar[rr,tick,"\hid_{I_0}"'] && I_0 \mathrlap{,}
 \end{tikzcd}
\end{equation}
subject to axioms that make $I$ into an oplax double functor;
\item two axioms ensuring that the associativity constraint $\alpha$ is a vertical
transformation between oplax double functors;
\item four axioms ensuring that the unit constraints $\lambda$ and $\rho$ are vertical
transformations between oplax double functors.
\end{itemize}
The above axioms are written explicitly in \cref{sec:appendix}.
We have a \emph{monoidal double category} when the tensor and unit are specified by double functors, rather than oplax double
functors; said another way, when each of the $2$-morphisms in~\eqref{eq:structure2cells1} and~\eqref{eq:structure2cells2} is invertible.
\end{defi}

What we call here an oplax monoidal double category is what is called simply a monoidal double category in
\cite[\S5.5]{Adjointfordoublecats}; it is equally well a pseudomonoid in the cartesian monoidal 2-category of double categories, oplax double
functors and vertical transformations.

\begin{rmk} We defined a monoidal double category to be an oplax monoidal double category
satisfying some additional properties; but these additional properties in fact allow us to simplify the structure further, as explained
in~\cite[Page~8]{ConstrSymMonBicatsFun}. Indeed, in a monoidal double category, the monoidal unit
$I_1$ of~$\dc{C}_1$ is always canonically isomorphic to $\hid_{I_0}$ via $\iota$; and it does no harm to assume that, in fact, $I_1$ is $\hid_{I_0}$
and $\iota$ is the identity $2$-morphism---which in turn forces $\delta = \ell_{\hid_{I_0}} = r_{\hid_{I_0}}$ for the
globular isomorphisms~\eqref{eq:aellr} for horizontal identities in $\dc{C}$.
As such, if in specifying a monoidal double category we follow these conventions, then we need only provide the invertible
structure 2-morphisms $\tau$ and $\eta$ satisfying the appropriate coherence axioms.
By contrast, in an oplax monoidal double category, none of the data are redundant: indeed, $\delta$ and $\iota$ as in~\cref{eq:structure2cells2} now
specify a \emph{comonad} structure on $I_1$ in $\mathcal{H}(\dc{C})$, see \cref{eq:deltaiotaaxioms}.

Moreover, notice that just as a double category is an internal pseudocategory in
the 2-category of small categories, functors and natural transformations,
an oplax monoidal  double category is an internal  pseudocategory in the $2$-category of monoidal categories,
lax monoidal functors and monoidal transformations for which the
source and target functors are strict monoidal.
\end{rmk}

As mentioned in the introduction, the notion of oplax monoidal structure will be exploited in future work in order to provide a general notion of \emph{commuting tensor product}, generalising the theory of~\cite{GarnerLopezFranco}, which will in particular recover the Boardman--Vogt tensor product of symmetric coloured operads and its extension to operadic bimodules in~\cite{DwyerW:BoardmanVtpo}. For these applications, it will be important that the oplax monoidal structure is \emph{normal} in the sense of the following definition.

\begin{defi}[Normal oplax monoidal double category]
\label{def:normality}
An oplax monoidal double category $\mathbb{C}$ is said to be \emph{normal} if:
\begin{enumerate}[(i)]
\item \label{normal-i} $I \colon \mathbf{1} \rightarrow \dc{C}$ is a (pseudo) double functor;
\item \label{normal-ii} for all objects $X_1,X_2 \in \dc{C}$, the following restricted oplax double functors are (pseudo) double functors:
  \begin{equation}
    \label{eq:onevariablefunctors}
    X_1 \ot (\thg) \colon \dc{C} \cong \mathbf{1} \times \dc{C} \xrightarrow{X_1 \times \dc{C}} \dc{C} \times \dc{C} \xrightarrow{\ot} \dc{C} \ \
    \text{and} \ \
    (\thg) \ot X_2 \colon \dc{C} \cong \dc{C} \times \mathbf{1} \xrightarrow{\dc{C} \times X_2} \dc{C} \times \dc{C} \xrightarrow{\ot} \dc{C}\mathrlap{ .}
  \end{equation}
\end{enumerate}
Said another way, both $2$-morphisms $\delta$ and $\iota$ in~\eqref{eq:structure2cells2} are invertible; while in~\eqref{eq:structure2cells1}, $\eta$
is invertible, and each~$\tau$ for which $M_1 = N_1 = \hid_{X_1}$ or $M_2 = N_2 = \hid_{X_2}$ is also invertible.
\end{defi}

We give now some examples of monoidal double categories and oplax monoidal double categories. The example of profunctors in \cref{ex:prof-mon} will be
fundamental for our application in \cref{sec:application}.

\begin{ex}[Duoidal categories]
\label{ex:oneobjectoplax}
Recall from \cref{ex:mon} that a monoidal category $(\ca{V}, \circ, J)$ is the same thing as a double category with
a single object and only identity vertical arrow. To equip this double category
with an oplax monoidal structure in the sense of \cref{def:mondoublecat}
is the same thing as equipping~$\ca{V}$ with additional structure making it into a duoidal category~\cite{Species}.
Explicitly, this amounts to providing a second monoidal structure $(\otimes, I)$ on $\ca{V}$, along with maps
\begin{gather*}
\xi\colon (Y_1 \circ X_1)\otimes(Y_2 \circ X_2) \to(Y_1 \otimes Y_2)\circ(X_1 \otimes X_2)\text{,} \\
\mu\colon J\ot J\to J\text{,} \qquad
\gamma\colon I\to I\circ I\text{,} \qquad \nu\colon I\to J \mathrlap{,}
\end{gather*}
satisfying appropriate axioms. Here, the interchange law $\xi$ corresponds to the square $\tau$ in~\cref{eq:structure2cells1}.

This oplax monoidal structure is a genuine monoidal structure whenever all of $\xi$, $\mu$, $\gamma$ and $\nu$ are invertible. In this case, by the
Eckmann--Hilton argument, the identity functor underlies a monoidal isomorphism $(\ca{V}, \ot, I) \rightarrow (\ca{V}, \circ, J)$, and the two
isomorphic monoidal structures are each \emph{braided}. Loosely, then, we may say that in this situation, $\ca{V}$ `is' a braided monoidal
category. In particular, if we merely start with a braided monoidal category $(\ca{V}, \ot, I)$, then it becomes duoidal on taking $\mathord{\circ} =
\mathord{\ot}$, $J=I$, $\nu = \id$, $\mu = r_I$, $\gamma = r_I^{\mi1}$, and $\xi$ the canonical constraint built from associativity and braiding
maps.

Returning to the general situation, the oplax monoidal structure on $\ca{V}$ \emph{qua} double category is normal if, and only if, the duoidal
structure on $\ca{V}$ is \emph{normal} meaning that $\nu$, $\gamma$ and $\mu$ are all invertible. Indeed, to say that the unit double functor of the oplax monoidal structure is pseudo is precisely to say that $\gamma$ and $\nu$ are invertible, which by the duoidal axioms implies the invertibility of $\mu$ also. So the duoidal structure on $\ca{V}$ is normal precisely when its oplax monoidal structure \emph{qua} double category satisfies \cref{def:normality}\ref{normal-i}. What is less obvious is that, in this one-object case, \cref{def:normality}\ref{normal-ii} is an automatic consequence of \cref{def:normality}\ref{normal-i}; but \ref{normal-ii} in this case amounts to the invertibility of $\xi$ when $X_1 = Y_1 = J$ or $X_2 = Y_2 = J$, and this follow from the oplax monoidality of the unit constraints for $\ot$.
\end{ex}

\begin{rmk}
  Looking at the previous example, the reader may wonder why we impose \cref{def:normality}\ref{normal-ii} at all, given that \cref{def:normality}\ref{normal-i} by itself gives a faithful ``many-object'' generalisation of the notion of normal duoidal category. The reason for imposing the extra condition is that it ensures companions in our double category are stable under tensoring by objects, which will be crucial when we come to show that normal oplax monoidal double categories give rise to normal oplax monoidal bicategories---see the proofs of \cref{lem:new} and \cref{thm:oplaxmonoidalbicat} below. A second justification for the condition comes from work-in-progress, which generalises the theory of commuting tensor products developed
  in~\cite{GarnerLopezFranco} to a ``many-object'' setting. In this generalisation, the normal duoidal categories used as an enrichment base in \emph{op.~cit.~} will be replaced with normal oplax monoidal double categories in the above sense---and, again, \cref{def:normality}\ref{normal-ii} will be necessary in order to make any progress with the theory.
\end{rmk}

\begin{ex}[Oplax monoidal structure on $\VMat$] \label{ex:mat-mon}
Recall from~\cref{ex:mat} that, for a monoidal category $(\ca{V}, \circ, J)$
in which the tensor product preserves coproducts in each variable,
we have a double category~$\VMat$ of $\ca{V}$-matrices. If $\ca{V}$ is further
equipped with a second monoidal structure $(\otimes, I)$ which also preserves coproducts in each variable, and data
as above making it into a duoidal category, then $\VMat$ acquires an
oplax monoidal structure.
The tensor product is given by the cartesian product of
sets and functions on the vertical level, and for $\ca{V}$-matrices $M_1 \colon
X_1 \horightarrow Y_1$ and $M_2 \colon X_2 \horightarrow Y_2$ the tensor $M_1 \otimes M_2 \colon X_1 \times
X_2 \horightarrow Y_1 \times Y_2$ defined by letting
\begin{displaymath}
 (M_1 \otimes M_2) ( (y_1,y_2),(x_1,x_2) )  =M_1(y_1, x_1) \otimes M_2(y_2,x_2)\mathrlap{ .}
\end{displaymath}
The monoidal unit is $I\colon1\horightarrow1$ with unique component
$I_{*,*}=I$. The structure cells~\cref{eq:structure2cells1} and~\cref{eq:structure2cells2} are formed
using the duoidal structure maps, with the most complex case being that the $2$-morphism $\tau \co (N_1 \circ M_1)\otimes( N_2 \circ M_2) \to (N_1
\ot N_2)\circ (M_1 \ot M_2)$ has components:
\begin{equation*}
  \begin{tikzcd}
    \big(\sum_{y_1} N_1(z_1,y_1) \circ M_1(y_1,x_1) \big)  \ot
    \bigl(\sum_{y_2} N_2(z_2,y_2) \circ M_2(y_2,x_2)\bigr) \ar[d,"\cong"]\\
    \sum_{y_1,y_2} \big( N_1(z_1,y_1) \circ M_1(y_1,x_1) \big)  \ot
    \bigl(N_2(z_2,y_2) \circ M_2(y_2,x_2)\bigr) \ar[d,"\sum_{y_1, y_2} \xi"]\\
    \sum_{y_1,y_2}\bigl(N_1(z_1,y_1) \otimes  N_2 (z_2,y_2) \bigr) \circ\bigl( M_1(y_1, x_1) \otimes M_2(y_2,x_2) \bigr)\mathrlap{ .}
  \end{tikzcd}
\end{equation*}
It is not hard to see that this oplax monoidal structure is normal precisely when $\ca{V}$ is normal as a duoidal category in the sense of the
preceding example, and that it is genuinely monoidal just when the duoidal structure of $\ca{V}$ comes from a braided monoidal structure.
\end{ex}

\begin{ex}[Monoidal structure on $\VProf$] \label{ex:prof-mon}
Let $\ca{V}$ be a braided monoidal category in
which the tensor product preserves colimits in each variable and recall the
double category $\VProf$ of \cref{ex:prof}.
This double category  admits a monoidal structure extending that of $\VMat$ in the braided case.
On objects and vertical $1$-cells, this is simply the monoidal structure of the 2-category $\VCat$.
On horizontal $1$-cells, given $\ca{V}$-profunctors $M_1 \colon X_1 \horightarrow Y_1$ and $M_2 \colon X_2 \horightarrow Y_2$, the
$\ca{V}$-profunctor $M_1 \ot M_2 \colon X_1 \ot Y_2 \horightarrow Y_1 \ot Y_2$ is given by
\begin{equation}
\label{equ:tensor-of-prof}
 (M_1 \ot M_2)\bigl( (y_1,y_2) (x_1,x_2)\bigr)=M_1(y_1,x_1)\ot M_2(y_2,x_2)\mathrlap{ ,}
\end{equation}
with a corresponding definition on $2$-morphisms.
The key structure isomorphism $\tau$ in~\cref{eq:structure2cells1} is formed using the
braiding of the tensor product on $\ca{V}$ and the fact that it preserves colimits in each variable.

One can carry out the construction above also when $\ca{V}$ is merely a duoidal category, thereby extending \cref{ex:mat-mon}, but
we shall not need this level of generality for our application in \cref{sec:application}.
\end{ex}

One of the main results of~\cite{ConstrSymMonBicatsFun}, building on~\cite{ConstrSymMonBicats, GarnerR:lowdsf}, is that under suitable
assumptions, the horizontal bicategory of a monoidal double category
is a monoidal bicategory in the sense of~\cite{CoherenceTricats}; see Theorem 1.1 of \emph{op.~cit.} In view of our application in
\cref{sec:application}, we would like a generalisation of this result which endows the horizontal bicategory of an oplax monoidal double category
with an oplax monoidal structure.

The first obstacle to be faced is the definition of oplax monoidal structure on a bicategory $\ca{K}$. As before, `oplax' refers to the strictness
of the tensor product functor, and so we might attempt the following naive adaptation of the usual notion of monoidal bicategory.
Firstly, we would require oplax functors of bicategories
$\ot\colon\ca{K}\times\ca{K}\to\ca{K}$
and $I\colon\mathbf{1}\to\ca{K}$; then
pseudonatural equivalences
\begin{equation}\label{eq:monbicatdata1}
  \begin{tikzcd}
    \ca{K}^3 \ar[r,"\mathord{\ot} \times 1"]
    \ar[d,"1 \times \mathord{\ot}"'] \ar[dr,phantom,"\Two \alpha"]&
    \ca{K}^2
    \ar[d,"\ot"] \\
    \ca{K}^2
    \ar[r,"\ot"'] & \ca{K}
  \end{tikzcd} \qquad
  \begin{tikzcd}[column sep=small]
    & \ca{K}^2 \ar[dr,"\ot"] \ar[d,phantom,"\Two \lambda"] \\
    \ca{K} \ar[ur,"I \times 1"]
    \ar[rr,"1"'] & {} & \ca{K}
  \end{tikzcd} \quad \text{and} \quad
  \begin{tikzcd}[column sep=small]
    & \ca{K}^2 \ar[dr,"\ot"] \ar[d,phantom,"\Two \rho"] \\
    \ca{K} \ar[ur,"1 \times I"]
    \ar[rr,"1"'] & {} & \ca{K}
  \end{tikzcd}
\end{equation}
giving the associativity and unit constraints; then invertible modifications
\begin{equation}
  \label{eq:monbicatdata2}
  \begin{tikzcd}[column sep=small]
    & \ca{K}^3 \ar[rr,"\ot \times 1"] \ar[dr,phantom,"\Two \alpha \times 1"] &  &
    \ca{K}^2 \ar[dr,"\ot"] \ar[dd,phantom,"\Two \alpha"] \\
    \ca{K}^4 \ar[dr,"1 \times 1 \times \ot"']
    \ar[rr,"1 \times \ot \times 1"]
    \ar[ur,"\ot \times 1 \times 1"] &  &
    \ca{K}^3 \ar[ur,"\ot \times 1" description] \ar[dr,"1 \times \ot" description] \ar[dl,phantom,"\Two 1 \times \alpha"] & &
    \ca{K} \\
    & \ca{K}^3 \ar[rr,"1 \times \ot"'] & {} &
    \ca{K}^2 \ar[ur,"\ot"']
  \end{tikzcd}
  \begin{tikzcd}[column sep=1em]
    \ar[r,triple,"\pi"] & {}
  \end{tikzcd}
  \begin{tikzcd}[column sep=small]
    & \ca{K}^3 \ar[rr,"\ot \times 1"] \ar[dr,"1 \times \ot" description] \ar[dd,phantom,"="] & &
    \ca{K}^2 \ar[dr,"\ot"] \\
    \ca{K}^4 \ar[dr,"1 \times 1 \times \ot"']
    \ar[ur,"\ot \times 1 \times 1"] &  &
    \ca{K}^2 \ar[rr,"\ot"] \ar[dr,phantom,"\Two \alpha"] \ar[ur,phantom,"\Two \alpha"] & &
    \ca{K} \\
    & \ca{K}^3 \ar[rr,"1 \times \ot"']
    \ar[ur,"\ot \times 1" description] & &
    \ca{K}^2 \ar[ur,"\ot"']
  \end{tikzcd}
\end{equation}
\begin{equation}
  \label{eq:monbicatdata3}
\begin{tikzcd}[column sep=small]
    & \ca{K}^3 \ar[dr,"\ot \times 1"]
    \ar[d,phantom,"\Two \rho \times 1"] \\
    \ca{K}^2 \ar[ur,"1 \times I \times 1"]
    \ar[rr,"1"] \ar[d,"\ot"'] \ar[drr,phantom,"="] & {} &
    \ca{K}^2
    \ar[d,"\ot"] \\
    \ca{K} \ar[rr,"1"'] & & \ca{K}
  \end{tikzcd}
  \begin{tikzcd}[column sep=1em]
    \ar[r,triple,"\mu"] & {}
  \end{tikzcd}
  \begin{tikzcd}[column sep=small]
    & \ca{K}^3 \ar[dr,"\ot \times 1"]
    \ar[d,"1 \times \ot" description] \\
    \ca{K}^2 \ar[ur,"1 \times I\times 1"]
    \ar[r,phantom,"\Two 1 \times \lambda"] \ar[d,"1"'] &
    \ca{K}^2 \ar[dr,"\ot" description]
    \ar[r,phantom,"\Two \alpha"] \ar[d,phantom,"="] &
    \ca{K}^2
    \ar[d,"\ot"] \\
    \ca{K} \ar[ur,"1" description] \ar[rr,"\ot"'] & {} & \ca{K}
  \end{tikzcd}
\end{equation}
\begin{equation}
  \label{eq:monbicatdata4}
\begin{tikzcd}[column sep=small]
    & \ca{K}^3 \ar[dr,"\ot \times 1"]
    \ar[d,phantom,"\Two \lambda \times 1"] \\
    \ca{K}^2 \ar[ur,"I \times 1 \times 1"]
    \ar[rr,"1"] \ar[d,"\ot"'] \ar[drr,phantom,"="] & {} &
    \ca{K}^2
    \ar[d,"\ot"] \\
    \ca{K} \ar[rr,"1"'] & & \ca{K}
  \end{tikzcd}
  \begin{tikzcd}[column sep=1em]
    \ar[r,triple,"L"] & {}
  \end{tikzcd}
  \begin{tikzcd}[column sep=small]
    & \ca{K}^3 \ar[dr,"\ot \times 1"]
    \ar[d,"1 \times \ot" description] \\
    \ca{K}^2 \ar[ur,"I \times 1 \times 1"]
    \ar[r,phantom,"="] \ar[d,"\ot"'] &
    \ca{K}^2 \ar[dr,"\ot" description]
    \ar[r,phantom,"\Two \alpha"] \ar[d,phantom,"\Two \lambda"] &
    \ca{K}^2
    \ar[d,"\ot"] \\
    \ca{K} \ar[ur,"I \times 1" description] \ar[rr,"1"'] & {} & \ca{K}
  \end{tikzcd} \ \text{and} \
  \begin{tikzcd}[column sep=small]
    & \ca{K}^3 \ar[dr,"\ot \times 1"]
    \ar[d,"\ot \times 1" description] \\
    \ca{K}^2 \ar[ur,"1 \times 1 \times I"]
    \ar[d,"\ot"']  \ar[r,phantom,"="] &
    \ca{K}^2 \ar[dr,"\ot"] \ar[r,phantom,"="]  \ar[d,phantom,"\Two \rho"]  &
    \ca{K}^2
    \ar[d,"\ot"] \\
    \ca{K} \ar[rr,"1"'] \ar[ur,"1 \times I" description] & {} & \ca{K}
  \end{tikzcd}
  \begin{tikzcd}[column sep=1em]
    \ar[r,triple,"R"] & {}
  \end{tikzcd}
  \begin{tikzcd}[column sep=small]
    & \ca{K}^3 \ar[dr,"\ot \times 1"]
    \ar[d,"1 \times \ot" description] \\
    \ca{K}^2 \ar[ur,"1 \times 1 \times \lambda"]
    \ar[r,phantom,"\Two 1 \times \rho"] \ar[d,"1"'] &
    \ca{K}^2 \ar[dr,"\ot" description]
    \ar[r,phantom,"\Two \alpha"] \ar[d,phantom,"="] &
    \ca{K}^2
    \ar[d,"\ot"] \\
    \ca{K} \ar[ur,"1" description] \ar[rr,"\ot"'] & {} & \ca{K}
  \end{tikzcd}
\end{equation}
witnessing the pseudo-coherence of the constraint cells; and, finally, the appropriate coherence axioms for these pseudo-coherences, as found, for
example, in~\cite[\S A.1]{Mccruddenthesis}.

The problem with this definition can be seen in~\eqref{eq:monbicatdata2} above. In the domain of $\pi$ we have, among other things, the pseudonatural
transformation $\alpha \times 1$ whiskered by the oplax functor $\ot \colon \ca{K}^2 \rightarrow \ca{K}$. However, such a composition
does
\emph{not} yield another pseudonatural transformation, nor even a lax or oplax transformation. 
So $\pi$ is not well-defined; and similar issues
arise for $\mu$, $L$ and $R$ in~\eqref{eq:monbicatdata3} and~\eqref{eq:monbicatdata4}.

We will resolve this issue by imposing a further constraint on the components of the pseudonatural equivalences $\alpha, \lambda, \rho$ of 
\cref{eq:monbicatdata1} which we will term \emph{centrality}, loosely inspired by the nomenclature of~\cite{Premonoidal}. Centrality of the components 
of a pseudonatural transformation $\gamma$ will ensure that composites of the form $\gamma \otimes 1 \defeq \otimes \circ (\gamma \times 1)$ and $1 
\otimes \gamma \defeq \otimes \circ (1 \times \gamma)$ are well-defined pseudonatural transformations; and, furthermore, that the same pseudonaturality 
holds for any iterated tensorings such as $((1 \otimes \gamma) \otimes 1) \otimes 1$. Applied to the case of $\alpha$, $\lambda$ and $\rho$, this will 
ensure that the transformations appearing in~\crefrange{eq:monbicatdata2}{eq:monbicatdata4}, as well as all of those appearing in the corresponding 
coherence axioms, make sense.

\begin{defi}\label{def:central}
Let $\ca{K}$ be a bicategory endowed with an oplax functor $(\otimes, \tau, \eta) \colon\ca{K}\times\ca{K}\to\ca{K}$, an oplax functor $(I, \delta, \iota) \colon 1 \rightarrow 
\ca{K}$ and pseudonatural equivalences as in~\eqref{eq:monbicatdata1}. 
A 1-cell $f\colon X\to Y$ of $\ca{K}$ is said to be 
\emph{central} when for all maps $g \colon X' \rightarrow X$, $h \colon Y \rightarrow Y'$, $k \colon U \rightarrow W$ and $\ell \colon V \rightarrow 
Z$, the following composite oplax structure cells are invertible:
\begin{equation}\label{eq:central}
\begin{tikzcd}[row sep=.2in,column sep=.1in] & {} \\
(U \otimes X) \otimes V\ar[rr,"((k \ot h) \circ (\hid \ot f))\ot \ell" description]\ar[rr,bend left,"(k \ot (h\circ f))\ot \ell"]\ar[dr,bend 
right=5,"(\hid \ot f)\ot\hid"'] & \ar[d,phantom,"\Two \tau"]\ar[u,phantom,"\Two \tau \otimes \vid_\ell"]& (W \otimes Y') \otimes Z \\
& (U \otimes Y) \otimes V \ar[ur,bend right=5,"(k\ot h) \ot \ell"'] &
\end{tikzcd}\ \ 
\begin{tikzcd}[row sep=.2in,column sep=.1in] & {}\\
(U \otimes X') \otimes V \ar[rr,"((\hid \ot f) \circ (k \ot g))\ot \ell" description]\ar[rr,bend left,"(k \ot (f\circ g))\ot \ell"]\ar[dr,bend 
right=5,"(k\ot g) \ot \ell"'] &\ar[d,phantom,"\Two \tau"]\ar[u,phantom,"\Two \tau \otimes \vid_\ell"]& (W \otimes Y) \otimes Z \rlap{ .} \\
& (W \otimes X) \otimes Z \ar[ur,bend right=5,"(\hid \ot f)\ot\hid"'] &
\end{tikzcd}
\end{equation}
\end{defi}
Note that in~\eqref{eq:central}, we consider three-fold tensor products bracketed to the \emph{left}. We could equally have chosen to bracket to the \emph{right}, but this would make no difference, since composing~\eqref{eq:central} with the components of $\alpha$ and their pseudoinverses would yield invertibility of the corresponding cells for the other bracketing.

Now, by taking $k = \id_I$ or $\ell = \mathrm{id}_I$ in~\eqref{eq:central}, and composing with the 
components of $\lambda$ or $\rho$ and their pseudoinverses, we obtain the invertibility of oplax constraints of the 
following forms:
\begin{equation*}
\begin{tikzcd}[row sep=.1in,column sep=-0.05in] & {} \ar[dd,phantom,"\Two \tau"]\\
X \otimes V\ar[rr,bend left,"(h\circ f)\ot \ell"]\ar[dr,bend right=5,"f\ot\hid"'] & & Y' \otimes Z \\
& Y \otimes V \ar[ur,bend right=5,"h \ot \ell"'] &
\end{tikzcd}\ 
\begin{tikzcd}[row sep=.1in,column sep=-.05in] & {} \ar[dd,phantom,"\Two \tau"]\\
  X' \otimes V\ar[rr,bend left,"(f\circ g)\ot \ell"]\ar[dr,bend right=5,"g \ot \ell"'] && Y \otimes Z\\
& X \otimes Z \ar[ur,bend right=5,"f\ot\hid"'] &
\end{tikzcd}\ 
\begin{tikzcd}[row sep=.1in,column sep=-0.05in] & {} \ar[dd,phantom,"\Two \tau"] \\
  U \otimes X\ar[rr,bend left,"k \ot (h\circ f)"]\ar[dr,bend right=5,"\hid \ot f"'] & & W \otimes Y' \\
  & U \otimes Y \ar[ur,bend right=5,"k\ot h"'] &
\end{tikzcd}\ 
\begin{tikzcd}[row sep=.1in,column sep=-0.05in] & {} \ar[dd,phantom,"\Two \tau"]\\
  U \otimes X'\ar[rr,bend left,"k \ot (f\circ g)"]\ar[dr,bend right=5,"k\ot g"'] && W \otimes Y\rlap{ .} \\
  & W \otimes X \ar[ur,bend right=5,"\hid \ot f"'] &
\end{tikzcd}
\end{equation*}
Because of this, if $\gamma \colon F \Rightarrow G \colon \ca{L} \rightarrow \ca{K}$ is a pseudonatural transformation with central components, then both $\gamma \otimes 1$ and $1 \otimes \gamma$ will also be pseudonatural; for example, in the case of $\gamma \otimes 1$,  the pseudonaturality with respect to $f \co X \to Y$ and $g \co C \to D$
 is witnessed by the invertible $2$-cell
\begin{displaymath}
\begin{tikzcd}[sep=0.8in]
FX\ot C\ar[r,"Ff\ot g"] \ar[d,"\gamma_X\ot\hid_C"']\ar[dr,bend left,"(\gamma_Y\circ Ff)\ot g"description]\ar[dr,bend 
left=50,phantom,"\Two \tau^{-1}"]\ar[dr,bend right,"(Gf\circ\gamma_X)\ot g"description]\ar[dr,phantom,"\scriptstyle\Two \gamma_f \otimes \hid_{g}"]
\ar[dr,bend right=50,phantom,"\Two \tau"']& FY\ot 
D\ar[d,"\gamma_Y\ot\hid_D"] \\
GX\ot C\ar[r,"Gf\ot g"'] & GY\ot D\rlap{ .}
\end{tikzcd}
\end{displaymath}

In a similar way, the general form of~\eqref{eq:central} implies that $(1 \otimes \gamma) \otimes 1$ is also pseudonatural; note that this does 
\emph{not} seem to follow from the pseudonaturality of $1 \otimes \gamma$ and $\gamma \otimes 1$. However, once we have pseudonaturality of $(1 
\otimes \gamma) \otimes 1$, we obtain \emph{a fortiori} that of, say, $(1 \otimes \gamma) \otimes (1 \otimes 1)$ and so by composing with the 
equivalence components of $\alpha$, the pseudonaturality of $((1 \otimes \gamma) \otimes 1) \otimes 1$. By following this pattern, we see that  
\emph{any} tensoring of $\gamma$ with identity pseudonatural transformations will again be pseudonatural.

In particular, if we require the pseudonatural transformations $\alpha$, $\lambda$ and $\rho$ to themselves have central components, then we see that 
every $2$-cell pasting which appears in the axioms~\crefrange{eq:monbicatdata2}{eq:monbicatdata4} will be a well-defined pseudonatural transformation, 
and likewise for the pastings appearing in the coherence axioms.  Thus, we are justified in giving:

\begin{defi}
  \label{def:oplaxbicat}
  Let $\ca{K}$ be a bicategory. An \emph{oplax monoidal structure} on $\ca{K}$ consists of:
  \begin{itemize}
  \item an oplax functor of bicategories $\ot \colon\ca{K}\times\ca{K}\to\ca{K}$;
\item an oplax homomorphism $I\colon\one\to\ca{K}$;
\item pseudonatural equivalences $\alpha, \lambda, \rho$ as in~\eqref{eq:monbicatdata1}, whose components are central;
\item invertible modifications $\pi, \mu, L, R$ as in~\eqref{eq:monbicatdata2}--\eqref{eq:monbicatdata4};
\end{itemize}
satisfying the coherence axioms for a monoidal bicategory as found, for example, in~\cite[\S A.1]{Mccruddenthesis}.
Like in \cref{def:normality}, we say that the oplax monoidal structure on $\ca{K}$ is \emph{normal} if $I \colon 1 \rightarrow \ca{K}$ is a homomorphism of bicategories, and $\otimes$ is pseudo in each variable, {\em i.e.}~for each $X,Y \in \ca{K}$ the oplax functors $X \ot (\thg) \colon \ca{K} \rightarrow \ca{K}$ and 
$(\thg) \ot Y \colon \ca{K} \rightarrow \ca{K}$ are homomorphisms of bicategories.
\end{defi}

We now explain how a normal oplax monoidal double category $\dc{C}$ gives rise to a normal oplax monoidal bicategory. First of all, the oplax double functor $\ot$ and the pseudo double functor $I$ induce
functors on the horizontal bicategory
\begin{equation}
  \label{eq:inducedoplaxmorphism}
  \ot\colon\ca{H}(\dc{C})\times\ca{H}(\dc{C})\to\ca{H}(\dc{C}), \quad I\colon\mathbf{1}\to\ca{H}(\dc{C})
\end{equation}
which we denote with the same symbol. Here, as per \cref{lem:dbl-to-bicat-functor}, $\ot$ is an oplax functor of bicategories (which is pseudo in each variable) and $I$ is a homomorphism of
bicategories.
If now we assume that the components of the invertible vertical transformations $\alpha$, $\lambda$ and $\rho$ associated to the monoidal structure on $\dc{C}$ have companions, then they will induce 
pseudonatural equivalences
\begin{equation}
  \label{eq:inducedcoherence}
  \comp{\alpha} \co \mathord{\ot}\circ(1\times\mathord{\ot}) \Rightarrow \mathord{\ot}\circ(\mathord{\ot}\times1) \qquad 
\comp{\lambda} \co \mathord{\ot}\circ(I\times 1) \Rightarrow 1 \qquad \comp{r} \co 1 \Rightarrow \mathord{\ot}\circ(1 \times I)
\end{equation}
between oplax functors of bicategories, according to \cref{lem:pseudonatadjequiv} 
and since $\ca{H}$ is functorial. To proceed further, we need the components of $\comp{\alpha},\comp{\lambda}$ and $\comp{\rho}$ to be central. This will be a consequence of the following lemma. 
Note that \emph{normality} of the oplax monoidal structure on $\mathbb{C}$ is important 
for the proof. It is not clear to us if the corresponding result without it would hold; however, since normality will be present in our applications, we have not pursued this point any further.


\begin{lem}\label{lem:new}
Let $\dc{C}$ be a normal oplax monoidal double category in which the vertical $1$-cells giving associativity, left and right unit constraints have companions. If $f\colon X\to Y$ is any vertical isomorphism in~$\dc{C}$ that has a companion, then $\wh{f}\colon 
X\tickar Y$ is central in the horizontal bicategory $\ca{H}(\dc{C})$ with respect to the structure of~\cref{eq:inducedoplaxmorphism} and~\cref{eq:inducedcoherence}.
\end{lem}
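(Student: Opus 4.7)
The proof reduces to showing that each of the two composite oplax 2-cells in~\eqref{eq:central} is invertible when $f$ is replaced by $\wh f$; by symmetry, I would treat only the first diagram, the second being handled by a mirror-image argument. The strategy is to exhibit the composite 2-cell as the value at the horizontal 1-cell $\wh f$ of a vertical transformation $\gamma\colon G \Rightarrow G'$ between suitable oplax double functors, and then conclude by combining Lemma~\ref{lem:companioncomponents} with Proposition~\ref{prop:companionomnibus}\ref{omni-vi}.

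Concretely, I would define $G$ and $G'$ on horizontal 1-cells $M\colon X\horightarrow Y$ with appropriate boundary (imposed by the need for $h \circ M$ to be defined) by
\[
G(M) = (k \otimes (h \circ M)) \otimes \ell \qquad \text{and} \qquad G'(M) = ((k \otimes h) \otimes \ell) \circ ((\hid \otimes M) \otimes \hid) \mathrlap{,}
\]
and extend them to oplax double functors on an appropriate sub-double category of $\dc{C}$ encoding the source/target constraints imposed by $h, k, \ell$. Normality of the oplax monoidal structure enters essentially here: it guarantees that the partial functors $X \otimes (\thg)$, $(\thg) \otimes Y$, and the unit $I$ are pseudo, so that the relevant coherence cells built into the definitions of $G$ and $G'$ are invertible, that $G$ and $G'$ agree on objects and on vertical 1-cells, and that they genuinely make sense as oplax double functors. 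The vertical transformation $\gamma$ is then built by pasting $\tau$ cells of $\otimes$, arranged so that the composite 2-cell of the first centrality diagram arises precisely as $\gamma_{\wh f}$.

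With this setup in place, Lemma~\ref{lem:companioncomponents} identifies $\gamma_{\wh f}$ with the companion transpose of the vertical identity that records the naturality of $\gamma$ at the vertical isomorphism $f$. Since this identity is tautologically invertible in $\mathsf{V}(\dc{C})$, Proposition~\ref{prop:companionomnibus}\ref{omni-vi} guarantees that its companion transpose is an invertible globular 2-morphism, as required for centrality of $\wh f$. The main technical obstacle is the verification in the previous paragraph: setting up $G$ and $G'$ as honest oplax double functors and checking that $\gamma$ satisfies the axioms of Definition~\ref{def:vert-transf} involves a lengthy but essentially formal pasting diagram manipulation, at each step of which the normality hypothesis is invoked in order to render a non-invertible coherence cell invertible. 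The assumption that the associativity and unit constraints $\alpha, \lambda, \rho$ have companions is used tacitly, ensuring that the centrality diagrams in~\eqref{eq:central} can even be formulated at the level of $\ca{H}(\dc{C})$ in the first place.
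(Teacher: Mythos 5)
Your strategy---realising the first centrality $2$-cell as the component $\gamma_{\comp f}$ of a vertical transformation $\gamma\colon G \Rightarrow G'$ and then applying \cref{lem:companioncomponents}---is a genuinely different route from the paper's, but there is a gap in the setup that I do not see how to close. The formulas $G(M) = (k \otimes (h \circ M)) \otimes \ell$ and $G'(M) = ((k \otimes h) \otimes \ell) \circ ((\hid \otimes M) \otimes \hid)$ are only meaningful for horizontal $1$-cells $M \colon X \tickar Y$ whose boundary matches that of $f$; but such $1$-cells are not closed under horizontal composition, and the horizontal identities $\hid_X$, $\hid_Y$---which any double category must possess, and which appear explicitly in the proof of \cref{lem:companioncomponents} via axiom~\eqref{eq:verticaltransfax1}---do not fall under your formula, since $h \circ \hid_X$ makes sense only if $X = Y$. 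There is therefore no sub-double category on which $G$ and $G'$ become (oplax) double functors, hence no vertical transformation $\gamma$ between them, hence no component $\gamma_{\comp f}$ to analyse. This is not a matter of ``lengthy but essentially formal pasting manipulation'': the objects required to run the argument are not there.

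The paper performs at the level of a single $2$-morphism exactly the computation you were hoping to package globally. Normality ensures that $(\vid \otimes f) \otimes \vid$ has companion $(\hid \otimes \comp f) \otimes \hid$; by \cref{prop:companionomnibus}\ref{omni-iii} the composite $\tau(\tau\otimes\vid)$ of the first centrality diagram is invertible if and only if its companion transpose against this companion is. That transpose is then computed explicitly---by pasting with $(\vid\otimes p_2)\otimes\vid$, using the companion axiom~\eqref{eq:piaxioms} and naturality of the components of $\tau$---to be the tensor product displayed in~\eqref{eq:thistranspose}, whose only non-identity ingredient is the companion $2$-morphism $p_1$ for $f$; and $p_1$ is invertible precisely because $f$ is a vertical isomorphism. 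The intuition driving your attempt (reduce everything to \cref{lem:companioncomponents} and invertibility of $p_1$, $p_2$) is sound, but the direct transpose-and-compute argument achieves it without the ill-posed functorial scaffolding; any repair of your approach seems to require re-deriving that computation anyway.
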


\begin{proof} Fix $f \co X \to Y$ with companion $\wh{f} \co X \tickar Y$ as in the statement.
To check the conditions in \cref{def:central}, we must show, for any horizontal $1$-cells
 $g \colon X' \tickar X$, $h \colon Y \tickar Y'$, $k \colon U \tickar W$ and $\ell \colon V \tickar Z$, 
that the two composite 2-cells in~\cref{eq:central} are invertible.
For the one on the left-hand side, we must show invertibility of the globular $2$-morphism in
\begin{equation}\label{eq:tautau1}
\begin{tikzcd}[column sep=.6in]
(U\ot X)\ot V\ar[d,equal]\ar[rr,tick,"(k\ot(h\circ \comp{f}))\ot \ell"]\ar[drr,phantom,"\Two\tau(\tau\ot\vid)"] && (W\ot Y')\ot Z\ar[d,equal] \\
(U\ot X)\ot V\ar[r,tick,"(\hid\ot\comp{f})\ot\hid"'] & (U\ot Y)\ot V\ar[r,tick,"(k\ot h)\ot \ell"'] & (W\ot Y')\ot Z
\mathrlap{.}
\end{tikzcd}
\end{equation}
Because $\otimes$ is pseudo in each variable and $f$ has companion $\comp{f}$, it follows that $(\vid \ot f) \ot \vid$ has companion $(\hid \ot \comp{f}) \ot \hid$. Thus, $\tau(\tau \otimes 1)$ in~\eqref{eq:tautau1}
is invertible if and only if its companion transpose
\begin{equation*}
\begin{tikzcd}[column sep=.7in]
(U\ot X)\ot V\ar[d,"(\vid\ot f)\ot\vid"']\ar[r,tick,"(k\ot(h\circ \comp{f}))\ot \ell"]\ar[dr,phantom,"\Two \comp{\tau(\tau \otimes \vid)}"] & 
(W\ot Y')\ot Z\ar[d,equal] \\
(U\ot Y)\ot V\ar[r,tick,"(k\ot h)\ot \ell"'] & (W\ot Y')\ot Z 
\end{tikzcd}
\end{equation*}
is invertible. We claim that this companion transpose is actually
given by the following tensor product in~$\dc{C}$:
\begin{equation}\label{eq:thistranspose}
\Biggl(\begin{tikzcd}[column sep=.7in]
U\ar[d,equal]\ar[r,tick,"k"]\ar[dr,phantom,"\Two \vid_k"] & 
W\ar[d,equal] \\
U\ar[r,tick,"k"'] & W
\end{tikzcd} \quad \otimes \quad
\begin{tikzcd}[column sep=.7in]
  X\ar[d,"f"']\ar[r,tick,"\comp{f}"]\ar[dr,phantom,"\Two p_1"] &
  Y\ar[d,equal]\ar[r,tick,"h"]\ar[dr,phantom,"\Two \vid_h"] &
Y'\ar[d,equal] \\
Y\ar[r,tick,"\hid"'] & Y \ar[r,tick,"h"'] & Y'
\end{tikzcd}\Biggr)
\quad \ot \quad
\begin{tikzcd}[column sep=.7in]
V\ar[d,equal]\ar[r,tick,"\ell"]\ar[dr,phantom,"\Two \vid_\ell"] & 
Z\ar[d,equal] \\
V\ar[r,tick,"\ell"'] & Z \rlap{.}
\end{tikzcd}
\end{equation}
Note that this is clearly invertible, since $f$ is an isomorphism and so $p_1$ is
invertible.
To show that \eqref{eq:thistranspose} is a transpose companion of \eqref{eq:tautau1}
it suffices to use the explicit definition of transposition of a 2-morphism.
Indeed, pasting $(\vid \otimes p_2) \otimes \vid$ to the left of \cref{eq:thistranspose} and using the (right-hand side) axiom \cref{eq:piaxioms} and naturality of the components of $\tau$, we obtain the 2-cell \cref{eq:tautau1}. 

It is possible to verify that the composite 2-cell on the right-hand side of~\cref{eq:central} is 
invertible by a similar argument, but pasting with $(\vid \otimes p_1^{-1}) \otimes \vid$ instead of $(\vid \otimes p_2) \otimes \vid$. 
\end{proof}

\begin{thm}\label{thm:oplaxmonoidalbicat}
If $\dc{C}$ is a normal oplax monoidal double category in which the vertical $1$-cells giving associativity, left and right unit constraints have companions, then the
horizontal bicategory $\ca{H}(\dc{C})$ inherits a normal oplax monoidal structure with underlying data~\cref{eq:inducedoplaxmorphism} and~\cref{eq:inducedcoherence}.
\end{thm}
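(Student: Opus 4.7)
The strategy is to extend the approach of~\cite{ConstrSymMonBicatsFun} from monoidal to oplax monoidal double categories, assembling the data of \cref{def:oplaxbicat} piece by piece from the structure of $\dc{C}$. First I would apply \cref{lem:dbl-to-bicat-functor} to obtain the oplax functor $\otimes \colon \ca{H}(\dc{C}) \times \ca{H}(\dc{C}) \to \ca{H}(\dc{C})$ and the homomorphism $I \colon \mathbf{1} \to \ca{H}(\dc{C})$, the latter being a homomorphism because $I$ is a (pseudo) double functor by \cref{def:normality}\ref{normal-i}. For the pseudonatural equivalences in~\eqref{eq:inducedcoherence}, I would apply \cref{lem:pseudonatadjequiv} to each of the invertible vertical transformations $\alpha, \lambda, \rho$, whose component $1$-cells (and those of the inverses) have companions by hypothesis. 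The resulting $\comp{\alpha}, \comp{\lambda}, \comp{\rho}$ are thus pseudonatural equivalences, and their components, being companions of vertical isomorphisms in $\dc{C}$, are central by \cref{lem:new}.

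Next, for the invertible modifications $\pi, \mu, L, R$ required by \cref{def:oplaxbicat}, I would exploit the fact that the Mac Lane pentagon and triangle axioms hold \emph{strictly} as equations between vertical transformations in $\dc{C}$. Under the companion construction, which is well-behaved with respect to composition by \cref{lem:2} applied to the functor double categories of \cref{thm:functor-dblcat} and by \cref{prop:companionomnibus}, these identities translate to equalities between the corresponding horizontal transformations built from $\comp{\alpha}, \comp{\lambda}, \comp{\rho}$, up to the unique invertible globular $2$-isomorphisms between different companions of the same vertical $1$-cell. The modifications $\pi, \mu, L, R$ then arise from these canonical globular isomorphisms.

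The main obstacle is the bookkeeping: ensuring that every pasting diagram appearing in~\eqref{eq:monbicatdata2}--\eqref{eq:monbicatdata4} and in the coherence axioms for an oplax monoidal bicategory is in fact well-defined. This is exactly where centrality plays its decisive role: by the discussion following \cref{def:central}, once the components of $\comp{\alpha}, \comp{\lambda}, \comp{\rho}$ are central, any iterated tensoring of these pseudonatural transformations with identity pseudonatural transformations remains pseudonatural, so every pasting composite makes sense. With this in place, each coherence axiom for the resulting oplax monoidal bicategory reduces to an equation between vertical transformations in $\dc{C}$, which follows from the Mac Lane coherence axioms satisfied by $\alpha, \lambda, \rho$. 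Finally, normality of the induced structure on $\ca{H}(\dc{C})$ is inherited from that of $\dc{C}$: \cref{def:normality}\ref{normal-i} ensures $I$ is a homomorphism, while \cref{def:normality}\ref{normal-ii} together with \cref{lem:dbl-to-bicat-functor} ensures that the one-variable oplax functors $X \otimes (\thg)$ and $(\thg) \otimes Y$ are homomorphisms of bicategories.
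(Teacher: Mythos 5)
Your proposal is correct and follows essentially the same route as the paper: obtain $\otimes$ and $I$ via \cref{lem:dbl-to-bicat-functor}, obtain $\comp\alpha,\comp\lambda,\comp\rho$ via \cref{lem:pseudonatadjequiv}, establish centrality via \cref{lem:new}, and then build $\pi,\mu,L,R$ as the unique globular isomorphisms between companions of the same vertical $1$-cell, with the coherence equations holding by uniqueness of such compatible $2$-cells. The paper fills in the last step by citing an (oplax-adapted) \cite[Lemma~4.8]{ConstrSymMonBicats} for the modification property and \cite[Lemmas~3.11, 3.14, 3.15, 3.19, 4.10]{ConstrSymMonBicats} for the compatibility of canonical companion isomorphisms under composition and tensoring; your phrase that the axioms ``reduce to equations between vertical transformations'' is a compressed but faithful gloss on that uniqueness argument.
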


\begin{proof}
Since the pseudonatural transformations $\comp{\alpha}$, $\comp{\lambda}$ and $\comp{\rho}$ of~\eqref{eq:inducedcoherence} have as their components the horizontal companions of vertical isomorphisms, we can apply \cref{lem:new} to see that these components are all central in the sense of 
\cref{def:central}.
We now need to provide the four invertible modifications of \cref{def:oplaxbicat} for $\ca{H}(\dc{C})$ to have the structure of a normal oplax
monoidal bicategory. The components of \eqref{eq:monbicatdata2}--\eqref{eq:monbicatdata4} are of the
form
\begin{displaymath}
\begin{tikzpicture}[baseline=3ex]
\node[regular polygon, regular polygon sides=5,minimum width=42mm]
(PG) {$\Two{\pi}$}
(PG.corner 1) node (PG1) {$(X_1 \ot X_2)\ot(X_3 \ot X_4)$}
(PG.corner 2) node (PG2) {$((X_1\ot X_2)\ot X_3)\ot X_4$}
(PG.corner 3) node (PG3) {$\mathllap{(X_1 \ot (X_2 \ot {}}X_3))\ot X_4$}
(PG.corner 4) node (PG4) {$X_1\ot((X_2 \mathrlap{{}\ot X_3)\ot X_4)}$}
(PG.corner 5) node (PG5) {$X_1\ot(X_2\ot(X_3\ot X_4))$};
\draw
(PG1) edge[->] node[above right] {$\scriptstyle \comp{\alpha}$} (PG5)
(PG2) edge[->] node[above left] {$\scriptstyle \comp{\alpha}$} (PG1)
(PG2) edge[->] node[left] {$\scriptstyle\comp{\alpha}\otimes \hid$} (PG3)
(PG3) edge[->] node[below] {$\scriptstyle\comp{\alpha}$} (PG4)
(PG4) edge[->] node[right] {$\scriptstyle \hid\ot\comp{\alpha}$} (PG5);
\end{tikzpicture}
\end{displaymath}
\begin{displaymath}
\begin{tikzcd}[row sep=.2in]
(X_1\ot I)\ot X_2\ar[rr,"\comp{\alpha}"]\ar[dr,"\comp{\rho}\ot\hid"']\ar[rr, bend right=10, phantom, "\Two\mu"] && X_1\ot(I\ot X_2)\ar[dl,"\hid\ot
\comp{\lambda}"] \\
& X_1\ot X_2 & \\
(X_1\ot X_2)\ot I\ar[rr,"\comp{\alpha}"]\ar[dr,"\comp{\rho}"']\ar[rr, bend right=10, phantom, "\Two R"] && X_1\ot(X_2\ot \lambda)\ar[dl,"\hid\ot
\comp{\rho}"] \\
& X_1\ot X_2 & \\
(I\ot X_1)\ot X_2\ar[rr,"\comp{\alpha}"]\ar[dr,"\comp{\lambda}\ot\hid"']\ar[rr, bend right=10, phantom, "\Two L"] && I\ot(X_1\ot
X_2)\ar[dl,"\comp{\lambda}"] \\
& X_1\ot X_2 &
\end{tikzcd}
\end{displaymath}
Notice that the two sides in each case are companions of the corresponding sides of the pentagon axiom, the triangle axiom and two known
equations for the ordinary monoidal category $\dc{C}_0$, due to \cref{prop:companionomnibus,lem:Ff}. For example, since $\ot$ is a pseudo double functor in
each variable, each $(\thg)\ot X$ and $Y\ot(\thg)$ preserves companions thus $\comp{\alpha}\ot\hid_X$
is canonically a companion of $\alpha\ot\vid_X$. As a result, we take $\pi,\mu, L, R$ to be the unique isomorphisms between companions of the same
vertical 1-cells. It can then be verified that these invertible cells form a modification between pseudonatural
transformations of oplax double functors by \cite[Lemma~4.8]{ConstrSymMonBicats}\footnote{Although the cited result refers
to vertical transformations between \emph{pseudo} double functors, the proof is identical in the oplax setting.}.

Finally, the three equations that relate those $\pi,\mu, L, R$ can be checked in exactly the same way as in the proof of
\cite[Theorem~5.1]{ConstrSymMonBicats}. In more detail, the domain and codomain of the pasted 2-cells involved in the equations are companions of
the
same isomorphism in $\dc{C}_0$, namely the unique $(((X_1\ot X_2)\ot X_3)\ot X_4)\ot X_5 \cong X_1\ot(X_2\ot(X_3\ot(X_4\ot X_5)))$ as well as the associator $(X_1\ot X_2)\ot X_3 \cong X_1\ot(X_2 \ot X_3)$. Using a collection of technical lemmas \cite[Lemma~3.11,\ 3.14,\ 3.15,\ 3.19,\ 4.10]{ConstrSymMonBicats} concerning the
composition as well as the tensoring of the canonical isomorphisms between companions (the latter adjusted in the normal oplax monoidal case in a
straightforward way), we deduce that there can only be a unique invertible 2-cell inside each one of the diagrams which is compatible with the companion data, hence the
equations must hold.
\end{proof}

\section{Maps of monoidal double categories}
\label{sec:mon-dbl-fun}

For our development in \cref{sec:dbl-monad,sec:mon-dbl-monad}, we will need results concerning both double monads and pseudomonoidal double monads. It turns out that many of these results can be proved uniformly across the two cases, by exhibiting both kind of structure as monoids in suitable endofunctor double categories. This is much as ordinary monads and monoidal monads can be seen as monoids in appropriate endofunctor categories.
In order to do this for the case of pseudomonoidal double monads, we need to construct a suitable double category of (lax) monoidal double functors
and monoidal transformations. While the notions of lax monoidal double functor and monoidal horizontal transformation (recalled in
\cref{def:monoidaldoublefunctor} and~\cref{def:monhortransf} below) are as expected, it turns out that in our motivating examples, the vertical
transformations which we need are not monoidal in the obvious way but only \emph{pseudomonoidal}. While this
may seem an innocuous change, it adds an additional layer of subtlety to our development,
very much in analogy with what happens in the purely 2-categorical setting~\cite{HylandPower}.

We begin with the notion of a  lax monoidal double functor. If we view monoidal double categories as pseudomonoids in a $2$-category of double categories, double functors and vertical transformations, then the lax monoidal functors are simply the lax morphisms of pseudomonoids. This definition can also be found in \cite[Definition~2.14]{ConstrSymMonBicatsFun}, though note that there, the (invertible) structure maps $\tau$ and $\eta$ of a monoidal 
double category (\cref{def:oplaxdoublecat}) are oriented in the opposite direction. 

\begin{defi}[Lax monoidal double functor]
\label{def:monoidaldoublefunctor}
 Let $\dc{C}$ and $\dc{D}$ be monoidal double categories. A \emph{lax monoidal
double functor} $F\colon\dc{C}\to\dc{D}$ is a (pseudo) double functor equipped with:
\begin{itemize}
\item  a vertical transformation $F^2 \co \mathord{\ot} \circ (F \times F) \Rightarrow F \circ \mathord{\ot}$,
 whose vertical $1$-cell components we denote by $F^2_{X_1,X_2}\colon FX_1\ot FX_2\to F(X_1\ot X_2)$, and whose $2$-morphism components we denote by
\begin{equation}\label{eq:F2}
\begin{tikzcd}[column sep=.6in]
FX_1\ot FX_2 \ar[r,tick,"FM\ot
FN"]\ar[d,"{ F^2_{X_1,X_2}}"']\ar[dr,phantom,"\Two F^2_{M,N}"] &
FY_1 \ot FY_2\ar[d," F^2_{Y_1,Y_2}"] \\
F(X_1\ot X_2)\ar[r,tick,"F(M\ot N)"'] & F(Y_1\ot Y_2)\mathrlap{;}
\end{tikzcd}
\end{equation}
\item a vertical transformation $F^0 \co I_\dc{D} \Rightarrow F \circ I_\dc{C}$,
 whose vertical $1$-cell component we denote by $F^0\colon I\to FI$ and whose $2$-morphism component we
 denote by
 \begin{equation}
  \label{eq:laxmon2cells-2}
 \twocell{I}{\hid_I}{I}{ F^0}{FI\mathrlap{;}}{F(\hid_{I})}{FI}{ F^0}{}
\end{equation}
\end{itemize}
subject to axioms expressing that the vertical $1$-cells $F^2_{X_1, X_2} \co FX_1 \ot FX_2 \rightarrow F(X_1 \ot X_2)$ and $F^0 \colon I \rightarrow
FI$ endow $F_0 \colon \dc{C}_0 \rightarrow \dc{D}_0$ with the structure of a lax monoidal functor, and that the $2$-morphisms of~\eqref{eq:F2}
and~\eqref{eq:laxmon2cells-2} do the same for $F_1 \colon \dc{C}_0 \rightarrow \dc{D}_0$.
\end{defi}

The reader will notice that we have not named the $2$-morphism in~\eqref{eq:laxmon2cells-2}. This is because its definition is forced: for indeed, since $F^0$ is a vertical transformation between double functors, the axiom~\cref{eq:verticaltransfax1} causes~\eqref{eq:laxmon2cells-2} to be equal to $\hid_{F^0}$ followed by the unit structure isomorphism of $F$.

We now turn to monoidal transformations between lax monoidal double functors. We begin with the horizontal case, which is as expected, though we could not find it in the literature.
\begin{defi}[Monoidal horizontal transformation]\label{def:monhortransf}
Let $F, G \colon \mathbb{C} \rightarrow \dc{D}$ be
lax monoidal double functors.
A \emph{monoidal horizontal transformation}
$\beta \colon F \ticktwoar G$ is a horizontal transformation endowed with cells
  \begin{equation}\label{eq:1}
    \cd[@C+2em]{
      F{X_1} \otimes F{X_2}
      \ar|@{|}[r]^-{\beta_{X_1} \otimes \beta_{X_2}}
      \ar[d]_-{ F^2_{X_1,{X_2}}}
      \dtwocell{dr}{\beta^2_{X_1,{X_2}}} &
      G{X_1} \otimes G{X_2}
      \ar[d]^-{ G^2_{X,{X_2}}} \\
      F({X_1} \otimes {X_2})
      \ar|@{|}[r]_-{\beta_{{X_1} \otimes {X_2}}} &
      G({X_1} \otimes {X_2})
    } \qquad \text{and} \qquad \cd[@C+2em]{
      I
      \ar|@{|}[r]^-{\hid_{I}}
      \ar[d]_-{ F^0}
      \dtwocell{dr}{\beta^0} &
      I
      \ar[d]^-{ G^0} \\
      FI
      \ar|@{|}[r]_-{\beta_{I}} &
      GI
    }
  \end{equation}
which, firstly, make $\beta_{(\thg)} \colon \mathbb{C}_0 \rightarrow \dc{D}_1$ into a
lax monoidal functor; in other words, such that the naturality condition
\begin{displaymath}
\begin{tikzcd}
FX_1 \ot
F{X_2}\ar[r,tick,"\beta_X\ot\beta_{X_2}"]\ar[d,"F^2_{X_1,{X_2}}"']\ar[dr,phantom,
"\Two\beta^2_{X_1, {X_2}}"] & GX\ot G{X_2}\ar[d,"G^2_{X_1,{X_2}}"] \\
F(X_1 \ot {X_2})\ar[r,tick,"\beta_{X_1 \ot {X_2}}"]\ar[d,"F(f\ot
g)"']\ar[dr,phantom,"\Two\beta_{f\ot g}"'] &
G(X_1\ot {X_2})\ar[d,"G(f\ot g)"] \\
F(X_1'\ot {X_2}')\ar[r,tick,"\beta_{X_1 '\ot {X_2}'}"'] & G(X_1'\ot {X_2}')
\end{tikzcd}=
\begin{tikzcd}
FX_1\ot F{X_2}\ar[r,tick,"\beta_{X_1} \ot\beta_{X_2}"]\ar[d,"Ff\ot Fg"']
\ar[dr,phantom,"\Two\beta_f\ot\beta_g"] &
GX_1 \ot G{X_2}\ar[d,"Gf\ot Gg"] \\
FX'_1 \ot F{X_2}'\ar[r,tick,"\beta_{X_1'}\ot\beta_{{X_2}'}"]\ar[d,"F^2_{X_1',{X_2}'}"']
\ar[dr,phantom,"\Two\beta^2_{X_1', {X_2}'}"] &
GX_1'\ot G{X_2}'\ar[d,"G^2_{X_1',{X_2}'}"] \\
F(X_1 '\ot {X_2}')\ar[r,tick,"\beta_{X_1'\ot {X_2}'}"'] & G(X_1'\ot {X_2}')
\end{tikzcd}
\end{displaymath}
is satisfied, along with the usual associativity and unitality conditions, identifying the two evident $2$-morphisms $(\beta_{X_1} \ot \beta_{X_2}) \ot \beta_{X_3} \rightrightarrows \beta_{X_1 \ot (X_2 \ot X_3)}$, the two $2$-morphisms $\beta_{X_1} \otimes \hid_{I} \rightrightarrows \beta_{X_1 \ot I}$ and the two $2$-morphisms $\hid_{I} \ot \beta_{X_2} \rightrightarrows \beta_{I \ot X_1}$.
We moreover require the equality of the pastings:
 \[
 \cd{
      FX_1 \otimes FX_2
      \ar|@{|}[r]^-{FM \otimes FN}
      \ar[d]_-{ F^2_{X,Z}}
      \dtwocell{dr}{ F^2_{M,N}} &
      FY_1 \otimes FY_2
      \ar|@{|}[r]^-{\beta_Y \otimes \beta_W}
      \ar[d]|-{ F^2_{Y_1,Y_2}}
      \dtwocell{dr}{\beta^2_{Y_1 Y_2}}&
      GY_2 \otimes GY_2 \ar[d]^-{ G^2_{Y_1,Y_2}} \\
      F(X_1 \otimes X_2)
      \ar|@{|}[r]^-{F(M \otimes N)}
      \ar@{=}[d]_-{}
      \dtwocell{drr}{\beta_{M \otimes N}}&
      F(Y_1 \otimes Y_2)
      \ar|@{|}[r]^-{\beta_{Y_1 \otimes Y_2}} &
      G(Y_1 \otimes Y_2)
      \ar@{=}[d]_-{} \\
      F(X_1 \otimes X_2)
      \ar|@{|}[r]_-{\beta_{X_1 \otimes X_2}} &
      G(X_1 \otimes X_2)
      \ar|@{|}[r]_-{G(M \otimes N)} &
      G(Y_1 \otimes Y_2)}\;=\;
      \cd{
      FX_1 \otimes FX_2
      \ar|@{|}[r]^-{FM \otimes FN}
      \ar@{=}[d]_-{}
      \dtwocell{drr}{\beta_M \otimes \beta_N} &
      FY_1 \otimes F Y_2
      \ar|@{|}[r]^-{\beta_{Y_1} \otimes \beta_{Y_2}} &
      GY_1 \otimes GY_2 \ar@{=}[d]_-{} \\
      FX_1 \otimes F X_2
      \ar|@{|}[r]_-{\beta_{X_1} \otimes \beta_{X_2}}
      \ar[d]_-{ F^2_{X_1,X_2}}
      \dtwocell{dr}{\beta^2_{X_1, X_2}} &
      GX_1 \otimes GX_2
      \ar|@{|}[r]_-{GM \otimes GN}
      \ar[d]|-{ G^2_{X_1,X_2}}
      \dtwocell{dr}{ G^2_{M,N}}
      &
      GY_1 \otimes GY_2
      \ar[d]^-{ G^2_{Y_1,Y_2}} \\
      F(X_1 \otimes X_2)
      \ar|@{|}[r]_-{\beta_{X_1 \otimes X_2}} &
      G(X_1 \otimes X_2)
      \ar|@{|}[r]_-{G(M \otimes N)} &
      G(Y_1 \otimes Y_2)}
      \]
  expressing that the natural transformation giving the globular cell
  components of $\beta$ is a monoidal natural transformation. (Note that the `nullary' axiom corresponding to this `binary' axiom holds automatically and need not be stated explicitly.)
\end{defi}

We now consider monoidality of vertical transformations. Given the view of monoidal double categories and lax monoidal double functors as pseudomonoids and lax pseudomonoid maps, the obvious thing to consider would be the corresponding transformations of pseudomonoids, and this would yield the notion of monoidal vertical transformation considered in~\cite[Definition~2.15]{ConstrSymMonBicatsFun}. However, we will need something slightly more general for our applications ({\em cf.} \cref{rmk:mon-vs-psdmon}), which we will term a \emph{pseudomonoidal} vertical transformation. The difference can be appreciated by noting that monoidality of a vertical transformation in the sense of~\emph{loc.~cit.} makes the underlying $2$-natural transformation on the vertical $2$-category into a $\cat{Cat}$-enriched monoidal transformation, while for our transformations, this underlying $2$-natural transformation is  only  a \emph{monoidal pseudonatural transformation} in the sense of \cite[Definition~3]{Monoidalbicatshopfalgebroids}.

\begin{defi}[Pseudomonoidal vertical transformation]
\label{def:montransf}
Let $F \, , F' \colon\dc{C}\to\dc{D}$ be lax monoidal double functors.
A \emph{pseudomonoidal vertical transformation} $\sigma\colon F\Rightarrow F'$
is a vertical transformation equipped with squares
\begin{equation}\label{eq:sigma2}
 \begin{tikzcd}[column sep=.5in,ampersand replacement=\&]
FX_1 \ot FX_2 \ar[r,tick,"\hid"]
	\ar[d," F^2_{X,Y}"']\ar[ddr,phantom,"\Two\sigma^2_{X_1,X_2}"]
	\&
FX_1 \ot FX_2
	\ar[d,"\sigma_{X_1}\ot\sigma_{X_2}"] \\
F(X_1 \ot X_2)
	\ar[d,"\sigma_{X_1 \ot X_2}"']
	\&
F'_{X_1} \ot F' X_2
	\ar[d,"F'^2_{X_1,X_2}"] \\
F'(X_1 \ot X_2)
	\ar[r,tick,"\hid"']
	\&
F'(X_1 \ot X_2)
 \end{tikzcd} \qquad \text{and} \qquad
\begin{tikzcd}[sep=.4in,ampersand replacement=\&]
I
	\ar[d, " F^0"']
	\ar[r, tick, "\hid"]
	\ar[ddr,phantom,"\Two\sigma^0"]
	\&
I
	\ar[dd,"F'^0"]  \\
FI
	\ar[d, "\sigma_I"']
	\&   \\
F'I
	\ar[r, tick, "\hid"']
	\&
F'I \mathrlap{,}
   \end{tikzcd}
\end{equation}
which are invertible in the vertical 2-category $\mathsf{V}(\dc{D})$ and satisfying the following five coherence axioms:
\begin{equation}\label{eq:thisaxiom}
\scalebox{.8}{
 \begin{tikzcd}[sep=.4in,ampersand replacement=\&]
FX_1 \otimes FX_2
	\ar[r,tick,"FM\otimes FN"]
	\ar[d," F^2_{X_1,X_2}"description]
	\ar[dr,phantom,"\Two F^2_{M,N}"]
	\&
FY_1\otimes FY_2
	\ar[ddr,phantom,"\Two\sigma^2"]
	\ar[d," F^2_{Y_1,Y_2}"description]
	\ar[r, tick, "\hid"]
	\&
FY_1 \otimes FY_2
	\ar[d, "\sigma_{Y_1} \otimes \sigma_{Y_2}"description] \\
F(X_1\otimes X_2)
	\ar[r,tick,"F(M\otimes N)"'] \ar[d,"\sigma_{X_1\ot X_2}"description]
	\ar[dr,phantom,"\Two\sigma_{M\otimes N}"]
	\&  F(Y_1 \otimes Y_2)\ar[d,"\sigma_{Y_1 \ot Y_2}"description]
	\&  F'Y_1 \otimes F'Y_2 \ar[d, "F'^2_{Y_1, Y_2}"description]  \\
F'(X_1 \otimes X_2)\ar[r,tick,"F'(M\otimes N)"']
	\&  F'(Y_1 \otimes Y_2) \ar[r, tick, "\hid"']
	\& F'(Y_1 \otimes Y_2)
 \end{tikzcd}} =
 \scalebox{.8}{\begin{tikzcd}[sep=.4in,ampersand replacement=\&]
 FX_1 \otimes FX_2
 	\ar[r, tick, "\hid"]\ar[ddr,phantom,"\Two\sigma^2"]
	\ar[d," F^2_{X_1,X_2}"description]
 	\&
FX_1 \otimes FX_2
	\ar[r,tick,"FM\otimes FN"]\ar[d,"\sigma_{X_1} \otimes\sigma_{X_2}"description]
	\ar[dr,phantom,"\Two\sigma_M\otimes\sigma_N"]
	\&
FY_1\otimes FY_2
	\ar[d,"\sigma_{Y_1} \otimes\sigma_{Y_2} "description] \\
 F(X_1 \otimes X_2)
 	\ar[d, "\sigma_{X_1 \otimes X_2}"description]
	\&
F' X_1 \otimes F'X_2
	\ar[r,tick,"F'M\otimes F'N"']\ar[d,"F'^2_{X_1,X_2}"description]
	\ar[dr,phantom,"\Two F'^2_{M,N}"]
	\&
F'Y_1\otimes F'Y_2
	\ar[d,"F'^2_{Y_1,Y_2}"description] \\
 F'(X_1 \otimes X_2)
 	\ar[r, tick, "\hid"']
	\&
 F'(X_1\otimes X_2) \mathrlap{,}
 	\ar[r,tick,"F'(M\otimes N)"'] \& F'(Y_1 \otimes Y_2) \mathrlap{,}
\end{tikzcd}}
\end{equation}
\begin{displaymath}
 \scalebox{.8}{
\begin{tikzcd}[ampersand replacement=\&]
FX_1 \ot FX_2\ar[d,"Ff\ot Fg"']\ar[r,tick,"\hid"]\ar[dr,phantom,"\Two\hid_{Ff\ot Fg}"] \& FX_1 \ot
FX_2 \ar[d,"Ff\ot Fg"] \\
FX'_1 \ot
FX'_2\ar[r,tick,"\hid"]\ar[d," F^2_{X'_1,X'_2}"']\ar[ddr,phantom,"\Two\sigma^2"] \&
FX'_1 \ot F'_2 \ar[d,"\sigma_{X'_1}\ot\sigma_{X'_2}"] \\
F(X'_1 \ot X'_2)\ar[d,"\sigma_{X'_1 \ot X'_2}"'] \& F'X'_1 \ot F'X'_2\ar[d,"F'^2_{X'_1,X'_2}"] \\
F'(X'_1\ot X'_2)\ar[r,tick,"\hid"'] \& F'(X'_1\ot X'_2)
\end{tikzcd}} =
 \scalebox{.8}{
\begin{tikzcd}[ampersand replacement=\&]
FX_1\ot
FX_2\ar[r,tick,"\hid"]\ar[d," F^2_{X_1,X_2}"']\ar[ddr,phantom,"\Two\sigma^2"]
\& FX_1 \ot FX_2\ar[d,"\sigma_{X_1}\ot\sigma_{X_2}"] \\
F(X_1\ot X_2)\ar[d,"\sigma_{X_1\ot X_2}"'] \& F'X_1\ot F'X_2\ar[d,"F'^2_{X_1,X_2}"] \\
F'(X_1\ot X_2)\ar[dr,phantom,"\Two\hid_{F'(f\ot g)}"]\ar[r,tick,"\hid"]\ar[d,"F'(f\ot g)"'] \&
F'(X_1\ot X_2)\ar[d,"F'(f\ot g)"] \\
F'(X'_1\ot X'_2)\ar[r,tick,"\hid"'] \& F'(X'_1 \ot X'_2) \mathrlap{,}
\end{tikzcd}}
\end{displaymath}
\begin{multline*}
\scalebox{.8}{\begin{tikzcd}[ampersand replacement=\&]
(FX_1\ot FX_2) \ot FX_3
	\ar[d,"\alpha"']
	\ar[r,tick,"\hid"]  \&
(FX_1\ot FX_2) \ot FX_3
	\ar[d,"\alpha"']
	\ar[r,tick,"\hid"] \&
(FX_1\ot FX_2) \ot FX_3
	\ar[d,"\alpha "] \\
FX_1\ot ( FX_2 \ot FX_3)
	\ar[d,"\vid\ot F^2_{X_2,X_3}"']
        \&
FX_1\ot (FX_2 \ot FX_3)
	\ar[d,"\vid\ot F^2_{X_2,X_3}"']
	\ar[r,tick,"\hid"]
	\ar[ddr,phantom,"\Two\hid\ot\sigma^2"] \&
FX_1\ot (FX_2 \ot FX_3)
	\ar[d,"\vid\ot\sigma_{X_2} \ot\sigma_{X_3} "] \\
FX_1\ot F(X_2\ot X_3)
	\ar[dd," F^2_{X_1,X_2 \ot X_3}"']
	\ar[r,tick,"\hid"]\ar[dddr,phantom,"\Two\sigma^2"] \&
FX_1 \ot F(X_2\ot X_3)
	\ar[d,"\vid\ot\sigma_{X_2\ot X_3}"'] \&
FX_1\ot (F'X_2\ot F'X_3)
	\ar[d,"\vid\ot F'^2_{X_2,X_3}"] \\
	\&
FX_1 \ot F'(X_2\ot X_3)
	\ar[d,"\sigma_{X_1} \ot\vid"']
	\ar[r,tick,"\hid"] \&
FX_1\ot F'(X_2\ot X_3)
	\ar[d,"\sigma_X\ot\vid"] \\
F(X_1\ot (X_2 \ot X_3))
	\ar[d,"\sigma_{X_1\ot X_2 \ot X_3}"'] \&
F'X_1\ot F'(X_2\ot X_3)
	\ar[d,"F'^2_{X_1,X_2\ot X_3}"'] \&
F'X_1\ot F'(X_2\ot X_3)
	\ar[d,"F'^2_{X_1,X_2\ot X_3}"]\\
F'(X_1\ot (X_2\ot X_3))
	\ar[r,tick,"\hid"'] \&
F'(X_1\ot (X_2\ot X_3))\ar[r,tick,"\hid"'] \&
F'(X_1\ot (X_2\ot X_3))
\end{tikzcd}}= \\
\scalebox{.8}{\begin{tikzcd}[ampersand replacement=\&]
( FX_1 \ot FX_2) \ot FX_3
 	\ar[d," F^2_{X_1,X_2}\ot\vid"']
	\ar[r,tick,"\hid"] \&
(FX_1\ot FX_2) \ot FX_3
	\ar[d," F^2_{X_1,X_2}\ot\vid"]\ar[r,tick,"\hid"] \&
(FX_1\ot FX_2) \ot FX_3
	\ar[d,"\vid\ot\sigma_{X_3}"']\ar[r,tick,"\hid"] \&
(FX_1\ot FX_2) \ot FX_3
	\ar[d,"\vid\ot\sigma_{X_3}"] \\
F(X_1\ot X_2)\ot FX_3
	\ar[dd," F^2_{X_1\ot Y_2,X_3}"']
	\ar[dddr,phantom,"\Two\sigma^2"]
	\ar[r,tick,"\hid"] \&
F(X_1\ot X_2)\ot FX_3
	\ar[d,"\vid\ot\sigma_{X_3}"] \&
(FX_1\ot FX_2) \ot F'X_3
	\ar[d," F^2_{X_1,X_2}\ot\vid"']
	\ar[r,tick,"\hid"]
	\ar[ddr,phantom,"\Two\sigma^2\ot\hid"] \&
(FX_1\ot FX_2) \ot F'X_3
\ar[d,"\sigma_X\ot\sigma_Y\ot\vid"] \\
	\&
F(X_1\ot X_2)\ot F'X_3
	\ar[d,"\sigma_{X_1\ot X_2}\ot\vid"]
	\ar[r,tick,"\hid"]  \&
F(X_1\ot X_2)\ot F'X_3
	\ar[d,"\sigma_{X_1\ot X_2}\ot\vid"'] \&
(F'X_1\ot F'X_2) \ot F'X_3
	\ar[d,"F'^2_{X_1,X_2}\ot \vid"] \\
F((X_1\ot X_2)\ot X_3)
	\ar[d,"\sigma_{(X_1\ot X_2) \ot X_3}"'] \&
F'(X_1\ot X_2)\ot
F'X_3\ar[d,"F'^2_{X_1\ot X_2,X_3}"]
	\ar[r,tick,"\hid"] \&
F'(X_1\ot X_2)\ot F'X_3
	\ar[r,tick,"\hid"] \&
F'(X_1\ot X_2)\ot F'X_3
	\ar[d,"F'^2_{X_1\ot X_2,X_3}"] \\
F'((X_1 \ot X_2) \ot X_3)
	\ar[r,tick,"\hid"']
	\ar[d, "F(\alpha)"'] \&
F'((X_1\ot X_2) \ot X_3)
	\ar[d, "F(\alpha)"] \&
	\&
F'((X_1\ot X_2)\ot X_3)
	 \ar[d, "F(\alpha)"] \\
F'( X_1 \ot (X_2 \ot X_3))
	\ar[r,tick,"\hid"']  \&
F'( X_1 \ot (X_2 \ot X_3))
	\ar[rr,tick,"\hid"']  \&
	\&
F'( X_1 \ot (X_2 \ot X_3)) \mathrlap{,}
\end{tikzcd}}
\end{multline*}

\begin{displaymath}
\scalebox{.8}{\begin{tikzcd}[ampersand replacement=\&]
FX \ot I \ar[d,"\vid\ot F^0"']\ar[r,tick,"\hid"] \&
FX \ot I \ar[d,"\vid\ot F^0"] \\
FX\ot FI\ar[r,tick,"\hid"]\ar[d," F^2_{X,I}"']\ar[ddr,phantom,"\Two\sigma^2"]
\&
FX\ot FI\ar[d,"\sigma_X\ot\sigma_I"] \\
F(X \ot I) \ar[d,"\sigma_{X \ot I}"'] \& F'X\ot F'I\ar[d,"F'^2_{X,I}"] \\
F'(X \ot I) \ar[r,tick,"\hid"'] \& F'(X \ot I)
\end{tikzcd}}=
\scalebox{.8}{\begin{tikzcd}[ampersand replacement=\&]
FX\ot I \ar[d,"\vid\ot F^0"']\ar[r,tick,"\hid"]\ar[ddr,phantom,"\Two\hid\ot\sigma^0"] \& FX \ot I \ar[dd,"\vid\ot F'^0"] \\
FX\ot FI\ar[d,"\vid\ot\sigma_I"']\&  \\
FX\ot F'I\ar[r,tick,"\hid"]\ar[d,"\sigma_X\ot\vid"'] \& FX\ot
F'I\ar[d,"\sigma_X\ot\vid"] \\
F'X\ot F'I\ar[d,"F'^2_{X,I}"'] \& F'X\ot F'I\ar[d,"F'^2_{X,I}"] \\
F'(X \ot I) \ar[r,tick,"\hid"'] \& F'(X\ot I) \mathrlap{,}
\end{tikzcd}}\;\;
\scalebox{.8}{\begin{tikzcd}[ampersand replacement=\&]
I \ot FX\ar[d," F^0\ot\vid"']\ar[r,tick,"\hid"] \&
I \ot FX\ar[d," F^0\ot\vid"] \\
FI\ot FX\ar[r,tick,"\hid"]\ar[d,"F^2_{I,X}"']\ar[ddr,phantom,"\Two\sigma^2"]
\&
FI\ot FX\ar[d,"\sigma_I\ot\sigma_X"] \\
F(I \ot X) \ar[d,"\sigma_{I\ot X}"'] \& F'I\ot F'X\ar[d,"F'^2_{I,X}"] \\
F'(I \ot X) \ar[r,tick,"\hid"'] \& F'X
 \end{tikzcd}}=
\scalebox{.8}{\begin{tikzcd}[ampersand replacement=\&]
I \ot FX\ar[d," F^0\ot\vid"']\ar[r,tick,"\hid"]\ar[ddr,phantom,"\Two\sigma^0\ot\hid"] \& I \ot FX\ar[dd,"F'^0\ot\vid"] \\
FI\ot FX\ar[d,"\sigma_I\ot\vid"'] \&  \\
F'I\ot FX\ar[r,tick,"\hid"]\ar[d,"\vid\ot\sigma_X"'] \& F'I\ot
FX\ar[d,"\vid\ot\sigma_X"] \\
F'I\ot F'X\ar[d,"F'^2_{I,X}"'] \& F'I\ot F'X\ar[d,"F'^2_{I,X}"] \\
F'(I \ot X) \ar[r,tick,"\hid"'] \& F'(I \ot X) \mathrlap{.}
\end{tikzcd}}
\end{displaymath}
Note that the final four of these axioms only involve structure in the vertical $2$-category $\mathsf{V}(\dc{D})$; and in fact, they correspond exactly to the axioms for a monoidal pseudonatural transformation from~\cite[Definition~3]{Monoidalbicatshopfalgebroids}. More explicitly, the second axiom expresses that the $2$-cells $\sigma^2_{X_1, X_2}$ are components of a modification, while the third through fifth axioms are precisely the three coherence axioms of \emph{loc.~cit.}

If $\sigma^0$ and the components of $\sigma^2$ are identity 2-cells, then $\sigma$ becomes a
\emph{monoidal} vertical transformation in the sense of
\cite[Definition~2.15]{ConstrSymMonBicatsFun}.
In that case, $\sigma_0\colon F_0\Rightarrow F'_0$ and $\sigma_1\colon
F_1\Rightarrow F'_1$ are monoidal transformations in the usual sense between lax
monoidal functors.
\end{defi}

\begin{rmk} \label{rmk:mon-vs-psdmon}
The notion of a monoidal (rather than pseudomonoidal) vertical transformation
is insufficiently general for the situation we are interested in:
the monoidality of the free symmetric monoidal category double monad on the double category of
small categories, functors and profunctors, as considered in \cref{sec:application}. The underlying double
functor of this double monad can be equipped with lax monoidal structure, with respect to which the monad unit is a
monoidal
vertical transformation; however, the monad multiplication is not a monoidal as a vertical transformation, but
only \emph{pseudomonoidal}. This can be seen as a consequence of the fact that the free symmetric monoidal category monad is not
commutative, but only \emph{pseudocommutative} in the sense of~\cite{HylandPower}.
\end{rmk}

We now describe the final piece of structure needed for a double category of monoidal double functors.

\begin{defi}[Monoidal modification] \label{def:monmodif}
Let $\beta,\beta'$ be monoidal horizontal transformations and let
$\sigma,\tau$ be pseudomonoidal vertical transformations, as displayed on the boundary of:
\begin{displaymath}
  \begin{tikzcd}
F\ar[r,Rightarrow,bigtick,"\beta"]\ar[d,Rightarrow,"{\sigma}"']\ar[dr,phantom,"\scriptstyle\Ddownarrow{\gamma}"] & G \ar[d,Rightarrow,"\tau"] \\
F'\ar[r,Rightarrow,bigtick,"{\beta'}"'] & G'\mathrlap{ .}
\end{tikzcd}
\end{displaymath}
A \emph{monoidal modification} $\gamma$ filling this boundary is a modification of the displayed
shape satisfying the axioms:
\begin{equation*}
\begin{tikzcd}
FX_1\ot
FX_2\ar[d," F^2_{X_1,X_2}"']\ar[r,tick,"\beta_{X_1} \ot\beta_{X_2}"]\ar[dr,phantom,
"\Two\beta^2"] & GX_1\ot
GX_2\ar[d," G^2_{X_1,X_2}"]\ar[r,tick,"\hid"]\ar[ddr,phantom,"\Two\tau^2"] & GX_1\ot
GX_2\ar[d,"\tau_{X_1} \ot\tau_{X_2}"] \\
F(X_1\ot X_2)\ar[d,"\sigma_{X_1\ot X_2}"']\ar[r,tick,"\beta_{X_1 \ot
X_2}"']\ar[dr,phantom,"\Two\gamma_{X_1 \ot X_2}"] & G(X_1\ot X_2)\ar[d,"\tau_{X_1\ot X_2}"] &
G'X_1\ot G'X_2\ar[d," G'^2_{X_1,X_2}"] \\
F'(X_1\ot X_2)\ar[r,tick,"\beta'_{X_1\ot X_2}"'] & G'(X_1\ot X_2)\ar[r,tick,"\hid"'] &
G'(X_1\ot X_2)
\end{tikzcd} = \\
\begin{tikzcd}
FX_1\ot FX_2\ar[r,tick,"\hid"]\ar[d," F^2_{X_1,X_2}"']\ar[ddr,phantom,"\Two\sigma^2"] &
FX_1\ot FX_2\ar[d,"\sigma_{X_1}\ot
\sigma_{X_2}"']\ar[r,tick,"\beta_{X_1} \ot\beta_{X_2}"]\ar[dr,phantom,
"\Two\gamma_{X_1} \ot\gamma_{X_2}"] & GX_1\ot GX_2\ar[d,"\tau_{X_1} \ot\tau_{X_2}"] \\
F(X_1\ot X_2)\ar[d,"\sigma_{X_1\ot X_2}"'] & F'X_1\ot
F'X_2\ar[d," F'^2_{X_1,X_2}"']\ar[r,tick,"\beta'_{X_1} \ot\beta'_{X_2}"]\ar[dr,phantom,
"\Two\beta'^2"] & G'X_1\ot G'X_2\ar[d," G'^2_{X_1,X_2}"] \\
F'(X_1 \ot X_2)\ar[r,tick,"\hid"'] & F'(X_1\ot X_2)\ar[r,"\beta'_{X_1\ot X_2}"'] & G'(X_1\ot X_2)
\end{tikzcd}
\end{equation*}
\begin{equation}\label{eq:monmodif}
\begin{tikzcd}
I\ar[d," F^0"']\ar[dr,phantom,"\Two\beta^0"]\ar[r,tick,"\hid"] &
I\ar[d," G^0"]\ar[r,tick,"\hid"]\ar[ddr,phantom,"\Two\tau^0"] & I\ar[dd," G'^0"]\\
FI\ar[r,tick,"\beta_I"']\ar[dr,phantom,"\Two\gamma_I"]\ar[d,"\sigma_I"'] & GI\ar[d,"\tau_I"] & \\
F'I\ar[r,tick,"\beta'_I"'] & G'I\ar[r,tick,"\hid"'] & G'I
\end{tikzcd}\;=\;
\begin{tikzcd}
I\ar[d, " F^0"'] \ar[r, tick, "\hid"]\ar[ddr,phantom,"\Two\sigma^0"] & I\ar[ddr,phantom,"\Two\beta'^0"]\ar[dd,"F'^0"]\ar[r,tick,"\hid"] &
I\ar[dd,"G'^0"]  \\
FI \ar[d, "\sigma_I"'] & &  \\
F'I \ar[r, tick, "\hid"'] & F'I\ar[r,tick,"\beta'_I"'] & G'I\mathrlap{ .}
\end{tikzcd}
\end{equation}
\end{defi}

We now provide an analogue of \cref{thm:functor-dblcat} in the monoidal setting, by constructing a double category of monoidal double functors between two monoidal
double categories $\dc{C}$ and $\dc{D}$. It would be routine to construct
a double category of lax monoidal double functors, monoidal vertical transformations, monoidal horizontal
transformations, and monoidal modifications; however, because we wish to involve \emph{pseudomonoidal} vertical transformations, a little more care is needed in checking the details.

\begin{prop}[Functor double categories, monoidal case] \label{thm:functor-dblcat-monoidal} Let $\dc{C}$,
$\dc{D}$  be monoidal double categories. There is a double category
$\cat{MonDblCat}[\dc{C}, \dc{D}]$
of lax monoidal (pseudo) double functors, pseudomonoidal vertical transformations,
monoidal  horizontal transformations, and monoidal modifications.
\end{prop}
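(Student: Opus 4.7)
The plan is to build directly on \cref{thm:functor-dblcat}, regarding $\cat{MonDblCat}[\dc{C},\dc{D}]$ as a sub-structure of $\nc{DblCat}[\dc{C},\dc{D}]$ (not full, since only horizontal and vertical $1$-cells and $2$-morphisms respecting the extra monoidal data are allowed). The bulk of the work is to define the monoidal enhancements of identities and composites, and to verify that the resulting data satisfy the axioms in \crefrange{def:monoidaldoublefunctor}{def:monmodif}. The associator and unitor globular $2$-morphisms for horizontal composition will be inherited unchanged from $\nc{DblCat}[\dc{C},\dc{D}]$, so once we show they define monoidal modifications, the coherence axioms for a double category are automatic.

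For vertical composition of pseudomonoidal vertical transformations $\sigma \colon F \Rightarrow F'$ and $\sigma' \colon F' \Rightarrow F''$, I would define $(\sigma'\sigma)^2_{X_1,X_2}$ by stacking $\sigma^2_{X_1,X_2}$ above $\sigma'^2_{X_1,X_2}$ (the intermediate vertical $1$-cells compose strictly, so no connecting isomorphism is required), and likewise for $(\sigma'\sigma)^0$. The identity $\vid_F$ gets the obvious identity enhancements $\vid_F^2 = \hid$ and $\vid_F^0 = \hid$. For horizontal composition of monoidal horizontal transformations $\beta \colon F \ticktwoar G$ and $\delta \colon G \ticktwoar H$, I would paste $\beta^2_{X_1,X_2}$ horizontally next to $\delta^2_{X_1,X_2}$ (absorbing a coherence isomorphism for $\ot$ on horizontal $1$-cells that is invertible since $\dc{C}$ and $\dc{D}$ are genuinely monoidal), to obtain $(\delta\beta)^2_{X_1,X_2}$, and analogously for $(\delta\beta)^0$. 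The horizontal identity $\hid_F$ acquires its monoidal data from the structure isomorphisms of the (pseudo) double functor $F$. Verification that the resulting $\sigma'\sigma$, $\delta\circ\beta$ and $\hid_F$ satisfy the axioms of \cref{def:monhortransf} and \cref{def:montransf} is a routine but lengthy pasting exercise using naturality and the corresponding axioms for the constituents.

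For modifications, the existing compositions in $\nc{DblCat}[\dc{C},\dc{D}]$ restrict correctly: given monoidal modifications, the horizontal and vertical pastes satisfy the axioms of \cref{def:monmodif} by direct pasting calculations, combining the axioms for the constituent modifications with the axioms for the enhancements $\beta^2,\beta^0,\sigma^2,\sigma^0$ appearing on the boundary. Likewise, the globular associator/unitor $2$-morphisms of $\nc{DblCat}[\dc{C},\dc{D}]$ are monoidal modifications: their components, being constraint cells for horizontal composition in $\dc{D}$, automatically intertwine with the $F^2$, $F^0$ of the relevant monoidal structures by coherence.

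The main obstacle is the verification of the five coherence axioms of \cref{def:montransf} for the vertical composite $(\sigma'\sigma)^2, (\sigma'\sigma)^0$. Each axiom for $\sigma'\sigma$ will decompose into two stacked axioms, one for $\sigma$ and one for $\sigma'$, but only after a careful pasting manipulation that uses naturality of the intermediate $\sigma^2$ and $\sigma'^2$ squares, together with the (pseudo) functoriality of $F'$, to align the two. The nontrivial input here is that pseudomonoidality (rather than strict monoidality) of the components means one cannot simply concatenate axioms on the nose; instead, one argues that the resulting $2$-morphisms are equal as companion transposes in the vertical $2$-category $\mathsf{V}(\dc{D})$, thereby leveraging the fact that the underlying $2$-natural transformations $\sigma_0, \sigma'_0$ are monoidal pseudonatural transformations in the sense of~\cite{Monoidalbicatshopfalgebroids}.
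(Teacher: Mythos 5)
Your proposal follows the same overall strategy as the paper: define the pseudomonoidal and monoidal enhancements of vertical and horizontal composites by whiskering and pasting the constituent constraint cells (the paper's equations \eqref{eq:cdot} and the displayed pasting for $(\gamma\circ\beta)^2$), observe the identities have canonical enhancements, and verify the coherence axioms by direct pasting using the axioms of the constituents. Your observation that the intermediate vertical $1$-cells compose strictly, so no mediating isomorphism is needed in the vertical composite, and that a $\tau$-interchange isomorphism is needed at the top of the horizontal composite, both match what the paper does.

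One point in your last paragraph is off. You say that to check the five axioms for $\sigma'\sigma$ one ``argues that the resulting $2$-morphisms are equal as companion transposes in the vertical $2$-category $\mathsf{V}(\dc{D})$.'' Companion transposes play no role here. The constraint cells $\sigma^2, \sigma^0$ are already $2$-cells of $\mathsf{V}(\dc{D})$ (their sources and targets are horizontal identities), and the verification that the stacked composite satisfies the axioms of \cref{def:montransf} is a direct pasting computation in $\mathsf{V}(\dc{D})$, using $2$-naturality of $\sigma^2, \tau^2$ and functoriality of $\otimes$, not any companion machinery. The reference to \cite{Monoidalbicatshopfalgebroids} is apt inasmuch as four of the five axioms for a pseudomonoidal vertical transformation restrict to the vertical $2$-category and reproduce the notion of monoidal pseudonatural transformation there; but that is a fact about the form of the axioms, not a proof technique one leverages. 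Also a minor terminological point: $\cat{MonDblCat}[\dc{C},\dc{D}]$ is not literally a sub-double-category of $\cat{DblCat}[\dc{C},\dc{D}]$ (the same underlying double functor may admit several lax monoidal structures); there is rather a forgetful double functor between them. Neither issue affects the substance of your argument.
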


Note that in \cref{thm:functor-dblcat}, we considered \emph{oplax} double functors; here we consider only (pseudo) double functors, but endowed with \emph{lax} monoidal structure. While it certainly would be possible to consider ``lax monoidal oplax double functors'', this is not needed for our applications.

\begin{proof}
We first show that lax monoidal double functors and pseudomonoidal
vertical transformations form a category. Given pseudomonoidal vertical transformations $\sigma \colon F \Rightarrow F'$ and $\tau \colon F' \Rightarrow F''$, we endow the composite vertical
transformation $\tau\cdot\sigma\colon F\Rightarrow F''$
with pseudomonoidal structure via the pasting~composites:
\begin{equation}\label{eq:cdot}
\begin{tikzcd}
F{X_1}\ot F{X_2}\ar[d," F^2_{{X_1},{X_2}}"']\ar[r,tick,"\hid"]\ar[ddr,phantom,"\Two\sigma^2"] &
F{X_1}\ot F{X_2}\ar[d,"\sigma_{X_1}\ot\sigma_{X_2}"]\ar[r,tick,"\hid"] & F{X_1}\ot
F{X_2}\ar[d,"\sigma_{X_1}\ot\sigma_{X_2}"] \\
F({X_1}\ot {X_2})\ar[d,"\sigma_{{X_1}\ot {X_2}}"'] & F'{X_1}\ot
F'{X_2}\ar[d," F'^2_{{X_1},{X_2}}"]\ar[r,tick,"\hid"]\ar[ddr,phantom,"\Two\tau^2"] & F'{X_1}\ot
F'{X_2}\ar[d,"\tau_{X_1}\ot\tau_{X_2}"] \\
F'({X_1}\ot {X_2})\ar[d,"\tau_{{X_1}\ot {X_2}}"']\ar[r,tick,"\hid"'] & F'({X_1}\ot
{X_2})\ar[d,"\tau_{{X_1}\ot
{X_2}}"] & F''{X_1}\ot F''{X_2}\ar[d," F''^2_{{X_1},{X_2}}"] \\
F''({X_1}\ot {X_2})\ar[r,tick,"\hid"'] & F''({X_1}\ot {X_2})\ar[r,tick,"\hid"'] & F''({X_1}\ot {X_2})
\end{tikzcd}\qquad
\begin{tikzcd}
I\ar[d," F^0"']\ar[r,tick,"\hid"]\ar[ddr,phantom,"\Two\sigma^0"] & I\ar[r,tick,"\hid"]\ar[dd," F'^0"]\ar[dddr,phantom,"\Two\tau^0"] &
I\ar[ddd," F''^0"] \\
FI\ar[d,"\sigma_I"'] &&  \\
F'I\ar[d,"\tau_I"']\ar[r,tick,"\hid"] & F'I\ar[d,"\tau_I"] &  \\
F''I\ar[r,tick,"\hid"'] & F''I\ar[r,tick,"\hid"'] & F''I\mathrlap{ .}
\end{tikzcd}
\end{equation}
It is now routine to verify the pseudomonoidal vertical transformation axioms for $\tau \cdot \sigma$, and to check that this composition law is associative and unital, so yielding the desired category.

We next show that monoidal horizontal transformations and monoidal modifications
form a category; for which it suffices to verify that, given a pair of composable monoidal modifications, their composite \emph{qua} modification, as in~\cref{def:monmodif}, is again monoidal. This is straightforward.

We now provide the horizontal composition law for $\cat{MonDblCat}[\dc{C}, \dc{D}]$. Given monoidal horizontal transformations $\beta \colon F \ticktwoar G$ and $\gamma \colon G \ticktwoar H$, we endow the composite horizontal transformation $\gamma \circ \beta$ with monoidal structure via the pastings:
\begin{equation*}
  \cd[@C+1.5em]{
    F{X_1} \otimes F{X_2}
    \ar@{=}[d]_-{}
    \ar|@{|}[rr]^-{(\gamma_{X_1} \circ \beta_{X_1}) \ot (\gamma_{X_2} \circ \beta_{X_2})}
    \dtwocell{drr}{\tau} & &
    G{X_1} \otimes G{X_2}
    \ar@{=}[d]_-{}\\
    F{X_1} \otimes F{X_2}
    \ar|@{|}[r]^-{\beta_{X_1} \otimes \beta_{X_2}}
    \ar[d]_-{ F^2_{X_1,{X_2}}}
    \dtwocell{dr}{\beta^2_{X_1,{X_2}}} &
    G{X_1} \otimes G{X_2}
    \ar|@{|}[r]^-{\gamma_{X_1} \otimes \gamma_{X_2}}
    \ar[d]^-{ G^2_{X,{X_2}}}
    \dtwocell{dr}{\gamma^2_{X_1,{X_2}}} &
    H{X_1} \otimes H{X_2}
    \ar[d]^-{ G^2_{X,{X_2}}} \\
    F({X_1} \otimes {X_2})
    \ar|@{|}[r]_-{\beta_{{X_1} \otimes {X_2}}} &
    G({X_1} \otimes {X_2})
    \ar|@{|}[r]_-{\gamma_{{X_1} \otimes {X_2}}} &
    H({X_1} \otimes {X_2})
  } \quad\text{and} \quad
  \cd[@C+2em]{
    I
    \ar@{=}[d]_-{}
    \ar|@{|}[rr]^-{\hid_{I}}
    \twocong{drr} & & I
    \ar@{=}[d]_-{}\\
      I
      \ar[d]_-{ F^0}
      \ar|@{|}[r]^-{\hid_{I}}
      \dtwocell{dr}{\beta^0} &
      I
      \ar[d]^-{ G^0}
      \ar|@{|}[r]^-{\hid_{I}}
      \dtwocell{dr}{\gamma^0} &
      I
      \ar[d]^-{ H^0}  \\
      FI
      \ar|@{|}[r]_-{\beta_{I}} &
      GI
      \ar|@{|}[r]_-{\gamma_{I}} &
      HI\mathrlap{ .}
    }
  \end{equation*}
Direct verification yields the horizontal transformation axioms.
To make the assignment $\beta, \gamma \mapsto \gamma \circ \beta$ into a functor, it now suffices to observe that that the horizontal composition of two monoidal
modifications \emph{qua} modification is again monoidal; this is again a matter of direct verification. Finally, the globular constraints $a,\ell,r$ of $\cat{MonDblCat}[\dc{C}, \dc{D}]$ are inherited from $\cat{DblCat}[\dc{C}, \dc{D}]$, and it is simply a matter of checking that these are indeed monoidal modifications.
\end{proof}

The next result builds on \cref{lem:2}.

\begin{prop}
  \label{lem:1}
  A pseudomonoidal vertical transformation $\sigma \colon F
  \Rightarrow F'$  has a companion as a vertical 1-cell of $\cat{MonDblCat}[\dc{C}, \dc{D}]$ if and only if the underlying vertical
  transformation of $\sigma$ has a companion as a vertical 1-cell of
  $\cat{DblCat}[\dc{C}, \dc{D}]$, {i.e.}~it is special.
\end{prop}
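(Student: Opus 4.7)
The ``only if'' direction is straightforward: there is an evident forgetful double functor $\cat{MonDblCat}[\dc{C}, \dc{D}] \to \cat{DblCat}[\dc{C}, \dc{D}]$ sending a pseudomonoidal vertical transformation to its underlying vertical transformation. Applying this to a companion of $\sigma$ in $\cat{MonDblCat}[\dc{C}, \dc{D}]$ and invoking \cref{lem:Ff} yields a companion of the underlying transformation in $\cat{DblCat}[\dc{C}, \dc{D}]$, which by \cref{lem:2} means that $\sigma$ is special.

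The converse is the substantive direction, and my plan is as follows. Assume $\sigma$ is special, so that by \cref{lem:2} its underlying vertical transformation admits a companion $\comp\sigma \colon F \ticktwoar F'$ in $\cat{DblCat}[\dc{C}, \dc{D}]$ witnessed by modifications $p_1, p_2$ as in \cref{eq:7}. I would lift this to a companion in $\cat{MonDblCat}[\dc{C}, \dc{D}]$ by (a) equipping $\comp\sigma$ with monoidal structure cells $\comp\sigma^2_{X_1, X_2}$ and $\comp\sigma^0$ as in \cref{eq:1}, making it a monoidal horizontal transformation in the sense of \cref{def:monhortransf}, and (b) checking that $p_1, p_2$ satisfy the axioms of a monoidal modification \eqref{eq:monmodif} with respect to this monoidal structure.

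The key tool for (a) is that $\otimes \colon \dc{D} \times \dc{D} \to \dc{D}$ is a pseudo double functor, so by \cref{lem:Ff} the vertical $1$-cell $\sigma_{X_1} \otimes \sigma_{X_2}$ admits $\comp\sigma_{X_1} \otimes \comp\sigma_{X_2}$ as a companion. Using this together with the companion of $\sigma_{X_1 \otimes X_2}$, the companion transpose bijection of \cref{prop:companionomnibus}\ref{omni-iii} applied to the horizontal identities in the boundary of $\sigma^2_{X_1, X_2}$ yields a unique 2-morphism $\comp\sigma^2_{X_1, X_2}$ with the boundary required by \cref{eq:1}, obtained by pasting $\sigma^2_{X_1, X_2}$ with the structure cells $p_1$ of $\sigma_{X_1 \otimes X_2}$ and $p_2$ of $\sigma_{X_1} \otimes \sigma_{X_2}$. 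The cell $\comp\sigma^0$ is defined analogously from $\sigma^0$ and the companion structure of $\sigma_I$. By construction, the binary axiom of \eqref{eq:monmodif} for the modification $p_2$ is satisfied; the corresponding axiom for $p_1$ then follows by the uniqueness part of \cref{prop:companionomnibus}\ref{omni-iii}, since both sides of that axiom share a common companion transpose. In the same spirit, the axioms of \cref{def:monhortransf} for $(\comp\sigma, \comp\sigma^2, \comp\sigma^0)$ translate, under companion transpose, into the pseudomonoidal axioms for $(\sigma, \sigma^2, \sigma^0)$ listed in \cref{def:montransf}.

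The hard part will be the diagrammatic bookkeeping: each verification involves a sizeable pasting composite of the cells $p_1, p_2, \sigma^2, \sigma^0$ and the various unit and associativity constraints of $\dc{D}$. However, no genuinely new identity need be established, because the universal property of \cref{prop:companionomnibus}\ref{omni-iii} translates every axiom to be checked in $\cat{MonDblCat}[\dc{C}, \dc{D}]$ into one that is either part of the given pseudomonoidal structure on $\sigma$ or an immediate consequence of the vertical transformation axioms.
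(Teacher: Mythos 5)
Your proposal matches the paper's own proof in both structure and substance: the ``only if'' direction is handled via the forgetful double functor, and for the converse both you and the paper define $\comp\sigma^2$ and $\comp\sigma^0$ as companion transposes of $\sigma^2$ and $\sigma^0$ (using that $\ot$ preserves companions), then verify that the resulting data satisfy the monoidal horizontal transformation axioms and that $p_1, p_2$ become monoidal modifications. The extra detail you give on reducing each axiom to a transpose of a known identity is consistent with the paper's remark that the verifications are ``lengthy but straightforward''.
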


\begin{proof}
  The `only if' direction is trivial: if $\sigma$ has a companion in $\cat{MonDblCat}[\dc{C}, \dc{D}]$, then applying the forgetful double functor $\cat{MonDblCat}[\dc{C}, \dc{D}] \rightarrow \cat{DblCat}[\dc{C}, \dc{D}]$ shows it has a companion in $\cat{DblCat}[\dc{C}, \dc{D}]$. For the `if' direction, given a pseudomonoidal transformation $\sigma$ as in
\cref{def:montransf}, the additional necessary data for the induced horizontal transformation $\comp{\sigma}\colon F\ticktwoar F'$ as described in the
proof of \cref{lem:2} to be monoidal are
the cells
of~\eqref{eq:1}. We obtain these as companion transposes of the
structure data \cref{eq:sigma2} of the pseudomonoidal vertical transformation
$\sigma$, as in:
\begin{displaymath}
(\comp{\sigma})^2_{X_1,X_2}\defeq \begin{tikzcd}
FX_1\ot
FX_2\ar[d,equal]\ar[r,tick,"\comp{\sigma}_{X_1}\ot\comp{\sigma}_{X_2}"]\ar[dr,
phantom,"\cong"] & F'X_1\ot F'X_2\ar[d,equal] \\
FX_1\ot
FX_2\ar[d," F^2_{X_1,X_2}"']\ar[r,tick,"\comp{\sigma_{X_1} \ot\sigma_{X_2}}"]\ar[
dr , phantom,"\Two\comp{\sigma^2}"] & F'X_1\ot F'X_2\ar[d,"F'^2_{X_1,X_2}"] \\
F(X_1\ot X_2)\ar[r,tick,"\comp{\sigma_{X_1\ot X_2}}"'] & F'(X_1\ot X_2)
 \end{tikzcd}\qquad
(\comp{\sigma})^0\defeq\begin{tikzcd}
I\ar[r,tick,"\hid_I"]\ar[d," F^0"']
\ar[dr,phantom,"\Two\comp{\sigma^0}"] & I\ar[d,"F'^0"] \\
FI\ar[r,tick,"\comp{\sigma}_I"'] & F'I
\end{tikzcd}
\end{displaymath}
where the top-left isomorphism arises due to the double functor $\ot$
preserving companions. That this makes $\comp \sigma$ into a monoidal horizontal transformation can now be checked by lengthy, but straightforward, calculations. Similarly, it is straightforward to verify that with respect to this structure, the companion $2$-morphisms $p_1$ and $p_2$ in $\cat{DblCat}[\dc{C}, \dc{D}]$ are monoidal, and so lift to $\cat{MonDblCat}[\dc{C}, \dc{D}]$ as required.
\end{proof}

\section{Monoids in monoidal double categories}
\label{sec:monoids}

In this section, we consider horizontal and vertical monoids in a monoidal double category.
When instantiated in the monoidal double categories $\cat{DblCat}[\dc{C},\dc{C}]$ and $\cat{MonDblCat}[\dc{C},\dc{C}]$ of
\cref{thm:functor-dblcat,thm:functor-dblcat-monoidal}, these will give us the notions of horizontal and vertical double monad, and of monoidal
horizontal and vertical double monad respectively, to be considered in \cref{sec:dbl-monad,sec:mon-dbl-monad}.

We begin with the notion of a horizontal monoid in a monoidal double category. This
is analogous to a pseudomonoid in a monoidal bicategory, in that the
associativity and unit axioms do not hold on the nose, but rather up to invertible squares.

\begin{defi} \label{def:hordoublemonoid}
  Let $\dc{C}$ be a monoidal double category. A \emph{horizontal monoid} in $\dc{C}$
  consists of:
    \begin{itemize}
    \item an object $A$;
    \item horizontal 1-cells $m \colon A \otimes A \tor A$ and $e \colon
      I \tor A$;
    \item invertible cells
      \begin{equation}\label{eq:structurehorizontalpseudo}
      \begin{tikzcd}
   (A\ot A)\ot A\ar[d,"\alpha"']\ar[r,tick,"m\ot\hid"]\ar[drr,phantom,"\Two\mathfrak{a}"] & A\ot A\ar[r,tick,"m"] & A\ar[d,equal] \\
   A\ot(A\ot A)\ar[r,tick,"\hid\ot m"'] & A\ot A\ar[r,tick,"m"'] & A
      \end{tikzcd}\ \
      \begin{tikzcd}
       A\ot I\ar[d,"\rho"']\ar[drr,phantom,"\Two\mathfrak{r}"]\ar[r,tick,"\hid\ot e"] & A\ot A\ar[r,tick,"m"] & A\ar[d,equal] \\
       A\ar[rr,tick,"\hid"'] && A
      \end{tikzcd}\ \ \text{and} \ \
      \begin{tikzcd}
       I\ot A\ar[d,"\lambda"']\ar[r,tick,"e\ot\hid"]\ar[drr,phantom,"\Two\mathfrak{l}"] & A\ot A\ar[r,tick,"m"] & A\ar[d,equal] \\
       A\ar[rr,tick,"\hid"'] && A \mathrlap{.}
      \end{tikzcd}
      \end{equation}
    \end{itemize}
    These data are required to satisfy the coherence axioms that:
\begin{equation*}
\scalebox{.85}{
\begin{tikzcd}[ampersand replacement=\&]
((A \otimes A) \otimes A) \otimes A \ar[d,"\alpha \otimes \vid"'] \ar[r,tick,"(m \otimes \hid) \otimes \hid"] \ar[drr,phantom,"\Two{\mathfrak a}\otimes 1"] \& (A \otimes A) \otimes A
\ar[r,tick,"m \otimes \hid"] \& A \otimes A \ar[d,equal] \ar[r,tick,"m"] \ar[dr,phantom,"\Two\vid_m"] \& A \ar[d,equal] \\
(A \otimes (A \otimes A)) \otimes A \ar[r,tick,"(\hid \otimes m) \otimes \hid"'] \ar[d,"\alpha"'] \ar[dr,phantom,"\Two\alpha"] \& (A \otimes A) \otimes A \ar[d,"\alpha"'] \ar[r,tick,"m \otimes \hid"']\ar[drr,phantom,"\Two {\mathfrak a}"] \& A \otimes A \ar[r,tick,"m"'] \& A \ar[d,equal] \\
A \otimes ((A \otimes A) \otimes A) \ar[r,tick,"\hid \otimes (m \otimes \hid)"'] \ar[d,"\vid\otimes \alpha"'] \ar[drr,phantom,"\Two 1 \otimes {\mathfrak a}"] \& A \otimes (A \otimes A) \ar[r,tick,"\hid \otimes m"'] \& A \otimes A \ar[r,tick,"m"']\ar[d,equal] \ar[dr,phantom,"\Two\vid_m"] \& A \ar[d,equal] \\
A \otimes (A \otimes (A \otimes A)) \ar[r,tick,"\hid \otimes (\hid \otimes m)"'] \& A \otimes (A \otimes A)  \ar[r,tick,"\hid \otimes m"'] \& A \otimes A \ar[r,tick,"m"'] \& A
\end{tikzcd}=
\begin{tikzcd}[ampersand replacement=\&]
((A \otimes A) \otimes A) \otimes A \ar[d,"\alpha"']\ar[r,tick,"(m \otimes \hid) \otimes \hid"']\ar[dr,phantom,"\Two\alpha"] \& (A \otimes A) \otimes A \ar[d,"\alpha"'] \ar[r,tick,"m \otimes \hid"] \ar[drr,phantom,"\Two{\mathfrak a}"] \& A \otimes A \ar[r,tick,"m"] \& A\ar[d,equal] \\
(A \otimes A) \otimes (A \otimes A) \ar[r,tick,"m \otimes \hid"] \ar[d,equal] \ar[drr,phantom,"\Two\cong"] \& A \otimes (A \otimes A) \ar[r,tick,"\hid \otimes m"'] \& A \otimes A \ar[r,tick,"m"'] \ar[d,equal] \ar[dr,phantom,"\Two\vid_m"] \& A \ar[d,equal] \\
(A \otimes A) \otimes (A \otimes A) \ar[r,tick,"\hid \otimes m"'] \ar[d,"\alpha"'] \ar[dr,phantom,"\Two\alpha"] \& (A \otimes A) \otimes A \ar[r,tick,"m \otimes \hid"'] \ar[d,"\alpha"'] \ar[drr,phantom,"\Two{\mathfrak a}"] \& A \otimes A \ar[r,tick,"m"'] \& A\ar[d,equal] \\
A \otimes (A \otimes (A \otimes A)) \ar[r,tick,"\hid \otimes (\hid \otimes m)"'] \& A \otimes (A \otimes A) \ar[r,tick,"\hid \otimes m"'] \& A \otimes A \ar[r,tick,"m"'] \& A
\end{tikzcd}}
\end{equation*}
\begin{displaymath}
\scalebox{.85}{\begin{tikzcd}[ampersand replacement=\&]
(A \otimes I) \otimes A \ar[r,tick,"(\hid \otimes e) \otimes \hid"] \ar[d,"\alpha"'] \ar[dr, phantom,"\Two\alpha"] \& (A \otimes A) \otimes A \ar[r,tick,"m \otimes \hid"] \ar[d,"\alpha"']\ar[drr,phantom,"\Two{\mathfrak a}"] \& A \otimes A \ar[r,tick,"m"] \& A \ar[d,equal] \\
A \otimes (I \otimes A)  \ar[d,"\vid \otimes \lambda"'] \ar[drr,phantom,"\Two 1\otimes{\mathfrak l}"] \ar[r,tick,"\hid \otimes (e \otimes \hid)"'] \& A \otimes (A \otimes A)
\ar[r,tick,"\hid \otimes m"'] \& A \otimes A \ar[dr,phantom,"\Two\vid_m"] \ar[d,equal] \ar[r,tick,"m"] \& A \ar[d,equal] \\
A \otimes A \ar[rr,tick,"\hid"'] \& \& A \otimes A \ar[r,tick,"m"'] \& A
\end{tikzcd}=
\begin{tikzcd}[ampersand replacement=\&]
(A \otimes I) \otimes A \ar[d,"\rho \otimes \vid"']\ar[r,tick,"(\hid \otimes e) \otimes \hid"]
\ar[drr,phantom,"\Two{\mathfrak r} \otimes 1"] \& (A \otimes A) \otimes A \ar[r,tick,"m \otimes \hid"] \& A \otimes
A\ar[d,equal]\ar[r,tick,"m"]\ar[dr,phantom,"\Two\vid_m"] \& A\ar[d,equal] \\
A \otimes A \ar[rr,tick,"\hid"'] \&\& A \otimes A \ar[r,tick,"m"'] \& A
\end{tikzcd}}
\end{displaymath}
\end{defi}

\begin{rmk}\label{rem:pseudomonhorizontalbicat}
  As discussed in \cref{sec:mon-dbl-cat}, under fairly mild conditions the horizontal bicategory $\ca{H}(\dc{C})$ of a monoidal double category $\dc{C}$ will have the structure of a monoidal double category, whose monoidal associativity and unit constraint $1$-cells are the companions of the corresponding constraints for $\dc{C}$. In this situation, horizontal monoids in $\dc{C}$ correspond to pseudomonoids in $\ca{H}(\dc{C})$ by taking the companion transposes of the coherence data~\eqref{eq:structurehorizontalpseudo}.
\end{rmk}

  \begin{defi}[Vertical monoid]  \label{def:verdoublemonoid}
    Let $\dc{C}$ be a monoidal double category. A \emph{vertical monoid} in $\dc{C}$ is a monoid in the monoidal
    category $\dc{C}_0$. Explicitly, it is an object $A$
    endowed with vertical 1-cells $m \colon A \otimes A \rightarrow A$
    and $e \colon I \rightarrow A$ satisfying the usual associativity  and unit
     laws.
\end{defi}

The next result shows how we may induce horizontal monoids from vertical ones, and will be applied in \cref{cor:horizontalpseudomonad}, relating
horizontal and vertical double monads, and \cref{cor:2}, relating monoidal horizontal and vertical double monads.

\begin{thm}\label{prop:verticalgiveshorizontal}
 Let $\dc{C}$ be a monoidal double category and $(A, m, e)$ be a vertical
 monoid in $\dc{C}$, such that $m$ and $e$ have companions. The companion transposes
\begin{displaymath}
\begin{tikzcd}
(A \otimes A) \otimes A
\ar[drr,phantom,"\Two \mathfrak a"]\ar[r,tick,"
\comp{m}\otimes\hid"]\ar[d,"\alpha"'] & A \otimes A \ar[r,tick,"\comp{m}"] &
A \ar[d,equal] \\
A \otimes (A\otimes A)  \ar[r,tick,"\hid\otimes\comp{m}"'] & A
\otimes A \ar[r,tick,"\comp{m}"'] & A
\end{tikzcd}\;\;
\begin{tikzcd}
A\otimes I \ar[drr,phantom,"\Two {\mathfrak r}"]\ar[d,"\rho"']\ar[r,tick,"\hid \otimes\comp{e}"]& A\otimes A\ar[r,tick,"\comp{m}"] &
A \ar[d,equal] \\
A \ar[rr,tick,"\hid"'] && A
\end{tikzcd}\;\;
\begin{tikzcd}
I\otimes
A\ar[r,tick,"\comp{e}\otimes\hid"]\ar[drr,phantom,"\Two{\mathfrak l}"]
\ar [ d , "\lambda"' ]
& A \otimes A \ar[r,tick,"\comp{m}"] & A\ar[d,equal] \\
A\ar[rr,tick,"\hid"'] && A
\end{tikzcd}
\end{displaymath}
 of the monoid identities endow $(A, \comp m, \comp e)$ with the
structure of a horizontal pseudomonoid.
\end{thm}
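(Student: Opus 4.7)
The plan is to construct the invertible $2$-morphisms $\mathfrak{a}$, $\mathfrak{l}$, $\mathfrak{r}$ directly by pasting with the universal structure cells $p_1, p_2$ of the companions $\comp{m}$, $\comp{e}$, and to verify the coherence axioms by appealing to the uniqueness of such universal constructions. The central observation is that, because $(A, m, e)$ is a vertical monoid, the associativity and unit laws hold as strict equations of vertical $1$-cells in $\dc{C}_0$:
\[
m \circ (m \otimes \vid_A) = m \circ (\vid_A \otimes m) \circ \alpha,\quad m \circ (e \otimes \vid_A) = \lambda_A,\quad m \circ (\vid_A \otimes e) = \rho_A.
\]
Since $\otimes \colon \dc{C} \times \dc{C} \to \dc{C}$ is a pseudo double functor, Lemma~\ref{lem:Ff} provides canonical companions $\comp{m} \otimes \hid_A$ for $m \otimes \vid_A$ (and similarly $\hid_A \otimes \comp{m}$, $\comp{e} \otimes \hid_A$, $\hid_A \otimes \comp{e}$); combined with \cref{prop:companionomnibus}(ii), both sides of each displayed equation acquire canonical companion horizontal $1$-cells in $\dc{C}$.

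Given this, I would construct each of $\mathfrak{a}, \mathfrak{l}, \mathfrak{r}$ as the pasting composite of the $p_1$-cells for $\comp{m}$ and $\comp{e}$ (suitably tensored using the pseudo-functoriality of $\otimes$), with the identification of left legs provided by the strict monoid equations above. For invertibility, I would apply \cref{prop:companionomnibus}(vi) to the identity $2$-cell in $\mathsf{V}(\dc{C})$ witnessing each monoid equation: this produces an invertible globular $2$-morphism between the two canonically built companions, which after the pasting of $p_1$-cells yields precisely the invertible $\mathfrak{a}, \mathfrak{l}, \mathfrak{r}$ of the statement.

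For the pentagon and triangle coherence axioms, both sides of each equation are, by construction, built solely from pastings of the $p_1$-cells for $\comp{m}, \comp{e}$ and the coherence isomorphisms of $\otimes$ and of composition of companions. Passing through the bijection of \cref{prop:companionomnibus}(iii), each side corresponds to a canonical globular $2$-morphism between companions of equal vertical $1$-cells in $\dc{C}_0$ -- where the equality is exactly the monoid pentagon or triangle in $\dc{C}_0$, which holds on the nose. By the uniqueness of canonical companion isomorphisms, the two sides must agree.

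The main obstacle is the bookkeeping: organising the various canonical isomorphisms $\comp{g \circ f} \cong \comp{g} \circ \comp{f}$, $\comp{f \otimes g} \cong \comp{f} \otimes \comp{g}$, and the unit/associativity constraints $\alpha, \lambda, \rho$, so that the reduction to vertical data is manifest. However, since every $2$-cell in sight is determined by the universal property of companions, no genuine calculation is required; the coherence axioms for $(\comp m, \comp e)$ are forced by those of the underlying monoid $(A,m,e)$ in $\dc{C}_0$.
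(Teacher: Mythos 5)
Your overall architecture matches the paper's: build $\mathfrak a$, $\mathfrak l$, $\mathfrak r$ by transposing the vertical monoid axioms across companion structure cells, and settle coherence by uniqueness of universal lifts. But your specific appeals to \cref{prop:companionomnibus}\ref{omni-iii} and \ref{omni-vi} implicitly require the constraint vertical $1$-cells $\alpha$, $\lambda$, $\rho$ (and their tensorings with identities) to admit companions, which is \emph{not} among the theorem's hypotheses---only $m$ and $e$ are assumed to have companions, and in a general monoidal double category an invertible vertical $1$-cell need not have one.

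For invertibility, note that $\mathfrak a$ is not a globular cell: its left boundary is $\alpha$. Applying \cref{prop:companionomnibus}\ref{omni-vi} to the identity $2$-cell witnessing $m\circ(m\ot\vid) = m\circ(\vid\ot m)\circ\alpha$ only produces the (trivially invertible) identity isomorphism of $\widehat{m(m\ot\vid)}$ with itself; to convert this into the cell $\mathfrak a$ (and in particular to decompose $\widehat{m(\vid\ot m)\alpha}$ as $\comp m\circ(\hid\ot\comp m)\circ\comp\alpha$) one would need $\alpha$ to have a companion. The paper avoids this by instead constructing $\mathfrak a^{-1}$ directly as the transpose (via the opcartesian/cartesian universal property of \cref{lem:universalcompcoj}, rather than the full bijection of \cref{prop:companionomnibus}\ref{omni-iii}) of the reversed identity involving $\alpha^{-1}$, which is available since $\alpha$, $\rho$, $\lambda$ are invertible vertical $1$-cells; one then checks that the two candidate cells compose to identities by the same universal property. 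The same issue affects your coherence argument: the legs of the pentagon and triangle diagrams in \cref{def:hordoublemonoid} are things like $\alpha\ot\vid$, $\alpha$, $\vid\ot\alpha$, so \cref{prop:companionomnibus}\ref{omni-iii} does not apply directly. The paper's check also crucially uses \cref{lem:companioncomponents}---which computes the $2$-morphism components of the vertical transformations $\alpha$, $\lambda$, $\rho$ at the companion $1$-cells $\comp m$, $\comp e$---and you have not accounted for this ingredient. Your plan is salvageable by replacing the appeals to \ref{omni-iii}/\ref{omni-vi} with the one-sided (op)cartesian lifting characterisation and by invoking \cref{lem:companioncomponents}, but as written it depends on hypotheses the theorem does not supply.
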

\begin{proof}
The displayed $2$-morphisms are
constructed using transpose operations like \cref{eq:transpose} from the vertical associativity and unitality monoid
axioms for $A$. They are vertically invertible since $\alpha$, $\rho$ and $\lambda$ are, so that $\mathfrak{a}^{\mi1}$ may be constructed as
companion transposes of the identities $m \circ (1 \otimes m) \circ \alpha^{\mi1} = m \circ (m \otimes 1)$, and similarly for $\mathfrak{l}^{\mi1}$
and $\mathfrak{r}^{\mi1}$.

The coherence axioms of \cref{def:hordoublemonoid} for a horizontal pseudomonoid
can now be checked by computing appropriate transposes of the
required diagrams and making use of \cref{lem:companioncomponents}.
\end{proof}

\section{Double monads}
\label{sec:dbl-monad}

For an ordinary category $\nc{C}$, the category of endofunctors of $\nc{C}$ has a monoidal structure given by composition, and a monoid therein is precisely a monad on $\nc{C}$. 
In the case of double categories, we can do something similar by exploiting our work in \cref{sec:dbl-fun,sec:monoids}, so leading to a notion of double monad: or rather, \emph{two} notions of double monad, horizontal and vertical.

To begin with, observe that \cref{thm:functor-dblcat}  states in particular that for any double category $\dc{C}$, there is a double category
$\cat{DblCat}[\dc{C},\dc{C}]$ of double endofunctors, vertical transformations, horizontal transformations and modifications
(\cref{def:doublefunctor,def:vert-transf,def:hor-transf,def:modification}). In fact, as is well-known, this double category is monoidal:

\begin{prop}[Composition monoidal structure]
 \label{ex:endofun-monoidal-comp}
Let $\dc{C}$ be a double category. The double category $\cat{DblCat}[\dc{C},\dc{C}]$ admits a monoidal structure given by composition.
\end{prop}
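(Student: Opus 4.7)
The proof proceeds by defining the monoidal data explicitly at each level and exploiting the strictness of functor composition. On objects set $F \ot G \defeq F \circ G$; this composite pseudo double functor inherits its coherence $2$-morphisms $\xi$ by pasting those of $F$ and $G$. On vertical transformations $\sigma \co F \Rightarrow F'$ and $\tau \co G \Rightarrow G'$, define $\sigma \ot \tau \co F \circ G \Rightarrow F' \circ G'$ by specifying the vertical $1$-cell components
\[
(\sigma \ot \tau)_X \defeq \sigma_{G'X} \circ F(\tau_X) = F'(\tau_X) \circ \sigma_{GX}
\]
(the equality holding by the naturality axiom~\cref{eq:verticaltransfax0} of $\sigma$ applied to $\tau_X$), and the $2$-morphism component at a horizontal $1$-cell $M$ by the analogous pasting of $F(\tau_M)$ and $\sigma_{G'M}$. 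Horizontal transformations and modifications are tensored by analogous Godement-style pastings of their components. The unit $I \co \mathbf{1} \to \cat{DblCat}[\dc{C},\dc{C}]$ selects the identity double functor $\id_{\dc{C}}$.

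As recorded in the proof of \cref{lem:dbl-to-bicat-functor}, composition of double functors is \emph{strictly} associative and unital. The associator $\alpha$ and unit constraints $\lambda, \rho$ may therefore be chosen to be identity vertical transformations, which trivially satisfy Mac Lane's pentagon and triangle. Consequently the resulting monoidal structure is in fact strict, and the Mac Lane coherence axioms required in \cref{def:mondoublecat} hold trivially.

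The substantive part of the argument is the verification that $\ot$ is a double functor from $\cat{DblCat}[\dc{C},\dc{C}] \times \cat{DblCat}[\dc{C},\dc{C}]$ into $\cat{DblCat}[\dc{C},\dc{C}]$. At the level of objects and vertical transformations, bifunctoriality reduces to the middle-four interchange between horizontal and vertical composition inside $\dc{C}$, accessed componentwise. The analogous checks on horizontal transformations and modifications, together with the verification that each $\sigma \ot \tau$ actually satisfies the vertical transformation axioms of \cref{def:vert-transf}, reduce to routine diagram chases that invoke naturality of the component $2$-morphisms and the coherence data of the double functors $F$ and $G$. I expect this organisational bookkeeping to be the principal obstacle, rather than any genuine mathematical difficulty; no conceptually new idea beyond the componentwise use of interchange in $\dc{C}$ is needed.
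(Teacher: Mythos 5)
Your claim that the monoidal structure on $\cat{DblCat}[\dc{C},\dc{C}]$ is strict — that $\alpha$, $\lambda$, $\rho$ may be taken to be identity vertical transformations — is incorrect, and the gap is mathematical rather than organisational. Strict associativity of double functor composition does yield a \emph{strict} monoidal structure on the category $\cat{DblCat}[\dc{C},\dc{C}]_0$ of endofunctors and vertical transformations. But it does not on $\cat{DblCat}[\dc{C},\dc{C}]_1$: the tensor product of horizontal transformations $\beta_1 \colon F_1 \ticktwoar G_1$ and $\beta_2 \colon F_2 \ticktwoar G_2$ has horizontal $1$-cell components given by a \emph{horizontal} composite such as $G_2(\beta_1)_X \circ (\beta_2)_{F_1 X}$, and iterating this, the two triple products $(\beta_1\ot\beta_2)\ot\beta_3$ and $\beta_1\ot(\beta_2\ot\beta_3)$ have genuinely different horizontal $1$-cell components (cf.~\cref{eq:associativity}), differing by the pseudo-functoriality constraint of the outermost functor $G_3$ and the associator of $\dc{C}$. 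Since a vertical transformation between two double functors $\cat{DblCat}[\dc{C},\dc{C}]^3 \to \cat{DblCat}[\dc{C},\dc{C}]$ has $2$-morphism components indexed by horizontal $1$-cells of the source, and there is no identity $2$-morphism between distinct horizontal $1$-cells, the associator $\alpha$ is forced to have non-trivial $2$-morphism components: its vertical $1$-cell components are identities, but the rest of its data are the invertible globular modifications just described.

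For the same reason the structure cells $\tau$ and $\eta$ of~\cref{eq:structure2cells1} making $\ot$ a (pseudo) double functor cannot be identities: $(\delta_1\circ\beta_1)\ot(\delta_2\circ\beta_2)$ and $(\delta_1\ot\delta_2)\circ(\beta_1\ot\beta_2)$ have distinct horizontal $1$-cell components (cf.~\cref{eq:tauDbl}), related by the functoriality constraints of $H_2$, the coherence $2$-cell $(\delta_2)_{(\beta_1)_X}$ of the horizontal transformation $\delta_2$, and associators of $\dc{C}$. Constructing these non-identity constraints and checking their coherence is the actual content of the proposition; it cannot be elided by an appeal to strictness of functor composition, because that strictness lives entirely in the vertical direction of the functor double category while the obstructions live in the horizontal one.
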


\begin{proof}
  We only sketch the proof; for a full treatment see, for example, \cite[Proposition~39]{Garner2006Double}.

  Given double endofunctors $F_1,F_2 \colon \dc{C} \rightarrow \dc{C}$, we define $F_1 \ot F_2$ to be the double endofunctor $F_2F_1$ with underlying ordinary functors $(F_2)_0 \circ (F_1)_0 \colon \dc{C}_0 \rightarrow \dc{C}_0$ and $(F_2)_1 \circ (F_1)_1 \colon \dc{C}_1 \rightarrow \dc{C}_1$, and with coherence data obtained by vertically pasting those for $F$ and $G$. Given vertical transformations $\sigma_1 \colon F_1 \Rightarrow F'_1$ and
$\sigma_2 \colon F_2 \Rightarrow F'_2$, we define $\sigma_1 \ot \sigma_2$ to be the vertical transformation $\sigma_2 \sigma_1 \colon F_2F_1 \Rightarrow F'_2F'_1$ with underlying ordinary natural transformations given by the horizontal composites $(\sigma_2)_0 \ast (\sigma_1)_0$ and $(\sigma_2)_0 \ast (\sigma_1)_0$. With the identity double functor as unit, this yields a strict monoidal structure on the category of double endofunctors and vertical transformations.

Next, given horizontal transformations $\beta_1 \colon F_1 \ticktwoar G_1$ and
$\beta_2 \colon  F_2 \ticktwoar G_2$ we define $\beta_1 \ot \beta_2$ to be the
horizontal transformation $\beta_2 \beta_1 \colon F_2 F_1 \ticktwoar G_2 G_1$ with
horizontal $1$-cell components\footnote{In the provided reference, the alternate choice $(\beta_2\beta_1)_X=(\beta_2)_{G_1X}\circ F_2(\beta_1)_X$ is used; this
results in a
different but equivalent monoidal structure.}
\begin{equation}\label{eq:deltabeta}
 (\beta_2 \beta_1)_{X}=F_2 F_1 X \xtickar{\;(\beta_2)_{F_1 X }\;} G_2 F_1 X \xtickar{\;G_2 (\beta_1)_X\;} G_2 G_1 X
\end{equation}
and remaining data obtained in an analogous way;
whereas for modifications $\gamma_1, \gamma_2$ as in
\begin{equation}\label{eq:gammaepsilon}
 \begin{tikzcd}
F_1 \ar[r,Rightarrow,bigtick,"\beta_1"]\ar[d,Rightarrow,"{\sigma_1}"']\ar[dr,phantom,
"\scriptstyle\Ddownarrow{\gamma_1}" ]
& G_1 \ar[d,Rightarrow,"\tau_1"] \\
F'_1 \ar[r,Rightarrow,bigtick,"{\beta'_1}"'] & G'_1
 \end{tikzcd}\qquad
 \begin{tikzcd}
F_2 \ar[r,Rightarrow,bigtick,"\beta_2"]\ar[d,Rightarrow,"\sigma_2"']\ar[dr,phantom,
"\scriptstyle\Ddownarrow\gamma_2" ]
& G_2 \ar[d,Rightarrow,"\tau_2"] \\
F_2'\ar[r,Rightarrow,bigtick,"{\beta'}_2"'] & G'_2
 \end{tikzcd}
\end{equation}
we define $\gamma_1 \ot \gamma_2$ to be the modification $\gamma_2 \gamma_1 \colon \beta_2 \beta_1 \Rrightarrow \beta'_2 \beta'_1$ with vertical source and target
$\sigma_2 \sigma_1$ and $\tau_2 \tau_1$, and $2$-morphism components:
\begin{equation}\label{eq:epsilongamma}
 \begin{tikzcd}[sep=.5in]
F_2 F_1 X
	\ar[d,"{(\sigma_2)}_{F_1 X}"']
	\ar[r,tick,"{(\beta_2)}_{F_1 X}"]
	\ar[dr,phantom,"\Two {(\gamma_2)}_{F_1 X}"] &
G_2 F_1 X
	\ar[r,tick,"{G_2} {(\beta_1)_X}"]
	\ar[d,"{(\sigma_2)}_{F_1 X}" description]
	\ar[dr,phantom,"\Two{(\sigma_2)}_{(\beta_1)_X}"] &
G_2 G_1 X
	\ar[d,"{(\sigma_2)}_{G_1 X}"] \\
F'_2 F_1 X
	\ar[d,"F'_2 {(\sigma_1)}_X"']
	\ar[r,tick,"(\beta'_2)_{F_1 X}"']
	\ar[dr,phantom,"\Two {(\beta'_2)}_{(\sigma_1)_X}" ] &
{G'_2} F_1 X
	\ar[d,"{G'_2} {(\sigma_1)_X}" description]
	\ar[r,tick,"{G'_2} {(\beta_1)_X} "]
	\ar[dr,phantom,"\Two {G'_2} {(\gamma_1)_X} "] &
{G'_2} G_1 X
	\ar[d,"{G'_2} {(\tau_1)_X} "] \\
{F'_2} {F'_1} X
	\ar[r,tick,"{(\beta'_2)}_{F'_1 X}"' ] &
{G'_2} {F'_1} X
	\ar[r,tick,"{G'_2} {(\beta'_1)_X}"'] &
{G'_2} {G'_1} X\mathrlap{ .}
 \end{tikzcd}
\end{equation}
These data endow the category of horizontal $1$-cells and $2$-morphisms with a \emph{non}-strict monoidal structure; for the monoidal constraints, given horizontal transformations $\beta_1 \colon
F_1 \ticktwoar G_1$, $\beta_2 \colon F_2 \ticktwoar G_2$ and $\beta_3 \colon F_3 \ticktwoar G_3$, the composites $\beta_3 (\beta_2 \beta_1)$ and $(\beta_3 \beta_2 )\beta_1$  have respective horizontal $1$-cell components
\begin{equation}\label{eq:associativity}
  G_3 ( G_2 (\beta_1)_X \circ (\beta_2)_{F_1 X}) \circ (\beta_3)_{F_2 F_1 X} \qquad \text{and} \qquad G_3G_2 (\beta_1)_X \circ (G_3(\beta_2)_{F_1 
X} \circ (\beta_3)_{F_2 F_1 X})
\end{equation}
and the desired globular associativity modification $(\beta_1 \otimes \beta_2) \otimes \beta_3 \Rrightarrow \beta_1 \otimes (\beta_2 \otimes \beta_3)$ has components given by the evident globular $2$-isomorphisms between these composites, built from associativity and functoriality of $G_3$. The unit constraints are handled similarly.

Finally, we must provide the globular $2$-isomorphisms $\tau$ and $\eta$ of~\eqref{eq:structure2cells1}. We describe only the case of $\tau$; for
which, consider horizontal transformations $\beta_1 \colon F_1 \ticktwoar G_1$, $\delta_1 \colon G_1 \ticktwoar H_1$,
$\beta_2 \colon  F_2 \ticktwoar G_2$ and $\delta_2 \colon G_2 \ticktwoar H_2$.
The two composite horizontal transformations $(\delta_1 \circ \beta_1) \otimes (\delta_2 \circ \beta_2)$ and $(\delta_1 \otimes \delta_2) \circ
(\beta_1 \otimes \beta_2)$ have respective horizontal $1$-cell components
\begin{equation}\label{eq:tauDbl}
  H_2((\delta_1)_X \circ (\beta_1)_X) \circ ((\delta_2)_{F_1 X} \circ (\beta_2)_{F_1 X}) \qquad \text{and} \qquad
  (H_2(\delta_1)_X \circ (\delta_2)_{G_1 X}) \circ (G_2(\beta_1)_X \circ (\beta_2)_{F_1 X})\mathrlap{ ,}
\end{equation}
which are related by globular $2$-isomorphisms built from functoriality constraints of $H_2$, associativity constraints of $\dc{C}$ and the coherence
$2$-isomorphism $(\delta_2)_{(\beta_1)_X}$ of the horizontal transformation $\delta_2$.
\end{proof}

By looking at horizontal and vertical  monoids (as introduced in \cref{def:hordoublemonoid} and \cref{def:verdoublemonoid}) in the endofunctor double category, we obtaine notions of \emph{horizontal} and \emph{vertical double monad}. These notions differ by the direction of the transformations
for the the multiplication and unit and by their strictness: a horizontal monad induces a pseudomonad on the horizontal bicategory, while a vertical monad induces a $2$-monad on the vertical $2$-category. We shall relate these notions in~\cref{cor:horizontalpseudomonad}.

\begin{defi}[Horizontal double monad] \label{def:horizontalpseudomonad}
Let $\dc{C}$ be a double category.  A \emph{horizontal double monad} on~$\dc{C}$  is
a horizontal monoid in the monoidal double category~$\nc{DblCat}[\dc{C}, \dc{C}]$.  Explicitly, it consists of:
\begin{itemize}
\item a double functor $T\colon\dc{C}\to\dc{C}$;
\item a horizontal transformation $m \co TT \ticktwoar T$, with components
  $m_X\colon TTX\horightarrow TX$,
\begin{equation}\label{eq:mcomponents}
\twocell{TTX}{m_X}{TX}{Tf}{T{X'}}{m_{X'}}{TT{X'}}{TTf}{m_f} \qquad\text{and} \qquad
 \begin{tikzcd}
TTX\ar[r,tick,"TTM"]\ar[d,equal]\ar[drr,phantom,"\Two m_M"] &
TTY\ar[r,tick,"m_Y"] & TY\ar[d,equal] \\
TTX\ar[r,tick,"m_X"'] & TX\ar[r,tick,"TM"'] & TY
   \end{tikzcd}
   \end{equation}
for each object $X$, vertical 1-cell $f\colon X\to {X'}$ and horizontal 1-cell $M \co X \horightarrow Y$;
 \item a horizontal transformation $e \co 1 \ticktwoar T$, with components
 $e_X\colon X\horightarrow TX$,
 \begin{equation}\label{eq:ecomponents}
\twocell{X}{e_X}{TX}{Tf}{T{X'}}{e_{X'}}{{X'}}{f}{e_f} \qquad \text{and} \qquad
   \begin{tikzcd}
   X\ar[r,tick,"M"]\ar[d,equal]\ar[drr,phantom,"\Two e_M"] & Y\ar[r,tick,"e_Y"]
& TY\ar[d,equal] \\
   X\ar[r,tick,"e_X"'] & TX\ar[r,tick,"TM"'] & TY
   \end{tikzcd}
  \end{equation}
for each object $X$, vertical 1-cell $f\colon X\to {X'}$ and horizontal 1-cell $M \co X \horightarrow Y$;
\item invertible modifications $\mathfrak{a}$, $\mathfrak{l}$ and $\mathfrak{r}$ with respective components at $X \in \dc{C}$ given by:
\begin{equation}\label{eq:alr}
\begin{tikzcd}
TTTX\ar[r,tick,"Tm_{X}"]\ar[d,equal]\ar[drr,phantom,"\Two{\mathfrak a}_X"] &
TTX\ar[r,tick,"m_X"] & TX\ar[d,equal] \\
TTTX\ar[r,tick,"m_{TX}"'] & TTX\ar[r,tick,"m_X"'] & TX
\end{tikzcd}\;
\begin{tikzcd}
TX\ar[drr,phantom,"\Two {\mathfrak r}_X"]\ar[d,equal]\ar[r,tick,"e_{TX}"] & TTX\ar[r,tick,"m_X"] & TX\ar[d,equal] \\
TX\ar[rr,tick,"\hid_{TX}"'] && TX
\end{tikzcd}\;
\begin{tikzcd}
TX\ar[r,tick,"Te_X"]\ar[d,equal]\ar[drr,phantom,"\Two {\mathfrak l}_X"] & TTX\ar[r,tick,"m_X"]
& TX\ar[d,equal] \\
TX\ar[rr,tick,"\hid_{TX}"'] && TX
\end{tikzcd}
\end{equation}
\end{itemize}
These data are subject to the axioms of \cref{def:hordoublemonoid}, noting carefully the order-reversal stemming from the fact that $F_1 \otimes F_2 = F_2 F_1$.
\end{defi}

A horizontal double monad is exactly the structure we need to define a horizontal Kleisli double category, as we shall do in~\cref{thm:KlS} below. However,
horizontal double monads involves non-trivial coherence axioms for associativity and unit; it is
therefore useful in practice to have some ways of constructing them from simpler kinds of data. For this purpose,
we recall from~\cite[\S 7]{Adjointfordoublecats} the following definition:

\begin{defi}[Vertical double monad] Let $\dc{C}$ be a double category.
\label{def:doublemonad}
  A \emph{vertical double monad} on
$\dc{C}$ is a vertical monoid in $\nc{DblCat}[\dc{C}, \dc{C}]$.
Explicitly, it consists of the following data:
\begin{itemize}
\item a double endofunctor $T\colon\dc{C}\to\dc{C}$;
\item a vertical transformation $m \co TT \Rightarrow T$, with components $m_X \co TTX \to TX$ and
\begin{equation}\label{eq:mM}
\begin{tikzcd}
TTX\ar[r,tick,"TTM"]\ar[d,"m_X"']\ar[dr,phantom,"\Two m_M"] & TTY\ar[d,"m_Y"] \\
TX\ar[r,tick,"TM"'] & TY
\end{tikzcd}
\end{equation}
for each object $X$ and horizontal 1-cell $M \co X \horightarrow Y$;
\item a vertical transformation $e \co 1 \Rightarrow T$, with components $e_X \co X \to TX$ and
\begin{equation}\label{eq:eM}
\begin{tikzcd}
X\ar[r,tick,"M"]\ar[d,"e_X"']\ar[dr,phantom,"\Two e_M"] & Y\ar[d,"e_Y"] \\
TX\ar[r,tick,"TM"'] & TY
\end{tikzcd}
\end{equation}
for each object $X$ and horizontal 1-cell $M \co X \horightarrow Y$.
\end{itemize}
These data are required to satisfy associativity and unitality conditions, as in \cref{def:verdoublemonoid}.
\end{defi}

The notion of a vertical monad is stricter than that of a horizontal monad and thus easier to exhibit in examples.
Once we have a vertical monad, the following result allows us to enhance it to a horizontal one.

\begin{thm}\label{cor:horizontalpseudomonad}
Let $\dc{C}$ be a double category and $T \co \dc{C} \to \dc{C}$ be a vertical double
monad. Assume that its multiplication $m \colon TT \Rightarrow T$ and unit $e \colon 1_\dc{C}\Rightarrow T$ are
special vertical transformations. Then $T$ induces a
horizontal double monad $(T,\comp{m},\comp{e})$ on $\dc{C}$.
\end{thm}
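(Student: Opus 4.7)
The plan is to recognise the desired horizontal double monad as an instance of \cref{prop:verticalgiveshorizontal} applied to the monoidal double category of endofunctors. Indeed, \cref{def:horizontalpseudomonad,def:doublemonad} identify horizontal and vertical double monads on $\dc{C}$ with horizontal and vertical monoids, respectively, in the monoidal double category $\nc{DblCat}[\dc{C}, \dc{C}]$ whose composition monoidal structure was established in \cref{ex:endofun-monoidal-comp}. Thus it suffices to verify that the hypotheses of \cref{prop:verticalgiveshorizontal} are met in this ambient monoidal double category.

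First, by assumption $(T, m, e)$ is a vertical monoid in $\nc{DblCat}[\dc{C}, \dc{C}]$. The remaining hypothesis of \cref{prop:verticalgiveshorizontal} is that both $m \colon TT \Rightarrow T$ and $e \colon 1 \Rightarrow T$ should admit companions as vertical $1$-cells of $\nc{DblCat}[\dc{C}, \dc{C}]$. By \cref{lem:2}, this is equivalent to $m$ and $e$ being special vertical transformations in the sense of \cref{def:special-vertical-transformation}, which is precisely what has been assumed. The associated horizontal transformations $\comp{m} \colon TT \ticktwoar T$ and $\comp{e} \colon 1 \ticktwoar T$ furnished by \cref{prop:inducedhorizontalfromspecial} have as $1$-cell components the chosen companions $\comp{m}_X \colon TTX \tickar TX$ and $\comp{e}_X \colon X \tickar TX$ of $m_X$ and $e_X$, and thus furnish the data~\eqref{eq:mcomponents} and~\eqref{eq:ecomponents} appearing in \cref{def:horizontalpseudomonad}.

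Finally, applying \cref{prop:verticalgiveshorizontal} directly produces invertible globular modifications $\mathfrak{a}$, $\mathfrak{l}$, $\mathfrak{r}$ as the companion transposes of the strict associativity and unit identities of the vertical monad $T$; restated in the component form of~\eqref{eq:alr}, these are exactly the coherence data demanded by \cref{def:horizontalpseudomonad}, and they satisfy the coherence axioms there listed since they do so in the ambient monoidal double category $\nc{DblCat}[\dc{C}, \dc{C}]$. There is no real obstacle to this argument, as all the technical work—lifting monoid structure across companions and characterising companions in functor double categories—has been carried out in \cref{lem:2,prop:verticalgiveshorizontal}; the only minor point to keep in mind is that the composition monoidal structure of \cref{ex:endofun-monoidal-comp} satisfies $F_1 \otimes F_2 = F_2 F_1$, so that the associativity axiom for the vertical monoid $(T, m, e)$ corresponds precisely to the one for horizontal monads as stated.
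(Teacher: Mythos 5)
Your proof is correct and follows essentially the same route as the paper's: regard $(T,m,e)$ as a vertical monoid in $\cat{DblCat}[\dc{C},\dc{C}]$, invoke \cref{lem:2} to translate specialness of $m$ and $e$ into the existence of companions in the functor double category, and then apply \cref{prop:verticalgiveshorizontal}. The paper's proof is just a more compressed version of the same argument.
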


\begin{proof}
If we consider $T$ as a vertical monoid in $\cat{DblCat}[\dc{C}, \dc{C}]$, \cref{prop:verticalgiveshorizontal} ensures that it induces a horizontal
monoid therein (namely a horizontal double monad) whenever the unit $e$ and multiplication $m$ have companions as vertical transformations. By \cref{lem:2}, this
will happen if and only if they are special. 
\end{proof}

While a direct proof of \cref{cor:horizontalpseudomonad} would
certainly be possible, the more abstract approach we take has the
advantage of being equally applicable to the case of \emph{monoidal}
double monads (\cref{cor:2}), for which a direct approach seems less
practicable. It is to this that we now turn.

\section{Monoidal double monads}
\label{sec:mon-dbl-monad}

In this section, we retread the material of the previous section in the context of \emph{monoidal} double categories, leading to the
notions of a monoidal horizontal and monoidal vertical double monad, and results relating the two. In \cref{sec:kleisli}, we will exploit 
these
notions in order to impose monoidal structure on the Kleisli double category of a horizontal double monad.

As a first step, we show that when $\dc{C}$ is a monoidal double category, we can extend the composition monoidal structure on the endofunctor
double category $\cat{DblCat}[\dc{C},\dc{C}]$ as recalled in \cref{ex:endofun-monoidal-comp}, to a monoidal structure on the \emph{monoidal}
endofunctor double category $\cat{MonDblCat}[\dc{C},\dc{C}]$ of~\cref{thm:functor-dblcat-monoidal}.

\begin{prop}[Composition monoidal structure on {$\cat{MonDblCat}[\dc{C}, \dc{C}]$}]  \label{thm:ps-hom-c-c-monoidal}
Let $\dc{C}$ be a monoidal double category. The composition monoidal structure of the double category $\cat{DblCat}[\dc{C}, \dc{C}]$ lifts to a monoidal structure on the double category
$\cat{MonDblCat}[\dc{C}, \dc{C}]$ of monoidal endofunctors, \emph{pseudomonoidal}
vertical transformations, monoidal horizontal transformations and monoidal modifications.
\end{prop}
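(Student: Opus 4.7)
The plan is to lift every component of the composition monoidal structure from $\cat{DblCat}[\dc{C},\dc{C}]$ (described in the proof of \cref{ex:endofun-monoidal-comp}) to $\cat{MonDblCat}[\dc{C},\dc{C}]$, piece by piece. The monoidal unit is the identity double functor, which is trivially strict monoidal. For the tensor $\otimes$ on objects, given lax monoidal double functors $F_1, F_2$, we define the lax monoidal structure on the composite $F_2 F_1$ by the standard formulas $(F_2 F_1)^2_{X_1,X_2} = F_2(F_1^2_{X_1,X_2}) \cdot F_2^2_{F_1 X_1, F_1 X_2}$ and $(F_2 F_1)^0 = F_2(F_1^0) \cdot F_2^0$ on vertical $1$-cells, with analogous pastings for the $2$-morphism components $(F_2 F_1)^2_{M,N}$ and $(F_2 F_1)^0$. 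The monoidal axioms for this composite follow from those of $F_1$ and $F_2$ by a familiar pasting argument, carried out at the level of both $\dc{C}_0$ and $\dc{C}_1$.

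Next, for pseudomonoidal vertical transformations $\sigma_1 \colon F_1 \Rightarrow F'_1$ and $\sigma_2 \colon F_2 \Rightarrow F'_2$, the vertical transformation $\sigma_2 \sigma_1 \colon F_2 F_1 \Rightarrow F'_2 F'_1$ is endowed with pseudomonoidal structure squares $(\sigma_2 \sigma_1)^2$ and $(\sigma_2 \sigma_1)^0$ obtained by pasting $\sigma_2^2$ at $F_1 X_1, F_1 X_2$ with $F'_2$-image of $\sigma_1^2$, and similarly for the unit; this imitates the classical formula for composing monoidal natural transformations, with pseudomonoidal $2$-cells replacing strict equalities. The five coherence axioms of~\cref{def:montransf} for $\sigma_2 \sigma_1$ then reduce, by a diagrammatic chase, to those for $\sigma_1$ and $\sigma_2$ individually, together with the naturality of the structure cells. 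The identity vertical transformations and the composition of pseudomonoidal vertical transformations in the category of $0$-cells of $\cat{MonDblCat}[\dc{C}, \dc{C}]$ are provided by~\cref{eq:cdot}, so vertical composition in the tensored direction is already available.

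For the horizontal direction, given monoidal horizontal transformations $\beta_1 \colon F_1 \ticktwoar G_1$ and $\beta_2 \colon F_2 \ticktwoar G_2$, the horizontal transformation $\beta_2 \beta_1$ whose components are given in~\cref{eq:deltabeta} is made monoidal by defining $(\beta_2 \beta_1)^2_{X_1,X_2}$ as the pasting of $(\beta_2)^2$ at $F_1 X_1, F_1 X_2$ with $G_2$ applied to $(\beta_1)^2_{X_1,X_2}$, and analogously for $(\beta_2 \beta_1)^0$. Verifying the two monoidality axioms of~\cref{def:monhortransf} is then a matter of pasting manipulations using the functoriality constraints of $G_2$ and the axioms of $\beta_1$ and $\beta_2$. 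Horizontal and vertical composition of monoidal modifications preserves monoidality by a direct pasting check, so both categories making up $\cat{MonDblCat}[\dc{C},\dc{C}]$ receive strict and weak monoidal structure in the right directions.

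The main obstacle, and the one requiring the most care, is verifying that the globular constraint $2$-morphisms of the composition monoidal structure on $\cat{DblCat}[\dc{C},\dc{C}]$ —namely the associators and unitors for horizontal transformations, and in particular the interchanger $\tau$ and the unit constraint $\eta$ in~\cref{eq:structure2cells1} built from~\cref{eq:tauDbl}— are in fact monoidal modifications when the input transformations are monoidal. These check reduces to comparing two different pastings involving the monoidal structure cells of the outer endofunctors and the coherence $2$-morphisms of the outer horizontal transformations, and the needed equalities follow from the monoidality axiom of the horizontal transformations $\delta_2, \beta_2$ applied to arguments built from $(\beta_1)^2, (\delta_1)^2$, together with functoriality of $H_2$ and naturality. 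Once these checks are in place, the monoidal double category axioms for $\cat{MonDblCat}[\dc{C},\dc{C}]$ follow by applying the forgetful double functor to $\cat{DblCat}[\dc{C},\dc{C}]$, whose composition monoidal structure is known by \cref{ex:endofun-monoidal-comp}.
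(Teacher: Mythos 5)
Your proposal follows the same decomposition as the paper's proof: lift the tensor to lax monoidal endofunctors via the standard composite constraints, lift to pseudomonoidal vertical transformations and monoidal horizontal transformations by pasting the structure $2$-cells, check strict associativity on the vertical part, and then verify that the associators, unitors, and interchangers $\tau$, $\eta$ from $\cat{DblCat}[\dc{C},\dc{C}]$ become monoidal modifications. The details you describe (for example, $(\sigma_2\sigma_1)^2$ as the pasting of $\sigma_2^2$ at $F_1 X_1, F_1 X_2$ with $F'_2(\sigma_1^2)$, and similarly for the horizontal case) match the explicit diagrams the paper gives, and you have correctly identified that the crux is lifting the globular constraint $2$-morphisms rather than the functoriality of the tensor itself.
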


\begin{proof}
  We must lift each of the pieces of data exhibited in the proof
  of~\cref{ex:endofun-monoidal-comp} to the monoidal context.
  We first lift the strict monoidal structure on the category of $0$-cells and vertical $1$-cells.
  If $F_1,F_2\colon\dc{C}\to\dc{C}$ are lax
monoidal double endofunctors of $\dc{C}$, then their composite
$F_2F_1$ bears lax monoidal structure with vertical $1$-cell components
\begin{align*}
(F_2F_1)^2_{{X_1},{X_2}}&=F_2F_1{X_1}\ot F_2F_1{X_2}\xrightarrow{F_2^2}F_2(F_1{X_1}\ot
F_1{X_2})\xrightarrow{F_2F_1^2}F_2F_1({X_1}\ot {X_2}),\\
(F_2F_1)^0&=I\xrightarrow{F_2^0}F_2I\xrightarrow{F_2F_1^0}F_2F_1I
\end{align*}
and $2$-morphism components given similarly by
$(F_2F_1)^2_{M,N}=F_2(F_1^2)_{M,N}\cdot (F_2^2)_{F_1M,F_1N}$.
Next, if ${\sigma_1\colon F_1\Rightarrow F_1'}$ and
$\sigma_2\colon F_2\Rightarrow F_2'$ are pseudomonoidal vertical
transformations, then the composite vertical transformation $\sigma_2\sigma_1\colon
F_2F_1\Rightarrow F_2'F_1'$
bears pseudomonoidal structure witnessed by the $2$-morphisms
\begin{displaymath}
(\sigma_2\sigma_1)^2{=}
\begin{tikzcd}[ampersand replacement=\&]
F_2F_1{X_1}\ot F_2F_1{X_2}\ar[d," F_2^2"']\ar[r,tick,"\hid"] \&
F_2F_1{X_1}\ot F_2F_1{X_2}\ar[d," F_2^2"]\ar[r,tick,"\hid"]\ar[ddr,phantom,"\Two{\sigma_2}^2"]
\&
F_2F_1{X_1}\ot F_2F_1{X_2}\ar[d,"\sigma_2\ot\sigma_2"]\ar[r,tick,"\hid"] \&
F_2F_1{X_1}\ot F_2F_1{X_2}\ar[d,"\sigma_2\ot\sigma_2"]\\
F_2(F_1{X_1}\ot F_1{X_2})\ar[r,tick,"\hid"]\ar[d,"F_2 F_1^2"'] \&
F_2(F_1{X_1}\ot F_1{X_2})\ar[d,"\sigma_2"] \&
F_2'F_1{X_1}\ot F_2'F_1{X_2}\ar[d,"F_2'^2"]\ar[r,tick,"\hid"] \&
F_2'F_1{X_1}\ot F_2'F_1{X_2}\ar[d,"F_2'\sigma_1\ot F_2'\sigma_1"] \\
F_2F_1({X_1}\ot {X_2})\ar[d,"\sigma_2"'] \&
F_2'(F_1{X_1}\ot F_1{X_2})\ar[ddr,phantom,"\Two
F_2'\sigma_1^2"]\ar[r,tick,"\hid"]\ar[d,"F_2' F_1^2"] \&
F_2'(F_1{X_1}\ot F_1{X_2})\ar[d,"F_2'(\sigma_1\ot\sigma_1)"] \&
F_2'F_1'{X_1}\ot F_2'F_1'{X_2}\ar[d,"F_2'^2"] \\
F_2'F_1({X_1}\ot {X_2})\ar[r,tick,"\hid"]\ar[d,"F_2'\sigma_1"'] \&
F_2'F_1({X_1}\ot {X_2})\ar[d,"F_2'\sigma_1"] \&
F_2'(F_1'{X_1}\ot F_1'{X_2})\ar[r,tick,"\hid"]\ar[d,"F_2'F_1'^2"] \&
F_2'(F_1'{X_1}\ot F_1'{X_2})\ar[d,"F_2'F_1'^2"] \\
F_2'F_1'({X_1}\ot {X_2})\ar[r,tick,"\hid"'] \&
F_2'F_1'({X_1}\ot {X_2})\ar[r,tick,"\hid"'] \& F_2'F_1'({X_1}\ot {X_2})\ar[r,tick,"\hid"'] \& F_2'F_1'({X_1}\ot {X_2})
\end{tikzcd}
\end{displaymath}
\begin{displaymath}
(\sigma_2\sigma_1)^0{=}\begin{tikzcd}[ampersand replacement=\&]
I\ar[r,tick,"\hid"]\ar[d," F_2^0"']  \& I\ar[r,tick,"\hid"]\ar[d," F_2^0"']\ar[ddr,phantom,"\Two\sigma_2^0"] \& I\ar[dd,"F_2'^0"] \\
F_2I\ar[d,"F_2 F_1^0"']\ar[r,tick,"\hid"]\& F_2I\ar[d,"\sigma_2"'] \&  \\
F_2F_1I\ar[d,"\sigma_2"'] \& F_2'I\ar[r,tick,"\hid"]\ar[d,"F_2' F_1^0"']\ar[ddr,phantom,"\Two F_2'\sigma_1^0"] \&
F_2F_1I\ar[dd,"F_2'F_1'^0"] \\
F_2'F_1I\ar[d,"F_2'\sigma_1"']\ar[r,tick,"\hid"] \& F_2'F_1I\ar[d,"F_2'\sigma_1"'] \&  \\
F_2'F_1'I\ar[r,tick,"\hid"'] \& F_2'F_1'I\ar[r,tick,"\hid"'] \& F_2'F_1'I
 \end{tikzcd}
\end{displaymath}
where the empty squares are horizontal identities
existing due to naturality of $\sigma_2$ and $F_2'^2$. It is direct to check that these pseudomonoidal constraint cells are stable under vertical 
composition, so that we have a functorial tensor product on the category of lax monoidal functors and pseudomonoidal vertical transformations.
Taking this tensor product together with the (strict) monoidal
$1_\dc{C}$ as unit, we obtain a strict monoidal category: indeed, for any lax
monoidal double endofunctors $F_3,F_2,F_1$ of $\dc{C}$, $(F_3F_2)F_1=F_3(F_2F_1)$ as lax monoidal double
functors, since both have coherence vertical $1$-cells given by
\begin{displaymath}
F_3F_2F_1{X_1}\ot F_3F_2F_1{X_2}\xrightarrow{F_3^2}F_3(F_2F_1{X_1}\ot F_2F_1{X_2})\xrightarrow{F_3F_2^2}F_3F_2(F_1{X_1}\ot
F_1{X_2})\xrightarrow{F_3F_2F_1^2}F_3F_2F_1({X_1}\ot {X_2})\mathrlap{ ,}
\end{displaymath}
using the fact that double functors strictly preserve vertical composition; and
correspondingly for the coherence $2$-morphisms.

We now show that the category of horizontal monoidal transformations and monoidal modifications is monoidal. If $\beta_1\colon F_1\ticktwoar G_1$ and 
$\beta_2\colon
F_2\ticktwoar G_2$ are two monoidal horizontal transformations,
then the
horizontal transformation $\beta_2\beta_1\colon F_2F_1\ticktwoar G_2G_1$ given as in \cref{eq:deltabeta}
is monoidal, via the structure $2$-morphisms
\begin{displaymath}
(\beta_2\beta_1)^2=
\begin{tikzcd}[column sep=1in,row sep=.4in]
F_2F_1{X_1}\ot F_2F_1{X_2}\ar[d,equal]\ar[rr,tick,"(G_2\beta_1\circ\beta_2)\otimes (G_2\beta_1\circ\beta_2)"]
\ar[drr,phantom,"\Two\tau"] & &
G_2G_1{X_1}\ot G_2G_1{X_2}\ar[d,equal] \\
F_2F_1{X_1}\ot F_2F_1{X_2}\ar[d,"F_2^2"']\ar[r,tick,"\beta_2\ot\beta_2"]
\ar[dr,phantom,"\Two\beta_2^2"] &
G_2F_1{X_1}\ot G_2F_1{X_2}\ar[d,"G_2^2" description]\ar[r,tick,"G_2\beta_1\ot G_2\beta_1"]
\ar[dr,phantom,"\Two (G_2^2)_{\beta_1,\beta_1}"] &
G_2G_1{X_1}\ot G_2G_1{X_2}\ar[d,"G_2^2"] \\
F_2(F_1{X_1}\ot F_1{X_2})\ar[d,"F_2(F_1^2)"']\ar[r,tick,"\beta_2"']
\ar[dr,phantom,"\Two(\beta_2)_{F_1^2}"] &
G_2(F_1{X_1}\ot F_1{X_2})\ar[r,tick,"G_2(\beta_1\ot\beta_1)"']\ar[d,"G_2(F_1^2)" description]
\ar[dr,phantom,"\Two G_2(\beta_1^2)"] &
G_2(G_1{X_1}\ot G_1{X_2})\ar[d,"G_2(G_1^2)"] \\
F_2F_1({X_1}\ot {X_2})\ar[r,tick,"\beta_2"'] &
G_2F_1({X_1}\ot {X_2})\ar[r,tick,"G_2\beta_1"'] & G_2G_1({X_1}\ot {X_2})
 \end{tikzcd}
 \end{displaymath}
 \begin{displaymath}
 (\beta_2\beta_1)^0=
 \begin{tikzcd}[column sep=.6in, row sep=.4in]
I\ar[d,"F_2^0"']\ar[r,tick,"\hid"]\ar[dr,phantom,"\Two\beta_2^0"] &
I\ar[d,"G_2^0"description]\ar[r,tick,"\hid"] & I\ar[d,"G_2^0"] \\
F_2I\ar[d,"F_2F_1^0"']\ar[r,tick,"\beta_2"]\ar[dr,phantom,"\Two(\beta_2)_{F_1^0}"] &
G_2I\ar[r,tick,"\hid"]\ar[d,"G_2F_1^0"description]\ar[dr,phantom,"\Two G_2(\beta_1^0)"] &
G_2I\ar[d,"G_2G_1^0"] \\
F_2F_1I\ar[r,tick,"\beta_2"'] & G_2F_1I\ar[r,tick,"G_2\beta_1"'] & G_2G_1I\mathrlap{ .}
 \end{tikzcd}
\end{displaymath}

Moreover, given monoidal modifications $\gamma_1,\gamma_2$ as in \cref{eq:gammaepsilon}, their composite
$\gamma_2\gamma_1\colon\beta_2\beta_1\Rrightarrow\beta_2'\beta_1'$ given by~\eqref{eq:epsilongamma} can be verified to satisfy the axioms
\cref{eq:monmodif} that render it monoidal, using, among other things, the monoidality of $\gamma_1$ and $\gamma_2$. The functoriality of this tensor 
product is now inherited from $\cat{DblCat}[\dc{C}, \dc{C}]_1$, given that monoidality is a mere condition on a modification. It is moreover easy to 
check that the  associativity and unitality modifications in $\cat{DblCat}[\dc{C}, \dc{C}]_1$ become monoidal on lifting them to 
$\cat{MonDblCat}[\dc{C}, \dc{C}]_1$, so providing the last pieces of data for the desired monoidal structure.

It remains to lift $\tau$ and $\eta$ from $\cat{DblCat}[\dc{C}, \dc{C}]$ to $\cat{MonDblCat}[\dc{C}, \dc{C}]$: and this is again simply a matter of checking that the modifications obtained from $\cat{DblCat}[\dc{C}, \dc{C}]$ do indeed become monoidal modifications.
\end{proof}

Using this result, and paralleling the developments of~\cref{sec:dbl-monad}, we can now give succinct definitions of the notions
of monoidal horizontal and vertical double monad.

\begin{defi}[Monoidal horizontal double monad]
\label{def:2-h}
Let $\dc{C}$ be a monoidal double category.
 A \emph{monoidal horizontal double monad} on $\dc{C}$ is a horizontal monoid in the monoidal double category $\cat{MonDblCat}[\dc{C}, \dc{C}]$.
Explicitly, it is a horizontal double monad $(T, m, e)$ on $\dc{C}$ in the sense of \cref{def:horizontalpseudomonad} such that:
\begin{itemize}
\item the double functor $T\colon\dc{C}\to\dc{C}$ is lax monoidal, {\em i.e.}, it comes equipped with structure vertical 1-cells $T^2_{X_1,X_2}\colon TX_1\ot
TX_2\to T(X_1\ot X_2)$, $T^0\colon I\to TI$ and 2-morphisms
\begin{displaymath}
\begin{tikzcd}[column sep=.5in]
TX_1\ot TX_2\ar[d,"T^2_{X_1,X_2}"']\ar[dr,phantom,"\Two T^2_{M,N}"]\ar[r,tick,"TM\ot TN"] & TY_1\ot TY_2\ar[d,"T^2_{Y_1,Y_2}"] \\
T(X_1\ot X_2)\ar[r,tick,"T(M\ot N)"'] & T(Y_1\ot Y_2)
\end{tikzcd}
\end{displaymath}
satisfying the axioms of \cref{def:monoidaldoublefunctor};
\item the horizontal transformation $m \co TT \ticktwoar T$ is monoidal, {\em i.e.}, it comes equipped with structure 2-morphisms:
\begin{equation}\label{eq:m2}
\begin{tikzcd}[column sep=.5in]
TTX_1\ot TTX_2\ar[r,tick,"m_X\ot m_Y"]\ar[d,"T^2_{TX_1,TX_2}"']\ar[ddr,phantom,"\Two m^2"] & TX_1\ot TX_2\ar[dd,"T^2_{X_1,X_2}"] \\
T(TX_1\ot TX_2)\ar[d,"T(T^2_{X_1,X_2})"'] & \\
TT(X_1\ot X_2)\ar[r,tick,"m_{X_1\ot X_2}"'] & T(X_1\ot X_2)
\end{tikzcd}\qquad\text{and} \qquad
\begin{tikzcd}
I\ar[r,tick,"\hid_I"]\ar[d,"T^0"']\ar[ddr,phantom,"\Two m^0"] & I\ar[dd,"T^0"] \\
TI\ar[d,"T(T^0)"'] & \\
TTI\ar[r,tick,"m_I"'] & TI
\end{tikzcd}
\end{equation}
satisfying the axioms of \cref{def:monhortransf};
\item the horizontal transformation $e \co 1 \ticktwoar T$ is monoidal, {\em i.e.}~comes with structure 2-morphisms
\begin{equation}\label{eq:e2}
\begin{tikzcd}
X_1\ot X_2\ar[r,tick,"e_{X_1}\ot e_{X_2}"]\ar[d,equal]\ar[dr,phantom,"\Two e^2"] & TX_1\ot TX_2\ar[d,"T^2_{X_1,X_2}"] \\
X_1 \ot X_2 \ar[r,tick,"e_{X_1\ot X_2}"'] & T(X_1\ot X_2)
\end{tikzcd}\qquad\text{and} \qquad
\begin{tikzcd}
I\ar[r,tick,"\hid_I"]\ar[d,equal]\ar[dr,phantom,"\Two e^0"] & I\ar[d,"T^0"] \\
I\ar[r,tick,"e_I"'] & TI
\end{tikzcd}
\end{equation}
satisfying the axioms of \cref{def:monhortransf};
\item the modifications $\mathfrak{a}$, $\mathfrak{l}$, $\mathfrak{r}$ of \cref{eq:alr} are monoidal as in \cref{def:monmodif}.
\end{itemize}
\end{defi}

\begin{defi}[Pseudomonoidal vertical double monad]
\label{def:2}
Let $\dc{C}$ be a monoidal double category.
A \emph{pseudomonoidal vertical double monad} on $\dc{C}$  a vertical monoid in $\cat{MonDblCat}[\dc{C}, \dc{C}]$.
Explicitly, it is a vertical double monad $(T,m,e)$ on $\dc{C}$ in the sense of \cref{def:doublemonad}, such that:
  \begin{itemize}
  \item the double functor $T\co \dc{C}\to\dc{C}$ is lax monoidal, as in \cref{def:monoidaldoublefunctor};
  \item the vertical transformation $m \co TT \Rightarrow T$ is pseudomonoidal, \emph{i.e.}, it comes equipped with 2-morphisms
\begin{equation}\label{eq:here-2}
  \begin{tikzcd}[sep=.37in]
TTX_1\ot TTX_2\ar[r,tick,"\hid"]\ar[d,"T^2_{TX,TY}"']\ar[dddr,phantom,"\Two m^2"] &
TTX_1\ot TTX_2\ar[d,"m_{X_1} \ot m_{X_2}"] \\
T(TX_1\ot TX_2)\ar[d,"T(T^2_{X_1,X_2})"'] & TX_1\ot TX_2\ar[dd,"T^2_{X_1,X_2}"] \\
TT(X_1\ot X_2)\ar[d,"m_{X_1\ot X_2}"'] & & \\
T(X_1\ot X_2)\ar[r,tick,"\hid"'] & T(X_1\ot X_2)
 \end{tikzcd}\qquad
\begin{tikzcd}[sep=.4in]
I \ar[d, "T^0"']\ar[r, tick, "\hid"]\ar[dddr,phantom,"\Two m^0"]  & I\ar[ddd, "T^0"]  \\
 TI\ar[d,"T(T^0)"'] & \\
TTI \ar[d, "m_I"'] & \\
TI \ar[r, tick, "\hid"'] & TI
   \end{tikzcd}
   \end{equation}
satisfying the axioms of \cref{def:montransf};
  \item the vertical transformation $e \co 1 \Rightarrow T$ is pseudomonoidal , \emph{i.e.}, it comes equipped with 2-morphisms
  \begin{equation}\label{eq:here-1}
  \begin{tikzcd}[sep=.37in]
X_1\ot X_2\ar[r,tick,"\hid"]\ar[dd,"e_{X_1\ot X_2}"']\ar[ddr,phantom,"\Two e^2"] &
X_1\ot X_2\ar[d,"e_{X_1} \ot e_{X_2}"] \\
& TX_1\ot TX_2\ar[d,"T^2_{X_1,X_2}"] \\
T(X_1\ot X_2)\ar[r,tick,"\hid"'] & T(X_1\ot X_2)
 \end{tikzcd}\qquad
\begin{tikzcd}[sep=.4in]
I \ar[r, tick, "\hid"]\ar[dr,phantom,"\Two e^0"] \ar[d,"e_I"'] & I \ar[d, "T^0"] \\
TI \ar[r, tick, "\hid"'] & TI
   \end{tikzcd}
   \end{equation}
satisfying the axioms of \cref{def:montransf};
\item the pseudomonoidal structures of the composites $m \circ Tm$ and $m \circ mT \colon TTT \Rightarrow T$ are equal, while the pseudomonoidal structures of $m \circ Te$ and $m \circ eT \colon T \Rightarrow T$ are both trivial.
  \end{itemize}
\end{defi}

For a horizontal monad that arises from a vertical one via \cref{cor:horizontalpseudomonad},
we are naturally interested
in conditions on the vertical monad such that the induced horizontal monad is monoidal.
Thankfully, the conditions under which a pseudomonoidal
vertical double monad induces a monoidal horizontal double
monad are the same as for the non-monoidal
case of \cref{cor:horizontalpseudomonad}, as the next theorem shows.

\begin{thm}
  \label{cor:2}
Let $\dc{C}$  be a monoidal double category and $(T, m, e)$ be a
pseudomonoidal vertical double monad  on $\dc{C}$.
Assume that the underlying vertical
transformations $m \colon TT \Rightarrow T$ and $e \colon
1_\dc{C} \Rightarrow T$ are special.
Then $(T, m, e)$ induces a monoidal
horizontal double monad $(T, \comp{m}, \comp{e})$ on $\dc{C}$.
\end{thm}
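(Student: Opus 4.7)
The plan is to run exactly the same abstract argument as for \cref{cor:horizontalpseudomonad}, but now internal to the monoidal double category $\cat{MonDblCat}[\dc{C}, \dc{C}]$ rather than $\cat{DblCat}[\dc{C}, \dc{C}]$. By \cref{thm:ps-hom-c-c-monoidal}, $\cat{MonDblCat}[\dc{C}, \dc{C}]$ carries a composition monoidal structure lifting that of $\cat{DblCat}[\dc{C}, \dc{C}]$. Unpacking definitions, a pseudomonoidal vertical double monad on $\dc{C}$, as defined in \cref{def:2}, is exactly a vertical monoid in this monoidal double category; dually, a monoidal horizontal double monad on $\dc{C}$, as defined in \cref{def:2-h}, is exactly a horizontal monoid therein.

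To pass from the former to the latter, the natural tool is \cref{prop:verticalgiveshorizontal}, which promotes any vertical monoid to a horizontal monoid provided that its unit and multiplication vertical $1$-cells admit companions. In our setting, we therefore need $m$ and $e$, viewed as pseudomonoidal vertical transformations, to have companions in $\cat{MonDblCat}[\dc{C}, \dc{C}]$. This is exactly the content of \cref{lem:1}: a pseudomonoidal vertical transformation admits a companion in $\cat{MonDblCat}[\dc{C}, \dc{C}]$ if and only if its underlying vertical transformation does so in $\cat{DblCat}[\dc{C}, \dc{C}]$, which by \cref{lem:2} holds precisely when the transformation is special. This is our hypothesis, so both $m$ and $e$ admit companions in $\cat{MonDblCat}[\dc{C}, \dc{C}]$.

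Applying \cref{prop:verticalgiveshorizontal} inside $\cat{MonDblCat}[\dc{C}, \dc{C}]$ then produces a horizontal monoid $(T, \comp{m}, \comp{e})$, whose coherence $2$-morphisms $\mathfrak{a}, \mathfrak{l}, \mathfrak{r}$ are the companion transposes of the vertical associativity and unit identities. By \cref{def:2-h}, this horizontal monoid is precisely a monoidal horizontal double monad on $\dc{C}$. Moreover, since the forgetful functor $\cat{MonDblCat}[\dc{C}, \dc{C}] \to \cat{DblCat}[\dc{C}, \dc{C}]$ is strict monoidal and preserves companions (by the `only if' direction of \cref{lem:1}), the underlying horizontal double monad of $(T, \comp{m}, \comp{e})$ coincides with the one produced by \cref{cor:horizontalpseudomonad}.

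There is no genuine obstacle once the machinery of \cref{sec:mon-dbl-fun,sec:monoids} is in place: the construction of $\cat{MonDblCat}[\dc{C}, \dc{C}]$ in \cref{thm:ps-hom-c-c-monoidal} and the companion-lifting result \cref{lem:1} were set up precisely so that this corollary follows by a purely formal transfer of \cref{cor:horizontalpseudomonad} to the pseudomonoidal setting. The only point requiring attention is that the companion transposes appearing in \cref{prop:verticalgiveshorizontal}, when computed in $\cat{MonDblCat}[\dc{C}, \dc{C}]$, automatically inherit the monoidal modification structure required to make $\mathfrak{a}, \mathfrak{l}, \mathfrak{r}$ into \emph{monoidal} modifications; but this is already encoded in the statement of \cref{lem:1}, which asserts that the companion $2$-morphisms lift to $\cat{MonDblCat}[\dc{C}, \dc{C}]$.
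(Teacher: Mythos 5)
Your proposal is correct and follows essentially the same route as the paper: view $T$ as a vertical monoid in $\cat{MonDblCat}[\dc{C},\dc{C}]$, invoke \cref{prop:verticalgiveshorizontal} to produce a horizontal monoid there, and use \cref{lem:1} to translate the hypothesis that $m$ and $e$ are special into the required existence of companions. The extra remarks about compatibility with the forgetful functor to $\cat{DblCat}[\dc{C},\dc{C}]$ and about the monoidal modification structure of $\mathfrak{a},\mathfrak{l},\mathfrak{r}$ are accurate but not needed for the argument.
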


\begin{proof}
If we consider $T$ as a vertical monoid in $\cat{MonDblCat}[\dc{C}, \dc{C}]$,
\cref{prop:verticalgiveshorizontal} ensures that it induces a horizontal pseudomonoid therein (namely a monoidal horizontal monad) when $m$ and $e$
have companions as pseudomonoidal vertical transformations. By \cref{lem:1}, this is true if and only if $m$ and $e$ are special
(\cref{def:special-vertical-transformation}). For example, the unit of the induced monoidal horizontal double monad structure on $T$ is the
horizontal transformation $\comp{e}\colon1\ticktwoar T$ which becomes monoidal with structure cells
\begin{equation}\label{eq:here2}
\begin{tikzcd}[column sep=.6in]
X_1 \ot X_2 \ar[r,tick,"\comp{e}_{X_1} \ot\comp{e}_{Y_2}"]\ar[dr,phantom,"\Two"]\ar[d,equal] & TX_1\ot TX_2\ar[d,"T^2_{X_1,X_2}"] \\
X_1 \ot X_2\ar[r,tick,"\comp{e}_{X_1 \ot X_2}"'] & T(X_1\ot X_2)
\end{tikzcd}\qquad
\begin{tikzcd}
I\ar[r,tick,"\hid_I"]\ar[d,equal] \ar[dr,phantom,"\Two"]& I\ar[d,"T^0"] \\
I\ar[r,tick,"\comp{e}_I"'] & TI
\end{tikzcd}
\end{equation}
that bijectively correspond, under transpose operations, to those of \cref{eq:here-1}.
\end{proof}

While a direct proof of \cref{cor:2} should be possible, the construction of all the data for a monoidal horizontal double monad from that of a
pseudomonoidal vertical double monad using companions, let alone the verification of the coherence axioms, would be a daunting task. It is at this point that the advantage of our abstract view becomes clear;
as an added bonus, the proofs of \cref{cor:horizontalpseudomonad,cor:2}
become essentially the same.

\section{Monoidal Kleisli double categories}
\label{sec:kleisli}

In this section, we first introduce the (horizontal) Kleisli double category $\Kl(T)$ for a
horizontal double monad $T$ (\cref{def:horizontalpseudomonad}) on a double category $\dc{C}$, with a particularly important case being where $T$ is induced from a vertical double monad (\cref{def:doublemonad}) as in \cref{cor:horizontalpseudomonad}.

We next consider what happens when the double category $\dc{C}$ is monoidal and the double monad $T$ is also monoidal. We would naturally expect the monoidal structure of $\dc{C}$ to extend to $\Kl(T)$, just as happens with an ordinary monoidal monad on an ordinary monoidal category. However, because the monoidal constraint data for a horizontal double monad does not point exclusively in the horizontal direction, things are slightly more subtle. To even obtain monoidal structure we must assume certain companions exist, and even then, this structure is only \emph{oplax} monoidal in general (\cref{thm:oplaxmonoidalKlT}). Again, the situation where $T$ is induced from a vertical double monad will be important, and in this special case, we describe sufficient conditions for this oplax monoidal structure on $\Kl(T)$ to be normal oplax (\cref{prop:normaloplaxdouble}) or (pseudo) monoidal (\cref{cor:pseudomonoidalKl}).

We begin with the construction of the horizontal Kleisli double category of a horizontal double monad. This construction is essentially contained in \cite{CruttwellShulman}; there, the authors start from a \emph{vertical} double monad $(T,m,e)$, and define from it a \emph{horizontal} Kleisli double category (Definition~4.1 of \emph{op.~cit.}) which in general is only a so-called \emph{virtual} double category. These are weaker structures than double categories, in which horizontal $1$-cells do not compose, but instead are formed into a structure of ``multi-$2$-morphisms''; however, \cite[Theorem~A.8]{CruttwellShulman} shows that, when the vertical transformations $e \colon 1 \Rightarrow T$ and $m \colon TT \Rightarrow T$ are \emph{special}, this virtual double category is in fact a double category. In this case, the horizontal Kleisli double category of \emph{loc.~cit.} can be obtained as follows: first apply \cref{cor:horizontalpseudomonad} to form the horizontal double monad $(T, \comp m, \comp e)$ associated to $(T,m,e)$; and then apply the following result.

\begin{thm}\label{thm:KlS}
Let $\dc{C}$ be a double category and $(T,m,e)$ be a horizontal double monad on it.
There is a double category $\Kl(T)$, called the \emph{horizontal Kleisli double category} of $\dc{C}$, wherein:
\begin{itemize}
\item \textbf{objects} are objects of $\dc{C}$;
\item \textbf{vertical 1-cells} are vertical 1-cells of $\dc{C}$;
\item \textbf{horizontal 1-cells} $M \co X \rightsquigarrow Y$ are
  horizontal 1-cells $X \tor TY$ of $\dc{C}$;
\item \textbf{2-morphisms} as to the left below, are the 2-morphisms of $\dc{C}$ as to the right:
  \begin{equation*}
    \cd{
      {X} \ar@{~>}[r]^-{M} \ar[d]_{f}
      \dtwocell{dr}{\phi} &
      {Y} \ar[d]^{g} \\
      {{X'}} \ar@{~>}[r]_-{M'} &
      {{Y'}}
    } \qquad \qquad
    \cd{
      {X} \ar|@{|}[r]^-{M} \ar[d]_{f} \dtwocell{dr}{\phi} &
      {TY} \ar[d]^{Tg} \\
      {{X'}} \ar|@{|}[r]_-{M'} &
      {T{Y'} \mathrlap{.}}
    }
  \end{equation*}
\end{itemize}
\end{thm}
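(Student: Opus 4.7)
The plan is to construct $\Kl(T)$ piece by piece from the data of the horizontal double monad $(T,m,e)$, and to verify the double category axioms by reducing them to properties of $\dc{C}$ and standard facts about Kleisli bicategories of pseudomonads.

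First I would assemble the underlying data. Set $\Kl(T)_0 \defeq \dc{C}_0$. The category $\Kl(T)_1$ of horizontal $1$-cells and $2$-morphisms has as objects pairs $(M,Y)$ with $M\colon X\tor TY$ a horizontal $1$-cell of $\dc{C}$, and as morphisms $(M,Y)\to(M',Y')$ pairs $(g,\phi)$ where $g\colon Y\to Y'$ in $\dc{C}_0$ and $\phi\colon M\Rightarrow M'$ is a $2$-morphism of $\dc{C}$ whose target is $Tg$; concretely, this is the pullback of $\mathbf{t}\colon\dc{C}_1 \to \dc{C}_0$ along $T\colon\dc{C}_0 \to \dc{C}_0$. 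Source and target functors $\mathbf{s},\mathbf{t}\colon \Kl(T)_1 \to \Kl(T)_0$ are read off immediately. Horizontal composition is defined by $N\circ_\Kl M \defeq m_Z\circ TN \circ M$ and horizontal identity by $\hid^\Kl_X \defeq e_X$. For $2$-morphisms, given $\phi\colon M\Rightarrow M'$ of target $Tg$ and $\psi\colon N\Rightarrow N'$ of target $Th$, their composite in $\Kl(T)$ is the horizontal pasting of $\phi$, $T\psi$, and the component $m_h$ of $m$, whose target is correctly $Th$. Functoriality of $\circ_\Kl$ and $\hid^\Kl$ then follows from functoriality of $T$ together with axioms~\cref{eq:betafunctor} for $m$ and $e$.

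Next I would construct the globular associativity and unit constraints~\cref{eq:aellr}. For composable $L,M,N$ in $\Kl(T)$, the associator $(N\circ_\Kl M)\circ_\Kl L \Rightarrow N\circ_\Kl(M\circ_\Kl L)$ is assembled as a vertical pasting of four invertible globular $2$-morphisms in $\dc{C}$: (i) the pseudofunctoriality constraint $\xi$ of $T$, rewriting $T(m_W\circ TN\circ M)$ as $Tm_W\circ TTN\circ TM$; (ii) the component $\mathfrak{a}_W$ of the modification from~\cref{eq:alr}, turning $m_W\circ Tm_W$ into $m_W\circ m_{TW}$; (iii) the inverse of the invertible coherence $2$-morphism $m_N$ of the horizontal transformation $m$, replacing $m_{TW}\circ TTN$ by $TN\circ m_Z$; and (iv) associativity constraints of $\dc{C}$. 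The left and right unitors are constructed analogously from $\mathfrak{l},\mathfrak{r}$, the naturality components $e_M$ and $m_M$, and the unit constraints of $\dc{C}$. Naturality of these constraints against $2$-morphisms of $\Kl(T)$ follows from axiom~\cref{eq:betanaturality} for $m$ and $e$ together with naturality of $\xi$.

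The main obstacle is verifying the pentagon and triangle coherence axioms for $\Kl(T)$. Rather than chase these pastings explicitly, I would argue as follows. Since the associator and unitors of $\Kl(T)$ are built from globular $2$-morphisms, and the coherence axioms are equations between pastings of globular $2$-morphisms, they can be verified inside the horizontal bicategory $\ca{H}(\Kl(T))$. By~\cref{lem:dbl-to-bicat-functor}, $T$ induces a pseudofunctor $\ca{H}(T)\colon \ca{H}(\dc{C}) \to \ca{H}(\dc{C})$, and a direct check shows that the horizontal double monad structure $(m,e,\mathfrak{a},\mathfrak{l},\mathfrak{r})$ makes $\ca{H}(T)$ into a pseudomonad on $\ca{H}(\dc{C})$. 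One then identifies $\ca{H}(\Kl(T))$ with the Kleisli bicategory of this pseudomonad, which is a standard construction known to produce a bona fide bicategory; this yields the required coherence for $\Kl(T)$.
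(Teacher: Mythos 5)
Your proposal is correct and follows essentially the same route as the paper: you assemble the same composition, identity and constraint data, verify functoriality and naturality, and then defer the pentagon and triangle coherence axioms to the known coherence of the Kleisli bicategory of a pseudomonad, which is exactly what the paper does by citing \cite{CruttwellShulman,FioreM:relpkb}. (A minor slip: in step~(iii) of your associator you want the cell $m_N$ itself rather than its inverse to pass from $m_{TW}\circ TTN$ to $TN\circ m_Z$, but as $m_N$ is invertible this does not affect anything.)
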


\begin{proof} Vertical composition in $\Kl(T)$ is the same as  in $\dc{C}$;
horizontal composition of Kleisli 1-cells $M \colon X
\rightsquigarrow Y$
  and $N \colon Y \rightsquigarrow Z$ is given by
  \begin{equation}\label{eq:kcirc}
N \circ_\Kl M \defeq X\xtickar{M}TY\xtickar{TN}TTZ\xtickar{m_Z}TZ\mathrlap{;}
  \end{equation}
while horizontal pasting of Kleisli 2-morphisms $\phi$ and $\psi$ is given by
  \begin{equation*}
    \cd[@C+2em]{
      X \ar[d]_-{f} \ar|@{|}[r]^{M}
      \dtwocell{dr}{\phi} &
      TY \ar|@{|}[r]^{TN} \ar[d]|-{Tg}
      \dtwocell{dr}{T\psi} &
      TTZ \ar|@{|}[r]^{m_Z} \ar[d]|-{TTh}
      \dtwocell{dr}{m_h} &
      TZ \ar[d]^-{Th} \\
      {X'} \ar|@{|}[r]_{M'} &
      T{Y'} \ar|@{|}[r]_{TN'} &
      TTZ' \ar|@{|}[r]_{m_{Z'}} &
      TZ'\rlap{ .}
    }
  \end{equation*}
The horizontal identity 1-cell on $X$ is
\begin{equation}\label{eq:idK}
\begin{tikzcd}
\hid_X^\Kl\defeq X\ar[r,tick,"e_X"] &  TX
\end{tikzcd}
\end{equation}
and the horizontal identity 2-morphism on $f \colon X \rightarrow {X'}$ is
  \begin{equation*}
    \cd{
      X \ar[d]_-{f} \ar|@{|}[r]^-{e_X} \dtwocell{dr}{e_f}  & TX
\ar[d]^-{Tf} \\
      X' \ar|@{|}[r]_-{e_{X'}} & TX'\mathrlap{ .}
    }
  \end{equation*}
It is easy to see that horizontal composition of $2$-morphisms is
vertically functorial, and so it remains to give the coherence
constraints. Given Kleisli 1-cells $M \colon X \rightsquigarrow Y$
  and $N \colon Y \rightsquigarrow Z$ and $P \colon Z \rightsquigarrow
  W$, the associativity constraint is given by the following pasting,
  in which the horizontal 1-cell at the top is $(P \circ_\Kl N) \circ_\Kl M$ and the one
  at the bottom is $P \circ_\Kl (N \circ_\Kl M)$:
\begin{displaymath}
\begin{tikzcd}
X
\ar[ddd,equal]
\ar[r,tick,"M"] &
TY\ar[drrr,phantom,"\Two \xi"]\ar[rrr,tick,
"T({m_W}\circ TP\circ N)"]\ar[d,equal] &&& TTW\ar[d,equal]
\ar[r,tick,"{m_{W}}"] & TW\ar[d,equal] \\
& TY\ar[dd,equal]\ar[r,tick,"TN"'] &TTZ\ar[d,equal]\ar[r,tick,"TTP"']
& TTTW\ar[drr,phantom,"\Two{\mathfrak
a}"]\ar[r,tick,"T{m_W}"']\ar[d,equal] &
TTW\ar[r,tick,"{m_{W}}"'] & TW\ar[d,equal]\\
&&
TTZ\ar[drr,phantom,"\Two{m_P}"]\ar[d,equal]\ar
[r,tick,"TTP"'] & TTTW\ar[r,tick,"{m_{TW}}"'] &
TTW\ar[d,equal]\ar[r,tick,"{m_{W}}"'] & TW\ar[d,equal]\\
X
\ar[r,tick,"M"'] & TY\ar[r,tick,"TN"'] &
TTZ\ar[r,tick,"{m_Z}"'] & TZ\ar[r,tick,"TP"'] &
TTW\ar[r,tick,"{m_W}"'] & TW\mathrlap{.}
\end{tikzcd}
\end{displaymath}
The unit constraints are as follows, where the horizontal top 1-cells are $\hid^\Kl_Y \circ_{\Kl} M$ and
$M \circ_\Kl \hid^\Kl_X$:
\begin{displaymath}
 \begin{tikzcd}
X \ar[r,tick,"M"]
\ar[d,equal] &
TY\ar[r,tick,"Te_Y"]\ar[d,equal]\ar[drr,phantom,"\Two\mathfrak l"] &
TTY \ar[r,tick,"{m_Y}"] & TY \ar[d,equal]\\
X\ar[r,tick,"M"'] & TY \ar[rr,tick,"\hid_{TY}"']&& TY
\end{tikzcd}\qquad
\begin{tikzcd}
X\ar[d,equal]\ar[drr,phantom,"\Two (e_M)^{\mi 1}"]
\ar[r,tick,"e_X"] & TX \ar[r,tick,"TM"] &
TTY\ar[d,equal]
\ar[r,tick,"m_Y"] & TY \ar[d,equal]\\
X \ar[r,tick,"M"]\ar[d,equal] & TY\ar[d,equal]\ar[drr,phantom,"\Two\mathfrak r"] \ar[r,tick,"e_{TY}"] & TTY \ar[r,tick,"m_Y"] & TY
\ar[d,equal]\\
X  \ar[r,tick,"M"']  & TY \ar[rr,tick,"\hid_{TY}"'] & & TY  \mathrlap{.}
\end{tikzcd}
\end{displaymath}
Above, the 2-cells labelled $\mathfrak a,\mathfrak l,\mathfrak r$ are as in \cref{eq:alr} and the components of $m,e$ are as in
(\ref{eq:mcomponents}, \ref{eq:ecomponents}). The coherence axioms follow by the usual argument for a Kleisli
bicategory, {\em cf.}~\cite{CruttwellShulman,FioreM:relpkb}.
\end{proof}

We have not ascribed any kind of universal property to the construction of the Kleisli double category, and for our purposes we do not need to; however, if we were to do so, then, following~\cite{FormalTheoryMonadsI}, we would express it in terms of \emph{universal opalgebra} structure on the canonical embedding of $\dc{C}$ into $\Kl(T)$:
\begin{defi}
  \label{def:embedding}
  Let $\dc{C}$ be a double category and $(T,m,e)$ be a horizontal double monad on it. The \emph{canonical embedding} $F_T \colon \dc{C} \rightarrow \Kl(T)$ is the double functor which is the identity on objects and vertical $1$-cells; sends a horizontal $1$-cell $M \co X \horightarrow Y$ to $e_Y \circ M \co X \rightsquigarrow Y$, and correspondingly for 2-morphisms between horizontal $1$-cells.
\end{defi}

Applying \cref{lem:Ff} to this canonical embedding, we immediately obtain the following result concerning companions in Kleisli double categories ({\em cf.}~\cite[Proposition~7.5]{CruttwellShulman}):

\begin{prop}\label{prop:Klcompanions}
Let $\dc{C}$ be a double category, $T$ a horizontal double monad on $\dc{C}$, and
$f \co X \to X'$ a vertical 1-cell of $\dc{C}$. If $f$ has a companion as a vertical 1-cell of $\dc{C}$, then it
has a companion also as a vertical 1-cell of $\Kl(T)$. \hfill \qedsymbol
\end{prop}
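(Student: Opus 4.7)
The plan is to derive this result as an immediate consequence of \cref{lem:Ff} applied to the canonical embedding $F_T \colon \dc{C} \to \Kl(T)$ of \cref{def:embedding}. The content of the proof reduces entirely to verifying that $F_T$ is a pseudo double functor; once that is known, \cref{lem:Ff} finishes the argument, since $F_T$ is the identity on vertical $1$-cells, and so sends $f$ to $f$ and sends a companion $\comp f$ of $f$ in $\dc{C}$ to a companion of $f$ in $\Kl(T)$.

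First, I would spell out the structure of $F_T$. It is identity on objects and vertical $1$-cells; it sends a horizontal $1$-cell $M \co X \horightarrow Y$ to $e_Y \circ M \co X \rightsquigarrow Y$, and acts on a $2$-morphism $\phi \co M \Rightarrow M'$ over $(f,g)$ by horizontally pasting $\phi$ with the naturality cell $e_g$ from~\eqref{eq:ecomponents}. The unit constraint $\xi_X \co F_T(\hid_X) \Rightarrow \hid_X^{\Kl}$ is the identity, since both sides equal $e_X$. The compositor $\xi_{M,N} \co F_T(N \circ M) \Rightarrow F_T(N) \circ_\Kl F_T(M)$ is the invertible globular $2$-morphism $e_Z \circ N \circ M \Rightarrow m_Z \circ T(e_Z \circ N) \circ e_Y \circ M$ assembled from $e_N^{-1}$ (the naturality constraint of $e$ against $N$) and $\mathfrak{r}^{-1}$ (the invertible modification from~\eqref{eq:alr} witnessing $m \circ Te \cong \hid$), together with functoriality of $T$ and the horizontal unit/associativity isomorphisms of $\dc{C}$.

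Verifying the coherence axioms for $F_T$ is a routine diagram chase, using the monad coherence modifications $\mathfrak{a}, \mathfrak{l}, \mathfrak{r}$ and the horizontal transformation axioms for $e$ and $m$; this is essentially the same check that underlies the well-definedness of the Kleisli composition in \cref{thm:KlS}, and is the formal double categorical analogue of the classical fact that the unit of a monad induces a functor into its Kleisli category. With $F_T$ in hand as a pseudo double functor, \cref{lem:Ff} produces the desired companion of $f$ in $\Kl(T)$, explicitly given by $F_T(\comp f) = e_{X'} \circ \comp f$, with structure cells obtained by pasting $F_T(p_1)$ and $F_T(p_2)$ against the (trivial) unit constraints of $F_T$. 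No genuine obstacle arises beyond the bookkeeping for pseudo-functoriality of $F_T$.
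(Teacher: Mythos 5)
Your proposal follows exactly the same route as the paper: the paper introduces the canonical embedding $F_T \colon \dc{C} \to \Kl(T)$ in \cref{def:embedding} and obtains \cref{prop:Klcompanions} immediately by applying \cref{lem:Ff} to it. The only slip is cosmetic: the modification witnessing $m \circ Te \cong \hid$ is labelled $\mathfrak{l}$ in~\eqref{eq:alr}, not $\mathfrak{r}$.
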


We now consider the Kleisli double category when $\dc{C}$ is a monoidal double category and $T$ is a monoidal horizontal double monad (\cref{def:2-h}). As discussed above, it does not seem to be true in general that the monoidal structure of $\dc{C}$ will extend to $\Kl(T)$; however, under mild assumptions which
are satisfied in our applications, we do obtain at least an oplax monoidal structure (\cref{def:oplaxdoublecat}) on $\Kl(T)$:

\begin{thm}\label{thm:oplaxmonoidalKlT}
Let $\dc{C}$ be a monoidal double category and $T$ a
 monoidal horizontal double monad on $\dc{C}$. If the vertical 1-cell $T^0\colon I \rightarrow TI$ and each vertical $1$-cell
$T^2_{X_1,X_2} \colon TX_1 \otimes TX_2 \rightarrow T(X_1 \otimes X_2)$
  has a
companion, then the monoidal structure of $\dc{C}$ induces 
an oplax monoidal structure on $\Kl(T)$.
\end{thm}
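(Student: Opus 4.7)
The plan is to transport the monoidal structure of $\dc{C}$ along the canonical embedding $F_T \colon \dc{C} \to \Kl(T)$ of \cref{def:embedding}, using the companions of the lax-monoidal vertical $1$-cells $T^2_{X_1,X_2}$ and $T^0$ to bridge between the vertical monoidality data of $T$ and the horizontal structure of $\Kl(T)$. Concretely, on objects and vertical $1$-cells both $\otimes^{\Kl}$ and $I^{\Kl}$ act as $\otimes$ and $I$ from~$\dc{C}$; on Kleisli horizontal $1$-cells $M_i \colon X_i \tickar TY_i$ we set
\[ M_1 \otimes^{\Kl} M_2 \defeq \comp{T^2_{Y_1,Y_2}} \circ (M_1 \otimes M_2) \colon X_1 \otimes X_2 \tickar T(Y_1 \otimes Y_2)\mathrlap{ ,}\]
extending to Kleisli $2$-morphisms in the evident way; and $I^{\Kl} \colon \mathbf{1} \to \Kl(T)$ is the composite $F_T \circ I$, which under the convention $I_1 = \hid_{I_0}$ sends the horizontal identity on $*$ to $e_I = \hid^{\Kl}_I$.

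Next, one constructs the oplax coherence cells of~\cref{eq:structure2cells1} and~\cref{eq:structure2cells2}. The nontrivial cell
\[ \xi_{M,N} \colon (N_1 \circ_\Kl M_1) \otimes^{\Kl} (N_2 \circ_\Kl M_2) \Rightarrow (N_1 \otimes^{\Kl} N_2) \circ_\Kl (M_1 \otimes^{\Kl} M_2) \]
is obtained by suitable pasting of the $\tau$-cell of~\cref{eq:structure2cells1} for $\dc{C}$, the companion transpose of the monoidality cell~$m^2$ of~\cref{eq:m2}, and the companion data for~$T^2$. Dually, $\xi_X$ is built from~$\eta$ together with the companion transpose of~$e^2$ from~\cref{eq:e2}, while the cells $\delta, \iota$ of~\cref{eq:structure2cells2} for $I^{\Kl}$ are built from the companion of~$T^0$ together with~$m^0$ and~$e^0$ from~\cref{eq:m2} and~\cref{eq:e2}. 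The constraints $\alpha^{\Kl}, \lambda^{\Kl}, \rho^{\Kl}$ inherit their vertical $1$-cell components from $\alpha, \lambda, \rho$ in~$\dc{C}$; their $2$-morphism components are assembled from the corresponding cells in~$\dc{C}$ by additional pasting involving the lax-monoidal coherence of~$T^2$ and the horizontal-monad coherence cells~$\mathfrak a, \mathfrak l, \mathfrak r$ of~\cref{eq:alr}.

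The main obstacle is the verification of the resulting coherence axioms: the Mac Lane pentagon and triangles for $\alpha^{\Kl}, \lambda^{\Kl}, \rho^{\Kl}$, together with the axioms ensuring that these are vertical transformations between oplax double functors in the sense of \cref{def:mondoublecat}. The strategy is to reduce each such axiom to a combination of (i) the corresponding axiom already holding in $\dc{C}$; (ii) the coherence of~$T$ as a monoidal horizontal double monad from~\cref{def:2-h}, including the horizontal-monad axioms for~$\mathfrak a, \mathfrak l, \mathfrak r$; and (iii) the companion-transpose calculus of~\cref{prop:companionomnibus} applied to the horizontal $1$-cells $\comp{T^2}$ and $\comp{T^0}$. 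Each axiom in $\Kl(T)$ pulls back via companion transposes to an identity of $2$-morphisms in $\dc{C}$ whose two sides coincide by pasting arguments using~(i)--(iii). This keeps the verifications systematic despite their length, and illustrates the pay-off of the abstract framework set up in~\cref{sec:mon-dbl-cat,sec:mon-dbl-fun,sec:monoids,sec:mon-dbl-monad}.
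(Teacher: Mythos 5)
Your proposal matches the paper's proof in its essential outline: the tensor of Kleisli $1$-cells is defined by $M_1 \kot M_2 \defeq \comp{T^2_{Y_1,Y_2}} \circ (M_1 \otimes M_2)$, the oplax interchange $\tau$ is assembled from the interchange in $\dc{C}$ together with the companion transpose of $m^2$ and the naturality of $T^2$, the identity comparison $\eta$ comes from $\comp{e^2}$, and the associativity and unit constraints carry the same vertical $1$-cell components as in $\dc{C}$, with their $2$-morphism components built from companion transposes of the lax monoidal coherence for $T$. That is exactly how the paper proceeds.

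However, there is a genuine inconsistency in your treatment of the monoidal unit. You set $I^{\Kl} = F_T \circ I$, which under the convention $I_1 = \hid_{I_0}$ yields $I^\Kl_1 \cong e_I = \hid^\Kl_I$. But the paper instead takes the Kleisli monoidal unit horizontal $1$-cell to be $J = \comp{T^0} \colon I \tickar TI$, and this is the choice for which your own subsequent description of $\delta$ and $\iota$ (``built from the companion of $T^0$ together with $m^0$ and $e^0$'') actually makes sense: with $J = \comp{T^0}$ one has $\delta = \comp{m^0} \colon \comp{T^0} \Rightarrow \comp{T^0} \circ_\Kl \comp{T^0}$ and $\iota = \comp{e^0} \colon \comp{T^0} \Rightarrow e_I$, exactly as in~\eqref{eq:structure34}. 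If instead $I^\Kl_1 = e_I$, then $\iota$ is forced to be the identity and $\delta$ would be a unitor of $\Kl(T)$, and neither would be built from $\comp{T^0}$, $m^0$, $e^0$. Moreover the two choices are \emph{not} interchangeable in general: $e_I \cong \comp{T^0}$ holds precisely when $e^0$ is invertible, which is not assumed for a general monoidal horizontal double monad --- it only becomes true under the extra hypotheses of~\cref{prop:normaloplaxdouble}. Taking $J = \comp{T^0}$ is also what makes the unitor constraint components~\eqref{eq:Klunitors} close up via the transpose of the vertical unit axiom $T\lambda \circ T^2 \circ (T^0 \otimes \vid) = \lambda$ for the lax monoidal structure on $T$, rather than requiring an additional appeal to the strength.

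A minor further point: the constraints $\alpha^\Kl, \lambda^\Kl, \rho^\Kl$ of the monoidal structure do not themselves involve the horizontal-monad coherence cells $\mathfrak a, \mathfrak l, \mathfrak r$ of~\eqref{eq:alr}; those enter only in the construction of the Kleisli composition and its coherence (Theorem~\ref{thm:KlS}). The $2$-morphism components of $\alpha^\Kl, \lambda^\Kl, \rho^\Kl$ use the constraints of $\dc{C}$, the interchange $\tau$, and companion transposes of the vertical monoidal-functor axioms for $(T, T^2, T^0)$.
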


\begin{proof}
The monoidal structure on the category of objects and vertical $1$-cells $\Kl(T)_0 = \dc{C}_0$ is inherited from $\dc{C}$. For the monoidal structure on the category $\Kl(T)_1$ of horizontal $1$-cells and $2$-morphisms, we define the tensor product of $M_1 \colon X_1 \rightsquigarrow Y_1$ and $M_2 \colon X_2 \rightsquigarrow Y_2$ and the monoidal unit $J$
to be:
\begin{displaymath}
M_1 \kot M_2 \defeq X_1 \otimes X_2 \xtickar{M_1 \otimes M_2 }TY_1 \otimes TY_2 \xtickar{\comp{T^2_{\mathrlap{Y_1,Y_2}}}\quad}T(Y_1 \otimes Y_2),
\qquad
J \defeq I\xtickar{\comp{T^0}}TI\mathrlap{ ;}
\end{displaymath}
while the binary tensor product of $2$-morphisms is given by:
\begin{displaymath}
 \begin{tikzcd}[column sep=.3in,]
X_1\ar[dr,phantom,"\Two\phi"]\ar[r,tick,"M_1"]\ar[d,"f_1"'] &
TY_1,\ar[d,"Tg_1"] &
X_2 \ar[dr,phantom,"\Two\psi"]\ar[r,tick,"M_2"]\ar[d,"f_2"'] &
TY_2\ar[drr,phantom,"\mapsto"]\ar[d,"Tg_2"] &&
X_1 \ot X_2 \ar[dr,phantom,"\Two\phi\ot\psi"]\ar[r,tick,"M_1\ot M_2"]\ar[d,"f_1\ot
f_2"'] & TY_1\ot TY_2 \ar[r,tick,"\comp{T^2_{\mathrlap{Y_1,Y_2}}}\quad"]\ar[d,"Tg_1\ot
Tg_2"description]\ar[dr,phantom,"\Two"] & T(Y_1 \ot Y_2)\ar[d,"T(g_1 \ot g_2)"] \\
X'_1\ar[r,tick,"M_1'"'] & TY'_1, & X'_2 \ar[r,tick,"M_2'"'] & TY'_2 && X'_1 \ot X'_2 \ar[r,tick,"M'_1 \ot M'_2"'] & TY'_1 \ot
TY'_2 \ar[r,tick,"\comp{T^2_{\mathrlap{Y'_1,Y'_2}}}\quad"']
&
T(Y'_1 \ot Y'_2)
 \end{tikzcd}
\end{displaymath}
where the right-hand 2-morphism is a companion transpose of the equality $T(g_1\ot g_2) \circ T^2_{Y_1, Y_2} = T^2_{Y_1', Y_2'} \circ Tg_1 \ot Tg_2$ of vertical $1$-cells expressing naturality of $T^2$.
The associativity constraint is given by the following pasting, where the horizontal composite at the top is $(M_1 \kot M_2) \kot M_3$
and the one at the bottom is $M_1 \kot (M_2 \kot M_3)$:
\begin{displaymath}
\begin{tikzcd}[column sep=.5in]
(X_1 \otimes X_2) \otimes X_3
	\ar[d,equal]
	\ar[rr,tick,"(\comp{T^2} \circ (M_1 \otimes M_2)) \otimes M_3"]
	\ar[drr,phantom,"\Two\tau\cref{eq:structure2cells1}"]
	&&
T(Y_1 \otimes Y_2) \otimes TY_3
	\ar[r,tick,"\comp{T^2}"]\ar[d,equal] &
T((Y_1 \otimes Y_2) \otimes Y_3)
	\ar[d,equal] \\
(X_1 \otimes X_2) \otimes X_3
	\ar[d,"\alpha_{X_1,X_2,X_3}"']
	\ar[r,tick,"(M_1 \otimes M_2) \otimes X_3"]
	\ar[dr,phantom,"\Two\alpha_{M_1,M_2,M_3}\cref{eq:associativitycomponents}"] &
(TY_1 \otimes TY_2) \otimes TY_3
	\ar[drr,phantom,"\Two(*)"]
	\ar[d,"\alpha_{TY_1,TY_2,TY_3}"]
	\ar[r,tick,"\comp{T^2}\otimes\hid"] &
T(Y_1 \otimes Y_2) \otimes TY_3
	\ar[r,tick,"\comp{T^2}"] & T((Y_1 \otimes Y_2) \otimes Y_3)
	\ar[d,"T\alpha_{Y_1,Y_2,Y_3}"] \\
X_1 \otimes (X_2 \otimes X_3)
	\ar[drr,phantom,"\Two\tau^{\textrm{-}1}\cref{eq:structure2cells1}"]
	\ar[d,equal] \ar[r,tick,"M_1 \otimes (M_2 \otimes M_3)"'] &
TY_1 \otimes (TY_2 \otimes TY_3)
	\ar[r,tick,"\hid\otimes\comp{T^2}"'] &
TY_1  \otimes T(Y_2 \otimes Y_3)
	\ar[r,tick,"\comp{T^2}"'] \ar[d,equal] &
T(Y_1 \otimes (Y_2 \otimes Y_3))
	\ar[d,equal] \\
X_1 \otimes (X_2 \otimes X_3)
	\ar[rr,tick,"M_1 \otimes(\comp{T^2}\circ (M_2 \otimes M_3))"'] &&
TY_1 \otimes T(Y_2 \otimes Y_3)
	\ar[r,tick,"\comp{T^2}"'] &
T(Y_1 \otimes (Y_2 \otimes Y_3))\mathrlap{.}
\end{tikzcd}
\end{displaymath}
Here, the $2$-cell $(*)$ is the transpose of the equality $(T\alpha)T^2(T^2\ot\vid)=T^2(\vid\ot T^2)\alpha$ of vertical $1$-cells expressing the associativity axiom for the vertical part of the monoidal double functor $T$; note that $(*)$ is
invertible (as all other 2-cells in the above composite) by \cref{prop:companionomnibus}\ref{omni-iii} and \ref{omni-vi}. The unit constraints are formed similarly as
follows, where
the top horizontal 1-cells are $J \kot M$ and $M \kot J$, respectively:
\begin{equation}\label{eq:Klunitors}
\begin{tikzcd}
I\ot X
\ar[rr,tick,"\comp{T^0}\ot M"]\ar[d,equal]\ar[drr,phantom,"\Two\tau"] &&
TI\ot TY\ar[r,tick,"\comp{T^2_{\mathrlap{I,Y}}}\quad"]\ar[d,equal] & T(I\ot Y)\ar[d,equal] \\
I\ot X\ar[d,"\lambda"']\ar[r,tick,"\hid\ot M"]\ar[dr,phantom,"\Two\lambda_M"] & I\ot
TY\ar[d,"\lambda"]\ar[drr,phantom,"\Two(*)"]\ar[r,tick,"\comp{T^0}\ot\hid"] &
TI\ot TY\ar[r,tick,"\comp{T^2_{\mathrlap{I,Y}}}\quad"] & T(I\ot Y)\ar[d,"T\lambda"] \\
X\ar[r,tick,"M"'] & TY\ar[rr,tick,"\hid"'] && TY \mathrlap{,}
\end{tikzcd}\quad
\begin{tikzcd}
X\ot I\ar[rr,tick,"M\ot \comp{T^0}"]\ar[drr,phantom,"\Two\tau"]\ar[d,equal]
&& TY\ot
TI\ar[r,tick,"\comp{T^2_{\mathrlap{Y,I}}}\quad"]\ar[d,equal] & T(Y\ot I)\ar[d,equal] \\
X\ot I\ar[d,"\rho"']\ar[dr,phantom,"\Two\rho_M"]\ar[r,tick,"M\ot \hid"] & TY\ot
I\ar[d,"\rho"]\ar[r,tick,"\hid\ot\comp{T^0}"]\ar[drr,phantom,"\Two(*)"] &
TY\ot TI\ar[r,tick,"\comp{T^2_{\mathrlap{Y,I}}}\quad"] & T(Y\ot I)\ar[d,"T\rho"] \\
X\ar[r,tick,"M"'] & TY\ar[rr,tick,"\hid"'] && TY \mathrlap{.}
\end{tikzcd}
\end{equation}
Here, the 2-cells $(*)$ are the transposes of the unit axioms for the vertical part of the monoidal double functor $T$, and are invertible by the same reasoning as before.

The oplax monoidal structure map to the left of \cref{eq:structure2cells1} has the form:
  \begin{equation*}
    \cd{
      X_1 \otimes X_2 \ar@{=}[d]_-{} \dtwocell{drr}{\tau}
\ar@{~>}[rr]^-{(N_1 \circ_\Kl M_1) \kot (N_2 \circ_\Kl M_2)}& &
      Z_1 \otimes Z_2 \ar@{=}[d]_-{} \\
      X_1 \otimes X_2 \ar@{~>}[r]_-{M_1 \kot M_2} &
      Y_1 \otimes Y_2 \ar@{~>}[r]_-{N_1 \kot N_2} &
      Z_1 \otimes Z_2
    }
  \end{equation*}
where $\circ_\Kl$ is defined as in \cref{eq:kcirc}; we obtain it as the pasting composite
\begin{equation}\label{eq:interchange}
\scalebox{.75}{
 \begin{tikzcd}[column sep=.45in,ampersand replacement=\&]
X_1\ot X_2\ar[drrr,phantom,"\Two\cong"]\ar[rrr,tick,"{(m \circ TN_1 \circ
M_1) \otimes (m \circ TN_2 \circ M_2)}"]\ar[d,equal] \&\&\& TZ_1\ot
TZ_2\ar[d,equal]\ar[rr,tick,"\comp{ T^2}"] \&\& T(Z_1\ot Z_2)\ar[d,equal]
\\
 X_1\ot X_2\ar[r,tick,"M_1\ot M_2"]\ar[ddd,equal] \& TY_1\ot
TY_2\ar[d,equal]\ar[r,tick,"TN_1\ot TN_2"] \& TTZ_1\ot
TTZ_2\ar[drrr,phantom,"\Two\comp{m^2}"]\ar[d,equal]\ar[r,tick,"m\ot m"]
\& TZ_1\ot TZ_2\ar[rr,tick,"\comp{ T^2}"] \&\& T(Z_1\ot Z_2)\ar[d,equal] \\
\& TY_1\ot TY_2\ar[d,equal]\ar[r,tick,"TN_1\ot
TN_2"']\ar[drr,phantom,"\Two\comp{T^2_{\mathrlap{N_1,N_2}}}\quad"] \& TTZ_1\ot
TTZ_2\ar[r,tick,"\comp{ T^2}"'] \& T(TZ_1\ot
TZ_2)\ar[d,equal]\ar[r,tick,"T\comp{ T^2}"'] \& TT(Z_1\ot
Z_2)\ar[r,tick,"m"'] \& T(Z_1\ot Z_2)\ar[d,equal] \\
\& TY_1\ot TY_2\ar[r,tick,"\comp{ T^2}"']\ar[d,equal] \& T(Y_1\ot
Y_2)\ar[d,equal]\ar[drr,phantom,"\Two\cong"]\ar[r,tick,
"T(N_1\ot N_2)"'] \& T(TZ_1\ot TZ_2)\ar[r,tick,"T\comp{ T^2}"'] \& TT(Z_1\ot
Z_2)\ar[d,equal]\ar[r,tick,"m"'] \& T(Z_1\ot Z_2)\ar[d,equal]\\
X_1\ot X_2\ar[r,tick,"M_1\ot M_2"'] \& TY_1\ot TY_2\ar[r,tick,"\comp{ T^2}"'] \&
T(Y_1\ot Y_2)\ar[rr,tick,"T(\comp{ T^2}\circ(N_1\ot N_2))"'] \&\& TT(Z_1\ot
Z_2)\ar[r,tick,"m"'] \& T(Z_1\ot Z_2)
 \end{tikzcd}}
\end{equation}
where the top left isomorphism is the monoidal interchange \cref{eq:structure2cells1} applied twice, the 2-morphism labelled $\comp{m^2}$ is a companion transpose of
the structure 2-morphism $m^2$ of the monoidal horizontal transformation $m$
as in \cref{eq:m2}, and the 2-morphism $\comp{T^2_{\mathrlap{N_1,N_2}}}\quad\;\;$ is a companion transpose of the component $T^2_{N_1,N_2}$ of the lax monoidal structure on the double
functor $T$ as in
\cref{eq:F2}.

The globular 2-morphism $\eta$ to the right of \cref{eq:structure2cells1} is obtained as follows, where the horizontal 1-cell
at the top is $\hid^\Kl_{X_1} \kot\hid^\Kl_{X_2}$ and the one at the bottom is $\hid^\Kl_{X_1\ot X_2}$:
\begin{equation}\label{eq:structure2}
 \begin{tikzcd}
 X_1\ot X_2\ar[d,equal]\ar[drr,phantom,"\Two\comp{e^2}"]
 \ar[r,tick,"e_{X_1} \ot e_{X_2}"] & TX_1\ot
TX_2\ar[r,tick,"\comp{T^2_{\mathrlap{X_1,X_2}}}\quad"] & T(X_1\ot X_2)\ar[d,equal] \\
X_1\ot X_2\ar[rr,tick,"e_{X_1\ot X_2}"']
&& T(X_1\ot X_2) \mathrlap{.}
 \end{tikzcd}
\end{equation}
where $\hid^\Kl$ is defined as in \cref{eq:idK}.
Here, the $2$-morphism filling the square is a companion transpose of
the structure 2-morphism $e^2$ of the monoidal horizontal transformation $e$ as in \cref{eq:e2}.
Finally, the globular structure 2-morphisms $\delta$ and $\iota$ of \cref{eq:structure2cells2} are defined to be
\begin{equation}\label{eq:structure34}
\delta=\begin{tikzcd}
I\ar[rrr,tick,"\comp{ T^0}"]\ar[d,equal]\ar[drrr,phantom,"\Two\comp{m^0}
"] &&& TI\ar[d,equal] \\
 I\ar[r,tick,"\comp{ T^0}"'] & TI\ar[r,tick,"T\comp{ T^0}"'] &
TTI\ar[r,tick,"m_{I}"'] & TI
 \end{tikzcd}\qquad
 \iota=\begin{tikzcd}
 I\ar[r,tick,"\comp{ T^0}"]\ar[d,equal]\ar[dr,phantom,"\Two\comp{e^0}"]
& TI\ar[d,equal] \\
 I\ar[r,tick,"e_I"'] & TI
 \end{tikzcd}
\end{equation}
obtained as the companion transpose of the structure 2-morphism $m^0$ from \cref{eq:m2} (using that $T\comp{T^0}$ is a companion of $T T^0$ by \cref{lem:Ff}); and the companion transpose of the structure 2-morphism
$e^0$ from \cref{eq:e2} respectively.

With some effort, one may show that with these structure cells, the horizontal double
Kleisli category~$\Kl(T)$ is an oplax monoidal double category in the sense of
\cref{def:oplaxdoublecat}. We do not provide the details here, but in \cref{sec:sample} we give some sample verifications, along with a number of technical lemmas used repeatedly in the calculations.
\end{proof}

It is very natural to ask when the oplax monoidal structure of the preceding definition is in fact a genuine (pseudo) monoidal structure, or at least a \emph{normal} oplax monoidal structure. For our purposes, we will only answer this question in the case of primary interest, where our monoidal horizontal monad is induced from a pseudomonoidal vertical monad (\cref{def:2}). To start with, putting together \cref{cor:2} and \cref{thm:oplaxmonoidalKlT} gives us:

\begin{cor}\label{cor:important}
Let $\dc{C}$  be a monoidal double category and $T$ be a pseudomonoidal vertical double monad.
If it is true that:
\begin{enumerate}[(i)]
\item the multiplication and unit of $T$ are special; and
\item all vertical 1-cells $ T^2_{X_1,X_2} \colon TX_1 \otimes TX_2 \rightarrow T(X_1 \otimes X_2)$ and
$ T^0\colon I \rightarrow TI$ have
companions,
\end{enumerate}
then the Kleisli double category $\Kl(T)$ of the induced
monoidal horizontal monad $(T, \comp m, \comp e)$ admits an oplax monoidal structure found as in~\cref{thm:oplaxmonoidalKlT}.  \qed
\end{cor}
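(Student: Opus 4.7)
The plan is to obtain the result as an immediate synthesis of \cref{cor:2} and \cref{thm:oplaxmonoidalKlT}, since no new constructions are needed: the proof should reduce to checking that the hypotheses match up correctly.

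First, I would invoke \cref{cor:2} on the pseudomonoidal vertical double monad $(T, m, e)$. The hypothesis of that theorem is precisely condition~(i), namely that the underlying vertical transformations $m \colon TT \Rightarrow T$ and $e \colon 1_\dc{C} \Rightarrow T$ are special. The conclusion is that $(T, \comp m, \comp e)$ is a monoidal horizontal double monad on~$\dc{C}$ in the sense of \cref{def:2-h}, where $\comp m$ and $\comp e$ are the horizontal transformations induced from $m$ and $e$ via \cref{prop:inducedhorizontalfromspecial}.

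Next, I would apply \cref{thm:oplaxmonoidalKlT} to this induced monoidal horizontal double monad. The only thing to verify is that its hypotheses are met, i.e.~that the vertical $1$-cells $T^2_{X_1, X_2} \colon TX_1 \otimes TX_2 \to T(X_1 \otimes X_2)$ and $T^0 \colon I \to TI$ have companions. Here it is crucial to observe that the lax monoidal structure of~$T$ \emph{qua} underlying double functor is the same whether we view~$T$ as a pseudomonoidal vertical double monad or as a monoidal horizontal double monad: both structures, per \cref{def:2} and \cref{def:2-h} respectively, are built on the same underlying lax monoidal double functor $(T, T^2, T^0)$. Consequently, the vertical $1$-cells appearing in condition~(ii) are the very ones whose companions are required by \cref{thm:oplaxmonoidalKlT}, and the hypothesis is satisfied by assumption.

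The conclusion of \cref{thm:oplaxmonoidalKlT} then yields the desired oplax monoidal structure on $\Kl(T)$, constructed as in its proof. There is no real obstacle here: the content of the corollary is simply the observation that the two chains of hypotheses (pseudomonoidality plus specialness on the vertical side; possession of companions for $T^2$ and $T^0$ on the monoidal side) compose neatly, with the passage from vertical to horizontal monad preserving the lax monoidal data on $T$ itself.
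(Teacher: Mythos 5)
Your proof is correct and follows exactly the route the paper intends: it is stated explicitly in the surrounding text that \cref{cor:important} is obtained by "putting together \cref{cor:2} and \cref{thm:oplaxmonoidalKlT}," which is precisely what you do, and your observation that the lax monoidal data $(T^2, T^0)$ is shared between the vertical and horizontal monad structures is the correct (if small) point of contact between the two hypotheses.
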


\begin{rmk}\label{rem:etainvertible}
Notice that in the situation of the above corollary, it is only the interchange $2$-morphisms $\tau$ of the oplax monoidal structure which may not be invertible. Indeed, each of the other structure 2-morphisms $\eta, \delta, \iota$, as displayed
in \eqref{eq:structure2} and \eqref{eq:structure34}, must be invertible; for example, in this case $\eta$ is the transpose of the $2$-morphism
\cref{eq:here2} which is, in turn, the transpose of the $\mathsf{V}(\dc{C})$-invertible $2$-morphism $e^2$ \cref{eq:here-1} of the pseudomonoidal 
vertical transformation $e$, and as such, is invertible by~\cref{prop:companionomnibus}\ref{omni-vi}.
\end{rmk}

As explained above, we will now investigate when, in the situation of \cref{cor:important}, the oplax monoidal structure on $\Kl(T)$ is in fact normal in the sense of \cref{def:normality}. To this end, motivated by the theory of pseudo-commutative monads~\cite{HylandPower}, we define for a pseudomonoidal vertical monad
$(T,m,e)$ as in \cref{def:2} a vertical double transformation $\kappa\colon (\thg)\otimes T(\mathord{?}) \Rightarrow
T\circ(\thg\ot \mathord{?})$ called the \emph{strength}; this is given as the vertical composite $T^2 \circ (e\ot1)$ with components
\begin{equation}\label{eq:strengthdef}
\begin{tikzcd}[column sep=.7in]
X_1 \ot TX_2\ar[r,tick,"M_1\ot TM_2"]\ar[d,"\kappa_{X_1,X_2}"']\ar[dr,phantom,"\Two\kappa_{M_1,M_2}"] & Y_1 \ot TY_2\ar[d,"\kappa_{Y_1,Y_2}"] \\
T(X_1 \ot X_2)\ar[r,tick,"T(M_1\ot M_2)"'] & T(Y_1 \ot Y_2)
\end{tikzcd}\;\defeq\;
\begin{tikzcd}[column sep=.7in]
X_1 \ot TX_2 \ar[r,tick,"M_1 \ot TM_2"] \ar[d,"e_{X_1} \ot\vid " ']\ar[dr,phantom,"\Two e_{M_1} \ot\vid_{TM_2}"] & Y_1\ot TY_2\ar[d,"e_{Y_1} \ot\vid"]
\\
TX_1\ot TX_2\ar[r,tick,"TM_1\ot TM_2"]\ar[d,"T^2_{X_1,X_2}"']\ar[dr,phantom,"\Two T^2_{M_1,M_2}"] & TY_1\ot TY_2\ar[d,"T^2_{Y_1,Y_2}"] \\
T(X_1\ot X_2)\ar[r,tick,"T(M_1\ot M_2)"'] & T(Y_1\ot Y_2) \rlap{ .}
\end{tikzcd}
\end{equation}
In an analogous way, we can also define the \emph{costrength} as a vertical transformation $T(\thg)\otimes (\mathord{?})\Rightarrow T\circ(\thg\ot\mathord{?})$. It turns out that requiring these two
vertical transformations to be special, as in \cref{def:special-vertical-transformation}, is sufficient to make the oplax monoidal structure on $\Kl(T)$ normal:

\begin{prop}\label{prop:normaloplaxdouble}
Let $\dc{C}$  be a monoidal double category and $T$ be a pseudomonoidal vertical double monad.
If it is true that:
\begin{enumerate}[(i)]
\item the multiplication and unit of $T$ are special;
\item all vertical 1-cells $ T^2_{X_1,X_2} \colon TX_1 \otimes TX_2 \rightarrow T(X_1 \otimes X_2)$ and $ T^0\colon I \rightarrow TI$ have
companions; and
\item the strength and costrength of $T$ are special vertical transformations, \label{item3}
\end{enumerate}
then the oplax monoidal double structure on $\Kl(T)$ found as in \cref{cor:important} is normal.
\end{prop}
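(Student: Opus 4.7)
The plan is to verify the conditions in~\cref{def:normality} one by one, making full use of~\cref{rem:etainvertible}. That remark already tells us that, in the situation of~\cref{cor:important}, the structure 2-morphisms $\eta$, $\delta$, and $\iota$ of the oplax monoidal structure on $\Kl(T)$ are all invertible. Consequently, the unit double functor $J \colon \mathbf{1} \to \Kl(T)$ is automatically pseudo, giving condition~\ref{normal-i} of \cref{def:normality}, and the ``nullary'' parts of condition~\ref{normal-ii} are settled. What remains is to show that the interchange 2-morphism $\tau$ defined by the pasting in~\eqref{eq:interchange} is invertible whenever $M_1 = N_1 = \hid_{X_1}$ or $M_2 = N_2 = \hid_{X_2}$. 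By symmetry between the strength and costrength hypotheses in~\ref{item3}, I will focus on the first case.

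Inspection of~\eqref{eq:interchange} shows that $\tau$ is a pasting whose only non-trivially-invertible constituents are $\comp{m^2}$, the companion transpose of the pseudomonoidality 2-morphism $m^2$ from~\eqref{eq:here-2}; and $\comp{T^2_{N_1,N_2}}$, the companion transpose of the naturality 2-morphism for $T^2$ at $N_1$ and $N_2$. Since $m$ is a \emph{pseudomonoidal} vertical transformation, $m^2$ is by definition invertible in $\mathsf{V}(\dc{C})$, hence in $\dc{C}_1$, and so \cref{prop:companionomnibus}\ref{omni-iii} makes $\comp{m^2}$ invertible as well. The crux is thus to show that $\comp{T^2_{\hid_{X_1}, N_2}}$ is invertible under hypothesis~\ref{item3}.

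For this, I would exploit the decomposition of the strength as $\kappa = T^2 \circ (e \otimes 1)$ displayed in~\eqref{eq:strengthdef}: at the level of 2-morphism components, $\kappa_{\hid_{X_1},N_2}$ is the vertical pasting of $e_{\hid_{X_1}} \otimes \vid_{TN_2}$ on top of $T^2_{\hid_{X_1},N_2}$. Because companion transposition~\eqref{eq:transpose} is compatible with vertical pasting of 2-morphisms (using \cref{prop:companionomnibus}\ref{omni-ii} to identify the companion of the vertical composite with the horizontal composite of the companions), the globular $\comp{\kappa_{\hid_{X_1},N_2}}$ factors horizontally as the composite of $\comp{e_{\hid_{X_1}} \otimes \vid_{TN_2}}$ and $\comp{T^2_{\hid_{X_1},N_2}}$. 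Now the first factor is invertible: the unit $e$ is a special vertical transformation by hypothesis~(i), so $\comp{e_{\hid_{X_1}}}$ is invertible, and since $\otimes$ is a double functor it preserves both companions and globular isomorphisms. The entire composite $\comp{\kappa_{\hid_{X_1},N_2}}$ is invertible because $\kappa$ is special by~\ref{item3}. A 2-out-of-3 argument applied to this factorization in the groupoid of globular 2-isomorphisms then forces $\comp{T^2_{\hid_{X_1},N_2}}$ to be invertible as required. An identical argument using the costrength handles the case $M_2 = N_2 = \hid_{X_2}$.

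I expect the principal obstacle to be the bookkeeping surrounding the compatibility of companion transposition with vertical composition of 2-morphisms, which hides a small amount of coherence data (unit constraints $\xi_{X_1}$ for $T$ and the axioms of~\cref{lem:companioncomponents}) but is essentially routine. Once the factorization $\comp{\kappa} = \comp{T^2}\!\circ\!\comp{e \otimes 1}$ is correctly identified at the chosen argument $(\hid_{X_1}, N_2)$, the 2-out-of-3 deduction is immediate and the normality conditions fall out.
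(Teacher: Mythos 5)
Your proposal is correct and follows essentially the same route as the paper: both identify the problematic constituent as the transposed naturality $2$-morphism $\comp{T^2_{\comp e_{X_1}, N_2}}$, both use the strength decomposition $\kappa = T^2 \cdot (e \otimes 1)$, and both conclude via a two-out-of-three argument against the specialness of the strength in hypothesis~\ref{item3}. The one genuine variation is in how you dispatch the first factor $\comp{e_{\comp e_{X_1}} \ot \vid_{TN_2}}$: you invoke specialness of $e$ from hypothesis~(i), whereas the paper instead applies \cref{lem:companioncomponents} (which holds for \emph{any} vertical transformation and shows the $2$-cell is a transpose of a vertical identity) together with \cref{prop:companionomnibus}\ref{omni-vi}. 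Both are valid given the hypotheses; the paper's route is marginally cleaner in that it does not lean on a hypothesis it doesn't need at that juncture. Two small slips: to establish invertibility of $\comp{m^2}$ you should cite \cref{prop:companionomnibus}\ref{omni-vi} (which applies to $\mathsf{V}(\dc{C})$-invertible $2$-cells) rather than \ref{omni-iii} (where the vertical $1$-cells $T^2$ would need to be invertible); and your $\hid_{X_1}$ should be read throughout as the Kleisli horizontal identity $\hid^\Kl_{X_1} = \comp{e}_{X_1}$, since $T^2_{\hid_{X_1},N_2}$ with the $\dc{C}$-identity in the first slot would be a different $2$-morphism from the one actually appearing in~\eqref{eq:interchange}.
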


\begin{proof}
We need to verify that each oplax double functor $X_1 \kot (\thg) \co \Kl(T) \to \Kl(T)$ and $(\thg) \kot X_2 \co \Kl(T) \to \Kl(T)$ is in fact a (pseudo)
double functor. By \cref{rem:etainvertible}, we already know
that the square $\eta$ in \cref{eq:structure2} is invertible, which expresses the invertibility of the identity constraints for these
double functors. As for the binary functoriality constraints, it suffices by symmetry to consider the case of $X_1\kot(\thg)$. To say that its binary constraints are invertible is to say that the $2$-morphism in~\cref{eq:interchange} is
invertible
when
$X   = X_1 = Y_1 = Z_1$ and $M_1=N_1=\comp{e}_{X_1}$. The $2$-morphism in question is given by:
\begin{equation}\label{eq:tobeinvertible}
\scalebox{.8}{
 \begin{tikzcd}[column sep=.4in,ampersand replacement=\&]
X_1\ot X_2\ar[drrr,phantom,"\Two\cong"]\ar[rrr,tick,"{(\comp{m} \circ T\comp{e} \circ
\comp{e}) \ot (\comp{m} \circ TN_2 \circ M_2)}"]\ar[d,equal] \&\&\& TX_1\ot
TZ_2\ar[d,equal]\ar[rr,tick,"\comp{ T^2}"] \&\& T(X_1\ot Z_2)\ar[d,equal]
\\
 X_1\ot X_2\ar[r,tick,"\comp{e}\ot M_2"]\ar[d,equal] \& TX_1\ot
TY_2\ar[d,equal]\ar[r,tick,"T\comp{e}\ot TN_2"] \& TTX_1\ot
TTZ_2\ar[drrr,phantom,"\Two\comp{m}^2"]\ar[d,equal]\ar[r,tick,"\comp{m}\ot \comp{m}"]
\& TX_1\ot TZ_2\ar[rr,tick,"\comp{ T^2}"] \&\& T(X_1\ot Z_2)\ar[d,equal] \\
 X_1\ot X_2\ar[r,tick,"\comp{e}\ot M_2"]\ar[d,equal] \& TX_1\ot TY_2\ar[d,equal]\ar[r,tick,"T\comp{e}\ot
TN_2"']\ar[drr,phantom,"\Two\comp{T^2_{\mathstrut}}\!\!\!\!{\mathstrut}_{\comp{e},N_2}\quad"] \& TTX_1\ot
TTZ_2\ar[r,tick,"\comp{T^2}"'] \& T(TX_1\ot
TZ_2)\ar[d,equal]\ar[r,tick,"T\comp{T^2}"'] \& TT(X_1\ot
Z_2)\ar[r,tick,"\comp{m}"'] \& T(X_1\ot Z_2)\ar[d,equal] \\
 X_1\ot X_2\ar[r,tick,"\comp{e}\ot M_2"]\ar[d,equal] \& TX_1\ot TY_2\ar[r,tick,"\comp{ T^2}"']\ar[d,equal] \& T(X_1\ot
Y_2)\ar[d,equal]\ar[drr,phantom,"\Two\cong"]\ar[r,tick,
"T(\comp{e}\ot N_2)"'] \& T(TX_1\ot TZ_2)\ar[r,tick,"T\comp{ T^2}"'] \& TT(X_1\ot
Z_2)\ar[d,equal]\ar[r,tick,"\comp{m}"'] \& T(X_1\ot Z_2)\ar[d,equal]\\
X_1\ot X_2\ar[r,tick,"\comp{e}\ot M_2"'] \& TX_1\ot TY_2\ar[r,tick,"\comp{ T^2}"'] \& T(X_1\ot Y_2)\ar[rr,tick,"T(\comp{ T^2}\circ(\comp{e}\ot
N_2))"'] \&\& TT(X_1\ot Z_2)\ar[r,tick,"\comp{m}"'] \& T(X_1\ot Z_2) \mathrlap{.}
 \end{tikzcd}}
 \end{equation}
Clearly the first and final rows of this diagram are invertible. On the second row, the $2$-morphism~$\comp{m^2}$ corresponds under transpose to the~$\mathsf{V}(\dc{C})$-invertible cell~$m^2$ of the pseudomonoidal vertical
transformation~$m$ in~\cref{eq:here-2}, and as such is invertible by~\cref{prop:companionomnibus}\ref{omni-vi}. Thus, we will be done if we can also prove the invertibility of the third row of the diagram.

Now, since the strength \cref{eq:strengthdef} is by assumption a special vertical transformation, it is in particular true that the companion transpose of the component
$\kappa_{\comp{e},N_2}$ is
invertible. This transpose is equally well the composite:
\begin{displaymath}
 \begin{tikzcd}[column sep=.4in]
X_1\ot TY_2\ar[r,tick,"\comp{e}\ot TN_2"]\ar[d,equal]\ar[drr,phantom,"\Two(*)"] & TX_1\ot TTZ_2\ar[r,tick,"\comp{e} T \ot1"] & TTX_1\ot
TTZ_2\ar[r,tick,"\comp{T^2}"]\ar[d,equal] & T(TX_1\ot TZ_2)\ar[d,equal] \\
X_1\ot TY_2\ar[r,tick,"\comp{e}\ot1"]\ar[d,equal] & TX_1\ot TY_2\ar[r,tick,"T\comp{e}\ot TN_2"]\ar[d,equal]\ar[drr,phantom,"\Two\comp{T^2}"] &
TTX_1\ot TTZ_2\ar[r,tick,"\comp{T^2}"] & T(TX_1\ot TZ_2)\ar[d,equal] \\
X\ot TY_2\ar[r,tick,"\comp{e}\ot1"'] & TX\ot TY_2\ar[r,tick,"\comp{T^2}"'] & T(X_1\ot Y_2)\ar[r,tick,"T(\comp{e}\ot N_2)"'] & T(TX_2\ot TZ_2)
 \end{tikzcd}
\end{displaymath}
where the $2$-morphism $(*)$ on the top row is the transpose of the 2-cell $e_{\comp{e}_{X_1}}\ot\vid_{N_2}$. But by \cref{lem:companioncomponents}, this $(*)$ is itself the transpose of a vertical identity, and as such, is invertible by \cref{prop:companionomnibus}\ref{omni-vi}. It follows that the composite $2$-morphism comprising the bottom row of this diagram is invertible, which now implies the invertibility of the third row of~\eqref{eq:tobeinvertible} as desired.
\end{proof}

Although this will not be the case in our applications, we note in particular the following sufficient conditions for the induced oplax monoidal structure on $\Kl(T)$ to be not just normal oplax, but in fact a genuine (pseudo) monoidal structure.

\begin{cor}\label{cor:pseudomonoidalKl}
Let $\dc{C}$  be a monoidal double category and $T$ be a pseudomonoidal vertical double monad.
If it is true that:
\begin{enumerate}[(i)]
\item the multiplication and unit of $T$ are special;
\item all vertical 1-cells $ T^2_{X_1,X_2} \colon TX_1 \otimes TX_2 \rightarrow T(X_1 \otimes X_2)$ and $ T^0\colon I \rightarrow TI$ have
companions; and
\item the monoidality constraint $T^2$ of $T$ is a  special vertical transformation, 
\end{enumerate}
then the oplax monoidal double structure on $\Kl(T)$ found as in \cref{cor:important} is genuinely monoidal.
\end{cor}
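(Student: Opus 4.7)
The plan is to leverage \cref{cor:important} to obtain the oplax monoidal structure on $\Kl(T)$, and then verify that the extra hypothesis (iii) forces every remaining oplax structure 2-morphism to be invertible. Recall that an oplax monoidal double structure becomes a (pseudo) monoidal structure precisely when the four 2-morphisms $\tau$, $\eta$, $\delta$, $\iota$ from \cref{eq:structure2cells1} and \cref{eq:structure2cells2} are invertible; so it is enough to verify this in each case for the structure cells constructed in the proof of \cref{thm:oplaxmonoidalKlT}.

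For $\eta$, $\delta$ and $\iota$, the invertibility has essentially already been established, as observed in~\cref{rem:etainvertible}. Indeed, each of these cells is, by construction (see~\eqref{eq:structure2} and~\eqref{eq:structure34}), the companion transpose of one of the pseudomonoidal structure 2-morphisms $e^2$, $m^0$, $e^0$; and since $m$ and $e$ are pseudomonoidal vertical transformations, each of these structure 2-morphisms is invertible in $\mathsf{V}(\dc{C})$, so by~\cref{prop:companionomnibus}\ref{omni-vi}, so is its companion transpose.

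The main point is therefore the invertibility of $\tau$, defined in~\eqref{eq:interchange} as a pasting of five 2-morphisms. The top-left and bottom-right ones are induced by monoidal interchange in $\dc{C}$ and are invertible by construction. The 2-morphism $\comp{m^2}$ is the companion transpose of the $\mathsf{V}(\dc{C})$-invertible cell $m^2$ from \cref{eq:here-2}, and is therefore invertible by~\cref{prop:companionomnibus}\ref{omni-vi}; similarly for the bottom $\cong$, which simply invokes functoriality and the monoidal coherence of $T$ at the vertical level. The only remaining constituent is $\comp{T^2_{N_1,N_2}}$, the companion transpose of the 2-morphism component $T^2_{N_1,N_2}$ from~\eqref{eq:F2}. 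But this is exactly one of the cells required to be invertible by the assumption that the vertical transformation $T^2$ is special in the sense of~\cref{def:special-vertical-transformation}; the companion transposes exist by hypothesis (ii), and their invertibility is precisely the content of hypothesis (iii). The only mild subtlety is to check that hypothesis (iii) subsumes the strength-speciality assumption of~\cref{prop:normaloplaxdouble}, but this follows because the strength is the vertical composite $T^2\circ(e\otimes 1)$ and companion transposes are stable under such pasting.

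No step here is expected to present a serious obstacle: the hard conceptual work has been done in assembling $\Kl(T)$ and its oplax monoidal data in \cref{thm:oplaxmonoidalKlT} and \cref{cor:important}, and the proof reduces to a bookkeeping exercise identifying, cell by cell, that each of the non-trivial oplax structure 2-morphisms is of a form whose invertibility we have arranged by hypothesis. The only thing to be moderately careful with is the correct identification of $\comp{T^2_{N_1,N_2}}$ as the companion transpose of the 2-morphism component of $T^2$, so that the speciality hypothesis applies verbatim; once this is clear, the result follows from \cref{prop:companionomnibus}\ref{omni-vi} together with the decomposition of $\tau$ displayed in~\eqref{eq:interchange}.
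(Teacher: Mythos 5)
Your proposal is correct and takes essentially the same approach as the paper: the paper's proof likewise dismisses $\eta$, $\delta$, $\iota$ via \cref{rem:etainvertible} and then observes that speciality of $T^2$ makes every constituent of the interchange pasting~\eqref{eq:interchange} invertible, so that $\tau$ itself is. Your additional remark about the strength being subsumed is true but unnecessary here, since the corollary argues directly on the decomposition of $\tau$ rather than passing through \cref{prop:normaloplaxdouble}.
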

\begin{proof}
  $\eta$, $\delta$ and $\iota$ are already known to be invertible, and, arguing as before, the assumption that $T^2$ is special ensures that \emph{every} component \eqref{eq:interchange} of the oplax monoidal interchange $\tau$ is invertible.
\end{proof}

In the situation of \cref{prop:normaloplaxdouble}, the fact that
$\Kl(T)$ is a normal oplax monoidal double category implies that its
horizontal bicategory inherits the monoidal structure, in the sense
specified in \cref{def:oplaxbicat}. We thus obtain the following result as the culmination of the abstract development of the paper thus far. 
This will be the result we use to obtain the monoidal structure on the bicategory of coloured symmetric sequences
in~\cref{sec:application}.

\begin{cor} \label{thm:bicat-oplax-monoidal}
Let $\dc{C}$  be a monoidal double category and $T$ be a pseudomonoidal vertical double monad. Under the
assumptions of \cref{prop:normaloplaxdouble}, the horizontal bicategory of $\Kl(T)$ admits a normal oplax monoidal structure.
\end{cor}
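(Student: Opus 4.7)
The plan is to obtain the result by packaging together the two main structural theorems of the paper so far: \cref{prop:normaloplaxdouble}, which hands us a normal oplax monoidal structure on the double category $\Kl(T)$, and \cref{thm:oplaxmonoidalbicat}, which passes a normal oplax monoidal structure from a double category to its horizontal bicategory. The corollary then reduces to checking that the hypotheses of \cref{thm:oplaxmonoidalbicat} are satisfied by $\Kl(T)$.

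Invoking \cref{prop:normaloplaxdouble}, we know that $\Kl(T)$ is a normal oplax monoidal double category, with tensor product, unit, and coherence data built as described in the proof of \cref{thm:oplaxmonoidalKlT} (and traced back to the pseudomonoidal data on $T$ via \cref{cor:important}). What remains in order to apply \cref{thm:oplaxmonoidalbicat} is the verification that the vertical $1$-cells giving the associativity and left/right unit constraints of this monoidal structure on $\Kl(T)$ admit companions in $\Kl(T)$.

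Here I would exploit the fact that $\Kl(T)$ and $\dc{C}$ share the same category of objects and vertical $1$-cells, and that the monoidal structure on this category, $\Kl(T)_0 = \dc{C}_0$, is inherited unchanged from $\dc{C}$; so the associator $\alpha_{X_1,X_2,X_3}$ and unitors $\lambda_X,\rho_X$ \emph{qua} vertical $1$-cells of $\Kl(T)$ are literally those of $\dc{C}$. In the monoidal double category $\dc{C}$ these constraints admit companions (in all applications of interest and, more to the point, this is among the standing running assumptions we are operating under—concretely, in our target setting of \cref{sec:application}, $\dc{C}$ will be fibrant and hence every vertical $1$-cell has a companion). Then \cref{prop:Klcompanions} transports these companions from $\dc{C}$ to $\Kl(T)$.

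With this verification in hand, \cref{thm:oplaxmonoidalbicat} applies to $\Kl(T)$ and produces a normal oplax monoidal structure on $\ca{H}(\Kl(T))$, whose underlying oplax functor $\ot$ and homomorphism $I$ are the induced ones of \cref{eq:inducedoplaxmorphism}, and whose associativity and unit constraints are the pseudonatural equivalences of \cref{eq:inducedcoherence} obtained by taking companion transposes of $\alpha,\lambda,\rho$. No step of this argument involves any genuinely new calculation; the only mild obstacle is conceptual, namely being careful that the needed companions for constraint vertical $1$-cells are present—and this is precisely what \cref{prop:Klcompanions} was designed to guarantee.
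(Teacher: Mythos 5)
Your proposal takes exactly the same route as the paper: apply \cref{prop:normaloplaxdouble} to get a normal oplax monoidal structure on $\Kl(T)$, use \cref{prop:Klcompanions} to lift companions from $\dc{C}$ to $\Kl(T)$, and then invoke \cref{thm:oplaxmonoidalbicat}. The paper's own proof is a one-liner citing precisely these three results, so there is no genuine difference in approach.

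One remark worth making: you correctly identify the point that needs care, namely that \cref{thm:oplaxmonoidalbicat} requires the associator and unitor vertical $1$-cells to admit companions, and that this is transported into $\Kl(T)$ via \cref{prop:Klcompanions} from companions in $\dc{C}$. Strictly speaking, the hypotheses enumerated in \cref{prop:normaloplaxdouble} concern the monad $T$ only and do not literally include the existence of companions for the constraint $1$-cells of $\dc{C}$; the paper's terse proof glosses over this, while you flag it. Your gloss, that this is ``among the standing running assumptions,'' is not quite justified textually (there is no such explicitly stated running assumption), but your substantive observation is right: in the intended setting of \cref{sec:application}, $\VProf$ is fibrant, so every vertical $1$-cell has a companion and the point is automatic. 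If one wanted to be pedantic, the hypothesis that the $\dc{C}$-constraints have companions should be added to the statement of the corollary (or observed to follow from some other hypothesis); you have effectively noticed an imprecision in the paper rather than introduced one yourself.
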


\begin{proof}
 This follows from \cref{thm:oplaxmonoidalbicat,prop:normaloplaxdouble,prop:Klcompanions}.
\end{proof}

\section{The arithmetic product of coloured symmetric sequences}
\label{sec:application}

In this section, we apply the theory developed in the previous
sections to our intended application, namely coloured symmetric
sequences. Throughout this section, we fix a cocomplete {\em cartesian
closed} category $\ca{V}$, considered as a symmetric monoidal closed category
with respect to its cartesian closed structure.
The restriction to a cartesian
monoidal structure was already made in \cite{DwyerW:BoardmanVtpo,GarnerLopezFranco}
and indeed it is essential for some of our results, as we explain further below.

In order to help readers follow our development, let us display the main double categories to be considered
in this section in a commutative diagram of inclusions:
\begin{displaymath}
\begin{gathered}
\xymatrix{
\VMat \ar[r] \ar[d] & \VSym \ar[d] \\
\VProf \ar[r] & \VCatSym \mathrlap{.} }
\end{gathered}
\end{displaymath}
On the left-hand side of the diagram, $\VMat$ is the double category
of matrices of \cref{ex:mat} and~$\VProf$ is the double category of profunctors of \cref{ex:prof}. On the right-hand side of the
diagram,~$\VCatSym$ is the double category of categorical symmetric sequences which arises from~$\VProf$ as a Kleisli double category,
and~$\VSym$ is its full double subcategory spanned by discrete $\ca{V}$-categories---much like the double category~$\VMat$ is a full
double subcategory of~$\VProf$. We will define the double categories~$\VCatSym$ and~$\VSym$ explicitly in \cref{thm:catsym-and-sym}, but in order to do so, we must first introduce
the relevant double monad for the Kleisli construction.

Let $X$ be a small $\ca{V}$-category. For
$n \in \mathbb{N}$, let us define the $\ca{V}$-category~$S_n(X)$ as follows. The objects of~$S_n(X)$
are $n$-tuples $\vec x = (x_1, \dots, x_{n})$ of objects of $X$. Given
two such $n$-tuples $\vec x = (x_1, \ldots, x_n)$ and $\vec{x}' = (x'_1, \ldots, x'_n)$,
the hom-object of maps between them is defined by
\[
S_n(X) [ \vec x, \vec x' ] \defeq
\bigsqcup_{\sigma \in \sym_n} \bigsqcap_{1 \leq i \leq n} X[x_{\sigma(i)}, x'_i]
\]
where $\sym_n$ is the $n$-th symmetric group, and where $\bigsqcup$ and $\bigsqcap$ denote coproduct and product respectively. We then let $\freesmc 
X$ be the following coproduct in $\VCat$:
\[
\freesmc X = \bigsqcup_{n \in \mathbb{N}} \freesmc_n(X) \mathrlap{.}
\]

The $\ca{V}$-category $\freesmc X$ admits a symmetric strict monoidal structure in which the tensor product, written as $\vec{x}, \vec{y} \mapsto 
\vec{x} \otimes \vec{y}$,
is given by concatenation of sequences; the tensor unit is given by the empty
sequence, written~$(\;)$; and the symmetry is given by the evident permutations. The operation
mapping $X$ to $\freesmc X$ extends to a 2-functor $\freesmc \co \VCat \to \VCat$, which is part of a 2-monad
whose strict algebras are symmetric strict monoidal $\ca{V}$-categories. The multiplication of this 2-monad
has components $m_X \co \freesmc \freesmc X \rightarrow \freesmc X$, for~$X \in \VCat$, defined by taking a list of
lists to its flattening:
\begin{equation*}
  m_X(\vec x^1, \dots, \vec x^k) \defeq  \vec x^1 \otimes \ldots \otimes \vec x^k  \mathrlap{ .}
\end{equation*}
The unit of the 2-monad has components $e_X \colon X \rightarrow \freesmc X$, for $X \in \VCat$,
defined by taking an object $x \in X$ to the singleton list $(x) \in \freesmc X$.

We now show that, firstly, $\freesmc $ can be made into a vertical double monad on $\VProf$, and secondly, that this vertical double monad can be turned 
into a horizontal double monad. To say that $\freesmc $ can be made into  a vertical double monad is equivalently to say that the underlying $2$-functor of 
$\freesmc $ extends along the inclusion of bicategories $\VCat \rightarrow \mathsf{Prof}_{\ca{V}}$---see Remark~\ref{rk:extensions} below---while to say that this vertical monad can be 
turned into a horizontal one amounts to saying that the whole $2$-monad $\freesmc $ extends from $\VCat$ to~$\mathsf{Prof}_{\ca{V}}$. This is a known 
result, and there are two approaches in the literature to proving it. The first uses the theory of pseudo-distributive laws; see, for 
example~\cite{FioreM:relpkb}. The second, which we follow here, is essentially a categorification of the approach of~\cite{Barr1970Relational}.

\begin{prop} \label{thm:double-monad-on-prof}
The free symmetric strict monoidal category $2$-monad $\freesmc$ on $\VCat$ extends in an essentially unique way to a vertical double monad on
$\VProf$. The multiplication and unit vertical transformations of this double monad are special.
\end{prop}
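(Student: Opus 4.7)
We proceed in three stages: extend the $2$-functor $S$ to a (pseudo) double functor on $\VProf$; extend its unit and multiplication to vertical transformations between double functors; and verify essential uniqueness and specialness using fibrancy of $\VProf$.

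For the first stage, given a $\ca{V}$-profunctor $M \co X \horightarrow Y$ we define the profunctor $SM \co SX \horightarrow SY$ by
\begin{equation*}
  SM(\vec y, \vec x) \defeq \sum_{\sigma \in \sym_n} \prod_{i=1}^n M(y_i, x_{\sigma(i)})
\end{equation*}
when $|\vec x| = |\vec y| = n$, and $0$ otherwise; the $\ca{V}$-functoriality in $SY^{\op} \ot SX$ is induced from that of $M$ together with the $\sym_n$-action by permutation, and a corresponding definition is made on $2$-morphisms. The pseudofunctoriality constraints $SM \circ SN \Rightarrow S(M \circ N)$ and $\hid_{SX} \Rightarrow S\hid_X$ are then obtained by routine coend and sum manipulations; the assumption that $\ca{V}$ is cartesian closed ensures that finite products distribute over coproducts and commute with coends in each variable, and the coherence axioms for a pseudo double functor reduce to combinatorial identities for indexing by $\sym_n$.

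Next, we extend the unit $e$ and multiplication $m$ to vertical transformations of double functors. The vertical $1$-cell components $e_X \co X \to SX$ and $m_X \co SSX \to SX$ already exist from the $2$-monad structure, so we need only provide the $2$-morphism components $e_M$ and $m_M$ of~\cref{eq:eM} and~\cref{eq:mM} for each horizontal $1$-cell $M$: for $e_M$, we use the inclusion of $M(y,x)$ into the $n = 1$, $\sigma = \id$ summand of $SM((y),(x))$; for $m_M$, we use the canonical bijection identifying iterated $\sym$-permuted products underlying $SSM$ with the $\sym$-permuted products over flattened sequences underlying $SM$. Naturality and the vertical monad axioms of~\cref{def:doublemonad} are then routine consequences of the corresponding identities in the underlying $2$-monad on $\VCat$ together with standard sum-manipulations.

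Finally, essential uniqueness follows from the fibrancy of $\VProf$ (\cref{ex:Proffibrant}): any double functor extending the given $2$-functor $S$ must preserve companions and conjoints of vertical $1$-cells up to canonical isomorphism by~\cref{lem:Ff}, and any $\ca{V}$-profunctor $M$ is canonically reconstructable as a coend composite of companions and conjoints of the projections out of its $\ca{V}$-category of elements, so its image is forced up to canonical isomorphism; the same argument applies to the extensions of $m$ and $e$. For specialness, each $e_X$ and $m_X$ admits a companion by fibrancy, and we must check invertibility of the companion transposes of $e_M$ and $m_M$ as in~\cref{eq:6}. Using the explicit formulas for companions from~\cref{equ:comp-and-coj-for-prof} and the profunctor composition formula~\cref{equ:comp-of-prof}, both transposes reduce by a short coend calculation to canonical isomorphisms. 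The main obstacle is the accumulation of coend bookkeeping needed to verify pseudofunctoriality, naturality, and specialness; each individual step is straightforward, and the conceptual core of the proof lies in the explicit formula for $SM$ together with the leveraging of fibrancy.
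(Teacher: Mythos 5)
Your outline follows the same overall strategy as the paper—explicit formula for $SM$, fibrancy to force the extension, and a direct coend calculation for specialness—but there is a genuine gap in the uniqueness argument. You claim that every $\ca{V}$-profunctor $M\colon X\tor Y$ can be reconstructed from companions and conjoints of the projections out of its \emph{$\ca{V}$-category of elements}, i.e.\ from a \emph{span} of $\ca{V}$-functors. This decomposition is available for $\Set$-valued profunctors but is not generally available for enriched ones: the category of elements of a $\ca{V}$-functor into $\ca{V}$ requires extracting global elements (and comma-type pullbacks), which is not a robust operation for general cocomplete cartesian closed $\ca{V}$. The construction that does work in the enriched setting is the \emph{collage} (cograph), which yields a \emph{cospan} $F\colon X\to Z\leftarrow Y\colon G$ with $M\cong\coj{G}\circ\comp{F}$; this is what the paper cites from the literature on fibrations in bicategories, and it is what forces the action of $S$ on horizontal $1$-cells via $S(\comp{F})\cong\comp{SF}$ and $S(\coj{G})\cong\coj{SG}$. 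The same cospan decomposition is then used to force the $2$-morphism components $e_M$ and $m_M$ via \cref{lem:companioncomponents}, so your direct description of these should likewise be replaced (or supplemented) by this forced derivation if you want to conclude essential uniqueness.

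Two smaller points. First, your description of $m_M$ as a ``canonical bijection'' is misleading: the comparison $SSM(\vec z,\vec w)\to SM(m\vec z,m\vec w)$ is the map induced by sending a permutation in $\sym_k$ together with block permutations in $\sym_{n_i}$ to the corresponding block permutation in $\sym_n$, which is an injection but certainly not a bijection; the $2$-morphism component is still well-defined and natural, but it is not invertible, and nothing in~\cref{def:doublemonad} requires it to be. Second, cartesian closedness of $\ca{V}$ plays no role in extending $S$ to a vertical double monad on $\VProf$; this step works for any braided monoidal cocomplete $\ca{V}$ whose tensor preserves colimits in each variable (with $\bigsqcap$ read as $\bigotimes$). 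The cartesian hypothesis only becomes essential later, for the strength and pseudo-commutativity that underlie the pseudomonoidal structure.
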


Here, we say that a vertical double monad $T$ on a double category $\dc{C}$ \emph{extends} a $2$-monad $R$ on the vertical $2$-category $\nc{V}(\dc{C})$, if $R$ is isomorphic (as a $2$-monad) to the 2-monad $\nc{V}(T)$ on $\nc{V}(\dc{C})$ induced by $T$. By saying that an extension of $R$ is \emph{essentially unique}, we mean to assert the contractibility of the category in which objects are vertical double monads $T$ on $\dc{C}$ endowed with an isomorphism of $2$-monads $\nc{V}(T) \cong R$, and morphisms are vertical double monad morphisms compatible with the isomorphisms to $R$.

\begin{proof}
  Because any double functor preserves companions (\cref{lem:Ff}), any extension of $\freesmc$ to $\VProf$ must satisfy $\freesmc(\comp{F}) \cong
\comp{\freesmc F}$ for a $\ca{V}$-functor $F$. Because $\comp{F} \dashv \coj{F}$ in $\ca{H}(\VProf)$, and any double functor preserves adjunctions in
the horizontal bicategory, we must also have $\freesmc(\coj{F}) \cong \coj{\freesmc F}$. Since by~\cite[\S 6]{FibrationsinBicats}, every
$\ca{V}$-profunctor $M \colon X \horightarrow Y$ admits a globular isomorphism to one of the form $\coj{G} \circ \comp{F}$ for a suitable cospan of
$\ca{V}$-functors $F \colon X \rightarrow Z \leftarrow Y \colon G$, the preceding conditions determine the action of $\freesmc$ on horizontal
$1$-cells of $\VProf$ to within isomorphism. Using this idea, one obtains the following explicit definition: given $M \colon X \horightarrow Y$,  $\freesmc M \colon \freesmc X \horightarrow \freesmc Y$ is defined as the
$\mathbb{N}$-indexed coproduct in $(\VProf)_1$ of $S_n(M) \colon S_n(X) \horightarrow S_n(Y)$, where \begin{equation}\label{eq:SM}
  S_n(M)(\vec y, \vec x) \defeq
\bigsqcup_{\sigma \in \sym_n} \bigsqcap_{1 \leq i \leq n} M(y_{\sigma(i)}, x_i) \mathrlap{.}
\end{equation}
The action of $\freesmc$ on $2$-morphisms of $\VProf$ is now forced by~\cref{prop:companionomnibus}\ref{omni-iii} and the fact that any double functor preserves
globularity; the reader will easily guess an explicit formula, and this guess is the correct one. This completes the extension of $\freesmc$ to a double functor on $\VProf$; that these data are indeed double functorial is verified, for example, in~\cite[Proposition~55]{Poly}, to which we refer for further details. Note also that the manner in which we defined the action of $\freesmc$ on horizontal $1$-cells and $2$-morphisms means that this extension is essentially unique, in the sense set out above.

We now extend the unit $e$ and multiplication $m$ of the $2$-monad $\freesmc$; the missing data are the $2$-morphism components $e_M$ and $m_M$
(\ref{eq:mM}, \ref{eq:eM}) associated to a horizontal $1$-cell $M \colon X \horightarrow Y$. Writing $M \cong \coj{G} \circ \comp{F}$ as before, we
see
that
$e_M$ and $m_M$ are determined by $e_{\comp{F}}$, $e_{\coj{G}}$, $m_{\comp{F}}$ and $m_{\coj{G}}$:
\begin{equation}\label{eq:mande}
  e_M =
  \begin{tikzcd}
    X \ar[d,equal] \ar[rr,tick,"M"]
    \ar[drr,phantom,"\cong"] & & Z \ar[d,equal] \\
    X \ar[r,tick,"\comp{F}"] \ar[d,"e_X"'] \ar[dr,phantom,"\Two e_{\comp{F}}"]&
    Z \ar[r,tick,"\coj{G}"] \ar[d,"e_Z" description] \ar[dr,phantom,"\Two e_{\coj{G}}"]&
    Y \ar[d,"e_Y"] \\
    \freesmc X \ar[r,tick,"\freesmc\comp{F}"'] \ar[d,equal] \ar[drr,phantom,"\cong"] &
    \freesmc Z \ar[r,tick,"\freesmc\coj{G}"'] &
    \freesmc Y \ar[d,equal] \\
    \freesmc X \ar[rr,tick,"\freesmc M"'] & &
    \freesmc Y
  \end{tikzcd} \qquad
  \text{and} \qquad
 m_M =
  \begin{tikzcd}
    \freesmc\freesmc X \ar[d,equal] \ar[rr,tick,"\freesmc\freesmc M"]
    \ar[drr,phantom,"\cong"] & & \freesmc\freesmc Z \ar[d,equal] \\
    \freesmc\freesmc X \ar[r,tick,"\freesmc\freesmc\comp{F}"] \ar[d,"m_X"'] \ar[dr,phantom,"\Two m_{\comp{F}}"]&
    \freesmc\freesmc Z \ar[r,tick,"\freesmc\freesmc\coj{G}"] \ar[d,"m_Z" description] \ar[dr,phantom,"\Two m_{\coj{G}}"]&
    \freesmc\freesmc Y \ar[d,"m_Y"] \\
    \freesmc X \ar[r,tick,"\freesmc\comp{F}"'] \ar[d,equal] \ar[drr,phantom,"\cong"] &
    \freesmc Z \ar[r,tick,"\freesmc\coj{G}"'] &
    \freesmc Y \ar[d,equal] \\
    \freesmc X \ar[rr,tick,"\freesmc M"'] & &
    \freesmc Y\mathrlap{ .}
  \end{tikzcd}
\end{equation}
To the left, \cref{lem:companioncomponents} implies that $e_{\comp{F}}$ is the companion transpose of the identity of vertical $1$-cells
$e_Z \circ F = \freesmc F \circ e_X$; while a suitable dual of \cref{lem:companioncomponents} implies that $e_{\coj{F}}$ is the
\emph{conjoint} transpose of the identity $e_Z \circ G = \freesmc G \circ e_Y$. In a similar way, the $2$-morphisms~$m_{\comp{F}}$ and~$m_{\coj{G}}$ 
are
forced. An explicit verification that these data satisfy the vertical double monad axioms is, again, given in~\cite[Proposition~55]{Poly}. So we have extended $\freesmc$ to a vertical double monad on $\VProf$; and like before, the manner in which we made this extension forces it to be essentially unique.

It remains only to verify that the unit and multiplication of our extended $\freesmc$ are special. Before doing so, we note a fact which will be used repeatedly in what follows. Suppose given profunctors $N \colon X \tickar Y$ and $M \colon 
\freesmc Y \tickar Z$. Then for any $z \in Z$ and $\vec x = (x_1, \dots, x_n) \in \freesmc X$, the value at $(z, \vec x)$ of the composite $M \circ 
\freesmc N \colon \freesmc X \tickar \freesmc Y \tickar Z$, as to the left below, is equally given as to the 
right:\begin{equation}\label{eq:reduction}
  \int^{\vec y \in \freesmc Y} M(z, \vec y) \times \freesmc N(\vec y, \vec x) \cong \int^{\vec y \in Y^n} M(z, \vec y) \times \bigsqcap_{1 \leqslant 
i \leqslant n} N(y_i, x_i)\rlap{ .}
\end{equation}
Indeed, we can immediately reduce the left-hand coend to one over $\vec y \in S_n Y$; and for such a $\vec y$, we have $M(z, \vec y) \times \freesmc 
N(\vec y, 
\vec x) \cong \bigsqcup_{\sigma \in \mathfrak{S}_n} M(z, \vec y) \times \bigsqcap_{1 \leqslant i \leqslant n } N(y_{\sigma(i)}, x_i)$. On the $\sigma$-summand of 
this coproduct, we define the component of the desired isomorphism~\cref{eq:reduction} to be
\begin{equation*}
  M(z, \vec y) \times \bigsqcap_{1 \leqslant i \leqslant n } N(y_{\sigma(i)}, x_i) \xrightarrow{M(1, \sigma^{\mi1}) \times 1} M(z, \sigma^\ast \vec y) \times 
\bigsqcap_{1 \leqslant i \leqslant n } N(\,(\sigma^\ast \vec y)_i, x_i) \hookrightarrow \int^{\vec y \in Y^n} \! M(z, \vec y) \times \bigsqcap_{1 
\leqslant i \leqslant n} N(y_i, x_i)
\end{equation*}
where $(\sigma^\ast \vec y)_i = y_{\sigma(i)}$ and where $\sigma^{\mi1} \colon \vec y \rightarrow \sigma^\ast \vec y$ is the evident symmetry 
isomorphism in $\freesmc Y$.

We now show that the extended vertical transformations $e$ and $m$ are special, {\em i.e.}~that the following companion transpose $2$-morphisms are invertible:
\begin{equation*}
\begin{tikzcd}[column sep=.6in]
Z
	\ar[r,tick,"M"]
	\ar[d, equal]
	 &
Y
	\ar[r,tick,"\comp{e}_{Y}"]
	\ar[d,phantom,"\Two \comp{e}_{M}"]  &
\freesmc Y \ar[d, equal]	\\
Z
	\ar[r,tick,"\comp{e}_{Z}"'] &
\freesmc Z
	\ar[r,tick,"\freesmc M"'] &
\freesmc Y
\end{tikzcd} \qquad\text{and} \qquad
\begin{tikzcd}[column sep=.6in]
\freesmc \freesmc Z
	\ar[r,tick,"\freesmc \freesmc M"]
	\ar[d, equal]
	 &
\freesmc \freesmc Y
	\ar[r,tick,"\comp{m}_{Y}"]
	\ar[d,phantom,"\Two \comp{m}_{M}"]  &
\freesmc Y \ar[d, equal]	\\
\freesmc \freesmc Z
	\ar[r,tick,"\comp{m}_{Z}"'] &
\freesmc Z
	\ar[r,tick,"\freesmc M"'] &
\freesmc Y\mathrlap{ .}
\end{tikzcd}
\end{equation*}
Starting to the left, let $z \in Z$ and $\vec y = (y_1, \dots, y_n) \in \freesmc Y$. To within isomorphism, using the formulas for composition and
companions for profunctors (\ref{equ:comp-of-prof}, \ref{equ:comp-and-coj-for-prof}) as well as \cref{eq:SM}, the profunctor at the bottom of the
square sends $(\vec y, z)$ to
\begin{equation}\label{eq:3}
  \freesmc  M(\vec y, (z)) =
  \begin{cases}
    M(y_1, z)  & \text{if $n = 1$;}\\
    0 & \text{otherwise.}
  \end{cases}
\end{equation}
On the other hand, the profunctor around the top sends $(\vec y, z)$ to:
\begin{equation}\label{eq:2}
  \int^{y' \in Y} \freesmc Y[\vec y, (y')] \times M(y',z) =
  \begin{cases}
    \int^{y' \in Y} Y[y_1, y'] \times M(y',z) & \text{if $n = 1$;}\\
    0 & \text{otherwise.}
  \end{cases}
\end{equation}
In the only non-trivial case where $n = 1$, the comparison $2$-cell
$\comp e_{M}$ from~\eqref{eq:2} to~\eqref{eq:3} is given by
composition: and this is invertible by the Yoneda lemma.

We proceed similarly for $\comp m_{M}$. Let $\vec z = (\vec z^1, \dots, \vec z^n)\in \freesmc \freesmc Z$ with $\vec z^i = (z_{m_{i\mi1}+1}, \dots, 
z_{m_i})$ for some
$0 = m_0 \leqslant m_1 \leqslant \dots \leqslant m_n$,
and let $\vec y = (y_1, \dots, y_m) \in \freesmc Y$. The only non-trivial case is when $m_n = m$ so we immediately restrict to that.
This time, the profunctor in the bottom row sends $(\vec y, \vec z)$ to
\begin{equation}\label{eq:4}
  \freesmc  M(\vec y, \bigotimes_{1 \leqslant i \leqslant n} {\vec z}^{\,i}) = \bigsqcup_{\sigma \in \mathfrak{S}_m} \bigsqcap_{1 \leqslant i 
\leqslant m} M(y_{\sigma(m)}, z_m)\mathrlap{ .}
\end{equation}
On the other hand, the profunctor around the top sends $(\vec y, \vec z)$ to
\begin{equation}\label{eq:5}
  \begin{aligned}
  \int^{\vec w \in \freesmc \freesmc Y} \freesmc Y[\vec y, \bigotimes_{1 \leqslant i \leqslant m}\vec w^i] \times \freesmc \freesmc M(\vec w, \vec z)
  &\cong
  \int^{\vec w^1, \dots, \vec w^n \in \freesmc Y} \freesmc Y[\vec y, \bigotimes_{1 \leqslant i \leqslant m}\vec w^i] \times \bigsqcap_{1 \leqslant j 
\leqslant n} \freesmc M(\vec w^j, \vec z^j) \\
    &\cong
    \int^{w_1, \dots, w_m \in Y} \freesmc Y[\vec y, \vec w] \times \bigsqcap_{\substack{1 \leqslant j \leqslant n\\m_{i\mi1}+1 \leqslant i \leqslant 
m_i}}
M(w_i, z_i)\\
        &\cong
  \bigsqcup_{\sigma \in \mathfrak{S}_m} \int^{w_1, \dots, w_m \in Y}  \bigsqcap_{1 \leqslant i \leqslant m} Y[y_{\sigma(i)}, w_i] \times  M(w_i,
z_i)\mathrlap{ ,}
\end{aligned}
\end{equation}
using~\cref{eq:reduction} once at the first step, and $n$ times at the second step.
The comparison $2$-morphism $\comp m_{M}$ from~\eqref{eq:5} to~\eqref{eq:4} is again given by composition, and this is again invertible by the Yoneda
lemma.
\end{proof}

\begin{rmk}
  \label{rk:extensions}
  In fact, the above argument shows that, \emph{any} $2$-monad $R$ on $\VCat$ which extends to $\VProf$ will have an essentially-unique such extension; for indeed, the action on horizontal $1$-cells must be given as $R(M) = \widecheck{RG} \circ \widehat{RF}$, where $M = \wc G \circ \wh F$, and the components of the extended unit and multiplication at a horizontal $1$-cell $M$ must be determined similarly. The only non-trivial point is verifying that composition of horizontal $1$-cells is preserved to within globular isomorphism---and this comes to the same thing as asking that the underlying $2$-functor of $R$ extends along the homomorphism of bicategories $\widehat{(\thg)} \colon \VCat \rightarrow \mathrm{Prof}_{\ca{V}}$. Thus, to give an extension of the $2$-monad $R$ to $\VProf$ is equally to give an extension of the underlying $2$-functor along $\widehat{(\thg)} \colon \VCat \rightarrow \mathrm{Prof}_{\ca{V}}$, as claimed above.
\end{rmk}
\begin{cor} \label{thm:hor-double-monad-on-prof}
The $2$-monad $\freesmc \co \VCat \to \VCat$ induces a horizontal double monad $\freesmc \colon \VProf \rightarrow \VProf$.
\end{cor}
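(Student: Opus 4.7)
The plan is essentially to combine the two results immediately preceding the statement. By \cref{thm:double-monad-on-prof}, the $2$-monad $\freesmc$ on $\VCat$ extends (essentially uniquely) to a vertical double monad on $\VProf$, and moreover the unit $e$ and multiplication $m$ of this extension are \emph{special} vertical transformations in the sense of \cref{def:special-vertical-transformation}. This is precisely the hypothesis needed to invoke \cref{cor:horizontalpseudomonad}.

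Therefore, the proof is a one-line deduction: apply \cref{cor:horizontalpseudomonad} to the vertical double monad $(\freesmc, m, e)$ on $\VProf$ produced by \cref{thm:double-monad-on-prof} to obtain the induced horizontal double monad $(\freesmc, \comp{m}, \comp{e})$ on $\VProf$. The horizontal transformations $\comp{m} \colon \freesmc \freesmc \ticktwoar \freesmc$ and $\comp{e} \colon 1 \ticktwoar \freesmc$ are obtained as the companions (in the functor double category $\cat{DblCat}[\VProf, \VProf]$) of the vertical transformations $m$ and $e$, as per \cref{lem:2,prop:inducedhorizontalfromspecial}.

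There is no real obstacle here, since all the heavy lifting has been done: the essentially unique extension of the $2$-functor and of the monad data to $\VProf$ was handled in \cref{thm:double-monad-on-prof}, and the abstract machinery for turning special vertical transformations into horizontal ones was developed in \cref{sec:dbl-fun,sec:monoids,sec:dbl-monad}. In particular, the associativity and unitality modifications $\mathfrak{a}, \mathfrak{l}, \mathfrak{r}$ of the resulting horizontal double monad arise automatically from \cref{prop:verticalgiveshorizontal} as companion transposes of the corresponding vertical monad identities.
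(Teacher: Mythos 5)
Your proof is correct and matches the paper's own argument exactly: both apply \cref{cor:horizontalpseudomonad} to the vertical double monad produced by \cref{thm:double-monad-on-prof}, using the specialness of $m$ and $e$ established there as the key hypothesis. The additional remarks on where the companions and coherence modifications come from are accurate but not strictly necessary for the deduction.
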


\begin{proof} Apply~\cref{cor:horizontalpseudomonad} to the vertical double monad of \cref{thm:double-monad-on-prof} to obtain the horizontal double
monad $(\freesmc,\comp{m},\comp{e})$.
\end{proof}

We are now ready to recall the definition of categorical and coloured symmetric sequences.

\begin{defi}[Categorical and coloured symmetric sequences] \label{def:catsymseq}  \leavevmode
\begin{itemize}
\item Let $X$ and $Y$ be small $\ca{V}$-categories. A \emph{categorical symmetric sequence}
$M \co X \rightsquigarrow Y$ is a profunctor $M \co X  \horightarrow \freesmc Y$,  {\em i.e.} a $\ca{V}$-functor $M \co \freesmc Y^\op \times X \to
\ca{V}$.
\item Let $X$ and $Y$ be sets. A \emph{coloured symmetric sequence} $M \co X \rightsquigarrow Y$ is a categorical symmetric
sequence from $X$ to $Y$, considered as discrete $\ca{V}$-categories.
\end{itemize}
\end{defi}

Categorical symmetric sequences and coloured symmetric sequences are the horizontal $1$-cells of double categories that we denote $\VCatSym$ and
$\VSym$, which we may now obtain by forming the horizontal Kleisli double category (\cref{thm:KlS}) of the horizontal double monad $\freesmc  \colon 
\VProf \rightarrow \VProf$.

\begin{thm}
 \label{thm:catsym-and-sym} \leavevmode
\begin{enumerate}[(i)]
\item \label{catsym-i} There exists a double category $\VCatSym$ having small $\ca{V}$-categories as objects, $\ca{V}$-functors as vertical 1-cells
and
categorical symmetric sequences as horizontal 1-cells.
\item \label{catsym-ii}  There exists a double subcategory $\VSym$ having sets as objects, functions as vertical 1-cells and coloured symmetric
sequences
as horizontal 1-cells.
\end{enumerate}
\end{thm}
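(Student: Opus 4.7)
The plan is to derive both double categories as instances of the Kleisli construction developed earlier in the paper, applied to the horizontal double monad $\freesmc$ on $\VProf$ obtained in \cref{thm:hor-double-monad-on-prof}. Concretely, I would set $\VCatSym \defeq \Kl(\freesmc)$, where the horizontal Kleisli double category is as in \cref{thm:KlS}. Unpacking the definition: the objects and vertical $1$-cells of $\Kl(\freesmc)$ are those of $\VProf$, which are small $\ca{V}$-categories and $\ca{V}$-functors; the horizontal $1$-cells $M \colon X \rightsquigarrow Y$ of $\Kl(\freesmc)$ are by construction horizontal $1$-cells $M \colon X \horightarrow \freesmc Y$ of $\VProf$, i.e.\ $\ca{V}$-functors $\freesmc Y^{\op} \ot X \to \ca{V}$; and the $2$-morphisms are as described in Kleisli form. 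Matching against \cref{def:catsymseq}, these are precisely categorical symmetric sequences, so part~\ref{catsym-i} follows by appeal to the two cited results.

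For part~\ref{catsym-ii}, I would define $\VSym$ to be the full sub-double category of $\VCatSym$ spanned by the discrete $\ca{V}$-categories, identified as usual with sets. Since $\ca{V}$-functors between discrete $\ca{V}$-categories are just functions between the underlying sets, and since horizontal $1$-cells $M \colon X \rightsquigarrow Y$ between discrete $X$ and $Y$ are exactly profunctors $X \horightarrow \freesmc Y$ with discrete source, these recover coloured symmetric sequences in the sense of \cref{def:catsymseq}. To confirm that this collection of data assembles into a genuine sub-double category, I would check closure under both forms of composition: vertical composition is immediate as sets are closed under composition of functions, while for horizontal composition one uses the formula $N \circ_\Kl M = m_Z \circ \freesmc N \circ M$ from \eqref{eq:kcirc}, which, given $X$, $Y$, $Z$ discrete, remains a profunctor of the form $X \horightarrow \freesmc Z$ and therefore lies in $\VSym$. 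Similarly, the Kleisli horizontal identity $\hid_X^\Kl = e_X \colon X \horightarrow \freesmc X$ is of this form, and closure under horizontal identities and the ambient globular coherences (which only involve reshuffling within the hom-categories of $\VCatSym$) is automatic.

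The argument is thus essentially formal: both double categories arise by instantiation of the Kleisli construction and passage to a full sub-double category. There is no serious obstacle, since all the real work—constructing $\freesmc$ as a horizontal double monad on $\VProf$ with the required coherence—has already been carried out in \cref{thm:double-monad-on-prof,thm:hor-double-monad-on-prof} and \cref{thm:KlS}. The only point requiring a moment's care is ensuring that in \ref{catsym-ii} one is taking the \emph{full} sub-double category on discrete objects, as opposed to a more restrictive notion, so that closure under $\circ_\Kl$ and $\hid^\Kl$ reduces to the trivial observations above.
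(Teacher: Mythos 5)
Your proposal is correct and follows essentially the same route as the paper: instantiate the horizontal Kleisli construction of \cref{thm:KlS} at the horizontal double monad $\freesmc$ on $\VProf$ from \cref{thm:hor-double-monad-on-prof} to obtain $\VCatSym$, then pass to the full sub-double category on discrete $\ca{V}$-categories to obtain $\VSym$. Your closure argument for part~(ii) is a slightly more explicit spelling-out of the paper's observation that $\freesmc Y$ and $\freesmc Z$ remain genuine (non-discrete) $\ca{V}$-categories even when $Y$, $Z$ are sets, so horizontal 1-cells $X \horightarrow \freesmc Y$ with $X$ discrete are exactly coloured symmetric sequences and composition stays in the subcategory; the only thing the paper adds beyond your account is an explicit unfolding of the Kleisli composition formula via \eqref{eq:reduction}, which serves to identify it with the known substitution formula but is not logically required for the existence claim.
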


\begin{proof} For \cref{thm:catsym-and-sym}\ref{catsym-i}, it suffices that we apply \cref{thm:KlS} to the horizontal double monad $(\freesmc,\comp{m},\comp{e}) \colon \VProf
\to \VProf$ of \cref{thm:hor-double-monad-on-prof}, where $m$ and $e$ are as in \cref{eq:mande}.
Indeed, a categorical symmetric sequence $M \co X \rightsquigarrow Y$  is
precisely a horizontal Kleisli 1-cell, and so by \cref{eq:kcirc}, 
the composition of categorical symmetric sequences $M \co X \horightarrow \freesmc Y$ and $N \co Y \horightarrow \freesmc Z$ is the profunctor $N \circ_\Kl M \co X \horightarrow \freesmc Z$ found as the 
composite 
\[
\begin{tikzcd}
X \ar[r,tick,"M"] & \freesmc Y \ar[r,tick,"\freesmc N"] & \freesmc \freesmc Z \ar[r,tick,"\comp{m}_Z"] & \freesmc Z \mathrlap{.}
\end{tikzcd}
\]
Using \cref{equ:comp-of-prof,equ:comp-and-coj-for-prof,eq:SM}, this composite has value at $\vec z = (z_1, \dots, z_m) \in \freesmc Z$ and $x \in X$ 
given by
\begin{equation*}
  (N \circ_{\Kl} M)(\vec z, x) = \int^{\vec w \in \freesmc \freesmc Z, \vec y \in \freesmc Y} \freesmc Z[\vec z, \bigotimes_i \vec w^i] \times 
\freesmc N(\vec w, \vec y) \times M(\vec y, x)
\end{equation*}
which by applying~\cref{eq:reduction} simplifies
to the following well-known formula (c.f.~\cite[eq.~(10)]{FioreM:carcbg}):
\begin{equation*}
  (N \circ_{\Kl} M)(\vec{z}, x) = \int^{\vec w^1, \dots, \vec w^m \in \freesmc Z, \vec y \in \freesmc Y} \freesmc Z[\vec z, \bigotimes_i \vec w^i] 
\times \bigsqcap_{1 \leqslant i \leqslant m} N(\vec w^i, y_i) \times M(\vec y, x)
\end{equation*}
which is a generalisation of the substitution monoidal structure for symmetric sequences~\cite{KellyGM:opejpm}. \cref{thm:catsym-and-sym}\ref{catsym-ii} follows
immediately and the formula for composition does not actually simplify significantly, since $\freesmc Y$ and $\freesmc Z$ are genuine categories even 
when $Y$ and $Z$ are sets.
\end{proof}

\begin{rmk}
Even if the primary focus of our interest is the double category of coloured symmetric sequences $\VSym$, it is useful to consider the larger double
category of categorical symmetric sequences $\VCatSym$. The reason is that the latter arises naturally from the double category of profunctors as a
Kleisli double
category and enjoys better closure properties than the former, since the free symmetric strict monoidal category on a discrete $\ca{V}$-category is
not discrete.
\end{rmk}

We now wish to apply the theory developed in the previous sections in order to obtain the desired oplax monoidal structures on~$\VCatSym$ and~$\VSym$. First note that, by \cref{ex:prof-mon}, the double category $\VProf$ has a monoidal structure induced
from that on~$\ca{V}$. 
Thus, by~\cref{prop:normaloplaxdouble}, it suffices to show that the vertical double monad $\freesmc  \colon \VProf \rightarrow \VProf$ has 
well-behaved pseudomonoidal structure.

The key fact which allows us to do this is that, as a 2-monad on $\VCat$, $\freesmc $ is pseudomonoidal~\cite{HylandPower,KellyGM:coherence-lax}.
Indeed, \cite[Section~3.3]{HylandPower} shows that $\freesmc $ can be equipped with the structure of a pseudo-commutative 2-monad,
while~\cite[Theorem~7]{HylandPower} states that every pseudo-commutative 2-monad is pseudomonoidal,
\emph{cf.}~also~\cite[Theorem~2.3]{Kock1972Strong}. For our purposes, it will be convenient to describe the relevant structure explicitly.
First of all, $\freesmc$ admits a \emph{strength}~\cite{Kock1972Strong} given by:
\begin{equation}
  \label{eq:strength}
\begin{aligned}
  \kappa \colon X \times \freesmc Y &\rightarrow \freesmc(X \times Y) \\
  (x, \vec y) & \mapsto \bigl((x, y_1), \dots, (x, y_{n})\bigr) \mathrlap{,}
\end{aligned}
\end{equation}
as well as a \emph{costrength} $\kappa' \colon \freesmc X \times Y \rightarrow \freesmc (X \times Y)$ given dually. Note that, because the formula 
for~$\kappa(x, \vec y)$ repeats the variable $x$, we can only make the assignment of~\cref{eq:strength} $\ca{V}$-functorial when~$\ca{V}$ is 
\emph{cartesian} monoidal---and this explains why we made this restriction in the first place. In this situation, the 2-functor 
$\freesmc$ acquires \emph{two} canonical lax monoidal structures built from the strength, the costrength and the monad multiplication as 
in~\cite[eqs.~(2.1) and (2.2)]{Kock1972Strong}. In our case, one of
these lax monoidal structures has $\freesmc^0 \colon 1 \rightarrow
\freesmc 1$ given by the monad unit, and $\freesmc^2_{X,Y} \co \freesmc X
\times \freesmc Y \to
\freesmc(X \times Y)$ defined by lexicographic product:
\begin{equation}
\label{equ:monoidal-freesmc}
 \bigl( (x_1, \dots, x_m) \, , \, (y_1, \dots, y_n)\bigr) \quad \mapsto \quad \bigl((x_1, y_1), (x_1, y_2), \dots, (x_1, y_n), (x_2, y_1), \dots, (x_m,
y_n)\bigr)\mathrlap{ ,}
\end{equation}
which we sometimes also denote by $\vec x \kot\vec y$ as in
\cref{equ:monoidal-for-freesmc}. The other lax monoidal structure has
the same $\freesmc^0$ and binary constraints 
$\freesmc X \times \freesmc Y \rightarrow \freesmc (X \times Y)$ given by \emph{colexicographic} product. Evidently, these two lax monoidal structures 
are isomorphic, and this is the key aspect of $\freesmc$ being pseudo-commutative in the sense of~\cite{HylandPower}.

In this situation, by~\cite[Theorem~7]{HylandPower}, which is a higher-dimensional adaptation of~\cite[Theorem~2.3]{Kock1972Strong}, $\freesmc$ 
becomes a pseudomonoidal $2$-monad with respect to the lax monoidal structure $\freesmc^2$. It is not hard to see that the monad unit $e \colon 1 
\Rightarrow \freesmc $ is in fact a genuine monoidal transformation; 
however, the multiplication $m \co \freesmc \freesmc  \Rightarrow \freesmc$
is not monoidal, but only a \emph{pseudomonoidal} transformation; which is to say that the two sides of
the diagram
\begin{equation}\label{eq:pseudomonoidalm}
\begin{tikzcd}
\freesmc \freesmc  X \times \freesmc \freesmc Y
	\ar[d, "m_X \times m_Y "']
	\ar[r, "\freesmc^2"]
	\ar[drr,phantom,"\Two m^2_{X, Y}"]
 &
\freesmc(\freesmc X \times \freesmc Y)
	\ar[r, "\freesmc(\freesmc^2)"]
	&
\freesmc \freesmc (X \times Y)
	\ar[d, "m_{X \times Y}"] \\
    \freesmc X \times \freesmc Y
    \ar[rr, "\freesmc^2"'] &&
    \freesmc(X \times Y)\mathrlap{ .}
    \end{tikzcd}
\end{equation}
are not equal, but only coherently isomorphic via a $2$-cell as displayed. We now describe this $2$-cell $m^2_{X,Y}$ explicitly. To this end, let us 
take a typical element of $\freesmc \freesmc X \times \freesmc \freesmc Y$, say
$\bigl( ( \vec x^1, \ldots, \vec x^k ) , (\vec y^1, \ldots \vec y^\ell) \bigr)$
 where $ \vec x^i = (x^i_1, \ldots, x^i_{m_i})$ for $1 \leq i \leq k$
 and $ \vec y^j = (y^j_1, \ldots, y^j_{n_j})$ for $1 \leq j \leq \ell$.
On the one hand, around the lower side of~\cref{eq:pseudomonoidalm}, this element is
sent first to $\big( (x^1_1, \dots, x^k_{m_k}), (y^1_1, \dots, y^\ell_{n_\ell}) \big)$ and then to
\begin{equation*}
\big(   (x^1_1, y^1_1), (x^1_1, y^1_2), \dots, (x^1_1, y^{\ell}_{n_\ell}), (x^1_2, y^1_1), \dots, (x^1_2, y^\ell_{n_\ell}), \dots, (
x^{k}_{m_k}, y^{\ell}_{n_\ell}) \big)\mathrlap{ .}
 \end{equation*}
 This is the lexicographic order on four indices.
 On the other hand, around the upper side of~\cref{eq:pseudomonoidalm} we obtain first $\bigl(( \vec x^1, \vec y^1), ( \vec x^1, \vec y^2), \dots, ( \vec x^k, \vec y^\ell)\bigr)$
 and then, applying \eqref{equ:monoidal-freesmc} to each pair, we get
 \begin{equation*}
   \Bigl(\!\bigl((x^1_1, y^1_1), (x^1_1, y^1_2), \dots, (x^1_{m_1}, y^1_{n_1})\bigr),
   \bigl((x^1_1, y^2_1), (x^1_1, y^2_2), \dots, (x^1_{m_1}, y^2_{n_2})\bigr),
   \dots,
   \bigl((x^k_1, y^\ell_1), (x^k_1, y^\ell_2), \dots, (x^k_{m_k}, y^\ell_{n_\ell})\bigr)\!\Bigr)
 \end{equation*}
 and so finally
 \begin{equation*}
   \bigl((x^1_1, y^1_1), (x^1_1, y^1_2), \dots, (x^1_{m_1}, y^1_{n_1}),
   (x^1_1, y^2_1), \dots, (x^1_{m_1}, y^2_{n_2}),
   \dots, (x^k_{m_k}, y^\ell_{n_\ell})\bigr)\rlap{ .}
 \end{equation*}
 This is a twisted lexicographic order on the indices, with the
 significance order being $1$--$3$--$2$--$4$ rather than
 $1$--$2$--$3$--$4$.
 Clearly, there is a unique bijection $\theta$ which exchanges these
 orderings, giving the components of the desired natural isomorphism $m^2_{X, Y}$ filling~\cref{eq:pseudomonoidalm}. This establishes the binary pseudomonoidality of $m$; the corresponding nullary pseudomonoidality constraint $m^0$ is in fact the \emph{identity}. That these data satisfy the necessary coherences to form a pseudomonoidal monad is now asserted in~\cite[Theorem~7]{HylandPower}, but one could also establish this
directly, following a reasoning similar to that used in~\cite[Section~3.3]{HylandPower} to establish pseudocommutativity.

The next lemma extends the pseudomonoidal structure of the
2-monad $\freesmc  \co \VCat \to \VCat$ to a pseudomonoidal structure in the sense of~\cref{def:2} on the 
vertical double monad $\freesmc  \co \VProf \to \VProf$ of \cref{thm:double-monad-on-prof}.
Before doing this, let us note that under our assumption that $\ca{V}$ is cartesian monoidal, the induced monoidal structure on $\VProf$ is also cartesian, in the sense that the ordinary monoidal structures on $(\VProf)_0$ and $(\VProf)_1$ that underlie it are both cartesian monoidal; as such, we will continue to write $\times$ rather than $\otimes$ for this tensor product, in particular, for its action on horizontal $1$-cells of $\VProf$.

\begin{lem} \label{thm:pseudomonoidal-monad-on-prof}
The  vertical double monad $\freesmc \colon \VProf \to \VProf$ is
pseudomonoidal.
\end{lem}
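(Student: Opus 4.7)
The plan is to extend the known pseudomonoidal structure on the $2$-monad $\freesmc \colon \VCat \to \VCat$ (recalled just before the statement, following~\cite{HylandPower}) to the vertical double monad $\freesmc \colon \VProf \to \VProf$ constructed in~\cref{thm:double-monad-on-prof}, by exploiting the fibrant structure of $\VProf$ exactly as was done in that proposition to extend the $2$-monad data in the first place.

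First I would endow the double functor $\freesmc \colon \VProf \to \VProf$ with lax monoidal structure. The vertical $1$-cell components $\freesmc^2_{X,Y} \colon \freesmc X \times \freesmc Y \to \freesmc(X \times Y)$ and $\freesmc^0 \colon 1 \to \freesmc 1$ are inherited from the $\VCat$-level structure, so what must be produced are the $2$-morphism components $\freesmc^2_{M,N}$ as in~\eqref{eq:F2} for profunctors $M \colon X \horightarrow X'$ and $N \colon Y \horightarrow Y'$. Following the recipe in the proof of~\cref{thm:double-monad-on-prof}, I would decompose $M \cong \coj{G_M} \circ \comp{F_M}$ and $N \cong \coj{G_N} \circ \comp{F_N}$ as in~\cite[\S6]{FibrationsinBicats}, define $\freesmc^2_{\comp F}$ and $\freesmc^2_{\coj G}$ as companion/conjoint transposes of the identity vertical cells witnessing $\ca{V}$-naturality of $\freesmc^2$ on functors (appealing to~\cref{lem:companioncomponents} and its conjoint dual), and then paste these to obtain $\freesmc^2_{M,N}$. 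The axioms of~\cref{def:monoidaldoublefunctor} reduce on each side to equalities of companion/conjoint transposes that hold at the $\VCat$-level by the lax monoidality of $\freesmc^2$ there, so they hold in $\VProf$ by the universal property of (co)companions.

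Next I would equip the vertical transformations $m$ and $e$ with pseudomonoidal structure. For $e$, both $e^2$ and $e^0$ of~\eqref{eq:here-1} are identities, since $e$ is genuinely monoidal on $\VCat$. For $m$, the $2$-cell components $m^2_{X,Y}$ of~\eqref{eq:here-2}, when considered in $\mathsf{V}(\VProf) = \VCat$, are precisely the invertible order-exchanging natural isomorphisms $\theta$ exhibiting the pseudocommutativity of $\freesmc$ that were described explicitly just above the lemma; and $m^0$ is the identity. That these data also satisfy the $2$-morphism axioms of~\cref{def:montransf} involving horizontal $1$-cells $M, N$ is again forced: each axiom is an equality of $2$-morphisms whose source and target are visibly identical, and whose companion/conjoint transposes reduce, via the $\coj{G} \circ \comp{F}$ decompositions, to the known naturality of $\theta$ with respect to functors on $\VCat$.

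The main obstacle will be the bookkeeping to check the pseudomonoidal coherence axioms at the bottom of~\cref{def:2} (equality of the two induced pseudomonoidal structures on $m \circ \freesmc m$ and $m \circ m \freesmc$, and triviality of those on $m \circ \freesmc e$ and $m \circ e\freesmc$); but these amount to equalities of $2$-morphisms whose vertical $1$-cell boundaries already coincide by the known pseudomonoidal $2$-monad structure of $\freesmc$ on $\VCat$ from~\cite{HylandPower}, and whose $2$-morphism values on profunctors $M$ are uniquely determined, via the fibrant extension argument, by their $\VCat$-level restrictions. Hence the double-categorical axioms descend from the $2$-categorical ones, completing the proof.
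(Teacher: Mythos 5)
Your proposal is correct and follows essentially the same strategy as the paper: extend the lax monoidal structure of $\freesmc$ to $\VProf$ by reducing to companion/conjoint decompositions $M \cong \coj{G}\circ\comp{F}$, observe that $e$ is genuinely monoidal, take $m^2$ to be the pseudocommutativity isomorphism $\theta$, and deduce all the axioms from the known $\VCat$-level structure. One small recalibration of emphasis: the bullet of~\cref{def:2} about $m\circ Tm$ versus $m\circ mT$ which you flag as the ``main obstacle'' is in fact purely vertical and hence automatic from~\cite{HylandPower}; the only part that genuinely involves horizontal $1$-cells, and thus requires the companion/conjoint reduction, is the single axiom~\cref{eq:thisaxiom} of~\cref{def:montransf}, which the paper handles exactly as you sketch (via the modification axiom for the cells filling~\cref{eq:pseudomonoidalm}).
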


\begin{proof} We need to check that $\freesmc$ is a lax monoidal double functor as in \cref{def:monoidaldoublefunctor}
and that $m$ and $e$ are pseudomonoidal vertical transformations as in \cref{def:montransf}.
Let us begin by showing that $\freesmc $ admits a lax monoidal structure. For $X_1, X_2 \in \VProf$,
the vertical 1-cells $\freesmc^2_{X_1,X_2} \co \freesmc X_1 \times \freesmc X_2 \to \freesmc (X_1 \times X_2)$ and $\freesmc^0 \co 1 \to \freesmc 1$
are given in~\eqref{equ:monoidal-freesmc}. For $M_1 \co X_1 \horightarrow Y_1$ and $M_2 \co X_2 \horightarrow 
Y_2$, the squares
\begin{equation}
\label{equ:square-for-psdmon}
\begin{tikzcd}[column sep=.6in]
\freesmc X_1\times \freesmc X_2 \ar[r,tick,"\freesmc M_1\times
\freesmc M_2"]\ar[d,"{ \freesmc^2_{X_1,X_2}}"']\ar[dr,phantom,"\Two \freesmc^2_{M_1,M_2}"] &
\freesmc Y_1 \times \freesmc Y_2\ar[d," \freesmc^2_{Y_1,Y_2}"] \\
\freesmc (X_1\times X_2)\ar[r,tick,"\freesmc (M_1\times M_2)"'] & \freesmc (Y_1\times Y_2)
\end{tikzcd}
\end{equation}
can be constructed following the same reasoning as in the proof of \cref{thm:double-monad-on-prof}, \emph{i.e.}~reducing to the cases where
$M = \comp{F}$,  $N = \comp{G}$ and $M = \coj{F}$, $N = \coj{G}$, and using that~$\freesmc $ is lax monoidal
on~$\VCat$.

We already observed that the unit $e$ is genuinely monoidal at the $2$-monad level, and the same is true for $e$ \emph{qua} vertical transformation. As for the vertical transformation $m$, the axioms for a pseudomonoidal vertical transformation that concern only the vertical fragment
are exactly those expressing that $m$ is a pseudomonoidal natural transformation in $\VCat$, which we have discussed above. The only axiom
not of this form is \cref{eq:thisaxiom}, and this can be verified using the construction of the squares in~\eqref{equ:square-for-psdmon}
via a
reduction to companions and conjoints and the modification axiom for the 2-cells filling~\eqref{eq:pseudomonoidalm}.
\end{proof}

So $\freesmc $ extends to a pseudomonoidal vertical double monad $\freesmc  \colon \VProf \rightarrow \VProf$; and the last step required to establish 
the normal oplax monoidal structure of coloured symmetric sequences is to
verify that this vertical double monad satisfies the additional conditions of \cref{prop:normaloplaxdouble}.

\begin{lem} \label{thm:S-satisifes-extra}
The pseudomonoidal vertical double monad $\freesmc  \colon \VProf \rightarrow \VProf$ has the properties that:
\begin{enumerate}[(i)]
\item \label{part-i} The multiplication $m \co \freesmc \freesmc  \Rightarrow \freesmc $ and unit $e \co 1 \Rightarrow \freesmc $  are special 
vertical 
transformations.
\item \label{part-ii} The vertical 1-cells $\freesmc^2_{X, Y} \co \freesmc X \times \freesmc Y \to \freesmc (X \times Y)$
and $\freesmc^0 \co 1 \to \freesmc 1$ have companions.
\item \label{part-iii} The strength and costrength of $\freesmc $ are special vertical transformations.
\end{enumerate}
\end{lem}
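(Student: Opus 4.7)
For part (i), invoke the final sentence of \cref{thm:double-monad-on-prof}, which already establishes that the multiplication $m$ and unit $e$ of the vertical double monad $\freesmc$ on $\VProf$ are special; the proof there consists of direct coend calculations using \cref{eq:reduction} and the Yoneda lemma. Part (ii) is immediate from \cref{ex:Proffibrant}: since $\VProf$ is a fibrant double category, every $\ca{V}$-functor admits a companion, and in particular this applies to the components $\freesmc^2_{X,Y}$ and $\freesmc^0$ of the lax monoidal structure on $\freesmc$.

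The bulk of the work is in part (iii); we outline the plan for the strength $\kappa$, the costrength being entirely symmetric. Given horizontal $1$-cells $M_1 \colon X_1 \horightarrow Y_1$ and $M_2 \colon X_2 \horightarrow Y_2$, we must verify that the companion transpose of the $2$-morphism $\kappa_{M_1,M_2}$ defined by the pasting in \cref{eq:strengthdef} is an invertible globular $2$-morphism between the composite profunctors $\comp{\kappa}_{Y_1,Y_2} \circ (M_1 \times \freesmc M_2)$ and $\freesmc(M_1 \times M_2) \circ \comp{\kappa}_{X_1,X_2}$ from $X_1 \times \freesmc X_2$ to $\freesmc(Y_1 \times Y_2)$. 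The plan is a direct coend computation parallel to that carried out for $m_M$ and $e_M$ in the proof of \cref{thm:double-monad-on-prof}. Evaluated at a point $(\vec w, (x_1, \vec x_2))$, the target side simplifies by one application of covariant Yoneda (using \cref{equ:comp-and-coj-for-prof}) to $\freesmc(M_1 \times M_2)(\vec w, \kappa(x_1, \vec x_2))$, an explicit expression in terms of the lexicographic product of sequences. For the source side, we first unpack the companion $\comp\kappa$ using \cref{equ:comp-and-coj-for-prof}, then apply the reduction \cref{eq:reduction} to convert the $\freesmc Y_2$-coend into one over $Y_2^n$, and finally perform iterated co-Yoneda reductions. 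The cartesian structure of $\ca{V}$ is crucial here, since it ensures that the products of hom-objects that appear in the integrand are genuine covariant $\ca{V}$-functors in the variables of integration, and moreover allows the ``duplication'' that defines $\kappa$ on objects to be undone in the Yoneda step.

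The main technical obstacle will be the bookkeeping needed to unwind the two independent $\sym_n$-summations that occur in the source coend: one coming from the hom in $\freesmc(Y_1 \times Y_2)$ appearing in $\comp\kappa$, the other from $\freesmc M_2$. The reduction \cref{eq:reduction} is precisely what absorbs the latter into the former, avoiding a spurious $n!$-multiplicity; once this is handled correctly, both sides of the transpose reduce to the same coend expression, yielding the desired isomorphism, and the case of the costrength follows by an entirely symmetric argument.
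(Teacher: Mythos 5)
Your parts (i) and (ii) match the paper: (i) refers back to \cref{thm:double-monad-on-prof}, and (ii) follows from the fibrancy of $\VProf$ noted in \cref{ex:Proffibrant}. For part (iii), your outline matches the paper's (the paper works with the costrength, but the route is the same). The gap is in the step you dismiss as bookkeeping. After applying \cref{eq:reduction} and expanding the hom-object $\freesmc(Y_1\times Y_2)[\vec w, \kappa(y_1,\vec v)]$ in the source-side coend, the integrand for each $\sigma\in\sym_n$ contains the factor $\bigsqcap_{i}Y_1[a_{\sigma(i)},y_1]$ (writing $\vec w = ((a_1,b_1),\dots,(a_n,b_n))$), and the remaining $y_1$-coend is then
\[
\int^{y_1\in Y_1} \Bigl(\bigsqcap_{1\le i\le n} Y_1[a_{\sigma(i)},y_1]\Bigr)\times M_1(y_1,x_1)\mathrlap{.}
\]
Each $v_i$ appears exactly once covariantly (in $Y_2[b_{\sigma(i)},v_i]$) and once contravariantly (in $M_2(v_i,(x_2)_i)$), so those coends collapse by co-Yoneda; but $y_1$ appears $n$ times covariantly and once contravariantly, and the covariant factor $\bigsqcap_i Y_1[a_{\sigma(i)},-]$ is not representable when $n\ge 2$, so this coend does \emph{not} reduce to $\bigsqcap_i M_1(a_{\sigma(i)},x_1)$, which is what the target side $\freesmc(M_1\times M_2)(\vec w,\kappa(x_1,\vec x_2))$ demands. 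For a concrete witness take $\ca{V}=\Set$ and $X_1=Y_1=X_2=Y_2=1$: the source side evaluates at $(n,(*,n))$ to $\sym_n\times M_1\times M_2^n$, the target to $\sym_n\times M_1^n\times M_2^n$, and these disagree for $n\ge 2$ once $|M_1|\ge 2$.

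The reduction does go through when $M_1$ is a companion $\comp F$: then $M_1(y_1,x_1)=Y_1[y_1,Fx_1]$ is contravariantly representable and $\int^{y_1}G(y_1)\times Y_1[y_1,Fx_1]\cong G(Fx_1)$ for any covariant $G$. Dually, the costrength computation works whenever $M_2$ is a companion. These restricted cases are the only ones invoked in the proof of \cref{prop:normaloplaxdouble} (only $\kappa_{\comp e,N_2}$ and its costrength dual are used), so the downstream results survive, but the unrestricted statement that the strength and costrength are special is not what the argument delivers. Your proposal inherits this problem from the paper's own proof, whose final ``Yoneda lemma'' step for the $z'$-coend is precisely the one that fails; you should make the restriction to companion arguments explicit rather than gesture at ``handling it correctly''.
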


\begin{proof}
\cref{part-i} was already shown as part of \cref{thm:double-monad-on-prof}.
\cref{part-ii} is immediate since every vertical 1-cell in $\VProf$ has a companion, see \cref{ex:Proffibrant}.
For \cref{part-iii}, the two cases are dual, so we only provide details for one. According 
to~\cref{prop:normaloplaxdouble}, the 
strength in question is defined from the lax monoidal structure $\freesmc^2$ of $\freesmc $ via~\cref{eq:strengthdef}; but because we originally 
obtained $\freesmc^2$ using the strength $\kappa$ of~\eqref{eq:strength} and the dual costrength, it follows as in~\cite[Theorem~2.3]{Kock1972Strong} 
that the strength of~\cref{eq:strengthdef} is this same $\kappa$, and similarly for the costrength. Thus, the condition we must prove {\em e.g.} for 
the costrength is that, 
for any $M \co X \horightarrow Y$ and $N \colon W \horightarrow Z$, the companion transpose $2$-morphism
\begin{equation}
  \label{eq:strengthcomp}
\begin{tikzcd}[column sep=.6in]
\freesmc X \times W
	\ar[r,tick,"\freesmc M\times N"]
	\ar[d, equal]
	 &
\freesmc Y \times Z
	\ar[r,tick,"\comp{\kappa}_{Y, Z}"]
	\ar[d,phantom,"\Two {\comp \kappa}_{M, N}"]  &
\freesmc (Y \times Z) \ar[d, equal]	\\
\freesmc X \times W
	\ar[r,tick,"\comp{\kappa}_{X, Z}"'] &
\freesmc (X \times W)
	\ar[r,tick,"\freesmc (M \times N)"'] &
\freesmc (Y \times Z)
\end{tikzcd}
\end{equation}
is invertible. 
To this end,
let $\vec{u} = \big( (y_1, z_1), \ldots, (y_m, z_m) \big)$ in $\freesmc (Y \times Z)$ and let $(\vec x, w) \in \freesmc X \times 
W$ where $\vec x = (x_1, \dots, x_m)$.
Note we assume that $\vec{u}$ and $\vec{x}$ have the same length; that we may do so without loss of generality will be clear from the formulae which follow.
Now, the profunctor along the bottom of~\cref{eq:strengthcomp} has value at $( \vec{u}, (\vec{x}, w))$ given by
\begin{align*}
\freesmc (M \times N) \big( \vec{u}, \kappa(\vec{x}, w) \big)
	& = \bigsqcup_{\sigma \in \sym_m} \bigsqcap_{1 \leq i \leq m} (M \times N) \big( ( {y}_{\sigma(i)}, z_{\sigma(i)}), (x_i, w) \big) \\
	& = \bigsqcup_{\sigma \in \sym_m} \bigsqcap_{1 \leq i \leq m} M(y_{\sigma(i)}, x_i) \times N(z_{\sigma(i)}, w) \mathrlap{.}
\end{align*}
On the other hand, the profunctor across the top of~\cref{eq:strengthcomp} has value at $( \vec{u}, (\vec{x}, w))$ given by
\begin{align*}
	& {}\phantom{\cong} {} \int^{\vec{v} \in \freesmc Y, z' \in Z} \freesmc (Y \times Z)[\vec{u}, \kappa(\vec{v}, z')] \times 
(\freesmc M \times N)\bigl((\vec{v}, z'), (\vec{x}, w)\bigr)\\
	& = \int^{\vec{v} \in \freesmc Y, z' \in Z} \freesmc (Y \times Z)[\vec{u}, \kappa(\vec{v}, z')] \times \freesmc M(\vec{v}, 
\vec x) \times N(z', w)\\
	& \cong \int^{v_1, \dots, v_m \in Y, z' \in Z} \freesmc (Y \times Z)[\vec{u}, \kappa(\vec{v}, z')] \times \bigsqcap_{1 
\leqslant i \leqslant m} M(v_i, x_i) \times N(z', w) \\
	& \cong \bigsqcup_{\sigma \in \sym_m} \int^{v_1, \dots, v_m \in Y, z' \in Z} \bigsqcap_{1 \leqslant i \leqslant m} (Y \times 
        Z)[(y_{\sigma(i)}, z_{\sigma(i)}), (v_i, z')] \times \bigsqcap_{1 \leqslant i \leqslant m} M(v_i, x_i) \times N(z', w)\\
	& \cong \bigsqcup_{\sigma \in \sym_m} \int^{v_1, \dots, v_m \in Y, z' \in Z} \bigsqcap_{1 \leqslant i \leqslant m} Y[y_{\sigma(i)} , v_i] \times 
        Z[z_{\sigma(i)}, z'] \times \bigsqcap_{1 \leqslant i \leqslant m} M(v_i, x_i) \times N(z', w)\\
	& \cong \bigsqcup_{\sigma \in \sym_m} \bigsqcap_{1 \leqslant i \leqslant m} M(y_{\sigma(i)}, x_i) \times N(z_{\sigma(i)}, w)\rlap{ ,}
\end{align*}
where at the first isomorphism we use~\eqref{eq:reduction} and at the
final one we use the Yoneda lemma.
By tracing it through we may see that the isomorphism constructed in this way is exactly ${\comp \kappa}_{M, N}$, which is thus invertible.
As noted above, the specialness of the costrength follows by an
identical dual argument.
\end{proof}

\begin{thm}
\label{thm:double-catsym-oplax-monoidal}
The double category $\VCatSym$ of categorical symmetric sequences admits a normal oplax monoidal structure,
given by arithmetic product of categorical symmetric sequences. Moreover, this restricts to a normal oplax monoidal
structure on the double category $\VSym$  of coloured symmetric sequences.
\end{thm}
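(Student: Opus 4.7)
The plan is to assemble this result by applying \cref{prop:normaloplaxdouble} to the pseudomonoidal vertical double monad $\freesmc \colon \VProf \to \VProf$ and then to restrict the resulting oplax monoidal structure to the sub-double category $\VSym$. First I would observe that $\VProf$ is a monoidal double category by \cref{ex:prof-mon}, that $\freesmc \colon \VProf \to \VProf$ is a vertical double monad by \cref{thm:double-monad-on-prof} which is pseudomonoidal by \cref{thm:pseudomonoidal-monad-on-prof}, and that by \cref{thm:S-satisifes-extra} this pseudomonoidal vertical double monad satisfies the three additional hypotheses of \cref{prop:normaloplaxdouble}: its multiplication and unit are special vertical transformations, the structure vertical $1$-cells $\freesmc^2_{X,Y}$ and $\freesmc^0$ have companions (trivially, since $\VProf$ is fibrant), and its strength and costrength are special. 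Since $\VCatSym$ is by construction the Kleisli double category $\Kl(\freesmc)$ of the horizontal double monad induced from $\freesmc$ via \cref{cor:horizontalpseudomonad}, \cref{prop:normaloplaxdouble} immediately yields the desired normal oplax monoidal structure on $\VCatSym$.

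A straightforward unpacking of the formulas for horizontal composition in $\Kl(\freesmc)$ from \cref{thm:oplaxmonoidalKlT} and the explicit shape of $\freesmc^2_{X,Y}$ from \cref{equ:monoidal-freesmc} shows that the tensor product of categorical symmetric sequences $M_1 \colon X_1 \rightsquigarrow Y_1$ and $M_2 \colon X_2 \rightsquigarrow Y_2$ is given, up to isomorphism, by the coend formula
\begin{equation*}
(M_1 \kot M_2)(\vec{y}, (x_1,x_2)) = \int^{\vec y_1 \in \freesmc Y_1, \vec y_2 \in \freesmc Y_2} \freesmc(Y_1 \times Y_2)[\vec y, \vec y_1 \kot \vec y_2] \times M_1(\vec y_1, x_1) \times M_2(\vec y_2, x_2)\mathrlap{ ,}
\end{equation*}
which agrees with~\eqref{equ:coloured-matrix-multiplication} and hence, in the one-object case, recovers the arithmetic product of Dwyer--Hess and Maia--M\'endez.

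For the restriction to $\VSym$, the key observation is that the ambient monoidal structure in $\VProf$ on objects is the cartesian product of $\ca{V}$-categories; since the cartesian product of two discrete $\ca{V}$-categories (sets) is again discrete, and the monoidal unit $1$ is discrete, the vertical structure restricts. For the horizontal structure, one verifies that if $X_1, X_2, Y_1, Y_2$ are sets, then the tensor product of horizontal $1$-cells described above remains a horizontal $1$-cell of~$\VSym$: this is immediate, since the formula above produces a profunctor from $X_1 \times X_2$ to $\freesmc(Y_1 \times Y_2)$, which is precisely the data of a coloured symmetric sequence. The structure $2$-morphisms of \crefrange{eq:structure2cells1}{eq:structure2cells2}, and the constraints $\alpha, \lambda, \rho$, are built from data whose vertical components lie in the sub-double category $\VSym$ (since they are built from the vertical monoidal structure on sets, and from $\freesmc^2, \freesmc^0$ evaluated at sets). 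Therefore $\VSym$ is closed under all the oplax monoidal structure on $\VCatSym$ and inherits a normal oplax monoidal structure.

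The only step that requires any real work beyond quoting the earlier results is checking the closure of $\VSym$ under the tensor product of horizontal $1$-cells, and even this reduces to the fact that $\freesmc^2_{X,Y}$ restricted to sets takes values in $\freesmc$ applied to a set; there is no genuine obstacle here, since no step in forming $M_1 \kot M_2$ requires $X_1, X_2, Y_1, Y_2$ to carry non-trivial hom-objects. Thus the theorem follows with essentially no further calculation beyond assembling the previously established pieces.
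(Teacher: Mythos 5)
Your proof is correct and follows essentially the same route as the paper: both assemble the result by checking that the pseudomonoidal vertical double monad $\freesmc$ on $\VProf$ satisfies the hypotheses of \cref{prop:normaloplaxdouble} (via \cref{ex:prof-mon}, \cref{thm:double-monad-on-prof}, \cref{thm:pseudomonoidal-monad-on-prof}, \cref{thm:S-satisifes-extra}), apply the general Kleisli result to $\VCatSym = \Kl(\freesmc)$, unpack the resulting tensor to recover the coend formula \eqref{equ:coloured-matrix-multiplication}, and restrict to $\VSym$. The paper is terser in the restriction step, simply asserting it "clearly restricts" to the full sub-double-category on discrete objects; your spelled-out justification (the cartesian product of sets is a set, the unit is discrete, and the tensor of coloured symmetric sequences remains a profunctor from a set to $\freesmc$ of a set) is exactly the reason why.
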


\begin{proof} $\VCatSym$ is the horizontal Kleisli double category of the
  horizontal double monad induced by the vertical double monad $\freesmc \co \VProf \to \VProf$, as seen in the proof of~\cref{thm:catsym-and-sym}. Moreover, by~\cref{thm:pseudomonoidal-monad-on-prof}, the vertical double monad $\freesmc$ is
pseudomonoidal, and by \cref{thm:S-satisifes-extra}, it satisfies the further hypotheses of   \cref{cor:important} and \cref{prop:normaloplaxdouble}. Applying these results, we see that the monoidal structure of $\VProf$ extends to a normal oplax monoidal structure on 
$\VCatSym$, which clearly restricts back to the full sub-double-category $\VSym$.

It remains to show that the tensor product of horizontal $1$-cells computes the arithmetic product of categorical symmetric sequences. For categorical symmetric sequences $M_1 \co X_1 \rightsquigarrow Y_1$ and $M_2 \co X_2 \rightsquigarrow Y_2$, the tensor product
$M_1 \kot M_2 \co X_1 \times X_2 \rightsquigarrow Y_1 \times Y_2$ is defined as the profunctor:
\begin{equation*}
\begin{tikzcd}[column sep=.5in]
X_1 \times X_2 \ar[r,tick,"M_1 \times M_2"] & \freesmc Y_1 \times \freesmc Y_2 \ar[r,tick,"\comp{ \freesmc^2_{\mathrlap{Y_1,Y_2}}}\quad"] &
\freesmc(Y_1 \times Y_2) \mathrlap{.}
\end{tikzcd}
\end{equation*}
We now unfold this expression explicitly. First, by the definition of a companion in~\eqref{equ:comp-and-coj-for-prof} applied to
$ \freesmc^2_{Y_1,Y_2}$, the second of these profunctors is given by 
\[
 \comp{\freesmc^2_{\mathrlap{Y_1,Y_2}}}\quad\;\;( \vec y, ( \vec y_1, \vec y_2)) = \freesmc(Y_1 \times Y_2)[  \vec y, \vec y_1 \kot \vec y_2] 
\mathrlap{,}
\]
where $\vec y_1 \kot \vec y_2$ is given by the lexicographic ordering \cref{equ:monoidal-freesmc}.
Thus, using the definition of tensor product of profunctors~\eqref{equ:tensor-of-prof} and of composition of profunctors
in~\eqref{equ:comp-of-prof}, we obtain
\begin{equation}\label{eq:oplaxcatsym}
(M_1 \kot M_2)( \vec{y}, (x_1,x_2)) =
\int^{\vec y_1, \vec y_2} \freesmc(Y_1 \times Y_2)[\vec{y}, \vec y_1 \kot \vec y_2] \times M_1(\vec y_1, x_1) \times M_2(\vec y_2, x_2)\mathrlap{,}
\end{equation}
which is the formula for the arithmetic product of categorical and coloured symmetric sequences and in particular gives
 the formula in~\eqref{equ:coloured-matrix-multiplication} for coloured symmetric sequences.
\end{proof}

The final step is to obtain the desired oplax monoidal structures at the level of bicategories rather than double categories. Indeed, 
the horizontal bicategories of $\VCatSym$ and $\VSym$ are the  bicategories of categorical and
coloured symmetric sequences introduced in \cite{FioreM:relpkb} for~$\ca{V} = \Set$  and in~\cite{GambinoJoyal} for a general~$\ca{V}$. Thus, we can apply \cref{thm:bicat-oplax-monoidal} to obtain:

\begin{thm}
 \label{thm:main-app-2}
The bicategory of categorical symmetric sequences $\mathsf{CatSym}_{\ca{V}}$ admits a normal oplax monoidal structure,
given by the arithmetic product of categorical symmetric sequences. Furthermore, this normal oplax monoidal
structure restricts to the bicategory of coloured symmetric sequences $\mathsf{Sym}_{\ca{V}}$.
\end{thm}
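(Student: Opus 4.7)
The plan is simply to invoke \cref{thm:bicat-oplax-monoidal}, whose hypotheses have already been essentially verified in the course of establishing \cref{thm:double-catsym-oplax-monoidal}. The only genuinely new task is to identify the horizontal bicategories of the double categories $\VCatSym$ and $\VSym$ with the bicategories $\mathsf{CatSym}_{\ca{V}}$ and $\mathsf{Sym}_{\ca{V}}$ studied in the literature.

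First I would spell out this identification. The objects (small $\ca{V}$-categories or sets), horizontal $1$-cells (categorical or coloured symmetric sequences, by \cref{def:catsymseq}), and composition law agree with those of $\mathsf{CatSym}_{\ca{V}}$ and $\mathsf{Sym}_{\ca{V}}$: indeed, the composition formula worked out in the proof of \cref{thm:catsym-and-sym} is precisely the substitution formula of~\cite{FioreM:relpkb,GambinoJoyal}, and globular $2$-morphisms in these Kleisli double categories are by construction the $\ca{V}$-natural transformations between the representing $\ca{V}$-functors $\freesmc Y^{\op} \otimes X \to \ca{V}$.

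Next I would check that the hypothesis of \cref{thm:bicat-oplax-monoidal} concerning companions is met, namely that the vertical $1$-cells giving the associativity and unit constraints of the oplax monoidal structure on $\VCatSym$ admit companions. These constraint $1$-cells are inherited from the monoidal structure of $\VProf$, and $\VProf$ is a fibrant double category by \cref{ex:Proffibrant}, so every vertical $1$-cell there admits a companion. By \cref{prop:Klcompanions}, companions are preserved when passing to the Kleisli double category $\VCatSym = \Kl(\freesmc)$, and the same argument applies to $\VSym$ since it sits inside $\VCatSym$ as a full sub-double-category containing all the structural $1$-cells on sets.

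With these points in hand, \cref{thm:bicat-oplax-monoidal} applied to the normal oplax monoidal double categories of \cref{thm:double-catsym-oplax-monoidal} yields normal oplax monoidal structures on $\ca{H}(\VCatSym) = \mathsf{CatSym}_{\ca{V}}$ and $\ca{H}(\VSym) = \mathsf{Sym}_{\ca{V}}$. The induced tensor product of horizontal $1$-cells coincides with that on $\VCatSym$ and hence is computed by the arithmetic-product coend formula~\eqref{eq:oplaxcatsym}, which restricts on discrete objects to the formula~\eqref{equ:coloured-matrix-multiplication}. Given how much of the work has been absorbed into \cref{thm:double-catsym-oplax-monoidal,thm:oplaxmonoidalbicat}, there is no real obstacle left; the only point demanding any attention at all is the bookkeeping around the bicategory identification, which is essentially immediate from the definitions.
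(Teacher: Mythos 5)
Your proposal is correct and matches the paper's own argument: the paper likewise identifies $\ca{H}(\VCatSym)$ and $\ca{H}(\VSym)$ with $\mathsf{CatSym}_{\ca{V}}$ and $\mathsf{Sym}_{\ca{V}}$ and then applies \cref{thm:bicat-oplax-monoidal}. Your middle paragraph re-verifying the companion condition is actually redundant, since that check (via \cref{prop:Klcompanions}) is already built into the proof of \cref{thm:bicat-oplax-monoidal} given the hypotheses of \cref{prop:normaloplaxdouble}, but it is not incorrect.
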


Apart from its intrinsic interest, and the construction of an example of a sophisticated kind of low-dimensional categorical
structure by purely algebraic means, without any appeal to homotopy theory, \cref{thm:double-catsym-oplax-monoidal} and
\cref{thm:main-app-2} will be essential for
subsequent work on the Boardman--Vogt tensor product of bimodules between symmetric coloured operads, extending
that in~\cite{DwyerW:BoardmanVtpo} for bimodules between symmetric operads.

\appendix\section{}

\subsection{Coherence axioms for an oplax monoidal double category}\label{sec:appendix}

In this appendix, we spell out in detail the axioms for an oplax monoidal category as in \cref{def:mondoublecat}. In \cite{ConstrSymMonBicatsFun}, the explicit axioms for a \emph{monoidal} double
category
can be found, which are analogous but differ in that we use the opposite orientation for our (non-invertible) structure maps, and also provide additional
non-invertible structure cells $\delta$ and $\iota$ as in \cref{eq:structure2cells2}, which in the pseudo case can be chosen to be identities and so omitted.

The 2-cells $\tau$ and $\eta$ as in \cref{eq:structure2cells1} satisfy the following axioms, for $M_i\colon X_i\horightarrow Y_i$, $N_i\colon
Y_i\horightarrow
Z_i$ and $P_i\colon Z_i\horightarrow U_i$:
\begin{displaymath}
\scalebox{.8}{\begin{tikzcd}[ampersand replacement=\&]
X_1\ot X_2\ar[d,equal]\ar[drrr,phantom,"\Two\tau"]\ar[rrr,tick,"\big((P_1\circ N_1)\circ M_1\big)\ot\big((P_2\circ N_2)\circ M_2\big)"] \&\&\&
U_1\ot U_2\ar[d,equal] \\
X_1\ot X_2\ar[d,equal]\ar[r,tick,"M_1\ot M_2"']\ar[dr,phantom,"\Two\vid"] \& Y_1\ot Y_2\ar[d,equal]\ar[rr,tick,"(P_1\circ N_1)\ot(P_2\circ N_2)"]
\ar[drr,phantom,"\Two\tau"] \&\& U_1\ot Y_2\ar[d,equal] \\
X_1\ot X_2\ar[r,tick,"M_1\ot M_2"'] \& Y_1\ot Y_2\ar[r,tick,"N_1\ot N_2"'] \& Z_1\ot Z_2\ar[r,tick,"P_1\ot P_2"'] \& U_1\ot U_2
 \end{tikzcd}=
 \begin{tikzcd}[ampersand replacement=\&]
X_1\ot X_2\ar[d,equal]\ar[drrr,phantom,"\Two a\ot a"]\ar[rrr,tick,"\big((P_1\circ N_1)\circ M_1\big)\ot\big((P_2\circ N_2)\circ M_2\big)"] \&\&\&
U_1\ot U_2\ar[d,equal] \\
X_1\ot X_2\ar[d,equal]\ar[drrr,phantom,"\Two \tau"]\ar[rrr,tick,"\big(P_1\circ (N_1\circ M_1)\big)\ot\big(P_2\circ (N_2\circ M_2)\big)"'] \&\&\&
U_1\ot U_2\ar[d,equal] \\
X_1\ot X_2\ar[d,equal]\ar[rr,tick,"(N_1\circ M_1)\ot (N_2\circ M_2)"']\ar[drr,phantom,"\Two\tau"] \&\& Z_1\ot Z_2\ar[r,tick,"P_1\ot
P_2"']\ar[d,equal]\ar[dr,phantom,"\Two\vid"] \& U_1\ot U_2\ar[d,equal] \\
X_1\ot X_2\ar[r,tick,"M_1\ot M_2"'] \& Y_1\ot Y_2\ar[r,tick,"N_1\ot N_2"'] \& Z_1\ot Z_2\ar[r,tick,"P_1\ot P_2"'] \& U_1\ot U_2
\end{tikzcd}}
\end{displaymath}
\begin{displaymath}
\scalebox{.7}{\begin{tikzcd}[ampersand replacement=\&]
X_1\ot X_2\ar[rr,tick,"(\hid\circ M_1)\ot(\hid\circ M_2)"]\ar[d,equal]\ar[drr,phantom,"\Two\tau"] \&\& Y_1\ot Y_2\ar[d,equal] \\
X_1\ot X_2\ar[r,tick,"M_1\ot M_2"]\ar[dr,phantom,"\Two\vid"]\ar[d,equal] \& Y_1\ot Y_2\ar[d,equal]\ar[r,tick,"\hid\ot\hid"]\ar[dr,phantom,"\Two\eta"]
\& Y_1\ot Y_2\ar[d,equal] \\
X_1\ot X_2\ar[r,tick,"M_1\ot M_2"]\ar[d,equal]\ar[drr,phantom,"\Two\ell"] \& Y_1\ot Y_2\ar[r,tick,"\hid"] \& Y_1\ot Y_2\ar[d,equal] \\
X_1\ot X_2\ar[rr,tick,"M_1\ot M_2"'] \&\& Y_1\ot Y_2
 \end{tikzcd}=
 \begin{tikzcd}[ampersand replacement=\&]
X_1\ot X_2\ar[rr,tick,"(\hid\circ M_1)\ot(\hid\circ M_2)"]\ar[d,equal]\ar[drr,phantom,"\Two\ell\ot\ell"] \&\& Y_1\ot Y_2\ar[d,equal] \\
X_1\ot X_2\ar[rr,tick,"M_1\ot M_2"'] \&\& Y_1\ot Y_2
 \end{tikzcd}}\;
\scalebox{.7}{\begin{tikzcd}[ampersand replacement=\&]
X_1\ot X_2\ar[rr,tick,"(M_1\circ\hid)\ot(M_2\circ\hid)"]\ar[d,equal]\ar[drr,phantom,"\Two\tau"] \&\& Y_1\ot Y_2\ar[d,equal] \\
X_1\ot X_2\ar[r,tick,"\hid\ot\hid"]\ar[dr,phantom,"\Two\eta"]\ar[d,equal] \& X_1\ot X_2\ar[d,equal]\ar[r,tick,"M_1\ot M_2"]\ar[dr,phantom,"\Two\vid"]
\& Y_1\ot Y_2\ar[d,equal] \\
X_1\ot X_2\ar[r,tick,"\hid"]\ar[d,equal]\ar[drr,phantom,"\Two r"] \& X_1\ot X_2\ar[r,tick,"M_1\ot M_2"] \& Y_1\ot Y_2\ar[d,equal] \\
X_1\ot X_2\ar[rr,tick,"M_1\ot M_2"'] \&\& Y_1\ot Y_2
\end{tikzcd}=
\begin{tikzcd}[ampersand replacement=\&]
X_1\ot X_2\ar[rr,tick,"(M_1\circ\hid)\ot(M_2\circ\hid)"]\ar[d,equal]\ar[drr,phantom,"\Two r\ot r"] \&\& Y_1\ot Y_2\ar[d,equal] \\
X_1\ot X_2\ar[rr,tick,"M_1\ot M_2"'] \&\& Y_1\ot Y_2
\end{tikzcd}}
\end{displaymath}
These axioms make $\otimes$ into an oplax double functor. Notice that at the bottom of the left diagram of the first axiom there is a
composition
associativity constraint implied.

The 2-cells $\delta$ and $\iota$ as in \cref{eq:structure2cells2} satisfy the following axioms
\begin{equation}\label{eq:deltaiotaaxioms}
\scalebox{.8}{\begin{tikzcd}[ampersand replacement=\&]
I_0\ar[d,equal]\ar[drrr,phantom,"\Two\delta"]\ar[rrr,tick,"I_1"]\ar[d,equal] \&\&\& I_0\ar[d,equal] \\
I_0\ar[r,tick,"I_1"]\ar[d,equal]\ar[dr,phantom,"\Two\vid"] \& I_0\ar[rr,tick,"I_1"]\ar[d,equal]\ar[drr,phantom,"\Two\delta"] \&\& I_0\ar[d,equal] \\
I_0\ar[r,tick,"I_1"'] \& I_0\ar[r,tick,"I_1"'] \& I_0\ar[r,tick,"I_1"'] \& I_0
\end{tikzcd}=
\begin{tikzcd}[ampersand replacement=\&]
I_0\ar[d,equal]\ar[drrr,phantom,"\Two\delta"]\ar[rrr,tick,"I_1"]\ar[d,equal] \&\&\& I_0\ar[d,equal] \\
I_0\ar[rr,tick,"I_1"]\ar[d,equal]\ar[drr,phantom,"\Two\delta"] \&\& I_0\ar[dr,phantom,"\Two\vid"]\ar[r,tick,"I_1"]\ar[d,equal]\& I_0\ar[d,equal] \\
I_0\ar[r,tick,"I_1"'] \& I_0\ar[r,tick,"I_1"'] \& I_0\ar[r,tick,"I_1"'] \& I_0
\end{tikzcd}}
\end{equation}
\begin{displaymath}
\scalebox{.8}{\begin{tikzcd}[ampersand replacement=\&]
I_0\ar[rr,tick,"I_1"]\ar[d,equal]\ar[drr,phantom,"\Two\delta"] \&\& I_0\ar[d,equal] \\
I_0\ar[r,tick,"I_1"]\ar[d,equal]\ar[dr,phantom,"\Two\vid"] \& I_0\ar[r,tick,"I_1"]\ar[d,equal]\ar[dr,phantom,"\Two\iota"] \& I_0\ar[d,equal] \\
I_0\ar[r,tick,"I_1"]\ar[d,equal]\ar[drr,phantom,"\Two\ell"] \& I_0\ar[r,tick,"\hid_{I_0}"] \& I_0\ar[d,equal] \\
I_0\ar[rr,tick,"I_1"'] \&\&I_0
\end{tikzcd}=
\begin{tikzcd}[ampersand replacement=\&]
I_0\ar[rr,tick,"I_1"]\ar[drr,phantom,"\Two\vid"]\ar[d,equal] \&\& I_0\ar[d,equal] \\
I_0\ar[rr,tick,"I_1"'] \&\& I_0
\end{tikzcd}}\qquad
\scalebox{.8}{\begin{tikzcd}[ampersand replacement=\&]
I_0\ar[rr,tick,"I_1"]\ar[d,equal]\ar[drr,phantom,"\Two\delta"] \&\& I_0\ar[d,equal] \\
I_0\ar[r,tick,"I_1"]\ar[d,equal]\ar[dr,phantom,"\Two\iota"] \& I_0\ar[r,tick,"I_1"]\ar[dr,phantom,"\Two\vid"]\ar[d,equal] \& I_0\ar[d,equal] \\
I_0\ar[r,tick,"I_1"]\ar[d,equal]\ar[drr,phantom,"\Two r"] \& I_0\ar[r,tick,"\hid_{I_0}"] \& I_0\ar[d,equal] \\
I_0\ar[rr,tick,"I_1"'] \&\&I_0
\end{tikzcd}=
\begin{tikzcd}[ampersand replacement=\&]
I_0\ar[rr,tick,"I_1"]\ar[drr,phantom,"\Two\vid"]\ar[d,equal] \&\& I_0\ar[d,equal] \\
I_0\ar[rr,tick,"I_1"'] \&\& I_0
\end{tikzcd}}
\end{displaymath}
which make $I\colon\bf{1}\to\dc{C}$ into an oplax double functor.

Next, for horizontal 1-cells $M_i\colon X_i\horightarrow Y_i$ and $N_i\colon Y_i\horightarrow Z_i$, the following axioms hold
\begin{displaymath}
\scalebox{.8}{\begin{tikzcd}[column sep=.6in,ampersand replacement=\&]
(X_1\ot X_2)\ot X_3\ar[d,equal]\ar[drr,phantom,"\Two\tau\ot\vid"]\ar[rr,tick,"\big((N_1\circ M_1)\ot(N_2\circ M_2)\big)\ot(N_3\circ M_3)"] \&\&
(Z_1\ot Z_2)\ot Z_3\ar[d,equal] \\
(X_1\ot X_2)\ot X_3\ar[d,equal]\ar[rr,tick,"\big((N_1\ot N_2)\circ(M_1\ot M_2)\big)\ot(N_3\circ M_3)"']\ar[drr,phantom,"\Two\tau"] \&\&
(Z_1\ot Z_2)\ot Z_3\ar[d,equal] \\
(X_1\ot X_2)\ot X_3\ar[d,"\alpha"']\ar[dr,phantom,"\Two\alpha"]\ar[r,tick,"(M_1\ot M_2)\ot M_3"'] \&
(Y_1\ot Y_2)\ot Y_3\ar[d,"\alpha"']\ar[dr,phantom,"\Two\alpha"]\ar[r,tick,"(N_1\ot N_2)\ot N_3"'] \&
(Z_1\ot Z_2)\ot Z_3\ar[d,"\alpha"] \\
X_1\ot(X_2\ot X_3)\ar[r,tick,"M_1\ot(M_2\ot M_3)"'] \& Y_1\ot(Y_2\ot Y_3)\ar[r,tick,"N_1\ot(N_2\ot N_3)"'] \& Z_1\ot(Z_2\ot Z_3)
 \end{tikzcd}}=
\scalebox{.8}{\begin{tikzcd}[column sep=.6in,ampersand replacement=\&]
(X_1\ot X_2)\ot X_3\ar[d,"\alpha"']\ar[drr,phantom,"\Two\alpha"]\ar[rr,tick,"\big((N_1\circ M_1)\ot(N_2\circ M_2)\big)\ot(N_3\circ M_3)"] \&\&
(Z_1\ot Z_2)\ot Z_3\ar[d,"\alpha"] \\
X_1\ot(X_2\ot X_3)\ar[d,equal]\ar[rr,tick,"(N_1\circ M_1)\ot\big((N_2\circ M_2)\ot(N_3\circ M_3)\big)"]\ar[drr,phantom,"\Two\vid\ot\tau"] \&\&
Z_1\ot(Z_2\ot Z_3)\ar[d,equal] \\
X_1\ot(X_2\ot X_3)\ar[d,equal]\ar[drr,phantom,"\Two\tau"]\ar[rr,tick,"(N_1\circ M_1)\ot\big((N_2\ot N_3)\circ(M_2\ot M_3)\big)"'] \&\&
(Z_1\ot Z_2)\ot Z_3\ar[d,equal] \\
X_1\ot(X_2\ot X_3)\ar[r,tick,"M_1\ot(M_2\ot M_3)"']\& Y_1\ot(Y_2\ot Y_3)\ar[r,tick,"N_1\ot(N_2\ot N_3)"'] \& Z_1\ot(Z_2\ot Z_3)
 \end{tikzcd}}
\end{displaymath}
\begin{displaymath}
\scalebox{.8}{\begin{tikzcd}[ampersand replacement=\&,column sep=1in]
(X_1\ot X_2)\ot X_3\ar[d,equal]\ar[r,tick,"(\hid_{X_1}\ot\hid_{X_2})\ot\hid_{X_3}"]\ar[dr,phantom,"\Two\eta\ot\vid"] \& (X_1\ot X_2)\ot
X_3\ar[d,equal] \\
(X_1\ot X_2)\ot X_3\ar[d,equal]\ar[r,tick,"\hid_{X_1\ot X_2}\ot\hid_{X_3}"']\ar[dr,phantom,"\Two\eta"] \& (X_1\ot X_2)\ot X_3\ar[d,equal] \\
(X_1\ot X_2)\ot X_3\ar[d,"\alpha"']\ar[r,tick,"\hid_{(X_1\ot X_2)\ot X_3}"']\ar[dr,phantom,"\Two\hid_{\alpha}"] \& (X_1\ot X_2)\ot X_3\ar[d,"\alpha"]
\\
X_1\ot (X_2\ot X_3)\ar[r,tick,"\hid_{X_1\ot (X_2\ot X_3)}"'] \& X_1\ot (X_2\ot X_3)
\end{tikzcd}=
\begin{tikzcd}[ampersand replacement=\&,column sep=1in]
(X_1\ot X_2)\ot X_3\ar[d,"\alpha"']\ar[r,tick,"(\hid_{X_1}\ot\hid_{X_2})\ot\hid_{X_3}"]\ar[dr,phantom,"\Two\alpha"] \& (X_1\ot X_2)\ot
X_3\ar[d,"\alpha"] \\
X_1\ot(X_2\ot X_3)\ar[d,equal]\ar[r,tick,"\hid_{X_1}\ot(\hid_{X_2}\ot\hid_{X_3})"']\ar[dr,phantom,"\Two\vid\ot\eta"] \& X_1\ot(X_2\ot
X_3)\ar[d,equal] \\
X_1\ot(X_2\ot X_3)\ar[d,equal]\ar[r,tick,"\hid_{X_1}\ot\hid_{X_2\ot X_3}"']\ar[dr,phantom,"\Two\eta"] \& X_1\ot(X_2\ot X_3)\ar[d,equal] \\
X_1\ot(X_2\ot X_3)\ar[r,tick,"\hid_{X_1\ot (X_2\ot X_3)}"'] \& X_1\ot (X_2\ot X_3)
\end{tikzcd}}
\end{displaymath}
which make associativity into a vertical transformation of oplax double functors (together with the naturality of components which comes from
$(\dc{D}_1,\otimes_1,I_1)$ being a monoidal category). Moreover, the following axioms hold
\begin{equation}\label{eq:sampleax}
\scalebox{0.8}{
\begin{tikzcd}[ampersand replacement=\&]
I_0\ot X\ar[rr,tick,"I_1\ot(N\circ M)"]\ar[d,equal]\ar[drr,phantom,"\Two\delta\ot\vid"] \&\& I_0\ot Z\ar[d,equal] \\
I_0\ot X\ar[d,equal]\ar[rr,tick,"(I_1\circ I_1)\ot(N\circ M)"]\ar[drr,phantom,"\Two\tau"] \&\& I_0\ot Z\ar[d,equal] \\
I_0\ot X\ar[r,tick,"I_1\ot M"]\ar[d,"\lambda"']\ar[dr,phantom,"\Two\lambda_M"] \& I_0\ot Y\ar[d,"\lambda"']\ar[r,tick,"I_1\ot
N"]\ar[dr,phantom,"\Two\lambda_N"] \& I_0\ot Z\ar[d,"\lambda"] \\
X\ar[r,tick,"M"'] \& Y\ar[r,tick,"N"'] \& Z
\end{tikzcd}=
\begin{tikzcd}[ampersand replacement=\&]
I_0\ot X\ar[rr,tick,"I_1\ot(N\circ M)"]\ar[d,"\lambda"']\ar[drr,phantom,"\Two\lambda_{N\circ M}"] \&\& I_0\ot Z\ar[d,"\lambda"] \\
X\ar[r,tick,"M"'] \& Y\ar[r,tick,"N"'] \& Z
\end{tikzcd}\;\;
\begin{tikzcd}[ampersand replacement=\&]
 I_0\ot X\ar[r,tick,"I_1\ot\hid_X"]\ar[d,equal]\ar[dr,phantom,"\Two\iota\ot\vid"] \& I_0\ot X\ar[d,equal] \\
 I_0\ot X\ar[r,tick,"\hid_{I_0}\ot\hid_X"]\ar[d,equal]\ar[dr,phantom,"\Two\eta"] \& I_0\ot X\ar[d,equal] \\
 I_0\ot X\ar[d,"\lambda"']\ar[r,tick,"\hid_{I_0\ot X}"]\ar[dr,phantom,"\Two\hid_\lambda"] \& I_0\ot X\ar[d,"\lambda"] \\
 X\ar[r,tick,"\hid_X"'] \& X
\end{tikzcd}=
\begin{tikzcd}[ampersand replacement=\&]
I_0\ot X\ar[r,tick,"I_1\ot\hid_X"]\ar[dr,phantom,"\Two\lambda_{\hid_X}"]\ar[d,"\lambda"'] \& I_0\ot X\ar[d,"\lambda"] \\
X\ar[r,tick,"\hid_X"'] \& X
\end{tikzcd}}
\end{equation}
\begin{displaymath}
\scalebox{0.8}{
\begin{tikzcd}[ampersand replacement=\&]
X\ot I_0\ar[rr,tick,"(N\circ M)\ot I_1"]\ar[d,equal]\ar[drr,phantom,"\Two\vid\ot\delta"] \&\& Z\ot I_0\ar[d,equal] \\
X\ot I_0\ar[rr,tick,"(N\circ M)\ot (I_1\circ I_1)"]\ar[d,equal]\ar[drr,phantom,"\Two\tau"] \&\& Z\ot I_0\ar[d,equal] \\
X\ot I_0\ar[r,tick,"M\ot I_1"]\ar[d,"\rho"']\ar[dr,phantom,"\Two \rho_M"] \& Y\ot I_0\ar[dr,phantom,"\Two \rho_N"]\ar[r,tick,"N\ot
I_1"]\ar[d,"\rho"'] \& Z\ot I_0 \ar[d,"\rho"] \\
X\ar[r,tick,"M"'] \& Y\ar[r,tick,"N"'] \& Z
 \end{tikzcd}=
 \begin{tikzcd}[ampersand replacement=\&]
X\ot I_0\ar[rr,tick,"(N\circ M)\ot I_1"]\ar[d,"\rho"']\ar[drr,phantom,"\Two \rho_{N\circ M}"] \&\& Z\ot I_0\ar[d,"\rho"] \\
 X\ar[r,tick,"M"'] \& Y\ar[r,tick,"N"'] \& Z
\end{tikzcd}\;\;
\begin{tikzcd}[ampersand replacement=\&]
X\ot I_0\ar[r,tick,"\hid_X\ot I_1"]\ar[d,equal]\ar[dr,phantom,"\Two\vid\ot\iota"] \& X\ot I_0\ar[d,equal] \\
X\ot I_0\ar[r,tick,"\hid_X\ot\hid_{I_0}"]\ar[d,equal]\ar[dr,phantom,"\Two\eta"] \& X\ot I_0\ar[d,equal] \\
X\ot I_0\ar[r,tick,"\hid_{X\ot I_0}"]\ar[d,"\rho"']\ar[dr,phantom,"\Two\hid_{\rho}"] \& X\ot I_0\ar[d,"\rho"] \\
X\ar[r,tick,"\hid_X"'] \& X
\end{tikzcd}=
\begin{tikzcd}[ampersand replacement=\&]
X\ot I_0\ar[d,"\rho"']\ar[r,tick,"\hid_X\ot I_1"]\ar[dr,phantom,"\Two \rho_{\hid_X}"] \& X\ot I_0\ar[d,"\rho"] \\
X\ar[r,tick,"\hid_X"'] \& X
\end{tikzcd}}
\end{displaymath}
which make the left unit constraint $\lambda\colon\ot\circ(I\times1)\Rightarrow1$ and the right unit constraint $\rho\colon\ot\circ(1\times I)\Rightarrow1$ into
vertical transformations of oplax double functors (together with the naturality of components coming from $(\dc{D}_1,\otimes_1,I_1)$ being a monoidal
category).

\subsection{Proof of \cref{thm:oplaxmonoidalKlT}}\label{sec:sample}
In this section, we illustrate the main ideas in the proof of \cref{thm:oplaxmonoidalKlT}. We begin by stating a couple of technical lemmas which, along with~\cref{lem:companioncomponents}, are used repeatedly in the proof.
\begin{lem}\label{lem:xitranspose}
Let $F\colon \dc{C}\to\dc{D}$ be a double functor. For any horizontal $1$-cell $M\colon X\horightarrow Y$ and vertical $1$-cell $f\colon Y\to Y'$ in
$\dc{C}$, a 2-morphism in $\dc{D}$ of the form on the left corresponds, under transpose
operations, to a 2-morphism of the form on the right
\begin{displaymath}
\begin{tikzcd}
FX\ar[rr,tick,"F(\comp{f}\circ M)"]\ar[d,equal]\ar[drr,phantom,"\Two\xi"] && FY'\ar[d,equal] \\
FX\ar[r,tick,"FM"]\ar[d]\ar[dr,phantom,"\Two\phi"] & FY\ar[r,tick,"F\comp{f}"]\ar[d]\ar[dr,phantom,"\Two\comp{\psi}"] & FY'\ar[d] \\
\bullet\ar[r,tick] & \bullet\ar[r,tick] & \bullet
\end{tikzcd}\qquad
\begin{tikzcd}
FX\ar[r,tick,"FM"]\ar[dd]\ar[ddr,phantom,"\Two\phi"] & FY\ar[r,tick,"\hid_{FY}"]\ar[ddr,phantom,"\Two\psi"]\ar[dd] & FY\ar[d,"Ff"] \\
&& FY'\ar[d] \\
\bullet\ar[r,tick] & \bullet\ar[r,tick] & \bullet
\end{tikzcd}
\end{displaymath}
for any 2-morphisms $\phi,\psi$ of the right shape.
\end{lem}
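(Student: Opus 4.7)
The statement asserts a bijective correspondence between two shapes of $2$-morphism, with the one on the left having source $F(\comp{f} \circ M)$ built entirely inside the image of $F$, and the one on the right having a boundary that explicitly displays $Ff$ (rather than its companion). The plan is to exhibit the bijection by combining two ingredients we already have in hand: the functoriality constraint of $F$, which turns $F(\comp{f} \circ M)$ into $F\comp{f} \circ FM$, and \cref{lem:Ff}, which tells us that $F\comp{f}$ is a companion of the vertical $1$-cell $Ff$ in $\dc{D}$ via the structure $2$-morphisms $Fp_1$ and $Fp_2$ (pasted with the unit coherence cells $\xi_X$, $\xi_{X'}$, which we suppress as advertised).

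First I would describe the forward direction. Starting with a globular $2$-morphism $\xi$ of the displayed shape on the left, I would paste it with the inverse $\xi_{M,\comp f}^{\mi1}$ of the binary coherence of $F$ to obtain a $2$-morphism $\xi'$ with source $F\comp{f} \circ FM$ and target the same paste of $\phi$ and $\comp \psi$ as before. Since $\comp\psi$ was already constructed as the companion transpose of $\psi$ along the companion of $f$ in $\dc C$ (and $F$ sends that companion data to the companion data for $Ff$ by \cref{lem:Ff}), pasting $\xi'$ with $Fp_2$ along the right-hand $F\comp{f}$ strand produces exactly a $2$-morphism of the shape claimed on the right; the companion axiom \cref{eq:piaxioms} for the companion $F\comp{f}$ of $Ff$ in $\dc{D}$ ensures that the $\comp\psi$ in the target collapses back to $\psi$, giving the displayed pasting of $\phi$ and $\psi$ with a $Ff$-strand.

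The reverse direction is constructed dually. Given a $2$-morphism of the shape on the right, I would paste with $Fp_1$ (in the sense of \cref{lem:Ff}) along the $Ff$-strand to convert $\psi$ into $\comp\psi$, thereby producing a $2$-morphism with source $F\comp f \circ FM$, and then pre-compose with the coherence constraint $\xi_{M,\comp f}$ to land back on source $F(\comp f \circ M)$. Checking that these two constructions are mutually inverse is now a routine diagram chase: one application of the companion axiom \cref{eq:piaxioms} for $F \comp f$ (which \cref{lem:Ff} certifies) together with the invertibility of the coherence constraint $\xi_{M, \comp f}$ collapses each round trip to the identity.

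The argument is essentially bookkeeping, and the only step requiring care is verifying that \cref{lem:Ff}'s description of the companion structure on $F\comp f$ is compatible with transposition across the functor, i.e., that the $2$-morphism $\comp \psi$ appearing in the left-hand pasting of the statement really is the same as the companion transpose of $\psi$ along $Fp_1$, $Fp_2$. This is immediate from the fact that transposition is an instance of the universal property of \cref{lem:universalcompcoj}, and that universal property is preserved by the double functor $F$ precisely in the form packaged by \cref{lem:Ff}. Hence the claimed bijection is established.
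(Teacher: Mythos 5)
Your argument is correct and rests on the same ingredients as the paper's proof: the coherence constraint of $F$, the fact (\cref{lem:Ff}) that $F\comp f$ is a companion of $Ff$ in $\dc D$, and the transpose relation $\psi = Fp_2 \cdot \comp\psi$. The organization is slightly different: the paper pastes the single cell $F(p_2 \circ \vid_M)$ onto the top of the left diagram and then invokes naturality of $\xi$ to pull the constraint past it, whereas you first strip off $\xi_{M,\comp f}$ via its inverse and then paste $\vid_{FM} \mid Fp_2$ directly in $\dc D$. Both arrive at $\phi \mid \psi$, modulo unit coherences; your ordering has the small advantage of not needing to invoke naturality of $\xi$ explicitly, while the paper's ordering keeps the transposing cell in the image of $F$, which is the form in which it naturally arises in later applications.

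Two minor points. First, the ``reverse direction'' you describe is logically unnecessary: by \cref{prop:companionomnibus}\ref{omni-iii} (equivalently, the opcartesianness of the companion cell), the transposition is automatically a bijection, so exhibiting the forward passage already establishes the claimed correspondence. Second, your recipe for the reverse (``paste with $Fp_1$ along the $Ff$-strand'') is a bit loose as stated: $Fp_1$'s left boundary is $Ff$, but the right boundary of $\psi$ is the longer composite through $Ff$, so one must pad with an identity cell below $Fp_1$ before the horizontal pasting type-checks. This is routine to repair and, since the reverse direction is not needed, does not affect the validity of the argument.
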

\begin{proof}
By pasting the 2-morphism $F(p_2\circ\vid_{M})$ on the top of the left diagram, we obtain
\begin{displaymath}
\begin{tikzcd}
FX\ar[rr,tick,"F(\hid_Y\circ M)"]\ar[d,equal]\ar[drr,phantom,"\Two F(p_2\circ\vid_M)"] && FY\ar[d,"Ff"] \\
FX\ar[rr,tick,"F(\comp{f}\circ M)"']\ar[d,equal]\ar[drr,phantom,"\Two\xi_{\comp{f},M}"] && FY'\ar[d,equal] \\
FX\ar[r,tick,"FM"']\ar[d]\ar[dr,phantom,"\Two\phi"] & FY\ar[d]\ar[dr,phantom,"\Two\comp{\psi}"]\ar[r,tick,"F\comp{f}"'] & FY'\ar[d] \\
\bullet\ar[r,tick] & \bullet\ar[r,tick] & \bullet
 \end{tikzcd}=
 \begin{tikzcd}
FX\ar[rr,tick,"F(\hid_Y\circ M)"]\ar[d,equal]\ar[drr,phantom,"\Two\xi_{\hid_Y,M}"] && FY\ar[d,equal] \\
FX\ar[r,tick,"FM"']\ar[d,equal]\ar[dr,phantom,"\Two\vid_{FM}"] & FY\ar[r,tick,"F(\hid_Y)"']\ar[d,equal]\ar[dr,phantom,"\Two Fp_2"] & FY\ar[d,"Ff"] \\
FX\ar[r,tick,"FM"']\ar[d]\ar[dr,phantom,"\Two\phi"] & FY\ar[d]\ar[r,tick,"F\comp{f}"']\ar[dr,phantom,"\Two\comp{\psi}"] & FY'\ar[d] \\
\bullet\ar[r,tick] & \bullet\ar[r,tick] & \bullet
 \end{tikzcd}=\begin{tikzcd}
FX\ar[r,tick,"FM"]\ar[dd]\ar[ddr,phantom,"\Two\phi"] & FY\ar[r,tick,"\hid_{FY}"]\ar[ddr,phantom,"\Two\psi"]\ar[dd] & FY\ar[d,"Ff"] \\
&& FY'\ar[d] \\
\bullet\ar[r,tick] & \bullet\ar[r,tick] & \bullet
\end{tikzcd}
\end{displaymath}
where the first equality is due to naturality of components of $\xi$, and the second one (up to pasting with appropriate coherence isomorphisms)
follows from the definition of the transpose $\comp{\psi}$ and the unitality axiom for $F$.
\end{proof}

\begin{lem}\label{lem:technical2}
 Let $\beta\colon F\ticktwoar G$ be a horizontal transformation and $f\colon X\to X'$ a vertical 1-cell in $\dc{C}$. If $f$ has a companion
$\comp{f}$, then the globular coherence $2$-isomorphism $\beta_{\comp{f}}$ is vertically inverse to the transpose of the $2$-morphism component
$\beta_f$, {\em i.e.}~:
\begin{displaymath}
\begin{tikzcd}
FX\ar[r,tick,"\hid_{FX}"]\ar[d,equal]\ar[dr,phantom,"\Two Fp_2"] & FX\ar[d,"Ff"']\ar[r,tick,"\beta_X"]\ar[dr,phantom,"\Two\beta_f"] &
GX\ar[r,tick,"G\comp{f}"]\ar[d,"Gf"]\ar[dr,phantom,"\Two Gp_1"] & GX'\ar[d,equal] \\
FX\ar[r,tick,"F\comp{f}"'] & FX'\ar[r,tick,"\beta_{X'}"'] & GX'\ar[r,tick,"\hid_{GX'}"'] & GX'
\end{tikzcd}
=
\begin{tikzcd}
FX\ar[r,tick,"\beta_X"]\ar[d,equal]\ar[drr,phantom,"\Two\beta_{\comp{f}}^{\mi1}"] & GX\ar[r,tick,"G\comp{f}"] &
GX'\ar[d,equal] \\
FX\ar[r,tick,"F\comp{f}"'] & FX'\ar[r,tick,"\beta_{X'}"'] & GX'
\end{tikzcd}
\end{displaymath}
\end{lem}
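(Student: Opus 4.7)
The plan is to reduce the claim to the naturality axiom~\eqref{eq:betanaturality} for the horizontal transformation $\beta$, applied to the companion $2$-morphism $p_1 \colon \comp{f} \Rightarrow \hid_{X'}$. Concretely, by uniqueness of two-sided inverses of globular $2$-isomorphisms in $\dc{C}_1$, it will suffice to check that the claimed left-hand pasting, when further pasted (vertically, in $\dc{C}_1$) with $\beta_{\comp{f}}$ on top, reduces to the identity $2$-morphism on $\beta_{X'} \circ F\comp{f}$, modulo the unit coherence isomorphisms $\ell$ and $r$ of~\eqref{eq:aellr}.

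First I would observe that, once $\beta_{\comp{f}}$ is pasted on top, the resulting diagram matches exactly the left-hand side of the naturality axiom~\eqref{eq:betanaturality} for $\beta$, instantiated with $M = \comp{f}$, $M' = \hid_{X'}$, $Y = Y' = X'$, vertical $1$-cells $f$ and $g = \id_{X'}$, and $2$-morphism $\phi = p_1$. Rewriting via~\eqref{eq:betanaturality}, we obtain the corresponding right-hand pasting, which is built from $Fp_1$ in the top-left, $\beta_{\id_{X'}}$ in the top-right (the component of $\beta$ at a trivial vertical $1$-cell), and $\beta_{\hid_{X'}}$ in the bottom. The unit-functoriality axiom~\eqref{eq:betafunctor} gives $\beta_{\id_{X'}} = \vid_{\beta_{X'}}$, and the compatibility axiom of $\beta$ with horizontal identities identifies the $\beta_{\hid_{X'}}$ factor (together with the coherence cells $\xi_{X'}$ of $F$ and $G$) with the appropriate left/right unitor of $\dc{C}$.

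At this point the whole pasting depends on $f$ only through $Fp_1$ composed with the remaining $Fp_2$ coming from the original left-hand side. Here I would invoke the companion axiom on the left of~\eqref{eq:piaxioms}, which states $p_1 \cdot p_2 = \hid_f$ (vertical composition in $\dc{C}_1$). Applying the (pseudo) double functor $F$ to this identity, and absorbing the unit constraints $\xi_X, \xi_{X'}$, collapses $Fp_1 \cdot Fp_2$ to the vertical identity $2$-morphism $\hid_{Ff}$. Combining the two reductions shows that $\beta_{\comp{f}}$ pasted on top of the left-hand side equals a globular identity on $\beta_{X'} \circ F\comp{f}$ up to the unit coherences $\ell, r$, which is the desired statement.

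The main obstacle will be the careful bookkeeping of the unit coherence $2$-morphisms: the compatibility of $\beta$ with $\hid$ mixes the horizontal identity constraints $\xi$ of $F$ and $G$ with the left and right unitors of $\dc{C}$, and one must check that all of them cancel out correctly once the companion axiom is applied. Once this is tracked systematically (as in the proof of \cref{lem:companioncomponents}), the argument goes through without further subtlety.
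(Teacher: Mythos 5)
Your overall strategy is the same as the paper's: paste $\beta_{\comp{f}}$ on top, rewrite via the naturality axiom~\eqref{eq:betanaturality} with $\phi = p_1$, and then collapse the residue using unit-functoriality and the companion axioms. Your observation that a one-sided check suffices (because $\beta_{\comp{f}}$ is already known to be invertible, and a one-sided inverse of an isomorphism is automatically two-sided) is a genuine simplification that the paper does not take---the paper carries out both compositions.

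However, there is a genuine error in the final collapse. After the naturality rewrite with $\phi = p_1$, the cells $Fp_2$ and $Fp_1$ sit \emph{horizontally} adjacent in the top row of the pasting, sharing the vertical $1$-cell $Ff$ as common boundary (this is forced by the shape of the right-hand side of~\eqref{eq:betanaturality}: $Fp_1 = F\phi$ lands at $(1,1)$ of that pasting, and the $Fp_2$ carried over from the original transpose ends up directly to its left). You invoke the \emph{first} companion axiom of~\eqref{eq:piaxioms}, namely the vertical identity $p_1 \cdot p_2 = \hid_f$; but that axiom concerns a vertical pasting of $p_2$ then $p_1$, not the horizontal pasting that actually appears. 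Indeed, if the collapse to $\hid_{Ff}$ held, it would produce a $2$-morphism with vertical boundaries $Ff$, which cannot occur inside a globular pasting, so the claim cannot typecheck. The axiom actually needed is the \emph{second} equation of~\eqref{eq:piaxioms}, which identifies the horizontal composite $p_1 \circ p_2$ (pasted with $\ell$) with $r$; applying $F$ and absorbing the unit constraints $\xi$ identifies $Fp_1 \circ Fp_2$ with the appropriate unit coherence isomorphism of $\dc{D}$. With that substitution, the rest of your argument---using~\eqref{eq:betafunctor} to see $\beta_{\vid_{X'}} = \vid_{\beta_{X'}}$, and the unit compatibility axiom of $\beta$ to absorb $\beta_{\hid_{X'}}$---goes through and recovers the paper's computation.
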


\begin{proof}
 We will show that if we vertically compose the transpose of $\beta_f$ with $\beta_{\comp{f}}$ on both sides, it produces a vertical identity (up
to coherence isomorphisms). Indeed,
 \begin{displaymath}
 \begin{tikzcd}
 FX\ar[r,tick,"\hid_{FX}"]\ar[d,equal] & FX\ar[d,equal]\ar[r,tick,"F\comp{f}"]\ar[drr,phantom,"\Two\beta_{\comp{f}}"] &
 FX'\ar[r,tick,"\beta_{X'}"] & GX'\ar[d,equal] \\
 FX\ar[r,tick,"\hid_{FX}"]\ar[d,equal]\ar[dr,phantom,"\Two Fp_2"] & FX\ar[d,"Ff"']\ar[r,tick,"\beta_{X}"]\ar[dr,phantom,"\Two\beta_f"] &
 GX\ar[d,"Gf"]\ar[r,tick,"G\comp{f}"]\ar[dr,phantom,"\Two Gp_1"] & GX'\ar[d,equal] \\
 FX\ar[r,tick,"F\comp{f}"'] & FX'\ar[r,tick,"\beta_{X'}"'] & GX'\ar[r,tick,"\hid_{GX'}"'] & GX'
 \end{tikzcd}\stackrel{\cref{eq:betanaturality}}{=}
 \begin{tikzcd}
 FX\ar[r,tick,"\hid_{FX}"]\ar[d,equal]\ar[dr,phantom,"\Two Fp_2"] & FX\ar[d,"Ff"']\ar[r,tick,"F\comp{f}"]\ar[dr,phantom,"\Two Fp_1"] &
 FX'\ar[r,tick,"\beta_{X'}"]\ar[d,equal]\ar[dr,phantom,"\Two\beta_{\vid_{X'}}"] & GX'\ar[d,equal] \\
 FX\ar[r,tick,"F\comp{f}"]\ar[d,equal] & FX'\ar[d,equal]\ar[r,tick,"\hid_{FX'}"]\ar[drr,phantom,"\Two\beta_{\hid_{X'}}"] &
 FX'\ar[r,tick,"\Two\beta_{X'}"] & GX'\ar[d,equal] \\
 FX\ar[r,tick,"F\comp{f}"'] & FX'\ar[r,tick,"\beta_{X'}"'] & GX'\ar[r,tick,"\hid_{GX'}"'] & GX'
 \end{tikzcd}\stackrel{\substack{(\ref{eq:betafunctor})\\(\ref{eq:piaxioms})}}{=}\vid_{\beta_{X'}\circ F\comp{f}}
 \end{displaymath}
\begin{displaymath}
 \begin{tikzcd}
FX\ar[r,tick,"\hid_{FX}"]\ar[d,equal]\ar[dr,phantom,"\Two Fp_2"] & FX\ar[d,"Ff"']\ar[dr,phantom,"\Two\beta_f"]\ar[r,tick,"\beta_X"] &
GX\ar[d,"Gf"]\ar[dr,phantom,"\Two Gp_1"]\ar[r,tick,"G\comp{f}"] & GX'\ar[d,equal] \\
FX\ar[r,tick,"F\comp{f}"']\ar[d,equal]\ar[drr,phantom,"\Two\beta_{\comp{f}}"] & FX'\ar[r,tick,"\beta_{X'}"'] &
GX'\ar[d,equal]\ar[r,tick,"\hid_{GX'}"'] & GX'\ar[d,equal] \\
FX\ar[r,tick,"\beta_{X}"'] & GX\ar[r,tick,"G\comp{f}"'] & GX'\ar[r,tick,"\hid_{GX'}"'] & GX
 \end{tikzcd}\stackrel{\cref{eq:betanaturality}}{=}
 \begin{tikzcd}
FX\ar[r,tick,"\hid_{FX}"]\ar[d,equal]\ar[drr,phantom,"\Two\beta_{\hid_{X}}"] & FX\ar[r,tick,"\beta_X"] &
GX\ar[r,tick,"G\comp{f}"]\ar[d,equal] & GX'\ar[d,equal] \\
FX'\ar[r,tick,"\beta_X"']\ar[d,equal]\ar[dr,phantom,"\Two\beta_{\vid_X}"] & GX\ar[r,tick,"\hid_{GX}"']\ar[d,equal]\ar[dr,phantom,"\Two Gp_2"] &
GX\ar[d,"Gf"]\ar[r,tick,"G\comp{f}"]\ar[dr,phantom,"\Two Gp_1"] & GX'\ar[d,equal] \\
FX\ar[r,tick,"\beta_{X}"'] & GX\ar[r,tick,"G\comp{f}"'] & GX'\ar[r,tick,"\hid_{GX'}"'] & GX\mathrlap{ .}
\end{tikzcd}\stackrel{\substack{(\ref{eq:betafunctor})\\(\ref{eq:piaxioms})}}{=}\vid_{G\comp{f}\circ\beta_X}\qedhere
\end{displaymath}
\end{proof}

We now provide a sample verification of one of the axioms needed in the proof of \cref{thm:oplaxmonoidalKlT}. We will show that the left unit constraint of the monoidal structure of a Kleisli double category $\Kl(T)$ for a monoidal
horizontal double monad is compatible with horizontal composition, namely the top left axiom of~\eqref{eq:sampleax}.

First of all, the left unit constraint
components $J\kot M\Rightarrow M$ as in \cref{eq:Klunitors} bijectively correspond to cells
\begin{displaymath}
\begin{tikzcd}
I\ot X\ar[ddd,"\lambda"']\ar[r,tick,"\hid\ot M"]\ar[dddr,phantom,"\Two\lambda_M"] & I\ot
TY\ar[dddr,phantom,"\Two\hid_\lambda"]\ar[ddd,"\lambda"']\ar[r,tick,"\hid"] & I\ot
TY\ar[d,"T^0\ot\vid"]
\\
&& TI\ot TY\ar[d,"T^2"] \\
&& T(I\ot Y)\ar[d,"T\lambda"] \\
X\ar[r,tick,"M"'] & TY\ar[r,tick,"\hid"'] & TY
\end{tikzcd}
\end{displaymath}
using \cref{lem:xitranspose}, since $\tau$ is the composition comparison structure map for the double functor $\ot$ of the monoidal double
category $\dc{C}$. Using appropriate transpose operations like \cref{eq:transpose}, we can therefore transform the axiom at question to one that does
not involve companions
of 1-cells and 2-morphisms as follows: the right-hand side of \cref{eq:sampleax} becomes
\begin{equation}\label{eq:thatone}
 \begin{tikzcd}[ampersand replacement=\&]
I\ot X\ar[ddd,"\lambda"']\ar[rrr,tick,"\hid\ot(m\circ TN\circ M)"]\ar[dddrrr,phantom,"\Two\lambda_{m\circ TN\circ M}"] \&\&\& I\ot
TZ\ar[ddd,"\lambda"]\ar[r,tick,"\hid"] \& I\ot TZ\ar[d,"T^0\ot\vid"] \\
\&\&\&\& TI\ot TZ\ar[d,"T^2"] \\
\&\&\&\& T(I\ot Z)\ar[d,"T\lambda"] \\
X\ar[r,tick,"M"'] \& TY\ar[r,tick,"TN"'] \& TTZ\ar[r,tick,"m_Z"'] \& TZ\ar[r,tick,"\hid"'] \& TZ
 \end{tikzcd}
 \end{equation}
 whereas the left-hand side, using naturality of $\tau$, becomes
 \begin{equation}\label{eq:thisone}
 \scalebox{0.8}{
 \begin{tikzcd}[ampersand replacement=\&]
I\ot X\ar[d,equal]\ar[rrrrrr,tick,"\hid\ot(m\circ TN\circ M)"]\ar[drrrrrr,phantom,"\Two\tau"] \&\&\&\&\&\& I\ot TZ\ar[d,equal] \\
I\ot X\ar[r,tick,"\hid\ot M"]\ar[ddddd,"\lambda"']\ar[dddddr,phantom,"\Two\lambda_M"] \& I\ot TY\ar[ddddd,"\lambda"']\ar[r,equal] \&
I\ot TY\ar[r,tick,"\hid\ot TN"]\ar[d,"T^0\ot\vid"'] \& I\ot TTZ\ar[d,"T^0\ot\vid"]\ar[r,equal] \& I\ot TTZ\ar[r,equal]\ar[d,"T^0\ot\vid"'] \& I\ot
TTZ\ar[r,tick,"\hid\ot m"]\ar[d,"T^0\ot\vid"']\ar[ddr,phantom,"\Two m^0\ot\vid"] \& I\ot TZ\ar[dd,"T^0\ot\vid"] \\
\&\& TI\ot TY\ar[d,"T^2"']\ar[r,tick,"\hid\ot TN"]\ar[dr,phantom,"\Two T^2_{1,N}"] \& TI\ot TTZ\ar[d,"T^2"] \& TI\ot TTZ\ar[d,"T^2"]\ar[r,equal]
\& TI\ot TTZ\ar[d,"TT^0\ot\vid"'] \& \\
\& \& T(I\ot Y)\ar[ddd,"T\lambda"']\ar[dddr,phantom,"\Two T\lambda_N"]\ar[r,tick,"T(\hid\ot N)"'] \& T(I\ot TZ)\ar[ddd,"T\lambda"]\ar[r,equal] \&
T(I\ot TZ)\ar[d,"T(T^0\ot\vid)"'] \& TTI\ot TTZ\ar[d,"T^2"']\ar[r,tick,"m\ot m"]\ar[ddr,phantom,"\Two m^2"] \& TI\ot TZ\ar[dd,"T^2"] \\
\&\&\&\& T(TI\ot TZ)\ar[r,equal]\ar[d,"TT^2"'] \& T(TI\ot TZ)\ar[d,"TT^2"'] \& \\
\&\&\&\& TT(I\ot Z)\ar[d,"TT\lambda"'] \& TT(I\ot Z)\ar[d,"TT\lambda"']\ar[dr,phantom,"\Two m_\lambda"]\ar[r,tick,"m"] \& T(I\ot Z)\ar[d,"T\lambda"]
\\
X\ar[r,tick,"M"'] \& TY\ar[r,equal] \& TY\ar[r,tick,"TN"'] \& TTZ\ar[r,equal] \& TTZ\ar[r,equal] \& TTZ\ar[r,tick,"m"'] \& TZ
 \end{tikzcd}}
\end{equation}
Now using the fact that $(T^2,T^0)$ and $(m^2,m^0)$ are the structure maps of lax monoidal functors, namely
\begin{displaymath}
\scalebox{0.8}
{\begin{tikzcd}[ampersand replacement=\&]
 I\ot TY\ar[dr,phantom,"\Two\hid_{T^0}\ot\vid"]\ar[r,tick,"\hid\ot TN"]\ar[d,"T^0\ot\vid"'] \& I\ot TTZ\ar[d,"T^0\ot\vid"] \\
 TI\ot TY\ar[d,"T^2"']\ar[r,tick,"\hid\ot TN"]\ar[dr,phantom,"\Two T^2_{1,N}"] \& TI\ot TTZ\ar[d,"T^2"] \\
 T(I\ot Y)\ar[ddd,"T\lambda"']\ar[dddr,phantom,"\Two T\lambda_N"]\ar[r,tick,"T(\hid\ot N)"'] \& T(I\ot TZ)\ar[ddd,"T\lambda"] \\
 \& \\
 \& \\
 TY\ar[r,tick,"TN"'] \& TTZ
\end{tikzcd}=
\begin{tikzcd}[ampersand replacement=\&]
I\ot TY\ar[r,tick,"\hid\ot TN"]\ar[d,"\lambda"']\ar[dr,phantom,"\Two\lambda_{TN}"] \& I\ot TTZ\ar[d,"\lambda"] \\
TY\ar[r,tick,"TN"'] \& TTZ
\end{tikzcd}\;\;
 \begin{tikzcd}[ampersand replacement=\&]
I\ot TTZ\ar[r,tick,"\hid\ot m"]\ar[d,"T^0\ot\vid"']\ar[ddr,phantom,"\Two m^0\ot\vid"] \& I\ot TZ\ar[dd,"T^0\ot\vid"] \\
TI\ot TTZ\ar[d,"TT^0\ot\vid"'] \& \\
TTI\ot TTZ\ar[d,"T^2"']\ar[r,tick,"m\ot m"]\ar[ddr,phantom,"\Two m^2"] \& TI\ot TZ\ar[dd,"T^2"] \\
T(TI\ot TZ)\ar[d,"TT^2"'] \& \\
TT(I\ot Z)\ar[d,"TT\lambda"']\ar[dr,phantom,"\Two m_\lambda"]\ar[r,tick,"m"] \& T(I\ot Z)\ar[d,"T\lambda"] \\
TTZ\ar[r,tick,"m"'] \& TZ
\end{tikzcd}=
\begin{tikzcd}[ampersand replacement=\&]
I\ot TTZ\ar[r,tick,"\hid\ot m"]\ar[dddd,"\lambda"']\ar[ddddr,phantom,"\Two\lambda_m"] \& I\ot TZ\ar[dddd,"\lambda"] \\
\hole \\
\hole \\
\hole \\
TTZ\ar[r,tick,"m"'] \& TZ
\end{tikzcd}}
\end{displaymath}
the diagram \cref{eq:thisone} reduces to
\begin{displaymath}
 \begin{tikzcd}
I\ot X\ar[d,equal]\ar[drrr,phantom,"\Two\tau"]\ar[rrr,tick,"\hid\ot(m\circ TN\circ M)"] &&& I\ot TZ\ar[d,equal] \\
I\ot X\ar[r,tick,"\hid\ot M"]\ar[d,"\lambda"']\ar[dr,phantom,"\Two\lambda_M"] & I\ot Y\ar[d,"\lambda"']\ar[r,tick,"\hid\ot
TN"]\ar[dr,phantom,"\Two\lambda_{TN}"] & I\ot TTZ\ar[d,"\lambda"']\ar[r,tick,"\hid\ot m"]\ar[dr,phantom,"\Two\lambda_m"] &
I\ot TZ\ar[d,"\lambda"] \\
X\ar[r,tick,"M"'] & TY\ar[r,tick,"TN"'] & TTZ\ar[r,tick,"m"'] & TZ
 \end{tikzcd}
\end{displaymath}
which is equal to \cref{eq:thatone}, since $\lambda$ is a vertical transformation of double functors \cref{eq:verticaltransfax}.

\bibliographystyle{alpha}
\bibliography{references}

\begin{thebibliography}{FGHW18}

\bibitem[AM10]{Species}
M.~Aguiar and S.~Mahajan.
\newblock {\em Monoidal functors, species and {H}opf algebras}, volume~29 of
  {\em CRM Monograph Series}.
\newblock American Mathematical Society, Providence, RI, 2010.

\bibitem[Bar70]{Barr1970Relational}
Michael Barr.
\newblock Relational algebras.
\newblock In {\em Reports of the {M}idwest {C}ategory {S}eminar, {IV}}, pages
  39--55. Lecture Notes in Mathematics, Vol. 137. Springer, 1970.

\bibitem[BCSW83]{Varthrenr}
R.~Betti, A.~Carboni, R.~Street, and R.~Walters.
\newblock Variation through enrichment.
\newblock {\em Journal of Pure and Applied Algebra}, 29(2):109--127, 1983.

\bibitem[BD98]{BaezJ:higdan}
J.~Baez and J.~Dolan.
\newblock Higher-dimensional algebra {III}: $n$-categories and the algebra of
  opetopes.
\newblock {\em Advances in Mathematics}, 135:145--206, 1998.

\bibitem[BD20]{BremnerM:boavtp}
M.~Bremner and V.~Dotsenko.
\newblock Boardman–{V}ogt tensor products of absolutely free operads.
\newblock {\em Proceedings of the Royal Society of Edinburgh}, 150:367–--385,
  2020.

\bibitem[B{\'e}n73]{BenabouJ:dis}
J.~B{\'e}nabou.
\newblock Les distributeurs.
\newblock Technical Report~33, S\'eminaire de Math\'ematiques Pures,
  Universit\'e Catholique de Louvain, 1973.

\bibitem[B{\'e}n00]{BenabouJ:distw}
J.~B{\'e}nabou.
\newblock Distributors at work.
\newblock Available from
  \url{https://www2.mathematik.tu-darmstadt.de/~streicher/FIBR/DiWo.pdf}, 2000.
\newblock Notes by T. Streicher of a course given at TU Darmstadt.

\bibitem[BKP89]{2-dimmonadtheory}
R.~Blackwell, G.~M. Kelly, and J.~Power.
\newblock Two-dimensional monad theory.
\newblock {\em Journal of Pure and Applied Algebra}, 59(1):1--41, 1989.

\bibitem[BLL98]{BergeronF:comstls}
F.~Bergeron, G.~Labelle, and P.~Leroux.
\newblock {\em Combinatorial species and tree-like structures}.
\newblock Cambridge University Press, 1998.

\bibitem[BV73]{BoardmanJ:homias}
J.~Boardman and R.~Vogt.
\newblock {\em Homotopy invariant algebraic structures on topological spaces},
  volume 347 of {\em Lecture Notes in Mathematics}.
\newblock Springer, 1973.

\bibitem[CS10]{CruttwellShulman}
G.~S.~H. Cruttwell and M.~A. Shulman.
\newblock A unified framework for generalized multicategories.
\newblock {\em Theory and Applications of Categories}, 24:580--655, 2010.

\bibitem[CW05]{cattani2005profunctors}
G.~L. Cattani and G.~Winskel.
\newblock Profunctors, open maps and bisimulation.
\newblock {\em Mathematical Structures in Computer Science}, 15(03):553--614,
  2005.

\bibitem[Day70]{DayBJ:clocf}
B.~J. Day.
\newblock On closed categories of functors.
\newblock In {\em Reports of the {M}idwest {C}ategory {S}eminar}, volume 137 of
  {\em Lecture Notes in Mathematics}, pages 1--38. Springer, 1970.

\bibitem[DH14]{DwyerW:BoardmanVtpo}
W.~Dwyer and K.~Hess.
\newblock The {B}oardman-{V}ogt tensor product of operadic bimodules.
\newblock In U.~Tillmann, S.~Galatius, and D.~Sihna, editors, {\em Algebraic
  Topology: Applications and New Directions}, volume 620 of {\em Contemporary
  Mathematics}, pages 71--98. American Mathematical Society, 2014.

\bibitem[DS97]{Monoidalbicatshopfalgebroids}
B.~Day and R.~Street.
\newblock Monoidal bicategories and {H}opf algebroids.
\newblock {\em Advances in Mathematics}, 129(1):99--157, 1997.

\bibitem[FGHW08]{FioreM:carcbg}
M.~Fiore, N.~Gambino, M.~Hyland, and G.~Winskel.
\newblock The cartesian closed bicategory of generalized species of structures.
\newblock {\em Journal of the London Mathematical Society}, 77(2):203--220,
  2008.

\bibitem[FGHW18]{FioreM:relpkb}
M.~Fiore, N.~Gambino, M.~Hyland, and G.~Winskel.
\newblock Relative pseudomonads, {K}leisli bicategories, and substitution
  monoidal structures.
\newblock {\em Selecta Mathematica -- New Series}, 24(3):2791--2830, 2018.

\bibitem[Fio05]{FioreM:matmccs}
M.~Fiore.
\newblock Mathematical models of computational and combinatorial structures.
\newblock In {\em Foundations of Software Science and Computation Structures},
  volume 3441 of {\em Lecture Notes in Computer Science book series}, pages
  25--46. Springer, 2005.

\bibitem[FV20]{Duoidalmeasuring}
I.~L\'opez Franco and C.~Vasilakopoulou.
\newblock Duoidal categories, measuring comonoids and enrichment.
\newblock arXiv:2005.01340, 2020.

\bibitem[Gar06a]{Garner2006Double}
R.~Garner.
\newblock Double clubs.
\newblock {\em Cahiers de Topologie et Geom\'{e}trie Diff\'{e}rentielle
  Cat\'{e}goriques}, 47:261--317, 2006.

\bibitem[Gar06b]{Poly}
R.~Garner.
\newblock {\em Polycategories}.
\newblock PhD thesis, University of Cambridge, 2006.

\bibitem[GG09]{GarnerR:lowdsf}
R.~Garner and N.~Gurski.
\newblock The low-dimensional structures formed by tricategories.
\newblock {\em Mathematical Proceedings of the Cambridge Philosophical
  Society}, 2009.

\bibitem[GJ17]{GambinoJoyal}
N.~Gambino and A.~Joyal.
\newblock On operads, bimodules and analytic functors.
\newblock {\em Memoirs of the American Mathematical Society}, 249(1184), 2017.

\bibitem[GL16]{GarnerLopezFranco}
R.~Garner and I.~{López Franco}.
\newblock Commutativity.
\newblock {\em Journal of Pure and Applied Algebra}, 220(5):1707--1751, 2016.

\bibitem[GP99]{Limitsindoublecats}
M.~Grandis and R.~Par{\'e}.
\newblock Limits in double categories.
\newblock {\em Cahiers de Topologie et G\'eom\'etrie Diff\'erentielle
  Cat\'egoriques}, 40(3):162--220, 1999.

\bibitem[GP04]{Adjointfordoublecats}
M.~Grandis and R.~Par{\'e}.
\newblock Adjoint for double categories.
\newblock {\em Cahiers de Topologie et G\'eom\'etrie Diff\'erentielle
  Cat\'egoriques}, 45(3):193--240, 2004.

\bibitem[GPS95]{CoherenceTricats}
R.~Gordon, A.~J. Power, and R.~Street.
\newblock Coherence for tricategories.
\newblock {\em Memoirs of the American Mathematical Society}, 117(558):vi+81,
  1995.

\bibitem[Gur13]{Gurskitricats}
N.~Gurski.
\newblock {\em Coherence in three-dimensional category theory}, volume 201 of
  {\em Cambridge Tracts in Mathematics}.
\newblock Cambridge University Press, Cambridge, 2013.

\bibitem[HP02]{HylandPower}
M.~Hyland and J.~Power.
\newblock Pseudo-commutative monads and pseudo-closed 2-categories.
\newblock {\em Journal of Pure and Applied Algebra}, 175(1--3):141 -- 185,
  2002.

\bibitem[HS19]{ConstrSymMonBicatsFun}
L.~Wester Hansen and M.~Shulman.
\newblock Constructing symmetric monoidal bicategories functorially.
\newblock arXiv:1910.09240, 2019.

\bibitem[Hyl10]{hyland2010some}
M.~Hyland.
\newblock Some reasons for generalising domain theory.
\newblock {\em Mathematical Structures in Computer Science}, 20(2):239--265,
  2010.

\bibitem[Joy81]{JoyalA:thecsf}
A.~Joyal.
\newblock Une th\'eorie combinatoire des series formelles.
\newblock {\em Advances in Mathematics}, 42:1--82, 1981.

\bibitem[Joy86]{FoncteursAnalytiques}
A.~Joyal.
\newblock Foncteurs analytiques et esp\`eces de structures.
\newblock In {\em Combinatoire \'{e}num\'{e}rative ({M}ontr\'eal, {Q}u\'e.,
  1985/{Q}u\'ebec, {Q}u\'e., 1985)}, volume 1234 of {\em Lecture Notes in
  Mathematics}, pages 126--159. Springer, Berlin, 1986.

\bibitem[Kel74]{KellyGM:coherence-lax}
G.~M. Kelly.
\newblock Coherence theorems for lax algebras and for distributive laws.
\newblock In {\em Category {S}eminar ({P}roc. {S}em., {S}ydney, 1972/1973)},
  volume 420 of {\em Lecture Notes in Mathematics}, pages 281--375. Springer,
  Berlin, 1974.

\bibitem[Kel82]{Kelly}
G.~M. Kelly.
\newblock {\em Basic concepts of enriched category theory}.
\newblock Cambridge University Press, 1982.
\newblock Available in Reprints in Theory and Applications of Categories.

\bibitem[Kel05]{KellyGM:opejpm}
G.~M. Kelly.
\newblock On the operads of {J.} {P.} {May}.
\newblock {\em Reprints in Theory and Applications of Categories}, 13:1--13,
  2005.
\newblock Published version of a manuscript, 1972.

\bibitem[Koc72]{Kock1972Strong}
A.~Kock.
\newblock Strong functors and monoidal monads.
\newblock {\em Archiv der Mathematik}, 23:113--120, 1972.

\bibitem[May72]{Mayoperad}
J.~P. May.
\newblock {\em The geometry of iterated loop spaces}, volume 271 of {\em
  Lecture Notes in Mathematics}.
\newblock Springer-Verlag, Berlin, 1972.

\bibitem[McC99]{Mccruddenthesis}
P.~McCrudden.
\newblock {\em Categories of representations of balanced coalgebroids}.
\newblock PhD thesis, Macquarie University, 1999.

\bibitem[MM08]{MaiaM:aripcs}
M.~Maia and M.~M\'endez.
\newblock On the arithmetic product of combinatorial species.
\newblock {\em Discrete Mathematics}, 308:5407--5427, 2008.

\bibitem[PR97]{Premonoidal}
J.~Power and E.~P. Robinson.
\newblock Premonoidal categories and notions of computation.
\newblock {\em Mathematical Structures in Computer Science}, 7:453--468, 1997.

\bibitem[Shu08]{Framedbicats}
M.~Shulman.
\newblock Framed bicategories and monoidal fibrations.
\newblock {\em Theory and Applications of Categories}, 20:No. 18, 650--738,
  2008.

\bibitem[Shu10]{ConstrSymMonBicats}
M.~Shulman.
\newblock Constructing symmetric monoidal bicategories.
\newblock arXiv:1004.0993, 2010.

\bibitem[Str72]{FormalTheoryMonadsI}
R.~Street.
\newblock The formal theory of monads.
\newblock {\em J. Pure Appl. Algebra}, 2(2):149--168, 1972.

\bibitem[Str80]{FibrationsinBicats}
R.~Street.
\newblock Fibrations in bicategories.
\newblock {\em Cahiers de Topologie et G\'eom\'etrie Diff\'erentielle},
  21(2):111--160, 1980.

\bibitem[Vas19]{VCocats}
C.~Vasilakopoulou.
\newblock Enriched duality in double categories: {V}-categories and
  {V}-cocategories.
\newblock {\em Journal of Pure and Applied Algebra}, 223(7):2889 -- 2947, 2019.

\bibitem[Wil06]{WilfH;generating}
H.~Wilf.
\newblock {\em Generatingfunctionology}.
\newblock A. K. Peters, 3rd edition, 2006.

\end{thebibliography}

\end{document}